\documentclass{article}
\usepackage{amsmath}
\usepackage{amssymb}
\usepackage{titlesec}
\usepackage{graphicx}
\usepackage{fancyhdr}
\usepackage{latexsym}
\usepackage{mathrsfs}%
\usepackage{booktabs}
\usepackage{mathrsfs}
\usepackage{mathtools}
\usepackage{amsthm}
\usepackage{wasysym}
\usepackage{cite}
\usepackage{color}
\usepackage{amsfonts}
\usepackage{amsmath,amssymb}
\usepackage{graphics}
\usepackage{graphics}
\usepackage{cite}
\usepackage{latexsym}
\usepackage{amsmath}
\usepackage{amssymb}
\usepackage{epsfig}
\usepackage{amscd}
\usepackage{amsmath, amsthm}
\usepackage[linkcolor=blue,colorlinks=true,urlcolor=red,bookmarksopen=true]{hyperref}
\numberwithin{equation}{section}
\newcommand{\R}{\mathbb{R}}
\newcommand{\I}{\mathbb{I}}
\newcommand{\A}{\mathcal{A}}
\newcommand{\LL}{\mathcal{L}}
\newcommand{\Kop}{\mathcal{K}}
\newcommand{\diver}{\operatorname{div}}
\newcommand{\ubar}{\underline{\rho}_0}
\newcommand{\obar}{\overline{\rho}_0}
\newcommand{\rstar}{\rho_*}
\newcommand{\norm}[1]{\left\|#1\right\|}
\newcommand{\divg}{{\rm  div}}

\newtheorem{theorem}{Theorem}[section]
\newtheorem{lemma}{Lemma}[section]
\newtheorem{remark}{Remark}[section]
\newtheorem{proposition}{Proposition}[section] 

\newtheorem{corollary}{Corollary}[section]

\title{Formation of Implosion Singularities in 3D Compressible Navier--Stokes--Korteweg Equations}
\author{Yongteng G{\small U}$^{a}$\thanks{Email addresses: guyongteng@amss.ac.cn (Y. T. Gu); xdhuang@amss.ac.cn (X. D. Huang)}, Xiangdi H{\small UANG}$^{a}$   \\  
	{\normalsize a. State Key Laboratory of Mathematical Sciences, Academy of Mathematics and Systems Sciences,}\\
	{\normalsize Chinese Academy of Sciences, Beijing 100190, China;}\\
}
\date{}

\begin{document}
	\maketitle
	\begin{abstract}
		We investigate finite-time singularity formation for the three-dimensional compressible Navier--Stokes--Korteweg system with density-dependent viscosity and capillarity coefficients
		\[
		\mu(\rho)=\nu\rho^\alpha,\qquad
		\lambda(\rho)=2\nu(\alpha-1)\rho^\alpha,\qquad
		\kappa(\rho)=\varepsilon^2\alpha^2\rho^{2\alpha-3}.
		\]
		Previous works of Gu--Huang--Meng--Zhou~\cite{Gu-Huang-Meng-Zhou} and Huang--Lei--Zhou~\cite{Huang-Lei-Zhou} established global strong solutions away from vacuum for arbitrarily large initial data when $\alpha$ lies in a suitable range. In contrast, we show that, for a class of small positive exponents $\alpha$ $(\alpha<\frac{1}{2}$), there exist smooth initial data with density uniformly separated from vacuum whose corresponding solutions develop finite-time implosion singularities.
		
		Our construction is based on smooth self-similar imploding profiles of the compressible Euler equations. After reformulating the system in self-similar coordinates, the viscous and capillary effects appear as exponentially decaying perturbations. We control the resulting non-autonomous system through weighted high-order energy estimates, a stable--unstable decomposition of the linearized operator, and a finite-dimensional selection of the unstable components. The constructed solutions remain smooth before the singular time and converge, after rescaling, to the prescribed imploding profile. In particular, at the blowup time $T$, the density becomes infinite at the origin, while the effective velocity $u + d \alpha \rho^{\alpha-2} \nabla \rho$ is unbounded in every neighborhood of the origin. These results complement the aforementioned global existence theory and exhibit a distinct finite-time blowup mechanism for the small-$\alpha$ regime, where the effective bulk-viscosity structure may no longer be positive.
	\end{abstract}
	{\bf Keywords:} compressible Navier-Stokes-Korteweg system; finite-time singularity formation; self-similar profile.\\[4mm]
	{\bf Mathematics Subject Classifications (2020):} 35D35; 35Q35; 35Q40; 76N10.\\[4mm]
	\tableofcontents
	\section{Introduction}
	In this paper, we study the compressible Navier--Stokes--Korteweg system
	\begin{equation}\label{eq:1.1}
		\begin{cases}
			\rho_t+\operatorname{div}(\rho u)=0,\\[2mm]
			(\rho u)_t+\operatorname{div}(\rho u\otimes u)+\nabla P
			=\operatorname{div}(2\mu(\rho)\mathbf D u)
			+\nabla(\lambda(\rho)\operatorname{div}u)
			+\operatorname{div}\mathbf K ,
		\end{cases}
	\end{equation}
	and the initial and far-field conditions:
	\begin{equation}
		(\rho,u)(0,x)=(\rho_0,u_0)(x) \qquad \text{for}\,\, x \in \mathbb R^3,
	\end{equation}
	\begin{equation}
		(\rho,u)(t,x) \rightarrow(\rstar,0)  \qquad \text{as} \,\,|x|\rightarrow\infty \, .
	\end{equation}
	Here $t\geq0$ denotes the time variable, $x\in\mathbb R^3$  is the spatial variable, $\rho$ and $u=(u_1,u_2,u_3)^\top$ stand for the density and velocity of the fluid, respectively, $\rstar>0$ is a positive constant, and the pressure is given by
	\[
	P=\frac{1}{\gamma}\rho^\gamma,\qquad \gamma\geq1.
	\]
	The deformation tensor is defined by
	\[
	\mathbf D(u)=\frac{\nabla u+(\nabla u)^\top}{2}.
	\]
	
	The capillary force is described through the Korteweg tensor $\mathbf K$, namely
	\[
	\mathbf{K}
	=
	\left(
	\rho \operatorname{div}(\kappa(\rho)\nabla\rho)
	-\frac{\rho\kappa'(\rho)-\kappa(\rho)}{2}|\nabla\rho|^2
	\right)\mathbb{I}
	-\kappa(\rho)\nabla\rho\otimes\nabla\rho,
	\]
	where $\mathbb{I}$ is the identity matrix. Equivalently, its divergence can be written as
	\[
	\operatorname{div}\mathbf K
	=
	\nabla\left(
	\rho\kappa(\rho)\Delta\rho
	+\frac{\kappa(\rho)+\rho\kappa'(\rho)}{2}|\nabla\rho|^2
	\right)
	-\operatorname{div}\big(\kappa(\rho)\nabla\rho\otimes\nabla\rho\big).
	\]
	
	The above model goes back to the pioneering work of Korteweg \cite{Korteweg}, where a stress tensor depending on density gradients was introduced in order to account for capillary effects. This theory was later developed further by Dunn and Serrin \cite{Dunn-Serrin}. When the capillarity coefficient vanishes, namely $\kappa(\rho)=0$, system \eqref{eq:1.1} reduces to the compressible Navier--Stokes equations with density-dependent viscosities. A particularly important class of such systems is associated with the Bresch--Desjardins algebraic relation
	\begin{equation}
		\lambda(\rho)=2\rho\mu'(\rho)-2\mu(\rho),
	\end{equation}
	which yields an additional entropy estimate and has played a central role in the analysis of global weak solutions
	\cite{Bresch-Desjardins-Lin, Bresch-Desjardins}. There is now a substantial literature on multidimensional weak solutions for compressible flows satisfying the BD structure. Notable examples include the analysis of the viscous Saint-Venant system by Vasseur--Yu \cite{Vasseur-Yu} and the treatment of power-law viscosities
	\begin{equation}\label{1.2powerlaw}
		\mu(\rho)=\rho^\alpha,\qquad
		\lambda(\rho)=2(\alpha-1)\rho^\alpha,
	\end{equation}
	by Li--Xin \cite{Li-Xin}. Later, under the power-law condition \eqref{1.2powerlaw} on the viscosity coefficients, a series of works \cite{Gu-Huang, Huang-Meng-Zhang-111, Zhang} established global existence of smooth solutions for arbitrary spherically symmetric initial data far away from vacuum for a partial range of $\alpha \le 1$.\par
	When the capillarity coefficient does not vanish, namely $\kappa(\rho)> 0$, the system becomes the Navier--Stokes--Korteweg system. There are numerous well-posedness results for this system with various capillarity coefficients. Notably, significant progress has recently been made when the viscosity coefficients and the capillarity coefficient satisfy
	\begin{align}\label{vis coff}
		\mu(\rho)=\nu\rho^\alpha,\quad\lambda(\rho)=2\nu(\alpha-1)\rho^\alpha,\quad\kappa(\rho)=\varepsilon^2\alpha^2\rho^{2\alpha-3},\quad \nu\ge\varepsilon>0.
	\end{align}
	
	When $\alpha < 1$, by establishing upper and lower bounds on the density and enabling the fast diffusion, convection, and Lam\'e structure to interact in a compatible way to yield higher-order estimates, Gu--Huang--Meng--Zhou \cite{Gu-Huang-Meng-Zhou} and Huang--Lei--Zhou \cite{Huang-Lei-Zhou} proved the global existence of strong solutions to the periodic and Cauchy problems in two and three dimensions, respectively, for arbitrarily large initial data far away from vacuum, provided $\alpha$ lies in a suitable range. In the critical scenario where $\alpha=1$ and $\nu=\varepsilon$, the global existence of multi-dimensional strong solutions far away from vacuum with arbitrarily large initial data was established in periodic domains and the whole space by Huang--Meng--Zhang \cite{Huang-Meng-Zhang-31} and Huang--Gu--Lei \cite{Huang-Gu-Lei}, respectively. Their approach hinges on a modified Nash--Moser iteration technique, which reveals a pivotal estimate coupling the $L^\infty$-norm of the effective velocity to the lower density bound.\par
	To summarize, for viscosity and capillarity coefficients satisfying \eqref{vis coff}, Huang et al. established that when the spatial dimension $N$ and parameters $\alpha, \gamma$ satisfy
	\begin{align}
		& N=2: \quad \alpha\in \Big(\frac{\sqrt{5}-1}{2},1\Big], \quad \gamma\ge 1,  \\
		& N=3: \quad \alpha\in \left(\frac{9\sqrt3-4\sqrt2}{9\sqrt3-2\sqrt2}, 1\right], \quad \gamma\in \Big[1, \frac{15\alpha-7}{3}\Big),
	\end{align}
	the multi-dimensional system admits global strong solutions for arbitrary initial data far away from vacuum, under a suitable relation between the viscosity constant $\nu$ and the capillarity constant $\varepsilon$.
	
	A natural question is whether solutions to the equation with given initial data exist globally or blow up in finite time when the viscosity coefficients satisfy \eqref{vis coff} and $\alpha$ falls outside the above range. This problem has attracted widespread attention, particularly for the compressible Navier--Stokes (CNS) equations. Recently, Merle--Rapha\"el--Rodnianski--Szeftel \cite{Merle-Raphael-Rodnianski-Szeftel-2022-I} pioneered the construction of smooth radially imploding profiles for compressible Euler equations. Beyond shock formation (For related results, we refer the reader, for example, to
	\cite{Lax-1964,Sideris-1985,Buckmaster-Shkoller-Vicol-2022,
		Buckmaster-Shkoller-Vicol-2023,
		Buckmaster-Shkoller-Vicol-Point-Shocks-2023}.), they established an alternative singular mechanism—known as implosion—where the density and velocity fields themselves, rather than merely their spatial derivatives, blow up in finite time. Specifically, such $C^\infty$-smooth, spherically symmetric imploding solutions were constructed for the three-dimensional barotropic compressible Euler equations whenever $\gamma > 1$, excluding at most a countable set of exceptional values. By exploiting the $C^\infty$-smooth self-similar profiles of compressible Euler equations, Merle--Rapha\"el--Rodnianski--Szeftel \cite{Merle-Raphael-Rodnianski-Szeftel-2022-II} exhibited a family of smooth, finite-energy initial data with a far-field vacuum background that develop finite-time imploding singularities with infinite density blowup. This phenomenon holds for $\gamma \in \big(1, 1 + \frac{2}{\sqrt{3}}\big)$, excluding at most a countable set of exceptional values $\mathcal J$. It is worth emphasizing that the physical cases of monatomic and diatomic gases have been successfully addressed in the context of the 3D barotropic compressible Navier--Stokes equations (CNS). Buckmaster--Cao-Labora--G\'omez-Serrano
	\cite{Buckmaster-Cao-Labora-Gomez-Serrano} and
	Shao--Wang--Wei--Zhang \cite{Shao-Wang-Wei-Zhang} treated the physically
	relevant cases of a diatomic gas, $\gamma=7/5$, and a monatomic gas,
	$\gamma=5/3$, respectively. In particular, their results show that
	$\left\{\frac{5}{3},\frac{7}{5}\right\}\cap\mathcal J=\varnothing$,
	so that the sequence of admissible scaling parameters described in
	\eqref{Eq_condition_P2_Lambda_sequence} exists for both values of
	$\gamma$. More generally, Buckmaster--Cao-Labora--G\'omez-Serrano
	\cite{Buckmaster-Cao-Labora-Gomez-Serrano} constructed smooth
	self-similar imploding profiles for the compressible Euler equations
	for every $\gamma>1$.
	
	Regarding implosion with non-spherically symmetric initial data, Cao-Labora--G\'omez-Serrano--Shi--Staffilani \cite{Cao-Labora-Gomez-Serrano-Shi-Staffilani-2023} constructed smooth solutions away from vacuum that nevertheless form finite-time imploding singularities in $\mathbb{T}^3$ or $\mathbb{R}^3$. Then Chen--Liu--Zhu \cite{Chen-Zhang-Zhu} established finite-time implosion for small values of $\alpha$ under the density-dependent viscosity laws
	\begin{equation}
		\mu(\rho) = a_1 \rho^\alpha, \quad \lambda(\rho) = a_2 \rho^\alpha, \quad (\alpha \ge 0),
	\end{equation}
	where $\mu(\rho)$ and $\lambda(\rho) + \frac{2}{3}\mu(\rho)$ represent the shear and bulk viscosity coefficients, respectively, with constants $a_1 > 0$ and $2a_1 + 3a_2 \ge 0$.
	
	Motivated by these previous studies, addressing the singularity formation for the NSK system is highly desirable. In particular, it remains a key open question whether solutions exist globally or blow up in finite time for the remaining range of $\alpha$ under relation \eqref{vis coff}. To this end, this paper investigates the finite-time blowup of solutions in 3D for small $\alpha$ subject to \eqref{vis coff}. Remarkably, in the regime of small $\alpha$, unlike traditional physical configurations, the system behaves as a Navier--Stokes--Korteweg-type effective model featuring a negative bulk-viscosity effect, i.e.,
	\begin{equation}
		\mu(\rho)>0, \qquad \lambda(\rho)+\frac{2}{3}\mu(\rho)
		=2\nu\rho^\alpha\left(\alpha-\frac{2}{3}\right)<0.
	\end{equation}
	This is due to the fact that, for the three-dimensional NSK system, the bulk viscosity becomes negative whenever $\alpha < 2/3$. Similar negative-viscosity or negative-bulk-viscosity effects have been
	discussed in non-equilibrium acoustic media, generalized hydrodynamics, cosmological viscous fluids,
	and active matter (see, e.g., \cite{Brevik, Tankeshwar}). The Korteweg part is physically motivated by diffuse-interface models of liquid--
	vapor phase transition and capillarity.
	Thus the viscosity structure considered here may include degenerate or sign-changing regimes, provided that the assumptions specified below are satisfied.
	\subsection{Equations for the Effective Velocity}
	
	\quad  To facilitate the analysis in self-similar variables, we first reformulate the original system in terms of an effective velocity. This procedure is motivated by the analytical frameworks developed by Merle et al.~\cite{Merle-Raphael-Rodnianski-Szeftel-2022-II} and Cao-Labora et al.~\cite{Cao-Labora-Gomez-Serrano-Shi-Staffilani-2023}. Treating the dispersive terms directly as a perturbation would yield intractable nonlinearities that fall outside the scope of standard analysis. Consequently, following the renormalization technique introduced by Gu--Huang--Meng--Zhou \cite[Lemma~8.2]{Gu-Huang-Meng-Zhou}, we circumvent the analytical complexities associated with the third-order Korteweg tensor by introducing the following \textit{effective velocities} $v_{\pm}$:
	\begin{equation}\label{eff v}
		v_{\pm} = u + d_\pm \alpha \rho^{\alpha-2} \nabla \rho,
	\end{equation}
	where the coefficients $d_\pm$ are given by $d_\pm = \nu \pm \sqrt{\nu^2 - \varepsilon^2}$. This transformation allows the original Navier--Stokes--Korteweg system to be recast into an equivalent parabolic--parabolic system for the variables $(\rho, v_{\pm})$. 
	
	In the subsequent analysis, we focus on the "negative" branch by defining $v \coloneqq v_-$ and $d \coloneqq d_-$ for convenience. Consequently, the evolution of the fluid is governed by the following reformulated system:
	\begin{equation}
		\label{Equ2}
		\begin{cases}
			\rho_t + \divg(\rho v) - d \Delta(\rho^\alpha) = 0, \\[6pt]
			\rho v_t + \rho u \cdot \nabla v + \nabla P = \mathcal{L}_{\rho, \alpha}(v),
		\end{cases}
	\end{equation}
	where the density-dependent diffusion operator $\mathcal{L}_{\rho, \alpha}(v)$ is defined as:
	\begin{equation*}
		\begin{aligned}
			\mathcal{L}_{\rho, \alpha}(v) \coloneqq & \ \nu \divg(\rho^\alpha \nabla v) + (\sqrt{\nu^2-\varepsilon^2}) \divg(\rho^\alpha (\nabla v)^t) \\
			& + (\alpha - 1)(\nu +\sqrt{\nu^2-\varepsilon^2}) \nabla(\rho^\alpha \divg v).
		\end{aligned}
	\end{equation*}
	
	The system \eqref{Equ2} is supplemented with the initial data $(\rho_0, v_0)$ satisfying the relation $v_0 = u_0 + d \alpha \rho_0^{\alpha-2} \nabla \rho_0$. This reformulation uncovers a hidden dissipative structure: the mass equation now exhibits a degenerate diffusion term $d \Delta(\rho^\alpha)$, which is instrumental for establishing the high-order energy estimates required for the study of implosion singularities. The equivalence follows by direct substitution of the effective velocity into \eqref{eq:1.1}.
	
	To facilitate the analysis of singularity formation, we first introduce a change of variables to renormalize the system. Let $\delta = \frac{\gamma-1}{2}$ and define the sound speed-related variable $c = \frac{\rho^\delta}{\delta}$. In terms of the variables $(c, v)$, the system is transformed into the following reformulated form:
	\begin{equation}
		\label{Eq_reformulated_system_c}
		\begin{cases}
			c_t + v \cdot \nabla c + \delta c \operatorname{div} v - d\alpha (\delta c)^{\frac{\alpha-1}{\delta}} \Delta c - d\alpha(\alpha-\delta) (\delta c)^{\frac{\alpha-\delta-1}{\delta}} |\nabla c|^2 = 0, \\[8pt]
			\begin{aligned}
				v_t &+ \left( v - \alpha ( \nu - \sqrt{\nu^2 - \varepsilon^2})(\delta c)^{\frac{\alpha-\delta-1}{\delta}} \nabla c \right) \cdot \nabla v + \delta c \nabla c \\
				&= (\delta c)^{\frac{\alpha-1}{\delta}} \left[ \nu \Delta v + \left(\sqrt{\nu^2 - \varepsilon^2} + (\alpha - 1) ( \nu + \sqrt{\nu^2 - \varepsilon^2} )\right) \nabla \operatorname{div} v \right] \\
				&\quad + \alpha (\delta c)^{\frac{\alpha-\delta-1}{\delta}} \nabla c \cdot \mathcal{D}v,
			\end{aligned}
		\end{cases}
	\end{equation}
	where the dissipation-related tensor is defined as 
	$$\mathcal{D}v = \nu \nabla v + \sqrt{\nu^2 - \varepsilon^2} (\nabla v)^t + (\alpha - 1) \left( \nu + \sqrt{\nu^2 - \varepsilon^2} \right) \divg v\mathbb{I}.$$
	The initial and far-field conditions:
	\begin{equation}
		(c,v)(0,x)=(\frac{{\rho^{\delta}_0}}{\delta},v_0)(x) \qquad \text{for}\,\, x \in \mathbb R^3,
	\end{equation}
	\begin{equation}
		(c,v)(t,x) \rightarrow(\frac{\rstar^{\delta}}{\delta},0)  \qquad \text{as} \,\,|x|\rightarrow\infty \, .
	\end{equation} 
	\subsection{Self-similar Analysis}
	\quad Next, we investigate the potential implosion by introducing the self-similar scaling. For fixed constants $T > 0$ and $\Lambda > 1$, we define the rescaled time $\tau$ and the similarity variable $y$ as follows:
	\begin{equation}
		\tau = - \frac{\log(T - t)}{\Lambda}, \quad y = \frac{x}{(T - t)^{1/\Lambda}} = e^{\tau} x, \quad \tau_0 = - \frac{\log T}{\Lambda}.
	\end{equation}
	Accordingly, we assume the solution $(\rho, v)$ follows the self-similar ansatz:
	\begin{align}
		c(x,t) &= \frac{(T - t)^{\frac{1}{\Lambda}-1}}{\Lambda} S(\tau, y), \label{cST-t}\\ 
		v(x,t) &= \frac{(T - t)^{\frac{1}{\Lambda}-1}}{\Lambda} V(\tau, y). \label{vVT-t}
	\end{align}
	
	\textbf{This substitution produces the common constant factor
		\(
		C_\Lambda:=\Lambda^{1-\frac{\alpha-1}{\delta}}
		\)
		in all terms involving \(\nu,\varepsilon,\) and \(d\). We absorb it by replacing
		\((\nu,\varepsilon,d)\) with
		\((C_\Lambda\nu,C_\Lambda\varepsilon,C_\Lambda d)\)}, which preserves
	\(d=\nu-\sqrt{\nu^2-\varepsilon^2}\), and retain the same notation for the rescaled coefficients. Substituting the ansatz into \eqref{Eq_reformulated_system_c}, we then obtain the evolution system for $(S, V)$ in the $(y, \tau)$ coordinates:
	\begin{equation}
		\label{Eq_self_similar_S}
		\begin{cases}
			\begin{aligned}
				S_\tau & + y \cdot \nabla_y S + (\Lambda - 1) S + V \cdot \nabla_y S + \delta S \operatorname{div}_y V \\ 
				&= e^{-f_1 \tau} d \alpha (\delta S)^{\frac{\alpha-1}{\delta}} \Delta_y S + e^{-f_1 \tau} d \alpha (\alpha-\delta) (\delta S)^{\frac{\alpha-\delta-1}{\delta}} |\nabla_y S|^2, \\
				V_\tau & + y \cdot \nabla_y V + (\Lambda - 1) V + V \cdot \nabla_y V + \delta S \nabla_y S \\
				&= e^{-f_1 \tau} (\delta S)^{\frac{\alpha-1}{\delta}}\mathbb{L}(V) + e^{-f_1 \tau} \alpha (\delta S)^{\frac{\alpha-\delta-1}{\delta}} \nabla_y S \cdot \mathbb{D}V,
			\end{aligned}
		\end{cases}
	\end{equation}
	where the time-dependent factor $e^{-{f}_1\tau}$ is given by
	\begin{equation}
		e^{-{f}_1\tau}:=e^{-(\frac{(1-\alpha)(\Lambda-1)}{\delta}+(\Lambda-2))\tau}=(T-t)^{\frac{\alpha-1}{\delta}(\frac{1}{\Lambda}-1)+1-\frac{2}{\Lambda}},
	\end{equation}
	the dissipative operator $\mathbb{L}$ is defined as
	\begin{equation}
		\mathbb{L}(V) = \nu \Delta_y V + \left( \sqrt{\nu^2 - \varepsilon^2} + (\alpha - 1) \left( \nu + \sqrt{\nu^2 - \varepsilon^2} \right) \right) \nabla_y \operatorname{div}_y V,
	\end{equation}
	and the dissipation-related tensor $\mathbb{D}V$ is defined as 
	\begin{equation}
		\mathbb{D}V = (2\nu - \sqrt{\nu^2 - \varepsilon^2}) \nabla_y V + \sqrt{\nu^2 - \varepsilon^2} (\nabla_y V)^t + (\alpha - 1) \left( \nu + \sqrt{\nu^2 - \varepsilon^2} \right) \divg_y V \mathbb{I}.
	\end{equation}
	This change in the coefficient results from moving the correction term in the
	transport velocity to the right-hand side of the equation. The initial condition in the self-similar variables is given by
	\begin{equation}\label{SSinitial}
		\begin{aligned}
			(S,V)(\tau_0,y)
			&=(S_0,V_0)(y)=
			\left(
			\frac{\Lambda}{\delta}
			e^{-(\Lambda-1)\tau_0}\rho_0^\delta,\,
			\Lambda e^{-(\Lambda-1)\tau_0}v_0
			\right)(e^{-\tau_0}y),
			\qquad y\in\mathbb R^3.
		\end{aligned}
	\end{equation}
	Moreover, the far-field condition takes the form
	\begin{equation}\label{SSfar}
		(S,V)(\tau,y)
		\rightarrow
		\left(S^*(\tau),0\right)
		:=
		\left(
		\frac{\Lambda}{\delta}
		e^{-(\Lambda-1)\tau}\rstar^{\delta},\,
		0
		\right)
		\quad\text{as }|y|\to\infty,
		\qquad \tau\ge\tau_0.
	\end{equation}

	The dissipative terms decay as \(\tau\to\infty\) precisely when \(f_1>0\), or equivalently
	\begin{equation}
		\Lambda>\frac{1-\alpha+2\delta}{1-\alpha+\delta}.
	\end{equation}
	\subsection{Nonlinear perturbation equations}
	\quad In Section 4, we show that the finite-time blowup problem for system \eqref{Equ2} can be reduced to the stability analysis of system~\eqref{Eq_self_similar_S} near a self-similar profile. We therefore proceed to derive the perturbation equations for \eqref{Eq_self_similar_S} around this profile. Consider a $C^\infty$ self-similar profile $(\bar{S},\bar{V})$ solving
	\eqref{Eq_self_similar_profile_system} for an admissible scaling parameter
	$\Lambda \in (1, \Lambda^*(\gamma))$. The spherical symmetry of
	$(\bar{S}, \bar{V})$ guarantees the existence of a scalar function
	$\bar{\mathcal V}$ such that
	$$\bar{S}(y) = \bar{S}(r), \quad \bar{V}(y) = \bar{\mathcal V}(r)\frac{y}{r} \quad \text{with } r = |y|.$$
	We use the standard profile bounds
	\begin{equation}\label{eq:profile-decay}
		|\nabla^j\bar S(y)|+|\nabla^j\bar V(y)|
		\le C_j\langle y\rangle^{-(\Lambda-1)-j},
		\qquad j\ge0,
	\end{equation}
	and choose the normalization radius $R_0$ so that
	\begin{equation}\label{eq:profile-R0-normalization}
		2\sigma_0\le\bar S(y)\le C\sigma_0,
		\qquad |y|=R_0.
	\end{equation}
	Let $\chi\in C_c^\infty([0,\infty))$, with
	$0\le\chi\le1$, $\chi(r)=1$ for $r\le\frac12$, and $\chi(r)=0$ for
	$r\ge1$, and define
	\begin{equation}\label{eq:time-dependent-cutoff}
		\widehat\chi(\tau,y):=\chi(e^{-\tau}|y|).
	\end{equation}
	The stability analysis is performed around the cutoff profile
	$(\widehat{\chi}\bar{S}, \widehat{\chi}\bar{V})$. Under this motivation, our primary objective is to investigate the algebraic and analytic structures governed by the perturbation variables:
	$$\widetilde{S} = S - \widehat{\chi}\bar{S}, \quad \widetilde{V} = V - \widehat{\chi}\bar{V}.$$
	By subtracting the profile system from the original equations, we obtain the following perturbation equations
	\begin{equation}\label{perturSEq}
		\begin{aligned}
			\partial_\tau \widetilde{S} = &\underbrace{-(\Lambda - 1)\widetilde{S} - (y + \widehat{\chi}\bar{V}) \cdot \nabla \widetilde{S} - \delta(\widehat{\chi}\bar{S}) \operatorname{div} \widetilde{V} - \widetilde{V} \cdot \nabla(\widehat{\chi}\bar{S}) - \delta \widetilde{S} \operatorname{div}(\widehat{\chi}\bar{V})}_{\mathcal{L}_s^1(\widetilde{S},\widetilde{V})} \\
			&\underbrace{- \widetilde{V} \cdot \nabla \widetilde{S} - \delta \widetilde{S} \operatorname{div} \widetilde{V}}_{\mathcal{N}_s(\widetilde{S},\widetilde{V})} \\
			&\underbrace{- (\widehat{\chi}^2 - \widehat{\chi})\bar{V} \cdot \nabla \bar{S} - (1 + \delta)\widehat{\chi}\bar{V} \cdot \nabla \widehat{\chi} \, \bar{S} - \delta(\widehat{\chi}^2 - \widehat{\chi})\bar{S} \operatorname{div} \bar{V}}_{\mathcal{E}_s(\bar{S},\bar{V})} \\
			&+e^{-f_1 \tau} d \alpha (\delta S)^{\frac{\alpha-1}{\delta}} \Delta_y S + e^{-f_1 \tau} d \alpha (\alpha-\delta) (\delta S)^{\frac{\alpha-\delta-1}{\delta}} |\nabla_y S|^2,
		\end{aligned}
	\end{equation}
	and 
	\begin{equation}\label{perturVEq}
		\begin{aligned}
			\partial_\tau \widetilde{V} = &\underbrace{-(\Lambda - 1)\widetilde{V} - (y + \widehat{\chi}\bar{V}) \cdot \nabla \widetilde{V} - \delta(\widehat{\chi}\bar{S})\nabla \widetilde{S} - \widetilde{V} \cdot \nabla(\widehat{\chi}\bar{V}) - \delta \widetilde{S} \nabla(\widehat{\chi}\bar{S})}_{\mathcal{L}_v^1(\widetilde{S},\widetilde{V})} \\
			&\underbrace{- \widetilde{V} \cdot \nabla \widetilde{V} - \delta \widetilde{S} \nabla \widetilde{S}}_{\mathcal{N}_v(\widetilde{S},\widetilde{V})} \\
			&\underbrace{- (\widehat{\chi}^2 - \widehat{\chi})\bar{V} \cdot \nabla \bar{V} - \widehat{\chi}\bar{V} \cdot \nabla \widehat{\chi} \, \bar{V} - \delta(\widehat{\chi}^2 - \widehat{\chi})\bar{S} \nabla \bar{S} - \delta \widehat{\chi}\bar{S}\nabla \widehat{\chi} \, \bar{S}}_{\mathcal{E}_v(\bar{S},\bar{V})} \\
			&+e^{-f_1 \tau} (\delta S)^{\frac{\alpha-1}{\delta}}\mathbb{L}(V) + e^{-f_1 \tau} \alpha (\delta S)^{\frac{\alpha-\delta-1}{\delta}} \nabla_y S \cdot \mathbb{D}V.
		\end{aligned}
	\end{equation}
	
	Our subsequent analysis is mainly devoted to establishing the stability of this perturbation system. We are now in a position to state the main theorem of this paper.
	\begin{theorem}\label{Thm_Main_Blowup_Profiles}
		Let
		\[
		(\overline S,\overline V)\in
		C^\infty(\R^3)\times C^\infty(\R^3;\R^3)
		\]
		be a spherically symmetric self-similar Euler profile satisfying
		\begin{equation}\label{Eq_self_similar_profile_system}
			\left\{
			\begin{aligned}
				(\Lambda-1)\overline S
				+(y+\overline V)\cdot\nabla_y\overline S
				+\delta\overline S\,\operatorname{div}_y\overline V
				&=0,\\
				(\Lambda-1)\overline V
				+(y+\overline V)\cdot\nabla_y\overline V
				+\delta\overline S\,\nabla_y\overline S
				&=0,
			\end{aligned}
			\right.
		\end{equation}
		together with the positivity, decay, and repulsivity properties stated in
		Lemma~\ref{prop:profiles-R}. Define
		\begin{equation}\label{Eq_alpha_star_critical_def}
			\alpha^*(\gamma)
			:=
			\begin{cases}
				\displaystyle
				\frac{\gamma+1}{4}
				-\frac{\sqrt{2(\gamma-1)}}{2},
				&\displaystyle
				1<\gamma<\frac53,\\[3mm]
				\displaystyle
				\frac{1-(2\sqrt3-3)\gamma}{2(3-\sqrt3)},
				&\displaystyle
				\frac53\le\gamma<1+\frac{2}{\sqrt3},
			\end{cases}
		\end{equation}
		and 
		\begin{equation}\label{eq:Lambda-star}
			\Lambda^*(\gamma)
			=
			\begin{cases}
				\displaystyle
				1+\frac{2}
				{\left(1+\sqrt{\frac{2}{\gamma-1}}\right)^2},
				&
				1<\gamma<\frac53,\\[8pt]
				\displaystyle
				\frac{3\gamma-1}
				{2+\sqrt3(\gamma-1)},
				&
				\frac53\le\gamma<1+\frac{2}{\sqrt3}.
			\end{cases}
		\end{equation}
		
		Assume that \((\gamma,\alpha)\) belongs to one of the following two parameter
		regimes:
		
		\smallskip
		\noindent\textnormal{\((\mathrm P_1)\)}
		Suppose that
		\begin{equation}\label{Eq_condition_P1}
			1<\gamma<1+\frac{2}{\sqrt3},
			\qquad
			0<\alpha<\frac12, \quad f_1
			:=
			\frac{(1-\alpha)(\Lambda-1)}{\delta}
			+\Lambda-2
			>0,
		\end{equation}
		where $\Lambda$ is an admissible scaling parameter for the profile defined in Lemma~\ref{thm:existence_profiles} \textnormal {(ii)}.
		
		\smallskip
		\noindent\textnormal{\((\mathrm P_2)\)}
		Suppose that
		\begin{equation}\label{Eq_condition_P2}
			\gamma\in
			\left(1,1+\frac{2}{\sqrt3}\right)\setminus\mathcal J,
			\qquad
			0<\alpha<\alpha^*(\gamma),
		\end{equation}
		where \(\mathcal J\) is the at most countable exceptional set in
		Lemma~\ref{thm:existence_profiles} \textnormal{(i)} and satisfies
		\[
		\left\{\frac75,\frac53\right\}\cap\mathcal J=\varnothing.
		\]
		By Lemma~\ref{thm:existence_profiles} \textnormal{(i)}, there exists a
		sequence of admissible scaling parameters \(\{\Lambda_n\}_{n\ge1}\) such that
		\begin{equation}\label{Eq_condition_P2_Lambda_sequence}
			1<\Lambda_n<\Lambda^*(\gamma),
			\qquad
			\Lambda_n\rightarrow\Lambda^*(\gamma)
			\quad\text{as }n\to\infty,
		\end{equation}
		and each \(\Lambda_n\) generates a smooth, spherically symmetric solution of
		\eqref{Eq_self_similar_profile_system}.

		Then, for sufficiently small \(T>0\) and \(\underline{\rho}_0>0\), one can find
		\(C^\infty\) initial data \((\rho_0,v_0)\) such that
		\[
		\inf_{x\in\R^3}\rho_0(x)>\underline{\rho}_0.
		\]
		The corresponding smooth solution \((\rho,v)\) of
		\eqref{Equ2}, with the constitutive coefficients specified in
		\eqref{vis coff}, exists on \([0,T)\) and develops an implosion singularity at
		\(t=T\). More precisely, for every \(r>0\),
		\begin{equation}\label{Eq_blowup_limit_infinity}
			\lim_{t\to T^-}\rho(t,0)=+\infty,
			\qquad
			\lim_{t\to T^-}\sup_{|x|\le r}|v(t,x)|=+\infty.
		\end{equation}
		
		Furthermore, the singularity is asymptotically described by
		\((\overline S,\overline V)\). For every fixed \(y\in\R^3\),
		\begin{equation}\label{Eq_profile_convergence_limits}
			\begin{aligned}
				\lim_{t\to T^-}
				\left[
				\frac{\Lambda}{\delta}
				(T-t)^{1-\frac1\Lambda}
				\right]^{\frac1\delta}
				\rho\left(t,(T-t)^{\frac1\Lambda}y\right)
				&=
				\overline S(y)^{\frac1\delta},\\
				\lim_{t\to T^-}
				\Lambda(T-t)^{1-\frac1\Lambda}
				v\left(t,(T-t)^{\frac1\Lambda}y\right)
				&=
				\overline V(y).
			\end{aligned}
		\end{equation}
	\end{theorem}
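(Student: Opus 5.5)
The plan follows the Merle--Rapha\"el--Rodnianski--Szeftel / Cao-Labora--G\'omez-Serrano--Shi--Staffilani strategy, applied to the perturbation system \eqref{perturSEq}--\eqref{perturVEq}.

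\textbf{Step 1 (parameter window).} I first check that in both regimes $(\mathrm P_1)$ and $(\mathrm P_2)$ there is an admissible $\Lambda$ with $f_1>0$.
\begin{itemize}
\item In $(\mathrm P_1)$ this is an assumption.
\item In $(\mathrm P_2)$ I use $\Lambda_n\to\Lambda^*(\gamma)$. The condition $\alpha<\alpha^*(\gamma)$ should be exactly equivalent to $\Lambda^*(\gamma)>\frac{1-\alpha+2\delta}{1-\alpha+\delta}$, which I verify by algebra in the two cases $\gamma<5/3$ and $\gamma\ge 5/3$. Hence $f_1>0$ for all large $n$.
\end{itemize}
With $f_1>0$, every viscous and capillary term in \eqref{perturSEq}--\eqref{perturVEq} carries the factor $e^{-f_1\tau}$.

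\textbf{Step 2 (linear theory).} I use the linearized Euler operator $\mathcal L^1=(\mathcal L^1_s,\mathcal L^1_v)$ around the cutoff profile, together with the repulsivity of Lemma~\ref{prop:profiles-R}.
\begin{itemize}
\item In a weighted Sobolev space $H^{K}$ with $K$ large, I prove a G\aa rding-type inequality: $\mathcal L^1+\lambda_0$ is dissipative modulo a compact operator supported in $|y|\le R_0$.
\item This gives a decomposition of the space into $V_{\rm unst}\oplus V_{\rm stab}$. Here $V_{\rm unst}$ is finite-dimensional, consists of smooth functions, and is invariant. On $V_{\rm stab}$ the semigroup decays like $e^{-\lambda_0\tau/2}$.
\item The growth of the cutoff $\widehat\chi$ is handled through the weights, following the cited works. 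The error terms $\mathcal E_s,\mathcal E_v$ are supported where $|y|\sim e^{\tau}$, so by the profile decay \eqref{eq:profile-decay} they are $O(e^{-c\tau})$ in the weighted norms.
\end{itemize}

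\textbf{Step 3 (bootstrap, the main obstacle).} I bootstrap the bound $\|(\widetilde S,\widetilde V)\|_{H^K}\le e^{-\eta\tau}$, with $0<\eta<\min(f_1,\lambda_0/2)$ small.
\begin{itemize}
\item The nonlinear terms $\mathcal N_s,\mathcal N_v$ are quadratic and are treated as in the Euler setting.
\item The hard part is the dissipative terms. They carry $K+2$ derivatives, so they cannot be treated by Duhamel as bounded perturbations. Instead I run the top-order energy estimate directly.
\item I need $S$ bounded below by a positive constant. This holds in the far field since $S\ge S^*(\tau)$, and near the origin via \eqref{eq:profile-R0-normalization} together with smallness of $\widetilde S$ in $L^\infty$.
\item Given this lower bound, the principal parts $e^{-f_1\tau}(\delta S)^{\frac{\alpha-1}{\delta}}\Delta S$ and $e^{-f_1\tau}(\delta S)^{\frac{\alpha-1}{\delta}}\mathbb L(V)$ contribute favorable-sign terms after integration by parts. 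The $\nabla\operatorname{div}$ coefficient may be negative when $\alpha$ is small, so I must check ellipticity of $\mathbb L$: the combined coefficient $\nu+\sqrt{\nu^2-\varepsilon^2}+(\alpha-1)(\nu+\sqrt{\nu^2-\varepsilon^2})$ equals $\alpha(\nu+\sqrt{\nu^2-\varepsilon^2})>0$, so the operator is elliptic.
\item The commutators and the lower-order pieces $|\nabla S|^2$ and $\nabla S\cdot\mathbb DV$ are absorbed using the prefactor $e^{-f_1\tau}$, at the cost of weights.
\item The powers $(\delta S)^{(\alpha-1)/\delta}$ are not degenerate, since $S$ is bounded below. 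Their far-field growth as $S^*(\tau)\to0$ is compensated precisely because $f_1>0$ was defined to absorb the scaling $e^{-(\Lambda-1)\tau}$. This is the step I would verify most carefully.
\end{itemize}

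\textbf{Step 4 (unstable modes and conclusion).}
\begin{itemize}
\item The unstable components are controlled by a Brouwer topological argument. I choose initial data $(\widetilde S,\widetilde V)(\tau_0)$ equal to a small stable part plus a parameter in a ball of $V_{\rm unst}$, and show that exit through the unstable boundary is transversal. Some parameter therefore keeps the solution trapped for all $\tau\ge\tau_0$.
\item $T$ small corresponds to $\tau_0$ large, which makes $e^{-f_1\tau_0}$ and the cutoff errors small. The data, transformed back via \eqref{SSinitial}, are $C^\infty$ with $\rho_0\ge\underline\rho_0$ once $\underline\rho_0$ is small. Local well-posedness gives smoothness on $[0,T)$.
\item The trapped decay gives $S(\tau,y)\to\bar S(y)$ and $V(\tau,y)\to\bar V(y)$ pointwise, which is \eqref{Eq_profile_convergence_limits}.
\item Blowup \eqref{Eq_blowup_limit_infinity} follows from $\bar S(0)>0$, the factor $(T-t)^{1/\Lambda-1}\to\infty$, and $\bar V\not\equiv0$ near the origin after rescaling.

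\end{itemize}
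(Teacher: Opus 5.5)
Your Step 1 algebra is correct, and the overall architecture matches the paper: spectral splitting, top-order energy estimates for the dissipative terms, and Brouwer selection of the unstable modes. Step 3, however, has a genuine gap, exactly where you flagged it.

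\textbf{Why the proposed lower bound fails.} Nothing in the coupled system yields $S\ge S^*(\tau)$: there is no comparison principle, and the inequality need not hold even at $\tau_0$. Even if it held, $S^*(\tau)=\frac{\Lambda}{\delta}e^{-(\Lambda-1)\tau}\rho_*^\delta\to0$, so it is not a $\tau$-uniform bound. The argument ``profile plus $L^\infty$-smallness of $\widetilde S$'' only works for $|y|\le R_0$. Beyond that, $\widehat\chi\bar S\lesssim\langle y\rangle^{1-\Lambda}$ is small, and it vanishes for $|y|\ge e^{\tau}$. A $y$-uniform smallness bound on $\widetilde S$ therefore gives no positivity in the zone $R_0\le|y|\lesssim e^{\tau}$.

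\textbf{Why $f_1>0$ does not rescue the dissipative terms.} Using $\frac{(1-\alpha)(\Lambda-1)}{\delta}=f_1-\Lambda+2$, one finds
\[
e^{-f_1\tau}\bigl(\delta S^*(\tau)\bigr)^{\frac{\alpha-1}{\delta}}\sim e^{(2-\Lambda)\tau}\longrightarrow\infty .
\]
So away from the core the viscous and capillary coefficients are large, not small. Terms produced by commutators and integration by parts, such as $e^{-f_1\tau}S^{\frac{\alpha-1}{\delta}}|\nabla S|^2S^{-2}$, cannot be absorbed using the prefactor together with only Sobolev or $L^\infty$ smallness of $\nabla\widetilde S$. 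With the crude bounds $|\nabla S|\lesssim 1$ and $S\gtrsim e^{-(\Lambda-1)\tau}$, this term is of size $e^{\Lambda\tau}$, which grows exponentially.

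\textbf{What is missing.} You need a pair of sharp pointwise bounds that are $\tau$-uniform but $y$-dependent, carried as extra bootstrap hypotheses:
\[
S\ge\tfrac{\sigma_1}{10}\langle y/R_0\rangle^{-(\Lambda-1)},\qquad |\nabla S|+|\nabla V|\le 2C_1\langle y/R_0\rangle^{-\Lambda}.
\]
These are supplemented by $|\nabla^jS|+|\nabla^jV|\lesssim\phi^{-j/2}$, obtained by weighted Gagliardo--Nirenberg interpolation against the weighted top-order energy. The paper improves these bounds by integrating along the dilation characteristics $y=y_0e^{\tau-\bar\tau}$ the quantities $e^{(\Lambda-1)(\tau-\bar\tau)}S$, $e^{\Lambda(\tau-\bar\tau)}\partial_{y_i}S$ and $e^{\Lambda(\tau-\bar\tau)}\partial_{y_i}V$. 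The characteristics start either on $|y_0|=R_0$ or at $\tau_0$. The identity above makes the diffusive contributions decay exponentially in $\tau-\bar\tau$. The $y_0$-dependence closes thanks to $f_1+2\eta\le1$ and $f_1+\Lambda-2+2\eta\le0$; this is where $\Lambda<\Lambda^*(\gamma)$ is used beyond existence of the profile. With these bounds, $(\delta S)^{\frac{\alpha-1}{\delta}}\lesssim\sigma_1^{\frac{\alpha-1}{\delta}}\langle y\rangle^{f_1-\Lambda+2}$ is exactly balanced by the decay of derivatives, for example $S^{\frac{\alpha-1}{\delta}}|\nabla S|^2S^{-2}\le C(\sigma_0)$. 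This is what closes the top-order estimate.

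\textbf{A related problem with your bootstrap norm.} The quantity $\|(\widetilde S,\widetilde V)\|_{H^K}\le e^{-\eta\tau}$ cannot be used as stated. Since $\widehat\chi\bar S$ vanishes for $|y|\ge e^{\tau}$ while $S\to S^*(\tau)\neq0$, you have $\widetilde S\notin L^2$. On the annulus $|y|\sim e^{\tau}$, $\nabla\widetilde S$ is generically of size $e^{-\Lambda\tau}$ on a volume $e^{3\tau}$. Hence even the unweighted $\dot H^1$ norm grows like $e^{(3/2-\Lambda)\tau}$. The paper instead bootstraps:
\begin{itemize}
\item $L^\infty$ decay of $(\widetilde S,\widetilde V)$;
\item decay of $\|\nabla^4(\widetilde S,\widetilde V)\|_{L^2(B^c(0,R_1))}$;
\item decay of the unstable projection in $X$;
\item only boundedness, not decay, of $\int(|\nabla^KS|^2+|\nabla^KV|^2)\phi^K\,dy$.
\end{itemize}
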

	\begin{remark}\label{rem:Lambda-star-upper-bound}
		It follows directly from \eqref{eq:Lambda-star} that
		\begin{equation}\label{lambdafanwei2}
			1<\Lambda^*(\gamma)<\frac{1+\sqrt3}{2},
			\qquad
			1<\gamma<1+\frac{2}{\sqrt3}.
		\end{equation}
		Hence every admissible scaling parameter
		\(\Lambda\in(1,\Lambda^*(\gamma))\) satisfies \(1<\Lambda<\frac{1+\sqrt3}{2}\).
	\end{remark}
	\begin{remark}
		The same blow-up result remains valid on the periodic domain
		$\mathbb{T}^3$. More precisely, there exists a finite-codimensional
		manifold of initial data with strictly positive initial density for
		which the corresponding solutions develop a finite-time singularity.
		The proof follows arguments analogous to those in
		\cite{Cao-Labora-Gomez-Serrano-Shi-Staffilani-2023} and
		\cite{Chen-Zhang-Zhu}. We therefore focus primarily on the
		whole-space setting $\mathbb{R}^3$.
	\end{remark}
	\begin{remark}
		\label{rmk:finite-codimensional-initial-data}
		We now explain why such initial data exist. Let $\mathcal B$ be a Banach space whose norm controls all the
		quantities in \eqref{eq:conditions_for_tilde}. Since the conditions in
		\eqref{eq:conditions_for_tilde} are open, they hold in a sufficiently
		small open subset of $\mathcal B$.
		
		The additional condition
		\[
		P_{\mathrm{uns}}
		(\widetilde S_0^*,\widetilde V_0^*)=0
		\]
		is a closed finite-dimensional constraint, since
		$P_{\mathrm{uns}}$ projects onto the finite-dimensional space
		$X_{\mathrm{u}}$. Moreover, the initial data are chosen as
		\[
		(\widetilde S_0,\widetilde V_0)
		=
		(\widetilde S_0^*,\widetilde V_0^*)
		+\sum_{j=1}^{m}\hat{k}_j\Phi_j,
		\]
		where $\{\Phi_j\}_{j=1}^m$ is a basis of $X_\mathrm{u}$. Therefore,
		$(\widetilde S_0,\widetilde V_0)$ and
		$(\widetilde S_0^*,\widetilde V_0^*)$ represent the same element of
		$\mathcal B/X_{\mathrm{u}}$. Hence the initial data leading to
		finite-time implosion form a finite-codimensional family.
	\end{remark}
	The following corollary shows that the original system develops a
	finite-time singularity, and it follows directly from the preceding
	theorem.
	\begin{corollary}[Density blow-up for the original system]
		\label{Thm_density_blowup_original_system}
		Assume the hypotheses of Theorem~\ref{Thm_Main_Blowup_Profiles}.
		Then, for sufficiently small $T>0$ and $\underline{\rho}_0>0$, there exist
		smooth initial data $(\rho_0,u_0)$ for the original system
		\eqref{eq:1.1}, with the constitutive coefficients prescribed in
		\eqref{vis coff}, such that
		\begin{equation}\label{Eq_original_initial_density_positive}
			\inf_{x\in\mathbb R^3}\rho_0(x)>\underline{\rho}_0.
		\end{equation}
		The initial physical velocity $u_0$ is recovered from the effective
		velocity $v_0$ through
		\begin{equation}\label{Eq_recover_original_initial_velocity}
			u_0
			=
			v_0-d\alpha\rho_0^{\alpha-2}\nabla\rho_0.
		\end{equation}
		
		The corresponding smooth solution $(\rho,u)$ to the original system
		exists on $[0,T)$, and its density develops an implosion singularity at
		$t=T$. More precisely,
		\begin{equation}\label{Eq_original_density_blowup}
			\lim_{t\to T^-}\rho(t,0)=+\infty.
		\end{equation}
	\end{corollary}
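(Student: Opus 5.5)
The corollary will follow from Theorem~\ref{Thm_Main_Blowup_Profiles} once we check that smooth solutions of the effective-velocity system \eqref{Equ2} and of the original system \eqref{eq:1.1} correspond to each other while the density stays positive. Concretely, take the data $(\rho_0,v_0)$ and the solution $(\rho,v)$ on $[0,T)$ given by Theorem~\ref{Thm_Main_Blowup_Profiles}, with $\inf\rho_0>\underline{\rho}_0$. Define $u_0$ by \eqref{Eq_recover_original_initial_velocity} and, for $t\in[0,T)$,
\[
u(t,x):=v(t,x)-d\alpha\rho^{\alpha-2}\nabla\rho(t,x).
\]
The lower bound on $\rho_0$ is then inherited verbatim, and $u_0$ is $C^\infty$ because $\rho_0$ is smooth and bounded below, so $\rho_0^{\alpha-2}$ is smooth.

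The main step is to show that $(\rho,u)$ is a smooth solution of \eqref{eq:1.1} on $[0,T)$. On any compact time interval $[0,T']\subset[0,T)$ the solution is smooth, and I would argue that it stays bounded away from vacuum. In self-similar variables, $S=\widehat\chi\bar S+\widetilde S$ with $\widetilde S$ small, $\bar S>0$, and the far-field value $S^*(\tau)>0$ from \eqref{SSfar}; this should give a positive lower bound on $\rho$ on each such interval. If the theorem's proof does not state this explicitly, I would instead use the maximum principle for the parabolic mass equation $\rho_t+\operatorname{div}(\rho v)-d\Delta\rho^\alpha=0$ on $[0,T']$, where all coefficients are smooth.

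Once $\rho>0$ is known, the equivalence is the direct substitution mentioned after \eqref{Equ2}. Plugging $v=u+d\alpha\rho^{\alpha-2}\nabla\rho$ into the first equation of \eqref{Equ2} gives
\[
\operatorname{div}(\rho v)=\operatorname{div}(\rho u)+d\,\Delta(\rho^\alpha),
\]
since $\alpha\rho^{\alpha-1}\nabla\rho=\nabla\rho^\alpha$. Hence $\rho_t+\operatorname{div}(\rho u)=0$.

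For the momentum equation, I would reverse the computation of \cite[Lemma~8.2]{Gu-Huang-Meng-Zhou}. One uses the identity
\[
\rho\,\partial_t(\alpha\rho^{\alpha-2}\nabla\rho)+\rho u\cdot\nabla(\alpha\rho^{\alpha-2}\nabla\rho)=-\nabla(\rho^\alpha\operatorname{div}u)-\operatorname{div}(\rho^\alpha\nabla u^t)+\dots,
\]
which follows from continuity. The terms quadratic in $d$ then combine with the Korteweg terms via $d^2-2\nu d+\varepsilon^2=0$, i.e.\ $d=\nu-\sqrt{\nu^2-\varepsilon^2}$, with $\kappa=\varepsilon^2\alpha^2\rho^{2\alpha-3}$. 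This bookkeeping is the only laborious part, but it is the algebraic identity the paper already asserts for the reformulation, so I would cite it rather than redo it. Finally, $(\rho,u)$ satisfies the far-field condition, because $v\to0$, $\rho\to\rho_*$, and $\nabla\rho\to0$ at infinity.

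The blowup statement \eqref{Eq_original_density_blowup} is the density part of \eqref{Eq_blowup_limit_infinity}, since $\rho$ is the same function in both formulations. Alternatively, it follows from \eqref{Eq_profile_convergence_limits} at $y=0$: there $\bar S(0)>0$ by positivity in Lemma~\ref{prop:profiles-R}, and the prefactor $[(T-t)^{1-1/\Lambda}]^{1/\delta}\to0$ because $\Lambda>1$, which forces $\rho(t,0)\to\infty$.

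The main obstacle is the positive lower bound on $\rho$ on compact subintervals of $[0,T)$. Without it, $u$ need not be smooth and the substitution fails. I expect this bound to be supplied by the stability estimates in the proof of the theorem.
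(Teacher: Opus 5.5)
Your proposal is correct and follows the same route as the paper. You set $u=v-d\alpha\rho^{\alpha-2}\nabla\rho$, use that $(\rho,v)$ is smooth with $\rho>0$ on $[0,T)$ so the direct-substitution equivalence between \eqref{Equ2} and \eqref{eq:1.1} applies, and inherit $\rho(t,0)\to+\infty$ from Theorem~\ref{Thm_Main_Blowup_Profiles}. The paper simply asserts the positivity and the equivalence, so your continuity-equation check, the maximum-principle lower bound on $[0,T']$, and the far-field check only fill in details. One correction for the momentum step: the precise identity is $\rho(\partial_t+u\cdot\nabla)(\alpha\rho^{\alpha-2}\nabla\rho)=-\operatorname{div}(\rho^{\alpha}(\nabla u)^{t})-(\alpha-1)\nabla(\rho^{\alpha}\operatorname{div}u)$, with coefficient $\alpha-1$ rather than $1$; after this, the remaining terms carry the factor $d(2\nu-d)=\varepsilon^2$ and cancel $\operatorname{div}\mathbf K$.
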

	\begin{remark}
		We can establish the blow-up of the effective velocity $v$. However,
		the blow-up of the physical velocity $u$ cannot be concluded directly,
		since $\nabla\rho$ may also become singular near the origin, and a
		possible cancellation in the relation $
		v=u+d\alpha\rho^{\alpha-2}\nabla\rho$
		cannot be ruled out.
	\end{remark}
	\section{Notation and Sketch of the Proof}
	\subsection{Notation and analytic conventions}
	\label{subsec:notation}
	
	We collect here the notation used in the perturbative, spectral, and weighted
	energy arguments. Unless otherwise stated, all spatial derivatives in the
	self-similar formulation are taken with respect to
	\(y=(y_1,y_2,y_3)\in\R^3\), and all norms are over \(\R^3\).
	
	\paragraph{Derivatives and multi-indices.}
	For a multi-index
	\(\beta=(\beta_1,\beta_2,\beta_3)\in\mathbb N_0^3\), we set
	\[
	|\beta|:=\beta_1+\beta_2+\beta_3,
	\qquad
	\partial_\beta
	:=
	\partial_{y_1}^{\beta_1}
	\partial_{y_2}^{\beta_2}
	\partial_{y_3}^{\beta_3}.
	\]
	
	When it is useful to distinguish the individual derivatives in
	\(\partial_\beta\), we regard \(\beta\) as an ordered list
	\[
	\beta=(\beta_1,\ldots,\beta_k),
	\qquad
	\beta_j\in\{1,2,3\},
	\]
	and write
	\[
	\beta^{(j)}
	:=
	(\beta_1,\ldots,\beta_{j-1},\beta_{j+1},\ldots,\beta_k).
	\]
	Thus \(\partial_{\beta^{(j)}}\) denotes the derivative obtained from
	\(\partial_\beta\) by deleting its \(j\)-th entry. For two multi-indices \(\eta,\beta\in\mathbb N_0^3\), we write
	\[
	\eta\le\beta
	\quad\Longleftrightarrow\quad
	\eta_i\le\beta_i
	\quad\text{for each }i=1,2,3,
	\]
	and
	\[
	\eta<\beta
	\quad\Longleftrightarrow\quad
	\eta_i\le\beta_i
	\quad\text{for each }i=1,2,3,
	\qquad
	|\eta|<|\beta|.
	\]
	Whenever \(\eta\le\beta\), the difference is defined componentwise by
	\[
	\beta-\eta
	:=
	(\beta_1-\eta_1,\beta_2-\eta_2,\beta_3-\eta_3).
	\]

	\paragraph{Commutators.}
	If \(\mathcal A\) is a linear differential operator and \(f\) is sufficiently
	regular, we define
	\[
	[\partial_\beta,\mathcal A]f
	:=
	\partial_\beta(\mathcal Af)
	-
	\mathcal A(\partial_\beta f).
	\]
	For a scalar coefficient \(a\), we use
	\[
	[\partial_\beta,a]f
	:=
	\partial_\beta(af)-a\partial_\beta f
	=
	\sum_{0<\eta\le\beta}
	C_{\beta,\eta}
	(\partial_\eta a)\partial_{\beta-\eta}f.
	\]
	More generally, if \(\mathcal A_a\) is a differential operator whose
	coefficients depend on \(a\), then
	\[
	[\partial_\beta,\mathcal A_a]f
	:=
	\partial_\beta(\mathcal A_af)
	-
	\mathcal A_a(\partial_\beta f).
	\]
	\paragraph{Function spaces and domains.}
	For \(R>0\), we write
	\[
	B(0,R):=\{y\in\R^3:|y|<R\},
	\qquad
	B^c(0,R):=\R^3\setminus B(0,R).
	\]
	Unless a domain is displayed explicitly,
	\[
	\|f\|_{L^p}:=\|f\|_{L^p(\R^3)},
	\qquad
	\|f\|_{H^s}:=\|f\|_{H^s(\R^3)}.
	\]
	For a pair of scalar and vector functions, we use
	\[
	\|(f,F)\|_{H^s}
	:=
	\|f\|_{H^s}+\|F\|_{H^s}.
	\]
	The Japanese bracket is denoted by
	\[
	\langle y\rangle:=(1+|y|^2)^{1/2}.
	\]
	
	Let \(m\) be the sufficiently large integer used in the construction of the
	truncated linearized operator. Its phase space is denoted by
	\begin{equation}\label{X space Hm0}
		X
		:=
		H_0^m(B(0,3R_1))
		\times
		H_0^m(B(0,3R_1);\R^3),
	\end{equation}
	
	\paragraph{Cutoff functions.}
	The radial cutoff \(\chi\) and its time-dependent rescaling
	\(\widehat\chi\) are defined in \eqref{eq:time-dependent-cutoff}.
	
	\paragraph{Choice of the weighted energy function.}
	Fix \(R_0>0\) and \(0<\eta\ll1\). Choose
	\(\chi_\phi\in C^\infty([0,\infty))\) satisfying
	\[
	0\le\chi_\phi\le1,\qquad
	\chi_\phi(r)=0\ \text{for }r\le1,\qquad
	\chi_\phi(r)=1\ \text{for }r\ge4.
	\]
	We define the radial weight
	\begin{equation}\label{eq:weight-definition-notation}
		\phi(y)
		:=
		1-\chi_\phi\left(\frac{|y|}{R_0}\right)
		+
		\chi_\phi\left(\frac{|y|}{R_0}\right)
		\frac{|y|^{2(1-\eta)}}{2R_0^{2(1-\eta)}}.
	\end{equation}
	Then \(\phi\) is smooth and strictly positive, and
	\[
	\phi(y)=1
	\quad\text{for }|y|\le R_0,
	\qquad
	\phi(y)=
	\frac{|y|^{2(1-\eta)}}{2R_0^{2(1-\eta)}}
	\quad\text{for }|y|\ge4R_0.
	\]
	The interpolation on \(R_0<|y|<4R_0\) is fixed once and for all. In
	particular,
	\begin{equation}\label{eq:weight-derivative-properties}
		|\nabla\phi(y)|\le C\phi(y),
		\qquad
		|y\cdot\nabla\phi(y)|\le C\phi(y),
	\end{equation}
	and hence, for every fixed integer \(K\ge 1\),
	\begin{equation}\label{eq:weight-power-properties}
		|\nabla(\phi^K)|
		\le C_K\phi^K,
		\qquad
		|\operatorname{div}(y\phi^K)|
		\le C_K\phi^K.
	\end{equation}
	Indeed, in the far field,
	\[
	y\cdot\nabla\phi=2(1-\eta)\phi,
	\]
	while the estimates in the transition annulus follow from compactness and the
	strict positivity of \(\phi\).
	\begin{remark}\label{remark1-etar}
		The cutoff function $\chi_\phi$ in \eqref{eq:weight-definition-notation}
		may be chosen to be nondecreasing and to complete its transition from $0$
		to $1$ before
		\[
		\frac{r}{R_0}
		=
		2^{\frac{1}{2(1-\eta)}}.
		\]
		More precisely, we may require
		\[
		\chi_\phi'\geq0,
		\qquad
		\operatorname{supp}\chi_\phi'
		\subset
		\left(1,2^{\frac{1}{2(1-\eta)}}\right).
		\]
		For this choice, the inequality
		\begin{equation}\label{eq:weight-radial-ratio-bound}
			\frac{r\partial_r\phi}{2\phi}\leq1-\eta
		\end{equation}
		holds for every $r\geq0$. Indeed, a direct calculation gives
		\[
		\begin{aligned}
			2(1-\eta)\phi-r\partial_r\phi
			&=
			2(1-\eta)
			\left[
			1-\chi_\phi\left(\frac r{R_0}\right)
			\right]\\
			&\quad-
			\frac r{R_0}\chi_\phi'\left(\frac r{R_0}\right)
			\left[
			\frac12\left(\frac r{R_0}\right)^{2(1-\eta)}-1
			\right].
		\end{aligned}
		\]
		On the support of $\chi_\phi'$, one has
		\[
		\frac12\left(\frac r{R_0}\right)^{2(1-\eta)}-1<0.
		\]
		Since $\chi_\phi'\geq0$ and $0\leq\chi_\phi\leq1$, both terms on the
		right-hand side are nonnegative. Hence
		\[
		r\partial_r\phi\leq2(1-\eta)\phi,
		\]
		which proves \eqref{eq:weight-radial-ratio-bound}.
	\end{remark}
	\paragraph{Constants and comparison notation.}
	The symbols \(C\) and \(c\) denote generic positive constants whose values may
	change from line to line. Subscripts indicate the permitted dependence; for
	example, \(C_K\) may depend on \(K\), while \(C_{\tau_1}\) may also depend on
	the local Sobolev bounds on \([\tau_0,\tau_1]\). We write
	\[
	A\lesssim B
	\]
	if \(A\le CB\) for a harmless constant \(C\), and
	\[
	A\sim B
	\]
	if both \(A\lesssim B\) and \(B\lesssim A\) hold. The notation
	\(A\ll B\) means that \(A/B\) is chosen sufficiently small, with the required
	smallness determined by the argument.
	
	\subsection{Sketch of the Proof}
	\quad Our analysis is primarily based on reducing the finite-time blowup problem to the long-time stability of the rescaled system around a self-similar profile. Indeed, compared with the self-similar profile system \eqref{Eq_self_similar_profile_system}, system \eqref{Eq_self_similar_S} includes additional perturbation terms on the right-hand sides of both the density equation and the velocity field equation. Our strategy is to treat the terms on the right-hand side of \eqref{Eq_self_similar_S} as perturbative contributions. This idea can be traced back to the general perturbative framework developed in
	\cite{Cortazar-delPino-Musso-2020,Duyckaerts-Kenig-Merle-2012,Daskalopoulos-delPino-Sesum-2018,Kenig-Merle-2008,Krieger-Schlag-Tataru-2009}. Compared with the radially symmetric setting, the analysis of non-radial
	perturbations is considerably more delicate, since angular modes interact
	with the radial dynamics and may generate additional unstable directions.
	Similar difficulties in extending radial stability to the non-radial setting
	have arisen for the energy-critical defocusing quintic nonlinear
	Schr\"odinger equation \cite{Bourgain-1999, Colliander-Keel-Staffilani-Takaoka-Tao-2008,Grillakis-2000} and the energy-critical wave equation
	\cite{Duyckaerts-Kenig-Merle-2011,Duyckaerts-Kenig-Merle-2012}. We consider the stability around the self-similar profile \eqref{Eq_self_similar_profile_system}. Note that the density and velocity equations are structurally very similar; essentially, the right-hand side of each equation (see system \eqref{Eq_self_similar_S}) consists of a dissipation term and a nonlinear structure. In principle, the dissipation term can absorb the nonlinear terms, which holds true for both equations. In a sense, $V$ and $S$ are equivalent here, except for differences caused by the product terms involving $S$ on the right-hand side. As long as we can establish a lower bound for the density, we expect their high-order derivative decay estimates to be identical. Compared with the work of Cao-Labora et al.~\cite{Cao-Labora-Gomez-Serrano-Shi-Staffilani-2023}, the main difficulty in analyzing the present system lies in the fact that the initial density equation possesses a dissipation structure in addition to its transport structure. Consequently, selecting an appropriate bootstrap argument is crucial when deriving lower-order derivative estimates and lower bounds for the density. As for the higher-order derivative estimates, we believe they can be handled by standard arguments because the smallness originating from the time slab allows the dissipation in the density equation to absorb the nonlinear terms. In summary, the key difficulties and innovations of this work are threefold:
	\begin{itemize}\item First, we need to reformulate the system into an effective velocity formulation. Combining the dissipation estimates with scaling transformations to study the stability around the profile forms the foundational framework of our proof.
		\item Second, we address how to obtain an a priori lower bound for the density. Unlike the method of Chen et al. \cite{Chen-Zhang-Zhu}, we cannot rely solely on the transport structure of the first equation to establish this bound. Instead, we must exploit both the transport and dissipation structures of the density equation via a bootstrap argument. Specifically, we introduce an additional bootstrap assumption:
		\[\frac{\sigma_1}{10} \left\langle \frac{y}{R_0} \right\rangle^{-(\Lambda - 1)} \le S,\]
		Under this assumption, together with the smallness of the time-slab length, we close the bootstrap argument and improve the lower bound back to:
		\[\frac{\sigma_1}{5} \left\langle \frac{y}{R_0} \right\rangle^{-(\Lambda - 1)} \le S.\]
		The core idea is that the density perturbation caused by the right-hand side terms will not exceed the primary effect induced by the transport terms; that is, the transport structure remains the dominant mechanism governing the evolution of the density. 
		
		\item Third, we derive the decay estimates for the gradients. Since the right-hand side of the density equation contains extra nonlinear terms and the decay rate cannot be absorbed by the time-slab smallness parameter, a straightforward analysis without an additional framework would lead to an insufficient decay rate. Nevertheless, this obstacle can be overcome by fully utilizing the intrinsic decay mechanism of the gradient terms, rather than relying on interpolations between zero-order terms and higher-order derivatives. Specifically, we design an additional bootstrap scheme:
		\[\vert{}\nabla \widetilde{S}\vert{} + \vert{}\nabla S\vert{} \le 2C_1 \left\langle \frac{y}{R_0} \right\rangle^{-\Lambda},\]
		\[\vert{}\nabla \widetilde{V}\vert{} + \vert{}\nabla V\vert{} \le 2C_1 \left\langle \frac{y}{R_0} \right\rangle^{-\Lambda},\]
		which allows us to close the estimates back to:
		\[\vert{}\nabla \widetilde{S}\vert{} + \vert{}\nabla S\vert{} \le C_1 \left\langle \frac{y}{R_0} \right\rangle^{-\Lambda},\]
		\[\vert{}\nabla \widetilde{V}\vert{} + \vert{}\nabla V\vert{} \le C_1 \left\langle \frac{y}{R_0} \right\rangle^{-\Lambda},\]
		where $C_1 > 0$ is a constant to be determined later in the paper. Meanwhile, we must verify that the local solution satisfies the weighted assumptions required to initiate the bootstrap argument. In particular, the weighted energy associated with the spatial gradients must remain locally bounded. The proof of this property is given in section 5.4.3.
	\end{itemize}
	
	We now outline the organization of the paper. Section 3 is devoted to the stability analysis of the perturbation system \eqref{perturSEq}--\eqref{perturVEq}. In particular, we establish its local well-posedness and prove the local boundedness of the weighted energy required to initiate the bootstrap argument. In Section 4, we return to the original variables and derive the finite-time blowup result for the original system. Finally, Section~5 contains the appendices, where we collect several auxiliary lemmas, recall the construction and properties of the self-similar Euler profiles, analyze the spectrum of the truncated linearized operator, and prove the local well-posedness of the original system.
	\section{Nonlinear Stability Analysis via Energy Methods}
	\quad In this section, we employ a bootstrap argument to analyze the nonlinear stability of the perturbation system \eqref{perturSEq}--\eqref{perturVEq}.
	\subsection{Parameter Hierarchy}
	\quad We first determine the parameter $R_1$ in the following lemma, which is essential for characterizing the Hilbert space $X$ introduced in \eqref{X space Hm0}.
	\begin{lemma}\label{Lemma_R1_determination}
		We choose a sufficiently large parameter $R_1$, depending exclusively on $\Lambda, \delta$, $M_1$, $C_4$, the self-similar profile $(\bar{S}, \bar{V})$, and the smooth cutoff modifier $\widehat{\chi}$ ( see \eqref{eq:time-dependent-cutoff}). Then, for any temporal variable $\tau \ge \tau_0$, the profile $(\bar{S}, \bar{V})$ satisfies the following estimates:
		\begin{align}
			& | \widehat{\chi} \bar{S}| + |\widehat{\chi} \bar{V}| + | \nabla(\widehat{\chi} \bar{S})| + |\nabla(\widehat{\chi} \bar{V})| \le \frac{1}{50 M_1} \quad \text{for } y \in B^c(0, R_1), \label{est_decay_1} \\
			& \|\nabla(\widehat{\chi} \bar{S})\|_{L^\infty(B^c(0, R_1))} + \|\nabla(\widehat{\chi} \bar{V})\|_{L^\infty(B^c(0, R_1))} \le \frac{1}{10000}, \label{est_decay_2} \\
			& \|\nabla^2(\widehat{\chi} \bar{S})\|_{L^8(B^c(0, R_1))} + \|\nabla^2(\widehat{\chi} \bar{V})\|_{L^8(B^c(0, R_1))} \le \frac{1}{10000C_4}, \label{est_decay_3} \\
			& \sum_{j=3}^{5} \left( \|\nabla^j(\widehat{\chi} \bar{S})\|_{L^2(B^c(0, R_1))} + \|\nabla^j(\widehat{\chi} \bar{V})\|_{L^2(B^c(0, R_1))} \right) \le \frac{1}{10000C_4}. \label{est_decay_4}
		\end{align}
		where \(M_1>0\) is a fixed constant defined in
		\eqref{Eq_constant_choice} and $C_4$ denotes a constant used in \eqref{Eq_energy_inequality_substitution_final}.
	\end{lemma}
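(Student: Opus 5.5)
The plan is to reduce all four estimates to the pointwise decay bounds \eqref{eq:profile-decay} for the profile, combined with uniform-in-$\tau$ bounds on the rescaled cutoff $\widehat\chi(\tau,y)=\chi(e^{-\tau}|y|)$. The key observation is that for every $j\ge0$,
\[
|\nabla^j\widehat\chi(\tau,y)|\le C_j e^{-j\tau}\mathbf 1_{\{e^{\tau}/2\le|y|\le e^\tau\}}\le C_j 2^{j}|y|^{-j}\qquad (j\ge1),
\]
since on the support of the derivatives $|y|\sim e^{\tau}$. Hence $|\nabla^j\widehat\chi|\le C_j\langle y\rangle^{-j}$ uniformly in $\tau\ge\tau_0$. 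Here we use that $\tau_0=-\log T/\Lambda$ is large for small $T$, so $e^\tau\ge1$ and the support lies in $|y|\ge1/2$. The constant $C_j$ depends only on $\chi$. Leibniz's rule then gives, uniformly in $\tau$,
\[
|\nabla^j(\widehat\chi\bar S)(y)|+|\nabla^j(\widehat\chi\bar V)(y)|\le C_j'\langle y\rangle^{-(\Lambda-1)-j},\qquad j\ge0,
\]
with $C_j'$ depending only on the profile constants $C_k$ ($k\le j$) and on $\chi$.

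With this bound, each inequality follows by choosing $R_1$ large.

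For \eqref{est_decay_1}, the left side is at most $2(C_0'+C_1')\langle R_1\rangle^{-(\Lambda-1)}$ on $B^c(0,R_1)$. This is $\le 1/(50M_1)$ once $R_1\ge (100M_1(C_0'+C_1'))^{1/(\Lambda-1)}$. The decay rate $\Lambda-1>0$ is small by Remark~\ref{rem:Lambda-star-upper-bound}, so $R_1$ may be very large, but it is finite and depends only on $\Lambda$, $M_1$ and the profile.

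Estimate \eqref{est_decay_2} is identical, using the $j=1$ bound $\langle y\rangle^{-\Lambda}$.

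For \eqref{est_decay_3}, integrate in polar coordinates:
\[
\int_{|y|\ge R_1}\langle y\rangle^{-8(\Lambda+1)}dy\le C R_1^{3-8(\Lambda+1)}.
\]
This converges since $8(\Lambda+1)>3$, and its $1/8$ power decays like $R_1^{-(\Lambda+1)+3/8}$.

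For \eqref{est_decay_4} with $3\le j\le5$, we need $\int_{|y|\ge R_1}\langle y\rangle^{-2(\Lambda-1+j)}dy$. It converges because $2(\Lambda-1+j)\ge 2(\Lambda+2)>3$, and it is bounded by $C R_1^{3-2(\Lambda-1+j)}\to0$. Choosing $R_1$ so that each quantity is below $1/(10000C_4)$ finishes the proof. The final $R_1$ is the maximum of the four thresholds, and it depends only on $\Lambda,\delta$ (through the profile), $M_1$, $C_4$, the profile and $\chi$.

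The main point needing care is \emph{uniformity in $\tau$}. This is only an issue for the cutoff derivatives, whose size $e^{-j\tau}$ must be converted into spatial decay via the support localization $|y|\simeq e^\tau$. If one prefers not to assume $e^{\tau_0}\ge1$, the same argument works with $\langle y\rangle$ replaced by $|y|$ on $|y|\ge R_1\ge1$, because $|\nabla^j\widehat\chi|\le C_j e^{-j\tau}\le C_j 2^j|y|^{-j}$ on the support regardless of the value of $\tau$. I expect no genuine obstacle beyond this bookkeeping.
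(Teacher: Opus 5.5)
Your proof is correct, but it differs from the paper's in where the smallness of the terms with derivatives on the cutoff comes from. For \eqref{est_decay_1}--\eqref{est_decay_2} the paper argues essentially as you do. For \eqref{est_decay_3}--\eqref{est_decay_4}, however, it treats the cross terms $|\nabla^j\widehat\chi|\,|\nabla^k\bar S|$ with $j\ge1$ on all of $\mathbb{R}^3$. It integrates them over the annulus $\{e^\tau/2\le|y|\le e^\tau\}$, bounds the result by $Ce^{-(2q-3)\tau}\le Ce^{-\tau}$, and makes them small by taking $\tau\ge\tau_0\gg1$. Only the terms with no derivative on $\widehat\chi$ are made small there by enlarging $R_1$.

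You instead use the support localization to trade $e^{-j\tau}$ for $|y|^{-j}$. This gives the single $\tau$-uniform pointwise bound $|\nabla^j(\widehat\chi\bar S)|+|\nabla^j(\widehat\chi\bar V)|\le C_j'\langle y\rangle^{-(\Lambda-1)-j}$, and all four estimates then follow from the choice of $R_1$ alone.

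Your route is slightly cleaner and matches the lemma as stated more faithfully. In your version $R_1$ really depends only on the listed quantities, and the conclusion holds for every $\tau$. The paper's version implicitly relies on $\tau_0$ being fixed after $R_1$ in the hierarchy \eqref{Eq_parameter_hierarchy}. The paper's route, for its part, gives explicit time decay of the cutoff errors on the whole space. That is the same computation it reuses later for $\mathcal E_s$, $\mathcal E_v$ and the term $H_{E,\beta}$.
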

	
	\begin{proof}
		In view of the bounds $|\widehat{\chi}| \le 1$ and $|\nabla \widehat{\chi}| \le 2e^{-\tau}$, the localized bounds \eqref{est_decay_1} and \eqref{est_decay_2} are immediate consequences of the asymptotic decay properties of $(\bar{S}, \bar{V})$.
		
		Next, we target the validation of \eqref{est_decay_3} and \eqref{est_decay_4}. For those components where the derivatives do not fall onto the cutoff function $\widehat{\chi}$, the claimed smallness follows from the same profile decay. On the other hand, considering the geometric definition of $\widehat{\chi}$, its support and derivatives satisfy
		\begin{equation}\label{xdecayestim}
			\mathrm{supp} \, (1 - \widehat{\chi}) \subset B^c\left(0, \frac{e^\tau}{2}\right), \quad \operatorname{supp}(\nabla \widehat{\chi}) \subset B(0, e^\tau) \cap B^c\left(0, \frac{e^\tau}{2}\right), \quad |\nabla^j \widehat{\chi}| \le C(j)e^{-j\tau}.
		\end{equation}
		Consequently, evaluating the localized cross-terms $|\nabla^j \widehat{\chi}| |\nabla^k \bar{S}|$, we deduce that
		$$
		\begin{aligned}
			\| |\nabla^j \widehat{\chi}| |\nabla^k \bar{S}| \|_{L^q(\mathbb{R}^3)}^q &\le C(j, k, q) e^{-j q \tau} \int_{\frac{1}{2}e^\tau}^{e^\tau} r^{-q(\Lambda - 1) - qk + 2} \, \mathrm{d}r \\
			&\le C(j, k, q) e^{-j q \tau} e^{(-q(\Lambda - 1) - qk + 3)\tau} \\
			&\le C(j, k, q) e^{-(2q - 3)\tau} \le C(j, k, q) e^{-\tau}.
		\end{aligned}
		$$
		Since $j \ge 1$, $k \ge 0$, $2 \le j+k \le 5$, $q \in \{2, 8\}$, and $\tau \ge \tau_0 \gg 1$, the terms containing at least one derivative acting on $\widehat{\chi}$ can be made arbitrarily small, which completes the proof.
	\end{proof} 
	
	We first specify the following hierarchy among the parameters used in the analysis:
	\begin{equation}
		\label{Eq_parameter_hierarchy}
		\frac{1}{\tau_0} \ll \sigma_0^{\frac{3}{2}} \ll \sigma_1 \ll \sigma_0 \ll \frac{1}{E} \ll \frac{1}{K} \ll \frac{1}{m} \ll \eta \ll \sigma_g = \frac{25}{12} \varpi \ll f_1 = O(1),
	\end{equation}
	In addition, the top-order index \(K\) is chosen sufficiently large relative
	to the profile repulsivity constant (The constant \(\bar{\eta}\) is defined in Lemma~\ref{prop:profiles-R}):
	\begin{equation}\label{eq:K-repulsivity-relation}
		\frac1K\ll\bar\eta.
	\end{equation}
	This condition guarantees that the positive top-order contribution generated
	by the self-similar transport dominates the commutator errors. It also allows
	the weighted Gagliardo--Nirenberg estimates to be applied with the weight
	exponent \(\eta\). More precisely, after \(m\) and the truncation parameter
	\(J\) have been fixed, we take
	\begin{equation}\label{eq:K-large-condition}
		K>
		\max\left\{
		3^m C(J,m),\,
		500,\,
		C\bar\eta^{-1}
		\right\}.
	\end{equation}
	In particular, \(K\ge6\), as required by the local existence and weighted
	interpolation arguments. We further strengthen the parameter hierarchy by requiring $C(K)\sigma_0\ll1$, $C(E)\sigma_0 \ll 1$, $C(K)\ll E$, $C(R_1) \sigma_0\ll 1$, $C(R_1)\ll m$ and $(\frac{3^K}{2})^{2K} \ll E$.
	We next choose the radius \(R_0\) appearing in the weight \(\phi\). By the
	positivity and decay properties of the profile, \(R_0\) can be taken
	sufficiently large so that
	\begin{equation}\label{eq:R0-choice}
		\begin{aligned}
			\bar S(y)
			&\ge 2\sigma_0,
			&& |y|\le R_0,\\
			|\nabla\bar S(y)|
			+
			|\nabla\bar V(y)|
			&\le \frac{\sigma_1}{2},
			&& |y|\ge R_0,\\
			\bar S(y)+|\bar V(y)|
			&\le C\sigma_0,
			&& |y|\ge R_0.
		\end{aligned}
	\end{equation}
	Since
	\[
	\bar S(y)\sim\langle y\rangle^{1-\Lambda}
	\qquad\text{as }|y|\to\infty,
	\]
	the choice of \(R_0\) may be characterized by
	\begin{equation}\label{eq:R0-sigma0-relation}
		\sigma_0 \sim R_0^{1-\Lambda},
	\end{equation}
	and the parameter hierarchy $\sigma_0^{\frac32}\ll \sigma_1\ll \sigma_0\ll1$
	ensures the existence of a radius \(R_0\) satisfying
	\eqref{eq:R0-choice}. The radial weight used in the
	high-order energy estimate is then chosen to satisfy
	\[
	\phi(y)=1
	\quad\text{for }|y|\le R_0,
	\qquad
	\phi(y)
	=
	\frac{|y|^{2(1-\eta)}}{2R_0^{2(1-\eta)}}
	\quad\text{for }|y|\ge4R_0,
	\]
	with a smooth positive interpolation on
	\(R_0<|y|<4R_0\) (for further details, see \eqref{eq:weight-definition-notation}--\eqref{eq:weight-power-properties}) .
	\begin{remark}
		To ensure that $\sigma_0^{\frac{3}{2}} \ll \sigma_1 \ll \sigma_0$, we choose $\sigma_1 = \sigma_0^{\frac{5}{4}}$. For the estimates required later, we further impose
		\begin{equation}\label{eq:sigma-g-hierarchy}
			\sigma_1 \ll \sigma_0 \sigma_g,
			\qquad \text{or equivalently,} \qquad
			\sigma_0^{\frac{1}{4}} \ll \sigma_g.
		\end{equation}
	\end{remark}
	\begin{remark}
		For the estimates below, we choose $\sigma_1$ and $\sigma_g$ such that
		\begin{equation}\label{R1sigma1sigmag1}
			R_1\left(\frac{\sigma_1}{\sigma_g}\right)^{\frac{3}{10}}\ll 1.
		\end{equation}
	\end{remark}
	Finally, after all the above parameters have been fixed, the initial
	self-similar time \(\tau_0\) is again chosen sufficiently large that
	\begin{equation}\label{eq:tau0-large-condition}
		C(E,\sigma_1,\sigma_0,R_0,R_1,K,m,J)
		e^{-f_1\tau_0}
		\ll 1.
	\end{equation}
	In particular, all terms carrying the factor \(e^{-f_1\tau}\) are perturbative
	on \([\tau_0,\infty)\).
	\subsection{Local Existence and Weighted Estimates}
	\quad In this section, based on the local existence result established in the Appendix, we present the local existence for system \eqref{Eq_self_similar_S} along with the weighted local energy estimates required for the bootstrap argument. For simplicity of exposition in establishing the higher-order estimates, we introduce the notation $\mathcal{F}_s$ and $\mathcal{F}_v$ as follows:
	\begin{align}
		\mathcal{F}_s &:= e^{-f_1 \tau} d \alpha (\delta S)^{\frac{\alpha-1}{\delta}} \Delta S + e^{-f_1 \tau} d \alpha (\alpha-\delta) (\delta S)^{\frac{\alpha-\delta-1}{\delta}} |\nabla S|^2, \label{Eq_def_mathcal_F_s} \\ 
		\mathcal{F}_v &:= e^{-f_1 \tau} (\delta S)^{\frac{\alpha-1}{\delta}}\mathbb{L}(V) + e^{-f_1 \tau} \alpha (\delta S)^{\frac{\alpha-\delta-1}{\delta}} \nabla S \cdot \mathbb{D}V. \label{Eq_def_mathcal_F_v}
	\end{align}
	
	First, we state the local existence theorem.
	\begin{theorem}\label{thm:local-self-similar}
		Let the parameters $(\Lambda,\alpha,\delta)$ satisfy
		\begin{equation}\label{eq:physical-assumptions-local}
			1<\Lambda<2,\qquad
			\delta>0,\qquad
			0<\alpha<1.
		\end{equation}
		Let \(K\ge6\) be an integer. Assume that the initial data
		\((S_0,V_0)\) satisfy
		\begin{equation}\label{eq:local-initial-data}
			\inf_{y\in\R^3}S_0(y)>0,\qquad
			S_0-S^*(\tau_0)\in H^K(\R^3),\qquad
			V_0\in H^K(\R^3).
		\end{equation}
		Then there exists a time \(\tau_*>\tau_0\) such that the Cauchy problem
		\eqref{Eq_self_similar_S} admits a unique solution
		\((S,V)\) on \([\tau_0,\tau_*]\times\R^3\). Moreover, this solution satisfies
		\begin{equation}\label{eq:local-regularity-self-similar}
			\begin{split}
				&\inf_{(\tau,y)\in[\tau_0,\tau_*]\times\R^3}S(\tau,y)>0,
				\qquad
				S-S^*(\tau)\in C([\tau_0,\tau_*];H^{K}(\R^3))
				\cap L^2(\tau_0,\tau_*;H^{K+1}(\R^3)),\\
				&V\in C([\tau_0,\tau_*];H^{K}(\R^3))
				\cap L^2(\tau_0,\tau_*;H^{K+1}(\R^3)).
			\end{split}
		\end{equation}
		and
		\begin{equation}\label{eq:local-time-regularity-weighted}
			\begin{split}
				&\partial_\tau\bigl(S-S^*(\tau)\bigr)
				\in C([\tau_0,\tau_*];H^{K-2}(\R^3))
				\cap L^2(\tau_0,\tau_*;H^{K-1}(\R^3)),\\
				&\partial_\tau V\in L^\infty(\tau_0,\tau_*;H^{K-2}(\R^3))
				\cap L^2(\tau_0,\tau_*;H^{K-1}(\R^3)).
			\end{split}
		\end{equation}
		In addition, if the weighted high-order energy is finite initially, namely
		\[
		E_K(\tau_0)=\int (|\nabla^K S(\tau_0)|^2 + |\nabla^K V(\tau_0)|^2) \phi^K \, \mathrm{d}y<\infty,
		\]
		and the initial data are bounded in the weighted Sobolev space:
		\begin{equation}\label{S,VyR0decay}
			\left\langle \frac{y}{R_0}\right\rangle^{\Lambda}\nabla S(\tau_0,y)\in H^2(\R^3),
			\qquad
			\left\langle \frac{y}{R_0}\right\rangle^{\Lambda}\nabla V(\tau_0,y)\in H^2(\R^3).
		\end{equation}
		Then there exists a  $\tau_1$ ($\tau_0<\tau_1\le\tau_*$) such that
		\begin{equation}\label{eq:weighted-energy-local-bound}
			\sup_{\tau_0\le \tau\le \tau_1}E_K(\tau)<\infty ,
		\end{equation}
		and there exists a constant
		\(C>0\) depending only on the initial data and the parameters
		of the system, such that for all \((\tau,y)\in[\tau_0,\tau_1]\times\R^3\),
		\begin{align}
			|\nabla \widetilde S(\tau,y)|+|\nabla S(\tau,y)|
			&\le 
			C\left\langle \frac{y}{R_0}\right\rangle^{-\Lambda},
			\label{nableS1aaa}\\
			|\nabla \widetilde V(\tau,y)|+|\nabla V(\tau,y)|
			&\le
			C\left\langle \frac{y}{R_0}\right\rangle^{-\Lambda}.
			\label{nableU1aaa}
		\end{align}
	\end{theorem}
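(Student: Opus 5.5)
The proof splits into three steps: a short-time existence result, a weighted top-order energy estimate, and pointwise weighted gradient bounds.

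\textbf{Step 1: local existence.} The first part of Theorem~\ref{thm:local-self-similar} is not proved directly in self-similar variables. Instead I transfer it from the local well-posedness of the original system \eqref{Equ2} (equivalently \eqref{Eq_reformulated_system_c}), proved in the Appendix. The change of variables $\tau=-\Lambda^{-1}\log(T-t)$, $y=e^\tau x$, together with \eqref{cST-t}--\eqref{vVT-t}, is a smooth bijection on compact time intervals. It maps $S-S^*(\tau)\in H^K$ and $V\in H^K$ to $\rho-\rho_*\in H^K$ and $v\in H^K$, up to harmless $\tau$-dependent dilation constants. It also preserves strict positivity of the density.

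Since $\inf S_0>0$ and $0<\alpha<1$, the coefficients $(\delta S)^{(\alpha-1)/\delta}$ are bounded above and below on a short interval. Hence both equations in \eqref{Eq_self_similar_S} are uniformly parabolic. The density equation is scalar. The velocity operator $\mathbb{L}$ is Lamé-type, and its ellipticity follows from $\nu>0$ and $\nu+\sqrt{\nu^2-\varepsilon^2}+(\alpha-1)(\nu+\sqrt{\nu^2-\varepsilon^2})>0$ for $\alpha>0$.

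Standard parabolic energy estimates at level $K\ge6$ then give the following:
\begin{itemize}
\item $C([\tau_0,\tau_*];H^K)\cap L^2H^{K+1}$ regularity, with the $H^{K+1}$ part coming from the dissipation;
\item continuity of the lower bound on $S$ in time;
\item the time-derivative regularity \eqref{eq:local-time-regularity-weighted}, read off from the equation, since $\partial_\tau$ costs two derivatives;
\item uniqueness, by an $L^2$ difference estimate.
\end{itemize}

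\textbf{Step 2: weighted energy $E_K$.} Differentiate \eqref{Eq_self_similar_S} by $\partial_\beta$ with $|\beta|=K$, multiply by $\phi^K\partial_\beta S$ and $\phi^K\partial_\beta V$, and integrate. I would justify this by working with the truncated weight $\phi_L=\min(\phi,L)$, smoothed, and letting $L\to\infty$ at the end.

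The transport term $y\cdot\nabla$ produces $\tfrac12\int \operatorname{div}(y\phi^K)|\nabla^K S|^2$, which is bounded by $C_K E_K$ by \eqref{eq:weight-power-properties}. The dissipative terms give a good term $e^{-f_1\tau}\int S^{(\alpha-1)/\delta}\phi^K|\nabla^{K+1}(S,V)|^2$. Integration by parts puts a derivative on $\phi^K$, and since $|\nabla\phi^K|\le C\phi^K$ the resulting cross terms are absorbed by Young's inequality.

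Commutators and nonlinear terms are estimated by Moser-type bounds. These use $L^\infty$ control of $S,V$ and their low derivatives, available from the $H^K$ bound because $K\ge 6$, and the lower bound on $S$. Because the solution is only controlled on a short interval, no sharp constants are needed. The result is $\frac{d}{d\tau}E_K\le C_{\tau_1}(1+E_K)$, and Gronwall gives \eqref{eq:weighted-energy-local-bound}.

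\textbf{Step 3: pointwise gradient bounds.} I expect this to be the main obstacle. Set $w=\langle y/R_0\rangle^\Lambda$ and $W=w\nabla S$, $Z=w\nabla V$. By the hypothesis \eqref{S,VyR0decay}, $W$ and $Z$ lie in $H^2$ at $\tau_0$. I would derive the equations for $W$ and $Z$ by differentiating \eqref{Eq_self_similar_S} once and multiplying by $w$.

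The transport term generates $-(y\cdot\nabla w)w^{-1}W$. This is bounded because $|y\cdot\nabla w|\le\Lambda w$. The zero-order terms with coefficient $\Lambda-1$ are also harmless. The dangerous terms are products like $V\cdot\nabla S$ and $S\,\operatorname{div}V$, in which one factor is undifferentiated. Here only $V$ and $S-S^*$ are known to lie in $L^\infty$, with no weight. This is fine because the weight sits on the gradient factor, so $w\nabla(V\cdot\nabla S)$ becomes bounded coefficients times $(W,\nabla W,Z)$.

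The dissipative terms commute with $w$ up to lower-order terms with bounded coefficients, since $|\nabla^j w|\lesssim w$. I would run an $H^2$ energy estimate for $(W,Z)$ and close it with Gronwall on $[\tau_0,\tau_1]$, shrinking $\tau_1$ if necessary. Sobolev embedding $H^2\subset L^\infty$ then gives $|\nabla S|+|\nabla V|\le C w^{-1}$.

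Finally, I would write $\nabla\widetilde S=\nabla S-\nabla(\widehat\chi\bar S)$, and similarly for $V$. The profile bound \eqref{eq:profile-decay} gives $|\nabla(\widehat\chi\bar S)|\lesssim\langle y\rangle^{-\Lambda}$, using $|\nabla\widehat\chi|\lesssim e^{-\tau}\lesssim\langle y\rangle^{-1}$ on its support. This yields \eqref{nableS1aaa}--\eqref{nableU1aaa}.

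The delicate point is checking that the $H^2$ estimate for $(W,Z)$ requires no weighted control of $S$ and $V$ themselves. The only inputs should be unweighted $W^{3,\infty}$ bounds from Step 1, with $K\ge6$.
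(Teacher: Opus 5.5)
Your Steps 1 and 3 essentially follow the paper. Local well-posedness is transferred from Proposition~\ref{thm:main}, and the pointwise gradient bounds come from a weighted $H^2$ estimate for $\langle\cdot\rangle^{\Lambda}\nabla(\cdot)$. The paper runs that estimate in the original variables and then rescales. You run it directly for $(W,Z)$ in self-similar variables, where the extra dilation term is harmless because $|y\cdot\nabla w|\le\Lambda w$. Your bound on $\nabla(\widehat\chi\bar S)$, using $e^{-\tau}\le|y|^{-1}$ on $\operatorname{supp}\nabla\widehat\chi$, is also correct.

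\textbf{The gap is in Step 2.} After applying $\partial_\beta$ with $|\beta|=K$, the commutators contain products such as $\phi^{K/2}\,\nabla^{j}V\,\nabla^{K+1-j}S$ with $2\le j\le K-1$, for instance $\phi^{K/2}\nabla^{K-1}V\,\nabla^2 S$. The diffusion terms produce analogous products of total order $K+2$.

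In these products the full weight $\phi^{K/2}\sim|y|^{K(1-\eta)}$ must be carried by derivatives of intermediate order. But $E_K$ weights only the $K$-th derivatives, and unweighted $H^K$ or $W^{k,\infty}$ bounds carry no weight at all. A Moser-type estimate redistributes derivatives, not weights, so it cannot bound these terms by $C(1+E_K)$.

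The missing ingredient is a weighted interpolation inequality. The paper uses the weighted Gagliardo--Nirenberg inequality (Lemma~\ref{lem:weighted-GN-new}) with $\varphi=\phi^{1/2}$ and $\psi=1$. It interpolates between unweighted $L^\infty$ bounds on $(S,V)$ and $\|\phi^{K/2}\nabla^K(S,V)\|_{L^2}$. The exponents then satisfy $\theta_0+\theta_1=\frac{2K-1}{2K-3}>1$. This surplus absorbs the $\langle y\rangle^{\varepsilon}$ losses in the lemma, but it makes the estimate superlinear:
\[
\frac{d}{d\tau}E_K\le C_{\tau_1}\bigl(1+E_K^{\frac{2K-1}{2K-3}}\bigr).
\]
Gronwall must therefore be replaced by an ODE comparison argument, which closes only on a short interval. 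This is why the statement asserts only the existence of some $\tau_1>\tau_0$.

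A smaller point in Step 1: \eqref{eq:local-time-regularity-weighted} cannot simply be read off from the equation. The time derivative $\partial_\tau S$ contains the dilation term $y\cdot\nabla S$, which need not lie in $H^{K-2}$ when only $S-S^*\in H^K$ is known. The paper invokes the propagated weighted bounds at this point.
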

	\begin{proof}
		From transformations \eqref{cST-t} and \eqref{vVT-t}, we obtain
		$$\rho_0(x) = \left( \frac{\delta e^{(\Lambda - 1)\tau_0}}{\Lambda} \right)^{\frac{1}{\delta}} S_0^{\frac{1}{\delta}}\left( e^{\tau_0} x \right),\quad
		v_0(x) = \frac{e^{(\Lambda - 1)\tau_0}}{\Lambda} V_0\left( e^{\tau_0} x \right).$$
		Therefore,
		\[
		S_0-S^*(\tau_0)\in H^K(\R^3),\qquad
		\inf_{y\in\R^3}S_0(y)>0,\qquad
		V_0\in H^K(\R^3),
		\]
		imply
		\[
		\rho_0-\rstar\in H^K(\R^3),
		\qquad
		v_0\in H^K(\R^3), \qquad \inf_{x\in\R^3}\rho_0(x)>0,
		\]
		By Proposition \ref{thm:main}, we obtain the existence and uniqueness of a local solution $(\rho,v)$ satisfying \eqref{localboundsrho}--\eqref{vtspaceHk-2} and \eqref{decayestx-2}.
		We now verify the regularity of the self-similar variables. Recall that
		\[
		t=T-e^{-\Lambda\tau},
		\qquad
		x=e^{-\tau}y,
		\]
		and
		\[
		S(\tau,y)
		=
		\frac{\Lambda}{\delta}
		e^{-(\Lambda-1)\tau}
		\rho^\delta(t,x),
		\qquad
		V(\tau,y)
		=
		\Lambda e^{-(\Lambda-1)\tau}
		v(t,x).
		\]
		Thus \(S^*(\tau)\) is spatially constant but time dependent; in particular,
		\(\partial_\tau S^*=-(\Lambda-1)S^*\).
		
		We first consider the spatial regularity. For each fixed \(\tau\), define $\mathcal P_\tau f(y):=f(e^{-\tau}y).$
		Then, for any integer \(m\ge0\),
		\[
		\|\mathcal P_\tau f\|_{H^m_y}
		\le
		C_{m,\tau}\|f\|_{H^m_x}.
		\]
		Moreover, on every finite interval \([\tau_0,\tau_*]\) ($\tau^*=-\frac{\log(T-T^*)}{\Lambda}$), the constants
		\(C_{m,\tau}\) are uniformly bounded. Since
		\[
		\rho-\rstar\in C([0,T^*];H^K_x)
		\cap L^2(0,T^*;H^{K+1}_x),
		\]
		and \(\rho\) remains uniformly away from vacuum, the Sobolev product estimate
		gives
		\[
		\rho^\delta-(\rstar)^\delta
		\in
		C([0,T^*];H^K_x)
		\cap L^2(0,T^*;H^{K+1}_x).
		\]
		Therefore,
		\[
		S-S^*(\tau)
		\in
		C([\tau_0,\tau_*];H^K_y)
		\cap L^2(\tau_0,\tau_*;H^{K+1}_y).
		\]
		Similarly, from
		\[
		v\in C([0,T^*];H^K_x)
		\cap L^2(0,T^*;H^{K+1}_x),
		\]
		we obtain
		\[
		V\in C([\tau_0,\tau_*];H^K_y)
		\cap L^2(\tau_0,\tau_*;H^{K+1}_y).
		\]
		The positive lower bound for $S$ follows similarly.
		
		We next verify that the weighted assumptions in the self-similar variables
		imply the weighted assumptions in the original variables, and consequently
		yield the pointwise decay required for the bootstrap argument. We obtain
		\[
		\langle x\rangle^{\Lambda}\nabla\rho_0(x)\in H^2(\R^3),
		\]
		and 
		\[
		\langle x\rangle^{\Lambda}\nabla v_0(x)\in H^2(\R^3),
		\]
		from \eqref{S,VyR0decay}. Therefore the local weighted estimate in the original variables gives
		\begin{equation}\label{decayestx-2-local}
			|\nabla\rho(t,x)|+|\nabla v(t,x)|
			\le
			C\langle x\rangle^{-\Lambda},
			\qquad 0\le t\le T^*.
		\end{equation}
		
		We now transform this decay back to the self-similar variables. Since
		\[
		S(\tau,y)
		=
		\frac{\Lambda}{\delta}e^{-(\Lambda-1)\tau}
		\rho^\delta(t,x),
		\qquad
		V(\tau,y)
		=
		\Lambda e^{-(\Lambda-1)\tau}v(t,x),
		\]
		with
		\[
		t=T-e^{-\Lambda\tau},
		\qquad
		x=e^{-\tau}y,
		\]
		we have
		\[
		\nabla_y S(\tau,y)
		=
		\Lambda e^{-(\Lambda-1)\tau}e^{-\tau}
		\rho^{\delta-1}(t,x)\nabla_x\rho(t,x),
		\]
		and
		\[
		\nabla_y V(\tau,y)
		=
		\Lambda e^{-(\Lambda-1)\tau}e^{-\tau}
		\nabla_x v(t,x).
		\]
		Since \(\rho\) stays in a fixed positive strip, \(\rho^{\delta-1}\) is uniformly
		bounded. Hence \eqref{decayestx-2-local} implies
		\[
		|\nabla_y S(\tau,y)|+|\nabla_y V(\tau,y)|
		\le
		C\langle e^{-\tau}y\rangle^{-\Lambda}.
		\]
		On the finite time interval \([\tau_0,\tau_*]\), the factor \(e^{-\tau}\) is
		bounded above and below by positive constants. There exists a constant $C > 0$ such that
		\[
		\langle e^{-\tau}y\rangle^{-\Lambda}
		\le
		C\left\langle \frac{y}{R_0}\right\rangle^{-\Lambda}.
		\]
		Thus
		\[
		|\nabla S(\tau,y)|+|\nabla V(\tau,y)|
		\le
		C\left\langle \frac{y}{R_0}\right\rangle^{-\Lambda}.
		\]
		
		Finally, the perturbations \(\widetilde S\) and
		\(\widetilde V\) satisfy the same local weighted bounds by the construction of
		the perturbation variables and the compactly supported cutoffs. Therefore,
		after selecting \(C\) if necessary, we obtain \eqref{nableS1aaa} and \eqref{nableU1aaa}.
		The propagated weighted bounds also control the dilation terms
		\(y\cdot\nabla S\) and \(y\cdot\nabla V\). Substitution into
		\eqref{Eq_self_similar_S}, together with the spatial regularity already
		proved, gives \eqref{eq:local-time-regularity-weighted}.
		
		We now prove the local boundedness of the weighted high-order energy. Define
		\[
		E_K(\tau)
		:=
		\sum_{|\beta|=K}
		\int_{\R^3}
		\bigl(
		|\partial_\beta S|^2+|\partial_\beta V|^2
		\bigr)\phi^K\,dy .
		\]
		Let \([\tau_0,\tau_1]\subset[\tau_0,\tau_*)\) be fixed. By the local existence
		theory and the smoothness of the self-similar transformation on finite time
		intervals, \((S,V)\) is smooth on
		\([\tau_0,\tau_1]\times\R^3\). In particular, all the unweighted norms used
		below are bounded. Since \(K\ge6\), Sobolev embedding
		implies
		\[
		\|S\|_{W^{K-2,\infty}}+\|V\|_{W^{K-2,\infty}}
		+\|S-S^*(\tau)\|_{H^K}
		+\|V\|_{H^K}
		\le C_{\tau_1}.
		\]
		In what follows, all constants \(C_{\tau_1}\) may depend on these local
		unweighted norms, on the parameters of the system, and on the fixed weight
		\(\phi\), but not on \(E_K(\tau)\).
		
		Applying \(\partial_\beta\), \(|\beta|=K\), to
		\eqref{Eq_self_similar_S}, and using
		\[
		\partial_\beta(y\cdot\nabla f)
		=
		y\cdot\nabla\partial_\beta f
		+
		K\partial_\beta f,
		\]
		we obtain
		\[
		\left\{
		\begin{aligned}
			&(\partial_\tau+\Lambda-1+K)\partial_\beta S
			+y\cdot\nabla\partial_\beta S
			+\partial_\beta(V\cdot\nabla S)
			+\delta\partial_\beta(S\diver V)
			=
			\partial_\beta\mathcal F_s,\\
			&(\partial_\tau+\Lambda-1+K)\partial_\beta V
			+y\cdot\nabla\partial_\beta V
			+\partial_\beta(V\cdot\nabla V)
			+\delta\partial_\beta(S\nabla S)
			=
			\partial_\beta\mathcal F_v .
		\end{aligned}
		\right.
		\]
		Multiplying the first equation by \(\phi^K\partial_\beta S\), the second one by
		\(\phi^K\partial_\beta V\), summing over \(|\beta|=K\), and integrating over
		\(\R^3\), we get
		\begin{equation}\label{Eq_rewrite_full_energy_integral_system_local1}
			\begin{aligned}
				&\left(\frac12\frac{d}{d\tau}+\Lambda-1+K\right)E_K(\tau) \\
				&=-
				\sum_{|\beta|=K}
				\int_{\R^3}
				\phi^K y\cdot
				\left(
				\nabla\partial_\beta S\,\partial_\beta S
				+
				\sum_{i=1}^3
				\nabla\partial_\beta V_i\,\partial_\beta V_i
				\right)\,dy \\
				&\quad-
				\sum_{|\beta|=K}
				\int_{\R^3}
				\phi^K
				\left(
				\partial_\beta S\,\partial_\beta(V\cdot\nabla S)
				+
				\sum_{i=1}^3
				\partial_\beta V_i\,\partial_\beta(V\cdot\nabla V_i)
				\right)\,dy \\
				&\quad-
				\delta
				\sum_{|\beta|=K}
				\int_{\R^3}
				\phi^K
				\left(
				\partial_\beta S\,\partial_\beta(S\diver V)
				+
				\sum_{i=1}^3
				\partial_\beta V_i\,\partial_\beta(S\partial_{y_i}S)
				\right)\,dy \\
				&\quad+
				\sum_{|\beta|=K}
				\int_{\R^3}
				\phi^K
				\left(
				\partial_\beta S\,\partial_\beta\mathcal F_s
				+
				\partial_\beta V\cdot\partial_\beta\mathcal F_v
				\right)\,dy \\
				&=: -\mathfrak I_1-\mathfrak I_2-\mathfrak I_3+\mathfrak I_4 .
			\end{aligned}
		\end{equation}
		We estimate these four terms separately. For the linear transport contribution,
		using $|\diver(y\phi^K)|\le C\phi^K,$ we have
		\[
		\begin{aligned}
			|\mathfrak I_1|
			&\le
			\frac12
			\sum_{|\beta|=K}
			\int_{\R^3}
			|\diver(y\phi^K)|
			\bigl(
			|\partial_\beta S|^2+|\partial_\beta V|^2
			\bigr)\,dy  \\
			&\le
			C E_K(\tau).
		\end{aligned}
		\]
		
		For the nonlinear transport contribution $\mathfrak I_2$, we use
		\[
		\partial_\beta(V\cdot\nabla S)
		=
		V\cdot\nabla\partial_\beta S
		+
		(\partial_\beta V)\cdot\nabla S
		+
		\sum_{0<\eta<\beta}
		C_{\beta,\eta}
		(\partial_\eta V)\cdot\nabla\partial_{\beta-\eta}S,
		\]
		and
		\[
		\partial_\beta(V\cdot\nabla V_i)
		=
		V\cdot\nabla\partial_\beta V_i
		+
		(\partial_\beta V)\cdot\nabla V_i
		+
		\sum_{0<\eta<\beta}
		C_{\beta,\eta}
		(\partial_\eta V)\cdot\nabla\partial_{\beta-\eta}V_i .
		\]
		The terms $V\cdot\nabla\partial_\beta S$ and $V\cdot\nabla\partial_\beta V_i$ are treated by integration by parts. Indeed,
		\[
		\diver(\phi^K V)
		=
		\phi^K\diver V
		+
		K\phi^{K-1}V\cdot\nabla\phi .
		\]
		Since
		\[
		\|V\|_{W^{1,\infty}}\le C_{\tau_1},
		\qquad
		|\nabla\phi|\le C\phi,
		\]
		we have
		\[
		|\diver(\phi^K V)|\le C_{\tau_1}\phi^K.
		\]
		Thus integrating by parts, we have
		\[
		\left|
		\int_{\R^3}
		\phi^K V\cdot\nabla\partial_\beta S\,\partial_\beta S\,dy
		\right|
		+
		\sum_{i=1}^3
		\left|
		\int_{\R^3}
		\phi^K V\cdot\nabla\partial_\beta V_i\,\partial_\beta V_i\,dy
		\right|
		\le
		C_{\tau_1}E_K(\tau).
		\]
		The terms containing \(\partial_\beta V\) are bounded directly by the
		unweighted \(L^\infty\) bounds:
		\[
		\left|
		\int_{\R^3}
		\phi^K
		\bigl((\partial_\beta V)\cdot\nabla S\bigr)\partial_\beta S\,dy
		\right|
		+
		\sum_{i=1}^3
		\left|
		\int_{\R^3}
		\phi^K
		\bigl((\partial_\beta V)\cdot\nabla V_i\bigr)\partial_\beta V_i\,dy
		\right| \le
		C_{\tau_1}E_K(\tau).
		\]
		It remains to estimate the lower-order terms.
		We now estimate
		\[
		\left\|
		\phi^{K/2}
		(\partial_\eta V)\cdot\nabla\partial_{\beta-\eta}S
		\right\|_{L^2},
		\qquad
		|\beta|=K,\qquad 1\le |\eta|\le K-1 .
		\]
		If \(|\eta|=1\), in this case we simply put the first factor in
		\(L^\infty\). Since \(K\ge6\), Sobolev embedding gives
		\[
		\|\partial_\eta V\|_{L^\infty}\le C_{\tau_1}.
		\]
		Hence
		\[
		\left\|
		\phi^{K/2}
		(\partial_\eta V)\cdot\nabla\partial_{\beta-\eta}S
		\right\|_{L^2}
		\le
		C_{\tau_1}
		\|\phi^{K/2}\nabla\partial_{\beta-\eta}S\|_{L^2}
		\le
		C_{\tau_1}E_K(\tau)^{1/2}.
		\]
		It remains to consider \(2\le |\eta|\le K-1\). Choose \(r_0,r_1\in(1,\infty)\)
		with $\frac1{r_0}+\frac1{r_1}=\frac{1}{2}$ . By Hölder's
		inequality,
		\[
		\begin{aligned}
			&\left\|
			\phi^{K/2}
			(\partial_\eta V)\cdot\nabla\partial_{\beta-\eta}S
			\right\|_{L^2} \\
			&\le
			\left\|
			\langle y\rangle^{-\varepsilon_0}
			\phi^{K\theta_0/2}
			\partial_\eta V
			\right\|_{L^{r_0}}
			\left\|
			\langle y\rangle^{-\varepsilon_1}
			\phi^{K\theta_1/2}
			\nabla\partial_{\beta-\eta}S
			\right\|_{L^{r_1}}  \\
			&\quad\times
			\left\|
			\langle y\rangle^{\varepsilon_0+\varepsilon_1}
			\phi^{\frac K2(1-\theta_0-\theta_1)}
			\right\|_{L^\infty}.
		\end{aligned}
		\]
		Taking 
		\[
		\psi=1,\qquad
		\varphi=\phi^{\frac{1}{2}},
		\qquad
		p=\infty,\qquad
		q=2,\qquad
		l=K.
		\]
		Let \(\theta_0,\theta_1\in(0,1)\) be chosen so that
		\[
		\frac1{r_0}
		=
		\frac{|\eta|}{3}
		+
		\theta_0\left(\frac12-\frac {K}{3}\right), \qquad \frac1{r_1}
		=
		\frac{K-|\eta|+1}{3}
		+
		\theta_1\left(\frac12-\frac {K}{3}\right).
		\]
		The condition $\frac1{r_0}+\frac1{r_1}=\frac12$ implies $\theta_0+\theta_1=\frac{2K-1}{2K-3}$.
		The hypotheses \eqref{eq:weighted-GN-extra-new} of Lemma~\ref{lem:weighted-GN-new} are satisfied provided that $K\eta \gg 1$. Let \(\varepsilon_0,\varepsilon_1>0\) be sufficiently small. 
		Since $\theta_0+\theta_1-1=\frac{2}{2K-3}>0$, and \(\phi(y)\sim |y|^{2(1-\eta)}\) in the exterior region, where
		\(\eta\) denotes the exponent in the definition of the weight, we can choose
		\(\varepsilon_0,\varepsilon_1>0\) so small that $\left\|
		\langle y\rangle^{\varepsilon_0+\varepsilon_1}
		\phi^{\frac K2(1-\theta_0-\theta_1)}
		\right\|_{L^\infty}
		<\infty .$ Applying Lemma~\ref{lem:weighted-GN-new} to \(\partial_\eta V\) gives
		\[
		\left\|
		\langle y\rangle^{-\varepsilon_0}
		\phi^{\frac{K\theta_0}{2}}
		\partial_\eta V
		\right\|_{L^{r_0}}
		\le
		C_{\tau_1}
		\left(
		1+
		E_K(\tau)^{\theta_0/2}
		\right),
		\]
		and applying the same lemma to
		\(\nabla\partial_{\beta-\eta}S\), which has order \(K-|\eta|+1\), gives
		\[
		\left\|
		\langle y\rangle^{-\varepsilon_1}
		\phi^{\frac{K\theta_1}{2}}
		\nabla\partial_{\beta-\eta}S
		\right\|_{L^{r_1}}
		\le
		C_{\tau_1}
		\left(
		1+
		E_K(\tau)^{\theta_1/2}
		\right).
		\]
		Therefore,
		\[
		\begin{aligned}
			\left\|
			\phi^{K/2}
			(\partial_\eta V)\cdot\nabla\partial_{\beta-\eta}S
			\right\|_{L^2}
			&\le
			C_{\tau_1}
			\left(
			1+
			E_K(\tau)^{\theta_0/2}
			\right)
			\left(
			1+
			E_K(\tau)^{\theta_1/2}
			\right)  \\
			&\le
			C_{\tau_1}
			\left(
			1+
			E_K(\tau)^{\frac{\theta_0+\theta_1}{2}}
			\right).
		\end{aligned}
		\]
		Therefore, we finally obtain
		\begin{equation}
			\begin{split}
				\left|\sum_{|\beta|=K}
				\int_{\R^3}
				\phi^K
				\left(
				\partial_\beta S\,\partial_\beta(V\cdot\nabla S)
				\right)\,dy \right| &\le C_{\tau_1}(1+E_K(\tau))\\
				&\quad+\sum_{|\beta|=K}\sum_{0<\eta<\beta}
				C_{\beta,\eta}\left\|
				\phi^{K/2}
				(\partial_\eta V)\cdot\nabla\partial_{\beta-\eta}S\right\|_{L^2}\left\|\phi^{K/2}\partial_\beta S\right\|_{L^2}\\
				&\le C_{\tau_1}(1+E_K(\tau)^{\frac{2K-2}{2K-3}}).
			\end{split}
		\end{equation}
		The term $|\sum_{|\beta|=K}
		\int_{\R^3}\left(
		\sum_{i=1}^3\phi^K
		\partial_\beta V_i\,\partial_\beta(V\cdot\nabla V_i)
		\right)\,dy|$ is estimated in the same way.
		Combining the estimates above, we obtain
		\[
		|\mathfrak I_2|
		\le
		C_{\tau_1}(1+E_K(\tau)^{\frac{2K-2}{2K-3}}).
		\]
		
		Proceeding as in the estimate of $\mathfrak{I}_2$, we likewise have
		\[
		|\mathfrak I_3|
		\le C_{\tau_1}(1+E_K(\tau)^{\frac{2K-2}{2K-3}}).
		\]
		The only difference arises in the treatment of the highest-order terms. Namely,
		we integrate by parts once in
		\[
		\begin{aligned}
			&\delta\sum_{|\beta|=K}
			\int_{\R^3}
			\phi^K
			\left(
			S\partial_\beta S\,\partial_\beta\operatorname{div}V
			+
			\sum_{i=1}^3
			S\partial_\beta V_i\,
			\partial_{y_i}\partial_\beta S
			\right)\,\mathrm dy\\
			&=
			\delta\sum_{|\beta|=K}\sum_{i=1}^3
			\int_{\R^3}
			\phi^K S\partial_\beta S\,
			\partial_{y_i}\partial_\beta V_i\,\mathrm dy
			+
			\delta\sum_{|\beta|=K}\sum_{i=1}^3
			\int_{\R^3}
			\phi^K S\partial_\beta V_i\,
			\partial_{y_i}\partial_\beta S\,\mathrm dy\\
			&=
			-\delta\sum_{|\beta|=K}\sum_{i=1}^3
			\int_{\R^3}
			\partial_\beta V_i\,
			\partial_{y_i}\bigl(\phi^K S\bigr)
			\partial_\beta S\,\mathrm dy.
		\end{aligned}
		\]
		
		It remains to estimate the term \(\mathfrak I_4\). Substituting
		\eqref{Eq_def_mathcal_F_s}--\eqref{Eq_def_mathcal_F_v} into its definition, we
		write
		\[
		\begin{aligned}
			\mathfrak I_4
			&=
			e^{-f_1\tau}
			\sum_{|\beta|=K}
			\int_{\R^3}
			\phi^K\partial_\beta S\,
			\partial_\beta
			\left[
			d\alpha(\delta S)^{\frac{\alpha-1}{\delta}}\Delta S
			+
			d\alpha(\alpha-\delta)
			(\delta S)^{\frac{\alpha-\delta-1}{\delta}}|\nabla S|^2
			\right]\,dy\\
			&\quad+
			e^{-f_1\tau}
			\sum_{|\beta|=K}
			\int_{\R^3}
			\phi^K\partial_\beta V\cdot
			\partial_\beta
			\left[
			(\delta S)^{\frac{\alpha-1}{\delta}}\mathbb L(V)
			+
			\alpha(\delta S)^{\frac{\alpha-\delta-1}{\delta}}
			\nabla S\cdot\mathbb D V
			\right]\,dy\\
			&=:\mathfrak I_{4,1}+\mathfrak I_{4,2}.
		\end{aligned}
		\]
		Set
		\[
		D_K(\tau):=
		\sum_{|\beta|=K}
		\int_{\R^3}
		\phi^K
		\left(
		|\nabla\partial_\beta S|^2
		+
		|\nabla\partial_\beta V|^2
		\right)\,dy .
		\]
		We first consider the term $\mathfrak I_{4,1}$ and write
		\[
		\partial_\beta(d\alpha(\delta S)^{\frac{\alpha-1}{\delta}}\Delta S)
		=
		d\alpha(\delta S)^{\frac{\alpha-1}{\delta}}\Delta\partial_\beta S
		+
		[\partial_\beta,d\alpha(\delta S)^{\frac{\alpha-1}{\delta}}]\Delta S .
		\]
		The principal part gives, after integration by parts,
		\[
		\begin{aligned}
			e^{-f_1\tau}
			\int_{\R^3}
			\phi^K\partial_\beta S\,d\alpha(\delta S)^{\frac{\alpha-1}{\delta}}\Delta\partial_\beta S\,dy
			&=
			-e^{-f_1\tau}
			\int_{\R^3}
			\phi^K d\alpha(\delta S)^{\frac{\alpha-1}{\delta}}|\nabla\partial_\beta S|^2\,dy  \\
			&\quad
			-e^{-f_1\tau}
			\int_{\R^3}
			\partial_\beta S\,\nabla(\phi^K d\alpha(\delta S)^{\frac{\alpha-1}{\delta}})
			\cdot\nabla\partial_\beta S\,dy .
		\end{aligned}
		\]
		Since
		\[
		|\nabla\phi|\le C\phi,
		\qquad
		\|d\alpha(\delta S)^{\frac{\alpha-1}{\delta}}\|_{W^{1,\infty}}\le C_{\tau_1},
		\]
		the second term is bounded by
		\[
		\vartheta e^{-f_1\tau}
		\|\phi^{K/2}\nabla\partial_\beta S\|_{L^2}^2
		+
		C_{\vartheta,\tau_1}e^{-f_1\tau}
		\|\phi^{K/2}\partial_\beta S\|_{L^2}^2 .
		\]
		By Leibniz' rule and the composition formula, the remaining term containing $[\partial_\beta,
		d\alpha(\delta S)^{\frac{\alpha-1}{\delta}}]\Delta S
		+
		d\alpha(\alpha-\delta)
		\partial_\beta\left[
		(\delta S)^{\frac{\alpha-\delta-1}{\delta}}
		|\nabla S|^2
		\right]$ is a
		linear combination of terms of the form
		\[
		C(S)
		\prod_{j=0}^{\ell}
		\partial_{\bar\beta^j}S,
		\qquad
		\sum_{j=0}^{\ell}|\bar\beta^j|=K+2,
		\qquad
		1\le|\bar\beta^j|\le K+1,
		\]
		where \(C(S)\) is a smooth function of \(S\), uniformly bounded on
		\([\tau_0,\tau_1]\). More precisely,
		\[
		\begin{aligned}
			&[\partial_\beta,
			d\alpha(\delta S)^{\frac{\alpha-1}{\delta}}]\Delta S
			+
			d\alpha(\alpha-\delta)
			\partial_\beta\left[
			(\delta S)^{\frac{\alpha-\delta-1}{\delta}}
			|\nabla S|^2
			\right] \\
			&=
			\sum_{\substack{
					\sum_{j=0}^{\ell}|\bar\beta^j|=K+2\\
					1\le|\bar\beta^j|\le K+1}}
			C_{\bar\beta^0,\ldots,\bar\beta^\ell}(S)
			\prod_{j=0}^{\ell}\partial_{\bar\beta^j}S .
		\end{aligned}
		\]
		The terms containing a factor of order \(K+1\) satisfy
		\[
		\begin{aligned}
			&e^{-f_1\tau}
			\sum_{|\beta|=K}
			\left|
			\int_{\R^3}
			\phi^K\partial_\beta S\,
			C(S)\partial_{\bar\beta^0}S
			\prod_{j=1}^{\ell}\partial_{\bar\beta^j}S\,dy
			\right| \\
			&\qquad\le
			C_{\tau_1}e^{-f_1\tau}
			E_K(\tau)^{1/2}
			\|\phi^{K/2}\nabla^{K+1}S\|_{L^2} \\
			&\qquad\le
			\vartheta e^{-f_1\tau}
			\|\phi^{K/2}\nabla^{K+1}S\|_{L^2}^2
			+
			C_{\vartheta,\tau_1}e^{-f_1\tau}E_K(\tau),
			\qquad |\bar\beta^0|=K+1.
		\end{aligned}
		\]
		The terms containing a factor of order \(K\) are estimated by placing that
		factor in weighted \(L^2\) and all the other factors in \(L^\infty\), which
		gives
		\[
		C_{\tau_1}e^{-f_1\tau}\bigl(1+E_K(\tau)\bigr).
		\]
		It remains to consider
		\[
		\sum_{j=0}^{\ell}|\bar\beta^j|=K+2,
		\qquad
		1\le|\bar\beta^j|\le K-1.
		\]
		Set
		\[
		r_j:=\frac{2(K+2)}{|\bar\beta^j|},
		\qquad
		\theta_j:=
		\frac{|\bar\beta^j|-3/r_j}{K-\frac32},
		\qquad
		j=0,\ldots,\ell.
		\]
		Then
		\[
		\sum_{j=0}^{\ell}\frac1{r_j}=\frac12,
		\qquad
		\sum_{j=0}^{\ell}\theta_j
		=
		\frac{K+\frac12}{K-\frac32}
		=
		\frac{2K+1}{2K-3}.
		\]
		By Hölder's inequality and Lemma~\ref{lem:weighted-GN-new}, taking 
		\[
		\psi=1,\qquad
		\varphi=\phi^{\frac{1}{2}},
		\qquad
		p=\infty,\qquad
		q=2,\qquad
		l=K,
		\]
		and for sufficiently
		small \(\varepsilon_j>0\),
		\[
		\begin{aligned}
			&\left\|
			\phi^{K/2}
			C(S)\prod_{j=0}^{\ell}\partial_{\bar\beta^j}S
			\right\|_{L^2} \\
			&\quad\le
			C_{\tau_1}
			\prod_{j=0}^{\ell}
			\left\|
			\langle y\rangle^{-\varepsilon_j}
			\phi^{K\theta_j/2}
			\nabla^{|\bar\beta^j|}S
			\right\|_{L^{r_j}}
			\left\|
			\langle y\rangle^{\sum_{j=0}^{\ell}\varepsilon_j}
			\phi^{\frac K2(1-\sum_{j=0}^{\ell}\theta_j)}
			\right\|_{L^\infty} \\
			&\quad\le
			C_{\tau_1}
			\prod_{j=0}^{\ell}
			\left(
			1+E_K(\tau)^{\theta_j/2}
			\right)
			\le
			C_{\tau_1}
			\left(
			1+E_K(\tau)^{\frac{2K+1}{2(2K-3)}}
			\right).
		\end{aligned}
		\]
		Here the last weighted \(L^\infty\)-norm is finite because
		\[
		\sum_{j=0}^{\ell}\theta_j-1
		=
		\frac{4}{2K-3}>0,
		\]
		provided that the parameters \(\varepsilon_j\) are sufficiently small and
		\(K\eta\) is sufficiently large. Consequently,
		\[
		\begin{aligned}
			& e^{-f_1\tau}
			\sum_{|\beta|=K}
			\left|
			\int_{\R^3}
			\phi^K\partial_\beta S
			\sum_{\substack{
					\sum_{j=0}^{\ell}|\bar\beta^j|=K+2\\
					1\le|\bar\beta^j|\le K-1}}
			C_{\bar\beta^0,\ldots,\bar\beta^\ell}(S)
			\prod_{j=0}^{\ell}\partial_{\bar\beta^j}S\,dy
			\right| \\
			&\quad\le
			C_{\tau_1}e^{-f_1\tau}
			E_K(\tau)^{1/2}
			\left(
			1+E_K(\tau)^{\frac{2K+1}{2(2K-3)}}
			\right) \\
			&\quad\le
			C_{\tau_1}e^{-f_1\tau}
			\left(
			1+E_K(\tau)^{\frac{2K-1}{2K-3}}
			\right).
		\end{aligned}
		\]
		Combining the above estimates and taking \(\vartheta>0\) sufficiently small,
		We conclude that there exists a positive constant \(\kappa_0>0\) such that
		\[
		\mathfrak I_{4,1}
		\le
		-\frac{\kappa_0}{2}e^{-f_1\tau}
		\sum_{|\beta|=K}
		\|\phi^{K/2}\nabla\partial_\beta S\|_{L^2}^2 +
		C_{\tau_1}e^{-f_1\tau}
		\left(
		1+E_K(\tau)^{\frac{2K-1}{2K-3}}
		\right).
		\]
		Similarly, $\mathfrak{I}_{4,2}$ can be estimated as (using the ellipticity of the operator \(\mathbb L\), as stated in \eqref{Lvellpiticoo})
		\[
		\begin{split}
			\mathfrak I_{4,2}
			&\le
			-\frac{\kappa_0}{2}e^{-f_1\tau}
			\sum_{|\beta|=K}
			\|\phi^{K/2}\nabla\partial_\beta V\|_{L^2}^2 +\frac{\kappa_0}{4}e^{-f_1\tau}\sum_{|\beta|=K}
			\|\phi^{K/2}\nabla\partial_\beta S\|_{L^2}^2\\
			&\quad+
			\sum_{|\beta|=K}
			C_{\tau_1}e^{-f_1\tau}
			\left(
			1+E_K(\tau)^{\frac{2K-1}{2K-3}}
			\right).
		\end{split}
		\]
		In particular, since \(e^{-f_1\tau}\) is bounded on
		\([\tau_0,\tau_1]\), this implies there exists a positive constant $c_{\tau_1}$ depending on $\tau_1$ such that
		\[
		\mathfrak I_4
		\le -c_{\tau_1}D_K(\tau)+C_{\tau_1}
		\left(
		1+E_K(\tau)^{\frac{2K-1}{2K-3}}
		\right)
		.
		\]
		
		Substituting the estimates for
		\(\mathfrak I_1,\mathfrak I_2,\mathfrak I_3,\mathfrak I_4\) into
		\eqref{Eq_rewrite_full_energy_integral_system_local1}, we obtain
		\[
		\frac{d}{d\tau}E_K(\tau)
		\le
		C_{\tau_1}
		\left(
		1+E_K(\tau)^{\frac{2K-1}{2K-3}}
		\right),
		\qquad
		\tau\in[\tau_0,\tau_1].
		\]
		Set
		\[
		p:=\frac{2K-1}{2K-3}>1,
		\qquad
		Z(\tau):=1+E_K(\tau).
		\]
		Since
		\[
		1+E_K(\tau)^p\le 2Z(\tau)^p,
		\]
		after enlarging \(C_{\tau_1}\) if necessary, we have
		\[
		Z'(\tau)\le C_{\tau_1}Z(\tau)^p.
		\]
		By the comparison principle for ordinary differential inequalities,
		\[
		Z(\tau)
		\le
		\left[
		Z(\tau_0)^{1-p}
		-
		(p-1)C_{\tau_1}(\tau-\tau_0)
		\right]^{-\frac1{p-1}},
		\]
		as long as
		\[
		Z(\tau_0)^{1-p}
		-
		(p-1)C_{\tau_1}(\tau-\tau_0)>0.
		\]
		Consequently, after choosing \(\tau_1-\tau_0>0\) sufficiently small so that
		\[
		(p-1)C_{\tau_1}(\tau_1-\tau_0)
		\le
		\frac12 Z(\tau_0)^{1-p},
		\]
		we obtain
		\[
		\sup_{\tau_0\le\tau\le\tau_1}E_K(\tau)
		\le
		2^{\frac1{p-1}}
		\bigl(1+E_K(\tau_0)\bigr)-1.
		\]
		Thus the weighted high-order energy remains finite on a sufficiently short
		time interval.
		
	\end{proof}
	\subsection{Truncated Equations and Choice of Initial Data}
	In this section, we truncate the system (or equation) and subsequently analyze its linear and nonlinear parts. Meanwhile, we make a preliminary choice of the initial data.
	Choose fixed radial cutoffs \(\widehat\chi_1,\widehat\chi_2\in
	C_c^\infty(\R^3)\) such that
	\[
	\widehat\chi_1=1\ \text{on }B(0,R_1),\quad
	\operatorname{supp}\widehat\chi_1\subset B(0,2R_1),\qquad
	\widehat\chi_2=1\ \text{on }B(0,2R_1),\quad
	\operatorname{supp}\widehat\chi_2\subset B(0,3R_1).
	\]
	We take \(\tau_0\) large enough that \(3R_1<e^{\tau_0}/2\); hence the
	time-dependent cutoff \(\widehat\chi\) equals one on the support of
	\(\widehat\chi_2\). For a sufficiently large damping parameter \(J>0\), we define the truncated
	linearized operator according to the perturbation equations \eqref{perturSEq}--\eqref{perturVEq}
	\begin{equation}\label{Ls,LvSSVV}
		\mathcal{L}=(\mathcal{L}_s,\mathcal{L}_v):= \widehat{\chi}_2(\mathcal{L}_s^1, \mathcal{L}_v^1) - J(1-\widehat{\chi}_1),
	\end{equation}
	We complement our system with the following auxiliary truncated equation:
	\begin{equation}\label{truncatedsystem}
		\left\{
		\begin{aligned}
			& \partial_\tau \widetilde{\widetilde{S}} = \mathcal{L}_s (\widetilde{\widetilde{S}}, \widetilde{\widetilde{V}}) + \widehat{\chi}_2({\mathcal{N}_s(\widetilde{S},\widetilde{V})}+{\mathcal{E}_s(\bar{S},\bar{V})} \\
			&\quad\quad\quad+e^{-f_1 \tau} d \alpha (\delta S)^{\frac{\alpha-1}{\delta}} \Delta S + e^{-f_1 \tau} d \alpha (\alpha-\delta) (\delta S)^{\frac{\alpha-\delta-1}{\delta}} |\nabla S|^2), \\
			& \partial_\tau \widetilde{\widetilde{V}} = \mathcal{L}_v (\widetilde{\widetilde{S}}, \widetilde{\widetilde{V}}) + \widehat{\chi}_2 ({\mathcal{N}_v(\widetilde{S},\widetilde{V})}+{\mathcal{E}_v(\bar{S},\bar{V})} \\
			&\quad\quad\quad+e^{-f_1 \tau} (\delta S)^{\frac{\alpha-1}{\delta}}\mathbb{L}(V) + e^{-f_1 \tau} \alpha (\delta S)^{\frac{\alpha-\delta-1}{\delta}} \nabla S \cdot \mathbb{D}V), \\
			& (\widetilde{\widetilde{S}}, \widetilde{\widetilde{V}})(\tau_0, y) = (\widetilde{\widetilde{S}}_0, \widetilde{\widetilde{V}}_0)(y) \quad \quad \text{for } y \in \mathbb{R}^3, \\
			& (\widetilde{\widetilde{S}}, \widetilde{\widetilde{V}})(\tau, y) \rightarrow (0, 0) \quad \quad \quad \quad \text{as } |y| \rightarrow \infty \quad \text{for } \tau \ge \tau_0,
		\end{aligned}
		\right.
	\end{equation}
	the initial data for the truncated system are chosen as
	\begin{equation}\label{initialdatatruncated}
		\left\{
		\begin{aligned}
			\widetilde{\widetilde S}_0
			&=
			\widehat{\chi}_2\widetilde S_0^*
			+
			\sum_{j=1}^{N}
			\widehat k_j\Phi_{j,S},\\
			\widetilde{\widetilde V}_0
			&=
			\widehat{\chi}_2\widetilde V_0^*
			+
			\sum_{j=1}^{N}
			\widehat k_j\Phi_{j,V},
		\end{aligned}
		\right.
	\end{equation}
	where \((\widetilde S_0^*,\widetilde V_0^*)\) denotes the prescribed stable
	part of the initial perturbation, while the $\Phi_{j,S}$,$\Phi_{j,V}$ $(j=1,2,...,N)$ are an orthonormal basis of \(X_{\rm u}\) constructed in Lemma~\ref{lem:new-stable-unstable-summary} and the coefficients
	\(\{\widehat{k}_j\}_{j=1}^{N}\) will be selected later to control the
	unstable modes.
	Let $\widehat\chi_0(y):=\widehat\chi(\tau_0,y),$ the stable part $(\widetilde S_0^*,\widetilde V_0^*)$
	is selected so that the complete initial perturbation
	\((\widetilde S_0,\widetilde V_0)\) obeys
	\begin{equation}\label{initialdatacondition1}
		\begin{aligned}
			&(\chi_2\widetilde S_0^*,\chi_2\widetilde V_0^*)
			\in X_{\mathrm{s}},\\
			&\max\left\{
			\|\widetilde S_0\|_{L^\infty},
			\|\widetilde V_0\|_{L^\infty},
			\|(\widehat\chi_2\widetilde S_0^*,
			\widehat\chi_2\widetilde V_0^*)\|_X
			\right\}
			\leq \sigma_1,\\
			&E_K(\tau_0)\le\frac{E}{2},\\
			&S_0(y)
			=
			\widetilde S_0(y)+\widehat\chi_0(y)\overline S(y)
			\ge
			\frac{\sigma_1}{2}
			\left\langle\frac{y}{R_0}\right\rangle^{1-\Lambda},
			\qquad y\in\R^3.
		\end{aligned}
	\end{equation}
	We further require the initial data to have the spatial decay needed in the
	bootstrap argument:
	\begin{equation}\label{initialdatacondition2}
		\sup_{y\in\R^3}
		\left\langle\frac{y}{R_0}\right\rangle^{\Lambda}
		\left[
		\left|
		\nabla\bigl(
		\widetilde S_0+\widehat\chi_0\overline S
		\bigr)(y)
		\right|
		+
		\left|
		\nabla\bigl(
		\widetilde V_0+\widehat\chi_0\overline V
		\bigr)(y)
		\right|
		\right]
		<\infty,
	\end{equation}
	The equations for
	\((\widetilde{\widetilde S},\widetilde{\widetilde V})\) form a first-order
	symmetric hyperbolic system with smooth coefficients determined by the local
	solution \((S,V)\). Since Theorem~\ref{thm:local-self-similar} ensures that
	\[
	(\widetilde S,\widetilde V)
	=
	(S-\widehat\chi\,\bar S,\,
	V-\widehat\chi\,\bar V)
	\]
	is smooth on \([\tau_0,\tau_1]\times\R^3\), the standard local theory for
	symmetric hyperbolic systems, yields a unique smooth solution
	$(\widetilde{\widetilde S},\widetilde{\widetilde V})$
	of the truncated Cauchy problem on
	\([\tau_0,\tau_1]\times\R^3\), with initial data
	\eqref{initialdatatruncated}.
	
	We now establish the relation between the truncated perturbation
	\((\widetilde{\widetilde S},\widetilde{\widetilde V})\) and the original
	perturbation \((\widetilde S,\widetilde V)\).
	\begin{lemma}\label{localidentity}
		Assume that $(\widetilde{\widetilde{S}},\widetilde{\widetilde{V}})$ represents a smooth solution to \eqref{truncatedsystem} over the temporal interval $[\tau_0, \tau_1] \times \mathbb{R}^3$. If, at the initial state $\tau = \tau_0$, the variables coincide on the ball:
		\begin{equation*}
			(\widetilde{\widetilde{S}}_0, \widetilde{\widetilde{V}}_0) = (\widetilde{S}_0, \widetilde{V}_0) \quad \text{for all } y \in B(0, R_1),
		\end{equation*}
		then the unique identification holds throughout the evolution:
		\begin{equation*}
			(\widetilde{\widetilde{S}}, \widetilde{\widetilde{V}}) = (\widetilde{S}, \widetilde{V}) \quad \text{for all } (\tau, y) \in [\tau_0, \tau_1] \times B(0, R_1).
		\end{equation*}
	\end{lemma}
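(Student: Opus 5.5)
The plan is a local energy (domain-of-dependence) argument for the difference
\[
W:=(W_S,W_V):=(\widetilde{\widetilde S}-\widetilde S,\ \widetilde{\widetilde V}-\widetilde V)
\]
on the fixed ball $B(0,R_1)$. The idea is that $W$ solves a \emph{linear}, source-free first-order system there, with an outgoing flux at $\partial B(0,R_1)$, and Gronwall then forces $W\equiv0$.

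\textbf{Step 1: reduce to a linear system.} On $B(0,R_1)$ we have $\widehat\chi_1=\widehat\chi_2=1$, so $\mathcal L=(\mathcal L_s^1,\mathcal L_v^1)$ there. Moreover $3R_1<e^{\tau_0}/2$ gives $\widehat\chi\equiv1$ on this ball for all $\tau\ge\tau_0$, so the coefficients $\widehat\chi\bar S,\widehat\chi\bar V$ coincide with $\bar S,\bar V$. The remaining right-hand sides of \eqref{truncatedsystem} are evaluated on the \emph{original} quantities $(\widetilde S,\widetilde V,S,V)$, not on the truncated unknowns. These are the terms $\widehat\chi_2(\mathcal N_s+\mathcal E_s+\text{dissipative terms})$ and their $V$-counterparts, and on the ball they equal exactly the corresponding terms in \eqref{perturSEq}--\eqref{perturVEq}. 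Subtracting the two systems therefore gives, for $(\tau,y)\in[\tau_0,\tau_1]\times B(0,R_1)$,
\[
\partial_\tau W_S=\mathcal L_s^1(W_S,W_V),\qquad \partial_\tau W_V=\mathcal L_v^1(W_S,W_V),\qquad W(\tau_0)=0 \text{ on } B(0,R_1).
\]
This is a linear symmetric hyperbolic system with smooth bounded coefficients built from $\bar S,\bar V$. The dissipative terms have cancelled, so no parabolic boundary issue arises.

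\textbf{Step 2: localized energy identity.} Set $\mathcal E(\tau):=\int_{B(0,R_1)}(|W_S|^2+|W_V|^2)\,dy$. Multiply the equations by $W_S$ and $W_V$ and integrate over the ball. The transport terms $-(y+\bar V)\cdot\nabla W$ produce interior contributions bounded by $C\mathcal E$ (via $\operatorname{div}(y+\bar V)$) and the boundary term
\[
-\tfrac12\int_{\partial B}(y+\bar V)\cdot n\,|W|^2\,dS .
\]
The coupling terms $-\delta\bar S(W_S\operatorname{div}W_V+W_V\cdot\nabla W_S)$ are handled by a single integration by parts. This yields the interior term $\int\delta W_SW_V\cdot\nabla\bar S$, bounded by $C\mathcal E$, and the boundary term $-\int_{\partial B}\delta\bar S\,W_S\,W_V\cdot n\,dS$. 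All zero-order terms are bounded by $C\mathcal E$. Altogether,
\[
\tfrac12\mathcal E'(\tau)\le C\mathcal E(\tau)-\tfrac12\int_{\partial B(0,R_1)}\Big[(y+\bar V)\cdot n\,(W_S^2+|W_V|^2)+2\delta\bar S\,W_S\,W_V\cdot n\Big]dS .
\]

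\textbf{Step 3: sign of the boundary flux (the main point).} On $|y|=R_1$ we have $(y+\bar V)\cdot n\ge R_1-|\bar V|$ and $|2\delta\bar S W_SW_V\cdot n|\le\delta\bar S(W_S^2+|W_V|^2)$. Thus the bracket is nonnegative provided
\[
R_1-|\bar V(y)|-\delta\bar S(y)\ge0\quad\text{on } |y|=R_1,
\]
that is, provided the boundary is outflow-characteristic: the radial transport speed dominates the sound speed. This holds because $R_1$ is large and, by \eqref{est_decay_1}, $|\bar S|+|\bar V|\le 1/(50M_1)$ outside $B(0,R_1)$. Hence the boundary integral enters with a favourable sign and can be dropped.

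\textbf{Step 4: conclude.} We obtain $\mathcal E'\le 2C\mathcal E$ with $\mathcal E(\tau_0)=0$, so Gronwall gives $\mathcal E\equiv0$ on $[\tau_0,\tau_1]$. Smoothness of both solutions then gives the pointwise identity on $[\tau_0,\tau_1]\times B(0,R_1)$, which is the claim of Lemma~\ref{localidentity}.

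The only delicate step is Step 3. If a sharper justification is wanted, one applies the same computation on the slightly smaller balls $B(0,R_1-\epsilon)$ and lets $\epsilon\to0$, with the outflow condition holding uniformly there.
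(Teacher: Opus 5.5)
Your proposal is correct and follows essentially the same route as the paper: you derive a linear, source-free system for the mismatch on $B(0,R_1)$, run an $L^2$ energy estimate in which integration by parts leaves an outgoing boundary flux that can be dropped, and close with Gr\"onwall from zero initial data. The only difference is cosmetic: you justify the sign of the boundary term using the largeness of $R_1$ together with \eqref{est_decay_1}, whereas the paper invokes the outgoing property \eqref{eq:profile-outgoing-property}, $r+\bar{\mathcal V}-\delta\bar S\ge (r-1)\bar\eta$, and either suffices.
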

	
	\begin{proof}
		Let us introduce the mismatch variables defined by
		\begin{equation*}
			(\mathcal{R}_S, \mathcal{R}_V) := (\widetilde{S} - \widetilde{\widetilde{S}}, \, \widetilde{V} - \widetilde{\widetilde{V}}).
		\end{equation*}
		Then, the dynamics of $(\mathcal{R}_S, \mathcal{R}_V)$ are governed by the following localized perturbation system:
		\begin{equation}\label{4.16}
			\begin{aligned}
				\partial_\tau \mathcal{R}_S + (\Lambda - 1)\mathcal{R}_S &= -(y + \widehat{\chi}\bar{V}) \cdot \nabla \mathcal{R}_S - \delta(\widehat{\chi}\bar{S}) \operatorname{div}\mathcal{R}_V - \mathcal{R}_V \cdot \nabla(\widehat{\chi}\bar{S}) - \delta \mathcal{R}_S \operatorname{div}(\widehat{\chi}\bar{V}), \\
				\partial_\tau \mathcal{R}_V + (\Lambda - 1)\mathcal{R}_V &= -(y + \widehat{\chi}\bar{V}) \cdot \nabla \mathcal{R}_V - \delta(\widehat{\chi}\bar{S}) \nabla \mathcal{R}_S - \mathcal{R}_V \cdot \nabla(\widehat{\chi}\bar{V}) - \delta \mathcal{R}_S \nabla(\widehat{\chi}\bar{S})
			\end{aligned}
		\end{equation}
		within the localized domain $B(0, R_1)$, equipped with the trivial initial data $(\mathcal{R}_S, \mathcal{R}_V)|_{\tau=\tau_0} = (0,0)$.
		
		Recalling that $(\bar{S}(y), \bar{V}(y))$ are spherically symmetric, there exists an underlying scalar field $\bar{\mathcal{V}}(|y|)$ such that its vector representation is given by
		\begin{equation}\label{sphericallysysm}
			(\bar{S}, \bar{V})(y) = \left( \bar{S}(|y|), \, \bar{\mathcal{V}}(|y|)\frac{y}{|y|} \right).
		\end{equation}
		Multiplying \eqref{4.16}$_1$ and \eqref{4.16}$_2$ by $\mathcal{R}_S$ and $\mathcal{R}_V$, respectively, summing the resulting equations, and integrating over the ball $B(0, R_1)$, we deduce from \eqref{sphericallysysm} and the profile bounds \eqref{eq:profile-outgoing-property} that
		$$
		\begin{aligned}
			&\frac{1}{2} \frac{\mathrm{d}}{\mathrm{d}\tau} \int_{B(0,R_1)} \left( \mathcal{R}_S^2 + |\mathcal{R}_V|^2 \right) \mathrm{d}y \\
			&= \int_{B(0,R_1)} ( -\frac{1}{2}(y + \widehat{\chi}\bar{V}) \cdot \nabla(\mathcal{R}_S^2) - \delta (\widehat{\chi}\bar{S}) \mathcal{R}_S \operatorname{div}\mathcal{R}_V \\
			&\quad- (\Lambda - 1)\mathcal{R}_S^2 - \mathcal{R}_S \mathcal{R}_V \cdot \nabla (\widehat{\chi}\bar{S}) - \delta \mathcal{R}_S^2 \operatorname{div}(\widehat{\chi}\bar{V}) ) \mathrm{d}y.\\
			&\quad + \int_{B(0,R_1)} ( -\frac{1}{2}(y + \widehat{\chi}\bar{V}) \cdot \nabla(|\mathcal{R}_V|^2) - \delta (\widehat{\chi}\bar{S}) \nabla \mathcal{R}_S \cdot \mathcal{R}_V \\
			&\quad- (\Lambda - 1)|\mathcal{R}_V|^2 - \mathcal{R}_V \cdot \nabla(\widehat{\chi}\bar{V}) \cdot \mathcal{R}_V - \delta \mathcal{R}_S \nabla (\widehat{\chi}\bar{S}) \cdot \mathcal{R}_V ) \mathrm{d}y \\
			&= \int_{B(0,R_1)} ( \frac{3 + \operatorname{div}\bar{V}}{2}(\mathcal{R}_S^2 + |\mathcal{R}_V|^2) + \delta \nabla \bar{S} \cdot (\mathcal{R}_S \mathcal{R}_V) - (\Lambda - 1)(\mathcal{R}_S^2 + |\mathcal{R}_V|^2) \\
			&\quad- \mathcal{R}_V \cdot \nabla \bar{V} \cdot \mathcal{R}_V - (\delta + 1)\mathcal{R}_S \mathcal{R}_V \cdot \nabla \bar{S} - \delta \mathcal{R}_S^2 \operatorname{div}\bar{V} ) \mathrm{d}y \\
			&\quad +\int_{\partial B(0,R_1)} ( -\frac{|y| + \bar{\mathcal{V}}}{2}(\mathcal{R}_S^2 + |\mathcal{R}_V|^2) - \delta \bar{S} \mathcal{R}_S (\mathcal{R}_V \cdot \boldsymbol{n}) ) \mathrm{d}S \\
			&\le C \int_{B(0,R_1)} (\mathcal{R}_S^2 + |\mathcal{R}_V|^2) \, \mathrm{d}y - \int_{\partial B(0,R_1)} \frac{|y| + \bar{\mathcal{V}} - \delta \bar{S}}{2}(\mathcal{R}_S^2 + |\mathcal{R}_V|^2) \, \mathrm{d}S \\
			&\le C \int_{B(0,R_1)} (\mathcal{R}_S^2 + |\mathcal{R}_V|^2) \, \mathrm{d}y,
		\end{aligned}$$
		with $\boldsymbol{n}$ denoting the outward unit normal vector on the boundary $\partial B(0,R_1)$. Applying the Grönwall inequality to the integral functional and invoking the trivial initial condition $$(\mathcal{R}_S, \mathcal{R}_V)|_{\tau=\tau_0} = (0, 0),$$ within $B(0, R_1)$, we conclude that the mismatch variables vanish identically:$$    (\mathcal{R}_S, \mathcal{R}_V)(\tau, y) = (0, 0) \quad \text{for all } (\tau, y) \in [\tau_0, \tau_1] \times B(0, R_1).$$
	\end{proof}
	\subsection{The Bootstrap Argument}
	Now we prescribe the required bootstrap framework which will be proved later.
	\begin{proposition}
		\label{prop_English}
		Assume that the initial data are chosen by \eqref{initialdatacondition1}--\eqref{initialdatacondition2}.
		We prescribe the following bootstrap hypotheses for all $\tau \in [\tau_0, \tau_1]$:
		\begin{align}
			\|P_{\mathrm{uns}}(\widetilde{\widetilde{S}}, \widetilde{\widetilde{V}})\|_{X} &\le \sigma_1 e^{-\varpi(\tau-\tau_0)}, \label{Puns}\\
			\max \{|\widetilde{S}|, |\widetilde{V}|\} &\le \frac{\sigma_0}{50} e^{-\varpi(\tau-\tau_0)}, \label{S,U,1} \\
			\frac{\sigma_1}{10} \left\langle \frac{y}{R_0} \right\rangle^{-(\Lambda - 1)} &\le S, \label{S,lower,1}\\
			|\nabla \widetilde{S}| + |\nabla S| &\le 2C_1 \left\langle \frac{y}{R_0} \right\rangle^{-\Lambda},\label{nableS1} \\
			|\nabla \widetilde{V}| + |\nabla V| &\le 2C_1 \left\langle \frac{y}{R_0} \right\rangle^{-\Lambda},\label{nableU1}\\
			\max \left\{\|\nabla^4 \widetilde{S}\|_{L^2(B^c(0,R_1))}, \|\nabla^4 \widetilde{V}\|_{L^2(B^c(0,R_1))}\right\} &\le \sigma_0 e^{-\varpi(\tau-\tau_0)},\label{nable41} \\
			E_K = \int (|\nabla^K S|^2 + |\nabla^K V|^2) \phi^K \, \mathrm{d}y &\le E \label{E1}.
		\end{align}
		Then, for $\tau \in [\tau_0, \tau_1]$, one can derive the following sharpened estimates:
		\begin{align}
			\|P_{\mathrm{uns}}(\widetilde{\widetilde{S}}, \widetilde{\widetilde{V}})\|_{X} &\le \sigma_1^{\frac{21}{20}} e^{-\frac{4\varpi}{3}(\tau-\tau_0)},  \label{Puns2}\\
			\max \{|\widetilde{S}|, |\widetilde{V}|\} &\le \frac{\sigma_0}{100} e^{-\varpi(\tau-\tau_0)},  \label{S,U,2}\\
			\frac{\sigma_1}{5} \left\langle \frac{y}{R_0} \right\rangle^{-(\Lambda - 1)}&\le S , \label{S,lower,2}\\
			|\nabla \widetilde{S}| + |\nabla S| &\le  C_1\left\langle \frac{y}{R_0} \right\rangle^{-\Lambda}, \label{nableS2}\\
			|\nabla \widetilde{V}| + |\nabla V| &\le  C_1\left\langle \frac{y}{R_0} \right\rangle^{-\Lambda},\label{nableU2}\\
			\max \left\{\|\nabla^4 \widetilde{S}\|_{L^2(B^c(0,R_1))}, \|\nabla^4 \widetilde{V}\|_{L^2(B^c(0,R_1))}\right\} &\le \frac{\sigma_0}{2} e^{-\varpi(\tau-\tau_0)},  \label{nable42}\\
			E_K = \int (|\nabla^K S|^2 + |\nabla^K V|^2) \phi^K \, \mathrm{d}y &\le \frac{E}{2}\label{E2}. 
		\end{align}
		Here, the constant $C_1$ is given by \eqref{C1define}. We write
		$X_{\rm s}$ and $X_{\rm u}$ for the
		stable and unstable subspaces in Lemma~\ref{lem:new-stable-unstable-summary};
		$P_{\mathrm{sta}}$ and $P_{\mathrm{uns}}$ denote the corresponding spectral
		projections.
		
		Specifically, \eqref{Puns} is utilized to derive the temporal decay of the unstable components, while the nonlinear stability estimates in \eqref{S,U,1} constitute the core of the proof for the singularity formation. The lower bound estimate for the density is established in \eqref{S,lower,1}. Furthermore, \eqref{nableS1}, \eqref{nableU1}, \eqref{nable41}, and \eqref{E1} are primarily designed to obtain lower-order decay and higher-order estimates. This bootstrap argument serves as the central mechanism for establishing both global existence and nonlinear stability in the self-similar regime.
	\end{proposition}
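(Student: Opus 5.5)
I will prove the proposition by improving each bootstrap hypothesis in turn, in an order in which every step uses only hypotheses already improved or the original ones. Throughout, every term carrying $e^{-f_1\tau}$ is made perturbative via \eqref{eq:tau0-large-condition}. The lower bound \eqref{S,lower,1} combined with \eqref{nableS1}, \eqref{nableU1} and \eqref{E1} controls the coefficients $(\delta S)^{\frac{\alpha-1}{\delta}}$ and $(\delta S)^{\frac{\alpha-\delta-1}{\delta}}$. This bound deteriorates at most polynomially, like $\sigma_1^{-C}\langle y/R_0\rangle^{C}$. The weights $\phi^K$ absorb this growth at high order. Elsewhere the factor $e^{-f_1\tau}$ beats any fixed power of $\sigma_1^{-1}$ and $R_0$.

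\emph{Step 1 (linear/spectral part).} I write the truncated system \eqref{truncatedsystem} in Duhamel form using the semigroup generated by $\mathcal L$ on $X$. Lemma~\ref{lem:new-stable-unstable-summary} provides the splitting $X=X_{\rm s}\oplus X_{\rm u}$, with decay $e^{-\frac{25}{12}\varpi(\tau-\tau')}$ (that is, $e^{-\sigma_g(\tau-\tau')}$) on $X_{\rm s}$.

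For \eqref{Puns2}, note that $P_{\rm uns}$ is finite dimensional. By a topological (Brouwer) argument on the choice of the coefficients $\widehat k_j$, its projection can be kept inside the shrunken bootstrap region; this is the standard shooting argument, and it effectively shows that the bound on $P_{\rm uns}$ holds for a suitable choice of $\widehat k$ rather than being improved unconditionally. The forcing terms are $\widehat\chi_2\mathcal N$, which is quadratic and hence of size $\sigma_0\sigma_1$-type; $\widehat\chi_2\mathcal E$, which vanishes because $\widehat\chi\equiv1$ on $\operatorname{supp}\widehat\chi_2$; and the $e^{-f_1\tau}$ terms. This gives the $\sigma_1^{21/20}$ gain.

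For the stable part, the same Duhamel estimate gives $\|P_{\rm sta}\|_X\lesssim \sigma_1 e^{-\frac43\varpi(\tau-\tau_0)}$. Lemma~\ref{localidentity} then transfers the bounds from the truncated solution to $(\widetilde S,\widetilde V)$ on $B(0,R_1)$. Sobolev embedding with $m$ large gives $|\widetilde S|+|\widetilde V|\lesssim C(R_1)\sigma_1 e^{-\varpi(\tau-\tau_0)}\ll\sigma_0/100$ there.

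\emph{Step 2 (exterior region $|y|\ge R_1$).} Here I use the outgoing characteristics of $\partial_\tau+(y+\widehat\chi\bar V+\widetilde V)\cdot\nabla$. Lemma~\ref{Lemma_R1_determination} makes the profile coefficients tiny, and the damping $(\Lambda-1)$ together with the $y\cdot\nabla$ transport produces decay along characteristics. Data enter either from $\tau_0$ or from $\partial B(0,R_1)$, where Step 1 already holds.

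I close \eqref{S,U,2} by a maximum-principle/characteristics estimate. The viscous terms on the right are bounded pointwise by $e^{-f_1\tau}\sigma_1^{-C}\times$ derivative bounds, which is negligible.

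The same route handles \eqref{nable42}. I prove an $L^2(B^c(0,R_1))$ energy estimate for $\nabla^4$, with boundary flux of the favourable sign since $|y|+\bar{\mathcal V}-\delta\bar S>0$. The lower-order coefficients are small in $L^\infty$, $L^8$ and $L^2$ by \eqref{est_decay_2}--\eqref{est_decay_4}, and the interior boundary values come from Step 1.

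\emph{Step 3 (pointwise bounds).} For the density lower bound \eqref{S,lower,2}, I write the $S$-equation along characteristics as $\frac{d}{d\tau}S=-(\Lambda-1+\delta\operatorname{div}V)S+\mathcal F_s$. Along characteristics $|y|$ grows like $e^{\tau}$, so $\langle y/R_0\rangle^{1-\Lambda}$ decays at essentially the rate $(\Lambda-1)$. The factor $\delta\operatorname{div}V$ is small by \eqref{nableU1}. The viscous error satisfies $|\mathcal F_s|\lesssim e^{-f_1\tau}\sigma_1^{-C}$ and is dominated, since $\sigma_1\gg$ this quantity.

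In the near region $|y|\le R_0$, instead, $S\ge 2\sigma_0-\sigma_0/50$ follows directly from \eqref{eq:R0-choice} and \eqref{S,U,2}. Comparing with the data bound $\frac{\sigma_1}{2}$ in \eqref{initialdatacondition1} then gives the constant $\frac15$.

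For \eqref{nableS2}--\eqref{nableU2}, I differentiate the equations once. This gives a transport system for $(\nabla S,\nabla V)$ with damping $\Lambda$, the exponent gained from commuting $\nabla$ with $y\cdot\nabla$. Weighting by $\langle y/R_0\rangle^{\Lambda}$ costs only small terms, using \eqref{eq:weight-radial-ratio-bound} and the repulsivity of the profile. The second-order terms coming from $\mathcal F$ carry $e^{-f_1\tau}$ and are bounded through \eqref{E1} and interpolation. Choosing $C_1$ larger than the data and profile bounds, as in \eqref{C1define}, closes the estimate.

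\emph{Step 4 (top order).} I repeat the computation from the proof of Theorem~\ref{thm:local-self-similar}, but now track the sign. The transport term $y\cdot\nabla$ contributes $+K-\frac{K}{2}\frac{r\partial_r\phi}{\phi}\cdot\frac{3}{?}\ge K\eta$-type positive damping. By \eqref{eq:K-repulsivity-relation} this damping dominates the commutators with the profile, whose coefficients are of size $O(1)$ independent of $K$.

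The diffusion yields $-c\,e^{-f_1\tau}\sigma_1^{C}D_K$, which absorbs the highest-order nonlinear terms. The remaining terms are bounded by $C(K)(1+E_K^{1-\epsilon})$ using the weighted Gagliardo--Nirenberg interpolation with lower norms, which are controlled by Steps 1--3. This gives $\frac{d}{d\tau}E_K\le -K\eta E_K+C(K)E^{1-\epsilon}$, hence $E_K\le E/2$ because $C(K)\ll E$.

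\emph{Main obstacle.} I expect this top-order estimate and the gradient bound to be the hardest parts. The degenerate coefficient $(\delta S)^{\frac{\alpha-1}{\delta}}$ grows like $\langle y\rangle^{(1-\alpha)(\Lambda-1)/\delta}$ in the far field. I would first try to absorb this growth using the gap $2\eta$ in the weight together with $e^{-f_1\tau}$, noting that $f_1>0$ was chosen exactly so that the product decays.
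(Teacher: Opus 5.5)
There is a genuine gap in your lower-order pointwise steps. Your overall architecture is the paper's: Duhamel plus a Brouwer selection for the truncated problem, rays $y=y_0e^{\tau-\bar\tau}$ outside, an exterior $\nabla^4$ energy, and a weighted top-order energy. But you use interpolation only for the viscous terms and otherwise rely on \eqref{nableS1}--\eqref{nableU1}, which carry the large constant $2C_1$ and no smallness.

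In Step~3 you take $\delta\operatorname{div}V$ to be small by \eqref{nableU1}. That hypothesis only gives $|\nabla V|\le 2C_1\langle y/R_0\rangle^{-\Lambda}$ with $C_1\ge 200$ by \eqref{C1define}. Along a ray from $|y_0|\ge R_0$ it therefore yields only $\int\delta|\operatorname{div}V|\,d\tau\lesssim\delta C_1/\Lambda$. The resulting factor $e^{-c\delta C_1}$ wipes out the passage from the datum $\frac{\sigma_1}{2}$ to $\frac{\sigma_1}{5}$.

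The gradient step fails more badly. Differentiating the Euler part does not give a transport system. It produces the acoustic couplings $\delta S\nabla\operatorname{div}V$ and $\delta S\nabla^2S$, whose second derivatives no hypothesis controls pointwise. It also produces quadratic terms $\nabla V\cdot\nabla S$, which \eqref{nableS1}--\eqref{nableU1} bound only by $4C_1^2\langle y/R_0\rangle^{-2\Lambda}$, far too large to improve $2C_1$ to $C_1$. In Step~2, likewise, $\widehat\chi\bar S\operatorname{div}\widetilde V$ and $\widetilde S\operatorname{div}\widetilde V$ need $|\nabla\widetilde V|\lesssim\sigma_0e^{-\varpi(\tau-\tau_0)}$ on $B^c(0,R_1)$.

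The missing ingredient is interpolation. You need the exterior Gagliardo--Nirenberg bound \eqref{Eq_GN_interpolation_result}, obtained from \eqref{S,U,1} and \eqref{nable41}. Above all you need the weighted bound \eqref{123decay}, $|\nabla^jS|+|\nabla^jV|\le\sigma_0^{9/10}\phi^{-j/2}$ on $B^c(0,R_0)$ for $j=1,2,3$, obtained from $|S|+|V|\le C\sigma_0$ there together with \eqref{E1}. With it every Euler source along the rays is $O(\sigma_0^{9/5})$ with time-integrable decay. On $|y|\le R_0$, where these terms are not perturbative for any ray argument, the gradient bound comes directly from the same interpolation ($|\nabla\widetilde S|+|\nabla\widetilde V|\le C(K)\sigma_0^{9/10}$) plus the profile.

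The viscous sources are also not negligible uniformly in $y$. Under the bootstrap bounds $(\delta S)^{\frac{\alpha-1}{\delta}}$ can grow like $\langle y/R_0\rangle^{(\Lambda-1)\frac{1-\alpha}{\delta}}$, and $e^{-f_1\tau}$ cannot offset spatial growth. What controls them is the $\phi^{-1}$ decay of $\nabla^2S$ together with the \emph{upper} bounds $f_1\le 1-2\eta$ in \eqref{1-lamda} and $f_1\le 2-\Lambda-2\eta$ in \eqref{2-lamda}. These use $\Lambda<\Lambda^*(\gamma)$, not $f_1>0$.

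Several further claims need correcting.

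First, in the exterior $\nabla^4$ estimate the boundary flux has the \emph{unfavourable} sign. Positivity of $|y|+\bar{\mathcal V}-\delta\bar S$ makes $\partial B(0,R_1)$ an outflow boundary for the ball, which is what Lemma~\ref{localidentity} uses. It is therefore an inflow boundary for $B^c(0,R_1)$. The flux is a source of size $C(R_1)(\sigma_1/\sigma_g)^2e^{-2\varpi(\tau-\tau_0)}$, paid for by the interior $X$-bound and absorbed via \eqref{R1sigma1sigmag1}.

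Second, at top order the profile commutators from $[\partial_\beta,\widehat\chi\bar V\cdot\nabla]$ and $[\partial_\beta,\widehat\chi\bar S\nabla]$ are of size $K$, not $O(1)$, so a $K\eta$ damping cannot dominate them by size. They must be combined, in radial and angular components, with the $K$ produced by $[\partial_\beta,y\cdot\nabla]$ and by the weight. They are then closed with the repulsivity bounds \eqref{eq:profile-radial-repulsivity}--\eqref{eq:profile-angular-repulsivity} in $B(0,R_0)$ and with \eqref{eq:weight-radial-ratio-bound} outside, leaving net damping $K\min\{\bar\eta,\eta\}$. Only the $K$-independent remainder is beaten via \eqref{eq:K-repulsivity-relation}.

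Third, the obstacle you flag for the viscous top-order terms is not resolved by the $2\eta$ gap or by $e^{-f_1\tau}$. Keep the growing factor inside the dissipation, $\int\phi^KS^{\frac{\alpha-1}{\delta}}|\nabla\partial_\beta S|^2$, together with its $V$ analogue, which is coercive by \eqref{Lvellpiticoo} since $\alpha<\frac12$. Then Cauchy--Schwarz against it, so that only ratios such as $S^{\frac{\alpha-1}{\delta}}|\nabla S|^2/S^2\lesssim\langle y/R_0\rangle^{f_1-\Lambda}$ must be bounded; this needs $f_1<\Lambda$. With your unweighted $-c\,e^{-f_1\tau}\sigma_1^{C}D_K$ you would instead need $S^{\frac{\alpha-1}{\delta}}|\nabla S|/S$ bounded. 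The bootstrap bounds control that quantity only by $\langle y/R_0\rangle^{f_1+1-\Lambda}$, which is unbounded when $f_1>\Lambda-1$, and this occurs for $\gamma$ near $1$ and small $\alpha$.

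Finally, the stable semigroup decays at rate $\mathfrak{c}_g/2=\frac{25}{24}\varpi$, not $\sigma_g$. You therefore get $\|P_{\rm sta}\|_X\lesssim(\sigma_1/\sigma_g)e^{-\varpi(\tau-\tau_0)}$, not a $\frac43\varpi$ rate. This still suffices for \eqref{S,U,2} because $\sigma_1\ll\sigma_0\sigma_g$.
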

	\subsection{Lower-Order estimates}
	\begin{lemma}
		Under the bootstrap hypotheses \eqref{Puns}-\eqref{E1}, we have
		\begin{equation}
			\label{123decay}
			|\nabla S| + |\nabla V| \le \sigma_0^{\frac{9}{10}} \phi^{-\frac{1}{2}}, \quad |\nabla^2 S| + |\nabla^2 V| \le \sigma_0^{\frac{9}{10}} \phi^{-1}, \quad |\nabla^3 S| + |\nabla^3 V| \le \sigma_0^{\frac{9}{10}} \phi^{-\frac{3}{2}},
		\end{equation}
		in the far-field region $B^c(0, R_0)$.
	\end{lemma}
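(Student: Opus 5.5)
The plan is to interpolate, in the far field $B^c(0,R_0)$, between two pieces of information from the bootstrap hypotheses. The low-order information is the pointwise smallness $\max\{|\widetilde S|,|\widetilde V|\}\le\frac{\sigma_0}{50}$ from \eqref{S,U,1}, combined with the profile smallness $\bar S+|\bar V|\le C\sigma_0$ on $|y|\ge R_0$ from \eqref{eq:R0-choice}. The high-order information is the weighted energy bound $E_K\le E$ from \eqref{E1}.

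First I write $S=\widetilde S+\widehat\chi\bar S$ and $V=\widetilde V+\widehat\chi\bar V$. By \eqref{eq:profile-decay} and $|\nabla^j\widehat\chi|\le Ce^{-j\tau}$, the cutoff profile satisfies
\[
|\nabla^j(\widehat\chi\bar S)|+|\nabla^j(\widehat\chi\bar V)|\le C_j\langle y\rangle^{-(\Lambda-1)-j}.
\]
On $|y|\ge R_0$ this is bounded by $C_j\sigma_0 R_0^{-j}\langle y/R_0\rangle^{-j}$, using $\sigma_0\sim R_0^{1-\Lambda}$ from \eqref{eq:R0-sigma0-relation}. Since $\phi^{-j/2}\gtrsim\langle y/R_0\rangle^{-j(1-\eta)}$ there, the profile contribution is at most $C\sigma_0\phi^{-j/2}$, which is far below $\sigma_0^{9/10}\phi^{-j/2}$. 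It therefore suffices to bound $\nabla^j\widetilde S$ and $\nabla^j\widetilde V$ for $j=1,2,3$. For these I use $\|\nabla^K S\|$ and $\|\nabla^K V\|$, which control $\nabla^K\widetilde S$ and $\nabla^K\widetilde V$ up to profile terms that are again harmless.

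The core step is a localized weighted Gagliardo--Nirenberg interpolation, carried out on dyadic annuli $A_\lambda=\{\lambda\le|y|\le2\lambda\}$ with $\lambda\ge R_0$. Rescaling $A_\lambda$ to unit size and applying the standard $L^\infty$ interpolation between $L^\infty$ and $\dot H^K$ (valid for $K\ge6>3/2+j$) gives
\[
\lambda^{j}\|\nabla^j f\|_{L^\infty(A_\lambda)}\lesssim \|f\|_{L^\infty(\tilde A_\lambda)}^{1-\theta}\bigl(\lambda^{K-3/2}\|\nabla^K f\|_{L^2(\tilde A_\lambda)}\bigr)^{\theta}+\|f\|_{L^\infty(\tilde A_\lambda)},
\qquad \theta=\frac{j}{K-3/2},
\]
where $\tilde A_\lambda$ is a slightly enlarged annulus. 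On $\tilde A_\lambda$ we have $\phi\sim(\lambda/R_0)^{2(1-\eta)}$, so $\|\nabla^K f\|_{L^2(\tilde A_\lambda)}\lesssim E^{1/2}(\lambda/R_0)^{-K(1-\eta)}$. Inserting $\|f\|_{L^\infty}\le\sigma_0$ yields
\[
|\nabla^j f|\lesssim \lambda^{-j}\sigma_0^{1-\theta}E^{\theta/2}\,\lambda^{\theta(K-3/2)}(\lambda/R_0)^{-\theta K(1-\eta)}+\sigma_0\lambda^{-j}.
\]
Because $\theta(K-3/2)=j$, the powers of $\lambda$ combine to $\lambda^{-\theta K(1-\eta)}R_0^{\theta K(1-\eta)}\approx(\lambda/R_0)^{-j(1-\eta)}$, up to a factor $(\lambda/R_0)^{-\theta\cdot 3(1-\eta)/2}\le1$. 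The result is $\lesssim\sigma_0^{1-\theta}E^{\theta/2}\phi^{-j/2}$, and the second term $\sigma_0\lambda^{-j}$ is handled as in the profile estimate. Alternatively, I could invoke Lemma~\ref{lem:weighted-GN-new} directly with $p=\infty$ and $\varphi=\phi^{1/2}$, as in the proof of Theorem~\ref{thm:local-self-similar}.

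The remaining point is the smallness, where $\theta=j/(K-3/2)\le 3/(K-3/2)$ is tiny because $1/K\ll1$. Then $\sigma_0^{1-\theta}E^{\theta/2}\le\sigma_0^{9/10}$ as long as $E^{\theta/2}\le\sigma_0^{-1/10+\theta}$, which holds by the hierarchy $\sigma_0\ll 1/E\ll 1/K$ (e.g.\ $C(E)\sigma_0\ll1$ and $(3^K/2)^{2K}\ll E$). I expect the main obstacle to be this bookkeeping. The annular interpolation must be uniform in $\lambda$ and must handle the transition region $R_0<|y|<4R_0$, where $\phi$ is only comparable to $1$. The implied Gagliardo--Nirenberg constants, which depend on $K$, must also be absorbed; this uses the freedom $C(K)\ll E$ and the fact that $\sigma_0^{1/10-\theta}$ dominates any $C(K)E^{\theta/2}$ once $\sigma_0$ is chosen after $E$ and $K$.
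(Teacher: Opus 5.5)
Your proposal is correct and is essentially the paper's argument. Both proofs interpolate, via the weighted Gagliardo--Nirenberg inequality with $\varphi=\phi^{1/2}$, $l=K$, $\theta=j/(K-\frac32)$, between an $O(\sigma_0)$ sup bound and the weighted energy bound $E_K\le E$, and both get $\sigma_0^{9/10}$ from $\theta\ll1$ together with the hierarchy $\sigma_0\ll 1/E\ll 1/K$. Your splitting $S=\widetilde S+\widehat\chi\bar S$ (using $\widetilde E_K\le 2E$ and the profile decay separately) is a mild improvement, because it makes the $L^\infty$ input genuinely global; the paper instead applies the lemma directly to $(S,V)$ with a sup bound valid only on $B^c(0,R_0)$, even though $\bar S(0)=S_*$ is of order one.
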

	\begin{proof}
		Regarding the behavior in the far-field region $B^c(0, R_0)$, the bootstrap bounds \eqref{S,U,1} and \eqref{eq:profile-decay} ensure that $(S,V)$ remain bounded, specifically 
		\begin{equation}\label{S,V,maxCsigma0}
			\max\{|S|, |V|\} \le C\sigma_0. 
		\end{equation}
		To obtain higher-order control, we interpolate these estimates by the weighted Gagliardo--Nirenberg inequality in Lemma \ref{lem:weighted-GN-new}, with the parameter configuration
		\begin{equation*}
			\varphi = \phi^{\frac{1}{2}}, \quad l = K, \quad i = j, \quad \theta = \frac{j}{K - 3/2},
		\end{equation*}
		it follows that for each index $j \in \{1, 2, 3\}$, we have:
		\begin{equation*}
			|\nabla^j S| + |\nabla^j V| \le C(K) \left( \sigma_0^{1-\frac{j}{K-3/2}} E^{\frac{j}{2K-3}} \phi^{-\frac{Kj}{2(K-3/2)}} + \sigma_0 \langle y \rangle^{-j} \right).
		\end{equation*}
		In view of the weight properties in \eqref{eq:weight-definition-notation}--\eqref{eq:weight-power-properties}, we arrive at the following localized decay estimates:
		\begin{equation}
			|\nabla S| + |\nabla V| \le \sigma_0^{\frac{9}{10}} \phi^{-\frac{1}{2}}, \quad |\nabla^2 S| + |\nabla^2 V| \le \sigma_0^{\frac{9}{10}} \phi^{-1}, \quad |\nabla^3 S| + |\nabla^3 V| \le \sigma_0^{\frac{9}{10}} \phi^{-\frac{3}{2}}. 
		\end{equation}
	\end{proof}
	\begin{proposition}
		Under the bootstrap hypotheses \eqref{Puns}-\eqref{E1}, we have that \eqref{S,lower,2} holds; that is,
		\begin{equation}
			\frac{\sigma_1}{5} \left\langle \frac{y}{R_0} \right\rangle^{-(\Lambda - 1)} \le S(\tau, y), \quad \text{for } \tau \in [\tau_0, \tau_1] \quad \text{and} \quad y \in \mathbb{R}^3,
		\end{equation}
		when $|y| \ge R_0$, there holds
		\begin{equation}\label{upper bound}
			S \le C \sigma_0 \max \left\{ \left\langle \frac{y}{R_0} \right\rangle^{-(\Lambda-1)}, e^{-(\tau-\tau_0)(\Lambda-1)} \right\} \quad \text{for } \tau \in [\tau_0, \tau_1].
		\end{equation}
	\end{proposition}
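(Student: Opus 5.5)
We split $\mathbb R^3$ into the near region $|y|\le R_0$ and the far region $|y|\ge R_0$, and treat the density equation as a transport equation with a small source.

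\emph{Near region.} For $|y|\le R_0$ and $\tau$ large, \eqref{eq:time-dependent-cutoff} gives $\widehat\chi=1$ there. Hence $S=\bar S+\widetilde S$. By \eqref{eq:R0-choice} and the bootstrap bound \eqref{S,U,1},
\[
S\ge 2\sigma_0-\frac{\sigma_0}{50}\ge \sigma_0\gg \frac{\sigma_1}{5}\Big\langle \frac{y}{R_0}\Big\rangle^{-(\Lambda-1)} .
\]
No dynamics is needed here.

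\emph{Far region, setup.} For $|y|\ge R_0$ we write the $S$-equation of \eqref{Eq_self_similar_S} as
\[
\partial_\tau S+(y+V)\cdot\nabla S=-(\Lambda-1)S-\delta S\,\operatorname{div}V+\mathcal F_s
\]
and integrate along the characteristics $\dot Y=Y+V(\tau,Y)$. By \eqref{S,V,maxCsigma0}, $|V|\le C\sigma_0\ll R_0$, so $|Y|$ grows at least like $e^{(1-C\sigma_0/R_0)(\tau-\tau_0)}|Y_0|$. In particular a characteristic that is in $B^c(0,R_0)$ at time $\tau$ either started in $B^c(0,R_0)$ at $\tau_0$ or crossed $|y|=R_0$ outward at some intermediate time $\tau'$. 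Along a characteristic,
\[
\frac{d}{d\tau}\log S=-(\Lambda-1)-\delta\operatorname{div}V+\frac{\mathcal F_s}{S}.
\]
We bound the two error terms as follows.
\begin{itemize}
\item \textbf{Divergence term.} By \eqref{123decay}, $|\operatorname{div}V|\le\sigma_0^{9/10}\phi^{-1/2}\lesssim \sigma_0^{9/10}\langle Y/R_0\rangle^{-(1-\eta)}$. Since $|Y|$ grows exponentially, its time integral is $O(\sigma_0^{9/10})$.
\item \textbf{Dissipation term.} We use the bootstrap lower bound \eqref{S,lower,1} to bound $(\delta S)^{(\alpha-1)/\delta}$ and $(\delta S)^{(\alpha-\delta-1)/\delta}$ by powers of $\sigma_1^{-1}\langle y/R_0\rangle^{\Lambda-1}$. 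We use \eqref{nableS1} and \eqref{123decay} for $|\nabla S|^2$ and $|\Delta S|$. Then $|\mathcal F_s|/S\le C(\sigma_1,R_0)e^{-f_1\tau}\langle y\rangle^{p}$ for some fixed power $p$.
\end{itemize}

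\emph{Far region, main obstacle.} The polynomial growth $\langle y\rangle^{p}$ is the expected obstacle: along characteristics $|Y|^p\sim e^{p\tau}$, which could beat $e^{-f_1\tau}$. I would first try to verify that the exponents close. Because $\alpha<\tfrac12$ and $\Lambda<\frac{1+\sqrt3}{2}$, the net power of $\langle y\rangle$ coming from $S^{(\alpha-1)/\delta-1}|\nabla^2S|$ (which decays like $\langle y\rangle^{-\Lambda-1}$ by the gradient bootstrap) should be controlled by the exponent $f_1$. If it does not close, the fallback is a cruder argument: split into $|y|\le e^{\tau}$ (where $\widehat\chi$ and the profile dominate) and $|y|\ge e^{\tau}$ (where $S$ is close to $S^*(\tau)$ plus a small error, using the far-field behaviour). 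In either case the aim is
\[
\int_{\tau_0}^{\infty}\Big|\frac{\mathcal F_s}{S}\Big|\,d\tau\le C(\sigma_1,R_0,\dots)e^{-f_1\tau_0}\ll1,
\]
which \eqref{eq:tau0-large-condition} provides.

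\emph{Far region, conclusion.} Integrating the log-equation gives
\[
S(\tau,Y)\ge S(\tau',Y(\tau'))\,e^{-(\Lambda-1)(\tau-\tau')}e^{-C\sigma_0^{9/10}-o(1)}.
\]
For the initial value we use \eqref{initialdatacondition1}, $S_0\ge\frac{\sigma_1}{2}\langle y/R_0\rangle^{1-\Lambda}$; for a boundary crossing we use the near-region bound $S\ge\sigma_0$. The comparison
\[
\langle Y(\tau)/R_0\rangle^{-(\Lambda-1)}\lesssim e^{-(\Lambda-1)(\tau-\tau')}\langle Y(\tau')/R_0\rangle^{-(\Lambda-1)}
\]
(a rate slightly below $1$ costs only a factor $1+O(\sigma_0)$) then yields $S\ge\frac{\sigma_1}{2}(1-o(1))\langle y/R_0\rangle^{1-\Lambda}\ge\frac{\sigma_1}{5}\langle y/R_0\rangle^{1-\Lambda}$.

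\emph{Upper bound.} The same characteristic identity gives the upper bound \eqref{upper bound}. Starting from $S\le C\sigma_0$ on $|y|\ge R_0$, the factor $e^{-(\Lambda-1)(\tau-\tau')}$ together with the $O(1)$ error factors gives $S\le C\sigma_0\max\{\langle y/R_0\rangle^{-(\Lambda-1)},e^{-(\Lambda-1)(\tau-\tau_0)}\}$. The first alternative covers characteristics entering through $|y|=R_0$ (or starting where the profile dominates); the second covers those starting in the far field, where $S_0\le C\sigma_0$.
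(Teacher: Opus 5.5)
\textbf{There is a genuine gap in the dissipation estimate.} Your overall structure matches the paper's. The near region is handled by the profile lower bound, and the far region by integrating along curves that start either on $\{|y|=R_0\}$ or on the initial slice. Using true characteristics with a multiplicative $\log S$ estimate is a legitimate variant. The paper instead uses the rays $y_0e^{\tau-\bar\tau}$ and treats $V\cdot\nabla S$ as a source, which gives an additive error $20\sigma_0^{9/5}\langle y_0/R_0\rangle^{1-\Lambda}$ and requires $\sigma_0^{9/5}\ll\sigma_1$.

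However, the step that carries the whole proof is left open. You never show that $\int|\mathcal F_s/S|\,d\tau\ll1$ uniformly over characteristics, and the justification you sketch fails for two reasons.

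First, $\nabla^2S$ is not controlled by the gradient bootstrap \eqref{nableS1}. The only available bound is \eqref{123decay}, namely $|\nabla^2S|\le\sigma_0^{9/10}\phi^{-1}\sim|y|^{-2(1-\eta)}$. Combining this with \eqref{S,lower,1} and the identity $(\Lambda-1)\frac{1-\alpha}{\delta}=f_1-\Lambda+2$ gives
\[
e^{-f_1\tau}S^{\frac{\alpha-1}{\delta}-1}|\Delta S|\lesssim C(\sigma_1,R_0)\,e^{-f_1\tau}\langle y\rangle^{f_1-1+2\eta}.
\]
So $p=f_1-1+2\eta$, and you need $p\le0$, that is $f_1+2\eta\le1$. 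The $|\nabla S|^2$ term, estimated with \eqref{nableS1}, gives $p=f_1-\Lambda$, which is negative once $f_1<1$. If $p>0$, your bound on the integral is not uniform over characteristics starting at $\tau_0$ with large $|Y_0|$.

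Second, $f_1<1$ does not follow from $\alpha<\frac12$ and $\Lambda<\frac{1+\sqrt3}{2}$. For example, $\gamma$ close to $1$ with $\Lambda=1.3$ gives $f_1\gg1$. What is needed is the $\gamma$-dependent restriction $\Lambda<\Lambda^*(\gamma)$. The paper checks it by explicit computation in the two regimes $1<\gamma<\frac53$ and $\frac53\le\gamma<1+\frac2{\sqrt3}$; these are its inequalities \eqref{1-lamda} and \eqref{2-lamda}.

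Once $p<0$ is established, you get $|\mathcal F_s/S|\lesssim C(\sigma_1,R_0)e^{-f_1\tau}$. The integral is then at most $C(\sigma_1,R_0)e^{-f_1\tau_0}/f_1\ll1$ by \eqref{eq:tau0-large-condition}, and your argument closes.

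Your fallback does not cover the case $p>0$. For $|y|\ge e^\tau$ one has $S=\widetilde S$, and the bootstrap gives only an upper bound on $|\widetilde S|$. Nothing gives relative closeness of $S$ to $S^*(\tau)$, so no lower bound follows.

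\textbf{Two smaller points.} A uniform growth rate $1-C\sigma_0/R_0$ for $|Y|$ would cost a factor $e^{C\sigma_0(\Lambda-1)(\tau-\tau')/R_0}$, which is unbounded in time. What is true is that $\frac{d}{d\tau}\log|Y|=1+O(|V|/|Y|)$, and $\int|V|/|Y|\,d\tau\lesssim\sigma_0/R_0$ because $|Y|$ grows exponentially. Hence $\log|Y(\tau)|=\log|Y(\tau')|+(\tau-\tau')+O(\sigma_0/R_0)$, which is what your comparison needs. Also, the bracket comparison loses a fixed factor $2^{-(\Lambda-1)/2}$ rather than $1-o(1)$. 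This is harmless, since $\frac12\cdot2^{-(\Lambda-1)/2}>\frac15$.
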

	\begin{proof}
		\textbf{1.} For the inner region $|y| < R_0$, the lower bound for the cutoff profile and \eqref{S,U,1} yield the following strict lower bound for the density: 
		\begin{equation}
			S \ge 2\sigma_0 - \|\widetilde S\|_{L^\infty} \ge \sigma_0\ge \frac{\sigma_1}{5}\ge\frac{\sigma_1}{5} \left\langle \frac{y}{R_0} \right\rangle^{-(\Lambda - 1)}.
		\end{equation}
		
		\textbf{2.} For the far-field regime where $|y| \ge R_0$, we introduce the characteristic-type $$\omega_{S,y_0}(\tau) := e^{(\Lambda-1)(\tau-\bar{\tau})} S(\tau, y_0 e^{(\tau-\bar{\tau})}),$$
		here $y=y_0e^{\tau-\bar{\tau}}$ ($\bar{\tau}\ge \tau_0$) and $|y_0| \ge R_0$ will be defined later.
		
		Taking the derivative with respect to $\tau$ and invoking the transport structure $\eqref{Eq_self_similar_S}_1$, we obtain:
		\begin{align}
			\partial_\tau \omega_{S,y_0}(\tau) &= (\Lambda - 1)e^{(\Lambda-1)(\tau-\bar{\tau})} S(\tau, y_0e^{(\tau-\bar{\tau})}) + e^{(\Lambda-1)(\tau-\bar{\tau})} \partial_\tau S(\tau, y_0e^{(\tau-\bar{\tau})}) \nonumber \\
			&\quad + e^{(\Lambda-1)(\tau-\bar{\tau})} y_0e^{(\tau-\bar{\tau})} \cdot \nabla S(\tau, y_0e^{(\tau-\bar{\tau})})  \nonumber\\
			&= -e^{(\Lambda-1)(\tau-\bar{\tau})} (V \cdot \nabla S)(\tau, y_0e^{(\tau-\bar{\tau})}) - \delta e^{(\Lambda-1)(\tau-\bar{\tau})} (S \operatorname{div} V)(\tau, y_0e^{(\tau-\bar{\tau})}) \nonumber\\
			&\quad +e^{-f_1 \tau}e^{(\Lambda-1)(\tau-\bar{\tau})} d \alpha (\delta S)^{\frac{\alpha-1}{\delta}} \Delta S(\tau, y_0e^{(\tau-\bar{\tau})}) \nonumber \\
			&\quad + e^{-f_1 \tau}e^{(\Lambda-1)(\tau-\bar{\tau})} d \alpha (\alpha-\delta) (\delta S)^{\frac{\alpha-\delta-1}{\delta}} |\nabla S|^2(\tau, y_0e^{(\tau-\bar{\tau})}). \label{wS1}
		\end{align}

		We obtain from \eqref{lambdafanwei2}, \eqref{Eq_parameter_hierarchy}, \eqref{123decay} and \eqref{S,V,maxCsigma0} that
		\begin{align}
			&\left| e^{(\Lambda-1)(\tau-\bar{\tau})} V(\tau, y_0 e^{(\tau-\bar{\tau})})\cdot \nabla S(\tau, y_0 e^{(\tau-\bar{\tau})}) \right| \nonumber \\
			&\le C \left| \frac{y_0 e^{(\tau-\bar{\tau})}}{R_0} \right|^{-1+\eta} e^{(\Lambda-1)(\tau-\bar{\tau})} \sigma_0^{\frac{19}{10}} \nonumber \\
			&\le C \sigma_0^{\frac{19}{10}} \left\langle \frac{y_0}{R_0} \right\rangle^{-1+\eta} e^{(\tau-\bar{\tau})(\eta+\Lambda-2)} \le \frac{1}{2} \sigma_0^{\frac{9}{5}} \left\langle \frac{y_0}{R_0} \right\rangle^{1-\Lambda} e^{-\frac{\tau-\bar{\tau}}{10}}, \label{VS1}
		\end{align}
		and
		\begin{equation}
			\left| \delta e^{(\Lambda-1)(\tau-\bar{\tau})} S(\tau, y_0 e^{(\tau-\bar{\tau})}) \operatorname{div} V(\tau, y_0 e^{(\tau-\bar{\tau})}) \right| \le \frac{1}{2} \sigma_0^{\frac{9}{5}} \left\langle \frac{y_0}{R_0} \right\rangle^{1-\Lambda} e^{-\frac{\tau-\bar{\tau}}{10}}.\label{SV1}
		\end{equation}
		Furthermore, by invoking \eqref{Eq_parameter_hierarchy}, \eqref{S,lower,1}, and \eqref{123decay} once more, we obtain
		\begin{equation}
			\begin{split}\label{SSS1}
				&\left| e^{-f_1 \tau}e^{(\Lambda-1)(\tau-\bar{\tau})} d \alpha (\delta S)^{\frac{\alpha-1}{\delta}} \Delta S(\tau, y_0e^{(\tau-\bar{\tau})}) \right| \\
				&\le C e^{-f_1 \tau} \left| \frac{y_0 e^{(\tau-\bar{\tau})}}{R_0} \right|^{-2+2\eta} e^{(\Lambda-1)(\tau-\bar{\tau})} \sigma_0^{\frac{9}{10}}{\sigma_1}^{\frac{\alpha-1}{\delta}} \left\langle \frac{y_0 e^{(\tau-\bar{\tau})}}{R_0} \right\rangle^{(\Lambda - 1)\frac{1-\alpha}{\delta}}  \\
				&\le C \sigma_0^{\frac{9}{10}}{\sigma_1}^{\frac{\alpha-1}{\delta}}e^{(-f_1-2+2\eta+\Lambda-1+\frac{(\Lambda-1)}{\delta}(1-\alpha))(\tau-\bar{\tau})} \left\langle \frac{y_0}{R_0} \right\rangle^{-2+2\eta+(\Lambda - 1)\frac{1-\alpha}{\delta}} e^{-f_1\bar{\tau}}\\
				&\le Ce^{-f_1\tau_0}\sigma_0^{\frac{9}{10}}{\sigma_1}^{\frac{\alpha-1}{\delta}}e^{-\frac{\tau-\bar{\tau}}{10}}\left\langle \frac{y_0}{R_0} \right\rangle^{-2+2\eta+(\Lambda - 1)\frac{1-\alpha}{\delta}}\\
				&\le \frac{1}{2}\sigma_0^{\frac{9}{5}}\left\langle \frac{y_0}{R_0} \right\rangle^{1-\Lambda}e^{-\frac{\tau-\bar{\tau}}{10}}.
			\end{split}
		\end{equation}
		Here, we have utilized the following facts in view of \eqref{Eq_parameter_hierarchy}:
		\begin{align}
			-f_1 - 2 + 2\eta + \Lambda - 1 + \frac{(\Lambda-1)}{\delta}(1-\alpha) = -1 + 2\eta \le -\frac{1}{10}&, \\
			f_1 - \Lambda + 2 - 2 + 2\eta = f_1 - \Lambda + 2\eta \le 1 - \Lambda&, \label{1-lamda}\\
			C e^{-f_1\tau_0} \sigma_0^{\frac{9}{10}} \sigma_1^{\frac{\alpha-1}{\delta}} \le \frac{1}{2} \sigma_0^{\frac{9}{5}}&.
		\end{align}
		In addition, we provide an explicit proof for $\eqref{1-lamda}$ as follows:
		\begin{itemize}
			\item In the adiabatic index regime $1 < \gamma < \frac{5}{3}$, the estimate for the parameter $f_1$ is modified as
			\begin{align}
				-1 + f_1 + 2\eta &= \left( \frac{1 - \alpha}{\delta} + 1 \right)(\Lambda - 1) - 2 + 2\eta \nonumber \\
				&\le \frac{2}{\left(1 + \sqrt{\frac{2}{\gamma-1}}\right)^2} \frac{2(1 - \alpha) + \gamma - 1}{\gamma - 1} - 2 + 2\eta \nonumber \\
				&= \frac{2\gamma + 2 - 4\alpha}{\gamma + 1 + 2\sqrt{2(\gamma - 1)}} - 2 + 2\eta < 0,
			\end{align}
			where the final inequality holds provided that $\eta > 0$ is chosen to be sufficiently small. 
			\item For the regime $\frac{5}{3} \le \gamma < 1 + \frac{2}{\sqrt{3}}$, we have:
			\begin{align}
				-1 + f_1 + 2\eta &= \left( \frac{1 - \alpha}{\delta} + 1 \right)(\Lambda - 1) - 2 + 2\eta \nonumber \\
				&\le \frac{(3 - \sqrt{3})(\gamma - 1) + (6 - 2\sqrt{3})(1 - \alpha)}{2 + \sqrt{3}(\gamma - 1)} - 2 + 2\eta \nonumber \\
				&= \frac{(3 - 3\sqrt{3})(\gamma - 1) + (6 - 2\sqrt{3})(1 - \alpha) - 4}{2 + \sqrt{3}(\gamma - 1)} + 2\eta < 0.
			\end{align}
		\end{itemize}
		Regarding the term
		\begin{equation*}
			e^{-f_1 \tau}e^{(\Lambda-1)(\tau-\bar{\tau})} d \alpha (\alpha-\delta) (\delta S)^{\frac{\alpha-\delta-1}{\delta}} |\nabla S|^2(\tau, y_0e^{(\tau-\bar{\tau})}),
		\end{equation*}
		we can obtain the following estimate by invoking \eqref{Eq_parameter_hierarchy}, \eqref{S,lower,1}, and \eqref{123decay}:
		\begin{equation}
			\begin{split}\label{SSS2}
				&\left| e^{-f_1 \tau}e^{(\Lambda-1)(\tau-\bar{\tau})} d \alpha (\alpha-\delta) (\delta S)^{\frac{\alpha-\delta-1}{\delta}} |\nabla S|^2(\tau, y_0e^{(\tau-\bar{\tau})}) \right| \\
				&\le C e^{-f_1 \tau} \left| \frac{y_0 e^{(\tau-\bar{\tau})}}{R_0} \right|^{-2+2\eta} e^{(\Lambda-1)(\tau-\bar{\tau})} \sigma_0^{\frac{9}{5}}{\sigma_1}^{\frac{\alpha-\delta-1}{\delta}} \left\langle \frac{y_0 e^{(\tau-\bar{\tau})}}{R_0} \right\rangle^{(\Lambda - 1)\frac{1-\alpha+\delta}{\delta}}  \\
				&\le C \sigma_0^{\frac{9}{5}}{\sigma_1}^{\frac{\alpha-\delta-1}{\delta}}e^{(-f_1-2+2\eta+\Lambda-1+\frac{(\Lambda-1)}{\delta}(1-\alpha+\delta))(\tau-\bar{\tau})} \left\langle \frac{y_0}{R_0} \right\rangle^{-2+2\eta+(\Lambda - 1)\frac{1-\alpha+\delta}{\delta}} e^{-f_1\bar{\tau}}\\
				&\le Ce^{-f_1\tau_0}\sigma_0^{\frac{9}{5}}{\sigma_1}^{\frac{\alpha-\delta-1}{\delta}}e^{-\frac{\tau-\bar{\tau}}{10}}\left\langle \frac{y_0}{R_0} \right\rangle^{-2+2\eta+(\Lambda - 1)\frac{1-\alpha+\delta}{\delta}}\\
				&\le \frac{1}{2}\sigma_0^{\frac{9}{5}}\left\langle \frac{y_0}{R_0} \right\rangle^{1-\Lambda}e^{-\frac{\tau-\bar{\tau}}{10}}.
			\end{split}
		\end{equation}
		Here we use the following set of relations, which are established by virtue of the parameter hierarchy \eqref{Eq_parameter_hierarchy}:
		\begin{align}
			-f_1 - 2 + 2\eta + \Lambda - 1 + \frac{(\Lambda-1)}{\delta}(1-\alpha+\delta) =\Lambda -2 + 2\eta \le -\frac{1}{10}&,\label{Lambda-19/10} \\
			f_1 - \Lambda + 2+\Lambda-1 - 2 + 2\eta = f_1 - 1 + 2\eta \le 1 - \Lambda&, \label{2-lamda}\\
			C e^{-f_1\tau_0} \sigma_0^{\frac{9}{5}} \sigma_1^{\frac{\alpha-\delta-1}{\delta}} \le \frac{1}{2} \sigma_0^{\frac{9}{5}}&.
		\end{align}
		We provide the proof for the estimate \eqref{2-lamda}:
		\begin{itemize}
			\item For the regime $1 < \gamma < \frac{5}{3}$,
			$$\begin{aligned}
				f_1 - 2 + \Lambda + 2\eta &= \left( \frac{1-\alpha}{\delta} \right) (\Lambda - 1) + 2\Lambda - 4 + 2\eta \\
				&\le \frac{2}{\gamma-1} \cdot \frac{2}{\left( 1 + \sqrt{\frac{2}{\gamma-1}} \right)^2} + \frac{4}{\left( 1 + \sqrt{\frac{2}{\gamma-1}} \right)^2} - 2 + 2\eta \\
				&= \frac{4 + 4\gamma - 4 - 2(\gamma-1) \left( 1 + \frac{2}{\gamma-1} + 2\sqrt{\frac{2}{\gamma-1}} \right)}{(\gamma-1) \left( 1 + \sqrt{\frac{2}{\gamma-1}} \right)^2} + 2\eta \\
				&= \frac{\sqrt{2} \left( \sqrt{2(\gamma-1)} - 4 \right)}{\sqrt{\gamma-1} \left( 1 + \sqrt{\frac{2}{\gamma-1}} \right)^2} + 2\eta \le 0.
			\end{aligned}$$
			\item For the regime $\frac{5}{3} \le \gamma < 1 + \frac{2}{\sqrt{3}}$,
			$$\begin{aligned}
				f_1 - 2 + \Lambda + 2\eta &= \left( \frac{1-\alpha}{\delta} \right) (\Lambda - 1) + 2\Lambda - 4 + 2\eta \\
				&\le \frac{2}{\gamma-1} \left( \frac{3\gamma-1}{2+\sqrt{3}(\gamma-1)} - 1 \right) + \frac{6\gamma-2}{2+\sqrt{3}(\gamma-1)} - 4 + 2\eta \\
				&= \frac{1}{(\gamma-1)(2+\sqrt{3}(\gamma-1))} \Big[ 6\gamma-2 - 2(2+\sqrt{3}(\gamma-1)) \\
				&\quad + (6\gamma-2)(\gamma-1) - 4(\gamma-1)(2+\sqrt{3}(\gamma-1)) \Big] + 2\eta \\
				&= \frac{(6-4\sqrt{3})\gamma^2 + (6\sqrt{3}-10)\gamma + 4-2\sqrt{3}}{(\gamma-1)(\sqrt{3}\gamma + 2-\sqrt{3})} + 2\eta \le 0.
			\end{aligned}$$
		\end{itemize}
		Substituting \eqref{VS1}, \eqref{SV1}, \eqref{SSS1}, and \eqref{SSS2} into \eqref{wS1}, we obtain
		\begin{equation} \label{WS1t}
			|\partial_\tau \omega_{S,y_0}(\tau)| \le 2\sigma_0^{\frac{9}{5}} \left\langle \frac{y_0}{R_0} \right\rangle^{1-\Lambda} e^{-\frac{\tau-\bar{\tau}}{10}}.
		\end{equation}
		Integrating \eqref{wS1} over $[\bar{\tau}, \tau]$, we obtain
		\begin{equation}\label{differencetau}
			|\omega_{S,y_0}(\tau) - \omega_{S,y_0}(\bar{\tau})| \le 20 \sigma_0^{\frac{9}{5}} \left\langle \frac{y_0}{R_0} \right\rangle^{1-\Lambda},
		\end{equation}
		which holds uniformly for all $\tau \ge \bar{\tau}$.
		
		By the normalization \eqref{eq:profile-R0-normalization},
		\[
		2\sigma_0 \le \bar{S} \le C\sigma_0 \quad \text{for } |y| = R_0.
		\]
		Consequently, we have
		\[
		\sigma_0 \le \omega_{S,y_0}(\bar{\tau}) \le C\sigma_0 \quad \text{for } |y_0| = R_0.
		\]
		
		\vspace{1em}
		
		\noindent Invoking \eqref{differencetau}, we deduce that for all $\tau \ge \bar{\tau}$,
		\[
		\frac{1}{2}\sigma_0 \le \omega_{S,y_0}(\tau) \le C\sigma_0 \quad \text{for } |y_0| = R_0.
		\]
		Equivalently, this can be rewritten as
		\[
		\frac{1}{2}\sigma_0 e^{-(\Lambda-1)(\tau-\bar{\tau})} \le S(\tau, y_0 e^{(\tau-\bar{\tau})}) \le C\sigma_0 e^{-(\Lambda-1)(\tau-\bar{\tau})}.
		\]
		Noting the identity $y = y_0e^{\tau-\bar{\tau}}$, we arrive at the estimate
		\begin{equation}\label{tR0}
			\frac{\sigma_0}{C} \left\langle \frac{y}{R_0} \right\rangle^{-(\Lambda-1)} \le S(\tau, y) \le C\sigma_0 \left\langle \frac{y}{R_0} \right\rangle^{-(\Lambda-1)},\quad \text{for} \,\,|y_0|=R_0 \,\,\text{and} \,\,\tau \ge \bar{\tau}.
		\end{equation}
		Next, by setting $\bar{\tau} = \tau_0$ and considering $|y_0| \ge R_0$, \eqref{initialdatacondition1} implies
		\[
		\frac{\sigma_1}{2} \left\langle \frac{y_0}{R_0} \right\rangle^{1-\Lambda} \le S(\tau_0, y_0) \le C\sigma_0.
		\]
		Therefore, we obtain from \eqref{differencetau}
		\begin{equation}\label{t0bigR0}
			e^{-(\Lambda-1)(\tau-\tau_0)} \frac{2\sigma_1}{5} \left\langle \frac{y_0}{R_0} \right\rangle^{1-\Lambda} \le S(\tau, y) = e^{-(\Lambda-1)(\tau-\tau_0)} \omega_{S,y_0}(\tau) \le C\sigma_0 e^{-(\Lambda-1)(\tau-\tau_0)}.
		\end{equation}
		For any $\tau \ge \tau_0$ and $|y| > R_0$, we shall first prove \eqref{S,lower,2} by distinguishing between two cases:
		\begin{itemize}
			\item \textbf{Case I}. When $R_0 \le |y|<R_0e^{\tau-\tau_0}$, we can choose $\bar{\tau}\ge\tau_0$ and $|y_0|=R_0$ s.t.  
			$$ y=y_0 e^{\tau-\bar{\tau}}.$$
			Using \eqref{tR0}, we get 
			\begin{equation}
				\frac{\sigma_1}{5} \left\langle \frac{y}{R_0} \right\rangle^{-(\Lambda - 1)}\le \frac{\sigma_0}{C} \left\langle \frac{y}{R_0} \right\rangle^{-(\Lambda-1)}  \le S(\tau, y).
			\end{equation}
			\item \textbf{Case II}. When $|y|\ge R_0e^{\tau-\tau_0}$, we can choose $|y_0|\ge R_0$ s.t.
			$$y=y_0 e^{\tau-\tau_0}.$$
			Using \eqref{t0bigR0}, we arrive at 
			\begin{equation}
				\frac{\sigma_1}{5} \left\langle \frac{y}{R_0} \right\rangle^{-(\Lambda - 1)}\le  e^{-(\Lambda-1)(\tau-\tau_0)} \frac{2\sigma_1}{5} \left\langle \frac{y_0}{R_0} \right\rangle^{1-\Lambda} \le S(\tau, y).
			\end{equation}
			Here we have used the fact that $e^{(1-\Lambda)(\tau-\tau_0)} =\frac{|y|^{1-\Lambda}}{|y_0|^{1-\Lambda}}$ and 
			\begin{equation}\label{Eq_bracket_estimate}
				\left\langle \frac{y}{R_0} \right\rangle^2 = \left( \frac{|y|^2}{R_0^2} + 1 \right) \ge \frac{1}{2} \left( \frac{|y_0|^2}{R_0^2} + 1 \right) \frac{|y|^2}{|y_0|^2} \ge \left( \frac{1}{2} \right)^{\frac{2}{\Lambda - 1}} \left\langle \frac{y_0}{R_0} \right\rangle^2 \frac{|y|^2}{|y_0|^2}.
			\end{equation}
		\end{itemize}
		The proof of the upper bound estimate \eqref{upper bound} is similar to that of the lower bound estimate by using \eqref{differencetau}, and is thus omitted here. This completes the proof.
	\end{proof} 
	\begin{lemma} \label{lem:energy-decay}
		Under the bootstrap hypotheses \eqref{Puns}-\eqref{E1}, for any $\tau \in [\tau_0, \tau_1]$, the perturbed energy functional $\widetilde{E}_K(\tau)$ satisfies the uniform bound
		\begin{equation}\label{energy decayEE}
			\widetilde{E}_K(\tau) := \int_{\mathbb{R}^3} \left( |\nabla^K \widetilde{S}|^2 + |\nabla^K \widetilde{V}|^2 \right) \phi^K dy \le 2E.
		\end{equation}
	\end{lemma}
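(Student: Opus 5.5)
Since $\widetilde S=S-\widehat\chi\bar S$ and $\widetilde V=V-\widehat\chi\bar V$, the plan is to bound $\widetilde E_K$ by the triangle inequality in the weighted space $L^2(\phi^K\,dy)$:
\[
\widetilde E_K(\tau)^{1/2}\le E_K(\tau)^{1/2}+\Bigl(\int_{\R^3}\bigl(|\nabla^K(\widehat\chi\bar S)|^2+|\nabla^K(\widehat\chi\bar V)|^2\bigr)\phi^K\,dy\Bigr)^{1/2}.
\]
The first term is at most $E^{1/2}$ by the bootstrap hypothesis \eqref{E1}. Squaring with $(a+b)^2\le (1+\epsilon)a^2+(1+\epsilon^{-1})b^2$, it therefore suffices to show that the profile contribution
\[
P_K(\tau):=\int_{\R^3}\bigl(|\nabla^K(\widehat\chi\bar S)|^2+|\nabla^K(\widehat\chi\bar V)|^2\bigr)\phi^K\,dy
\]
is bounded by a constant $C(K)$ that is independent of $\tau$ and of $E$. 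Taking $\epsilon=1/2$ then gives $\widetilde E_K\le \tfrac32 E+3C(K)\le 2E$, because the hierarchy \eqref{Eq_parameter_hierarchy} includes $C(K)\ll E$.

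To bound $P_K$ we expand by Leibniz, $\nabla^K(\widehat\chi\bar S)=\sum_{j=0}^K c_{j}\nabla^j\widehat\chi\,\nabla^{K-j}\bar S$, and treat each term with the profile decay \eqref{eq:profile-decay} and the cutoff bounds \eqref{xdecayestim}.

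\textbf{Case $j=0$.} Here $|\nabla^K\bar S|^2\phi^K\lesssim \langle y\rangle^{-2(\Lambda-1)-2K}\,\langle y\rangle^{2K(1-\eta)}=\langle y\rangle^{-2(\Lambda-1)-2K\eta}$. This is integrable over $\R^3$ provided $2(\Lambda-1)+2K\eta>3$, which holds because $K\eta\gg1$, the same condition already used for the weighted Gagliardo--Nirenberg lemma.

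\textbf{Case $j\ge1$.} The integrand is supported in the annulus $e^\tau/2\le|y|\le e^\tau$ and bounded by $e^{-2j\tau}e^{-2(K-j)\tau}e^{-2(\Lambda-1)\tau}e^{2K(1-\eta)\tau}$ up to constants. Multiplying by the annulus volume $e^{3\tau}$ gives $e^{(3-2(\Lambda-1)-2K\eta)\tau}\le 1$, again by $K\eta\gg1$. This is essentially the computation in the proof of Lemma~\ref{Lemma_R1_determination}.

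Because $\phi$ is explicit, $P_K\le C(K,R_0,\bar S,\bar V)$ uniformly in $\tau\ge\tau_0$. The same bound holds for $\bar V$.

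The main point to check is that this constant is harmless. It depends on $K$ and on $R_0$ through $\phi$; since $\phi\le 1+|y/R_0|^{2(1-\eta)}$, large $R_0$ only helps. One must confirm that the hierarchy allows $E$ to be taken larger than it, which is built in via $C(K)\ll E$ together with $E$ being chosen after the profile and $K$.
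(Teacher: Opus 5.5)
Your proposal is correct and follows essentially the same route as the paper. Both arguments use \eqref{E1} and $C(K)\ll E$ to reduce the claim to a $\tau$-uniform bound on $\int(|\nabla^K(\widehat\chi\bar S)|^2+|\nabla^K(\widehat\chi\bar V)|^2)\phi^K\,dy$. The term with no derivative on $\widehat\chi$ is then controlled by the profile decay together with $K\eta\gg1$, and the terms with a derivative on $\widehat\chi$ by the annulus computation giving $e^{-(2K\eta-3+2(\Lambda-1))\tau}$. You also make explicit two points the paper leaves implicit: the constant bookkeeping ($\tfrac32E+3C(K)\le 2E$), and the observation that $\phi\le 1+|y/R_0|^{2(1-\eta)}$ makes the profile bound uniform in $R_0\ge1$, so choosing $R_0$ after $E$ creates no circularity.
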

	
	\begin{proof}
		Recall the decomposition $(\widetilde{S}, \widetilde{V}) = (S, V) - (\widehat{\chi} \bar{S}, \widehat{\chi} \bar{V})$. In view of \eqref{E1} and the assumption $C(K) \ll E$, it suffices to validate the following upper bound for the cutoff profiles:
		\[
		\int_{\mathbb{R}^3} \left( |\nabla^K (\widehat{\chi}\bar{S})|^2 + |\nabla^K (\widehat{\chi}\bar{V})|^2 \right) \phi^K dy \le C(K).
		\]
		By virtue of \eqref{eq:weight-definition-notation} and the profile decay \eqref{eq:profile-decay}, we obtain the pointwise estimate
		\[
		\left( |\nabla^K \bar{S}|^2 + |\nabla^K \bar{V}|^2 \right) \phi^K \le C(K) \langle y \rangle^{-2(\Lambda-1)-2K\eta}.
		\]
		Consequently, under the parameter choice $\frac{1}{K} \ll \eta$, we arrive at the verification bound
		\begin{equation}\label{barSUE}
			\int_{\mathbb{R}^3} \left( |\nabla^K \bar{S}|^2 + |\nabla^K \bar{V}|^2 \right) \phi^K dy \le C(K).
		\end{equation}
		Next, using \eqref{xdecayestim} and the profile decay \eqref{eq:profile-decay}, the cross-terms can be evaluated through the following summation:
		\begin{equation}\label{K-1SUE}
			\begin{aligned}
				\sum_{0 \le j \le K-1} & \int_{\mathbb{R}^3} |\nabla^{K-j} \widehat{\chi}|^2 \left( |\nabla^j \bar{S}|^2 + |\nabla^j \bar{V}|^2 \right) \phi^K dy \\
				&\le C(K) \sum_{0 \le j \le K-1} e^{-2(K-j)\tau} \int_{\frac{1}{2}e^\tau}^{e^\tau} r^{-2(\Lambda-1)-2j} r^{2(1-\eta)K} r^2 dr \\
				&\le C(K) \sum_{0 \le j \le K-1} e^{-2(K-j)\tau + \left(3 + 2(1-\eta)K - 2(\Lambda-1) - 2j\right)\tau} \\
				&\le C(K) e^{-(2K\eta - 3 + 2(\Lambda-1))\tau} \le C(K).
			\end{aligned}
		\end{equation}
		As a consequence, the desired estimate \eqref{energy decayEE} flows directly from combining \eqref{barSUE} with \eqref{K-1SUE}.
	\end{proof}
	\begin{proposition}
		Under the bootstrap hypotheses \eqref{Puns}-\eqref{E1}, we have that \eqref{nableS2} and \eqref{nableU2}  hold:
		\begin{align}
			|\nabla \widetilde{S}| + |\nabla S| &\le  C_1\left\langle \frac{y}{R_0} \right\rangle^{-\Lambda},  \quad \text{for } \tau \in [\tau_0, \tau_1] \quad \text{and} \quad y \in \mathbb{R}^3,\label{SSC1yR0}\\
			|\nabla \widetilde{V}| + |\nabla V| &\le  C_1\left\langle \frac{y}{R_0} \right\rangle^{-\Lambda}, \quad \text{for } \tau \in [\tau_0, \tau_1] \quad \text{and} \quad y \in \mathbb{R}^3.\label{UUC1yR0}
		\end{align}
	\end{proposition}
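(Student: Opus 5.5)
We close the gradient bounds \eqref{SSC1yR0}--\eqref{UUC1yR0} by splitting $\R^3$ into the near region $|y|\le R_0$ and the far field $|y|\ge R_0$, and in the far field by running a characteristic argument for $\nabla S,\nabla V$ analogous to the one used for $\omega_{S,y_0}$ in the proof of \eqref{S,lower,2}.

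\textbf{Near region.} For $|y|\le R_0$ we have $\langle y/R_0\rangle^{-\Lambda}\ge 2^{-\Lambda/2}$. It therefore suffices to bound $|\nabla\widetilde S|+|\nabla S|+|\nabla\widetilde V|+|\nabla V|$ by a constant depending only on the profile. The profile part is controlled by \eqref{eq:profile-decay}. For the perturbation we interpolate between $\|\widetilde S\|_{L^\infty},\|\widetilde V\|_{L^\infty}\le \sigma_0/50$ from \eqref{S,U,1} and the top-order bound $\widetilde E_K\le 2E$ from Lemma~\ref{lem:energy-decay}, using Gagliardo--Nirenberg on the ball ($\phi=1$ there). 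This gives $|\nabla\widetilde S|\lesssim \sigma_0^{1-\frac{1}{K-3/2}}E^{\frac{1}{2K-3}}\le 1$ by the hierarchy $C(K)\ll E$, $C(E)\sigma_0\ll1$. The constant $C_1$ in \eqref{C1define} is then fixed larger than $2^{\Lambda/2}$ times the resulting profile-dependent bound.

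\textbf{Far field.} For $|y|\ge R_0$, differentiate \eqref{Eq_self_similar_S} once. The dilation term $y\cdot\nabla$ commutes to give the damping $(\Lambda-1+1)\nabla S=\Lambda\nabla S$. Introduce
\[
\omega_{y_0}(\tau):=e^{\Lambda(\tau-\bar\tau)}\bigl(|\nabla S|+|\nabla V|\bigr)(\tau,y_0e^{\tau-\bar\tau}),
\]
along the same characteristics as before. The weight $e^{\Lambda(\tau-\bar\tau)}$ exactly matches $\langle y/R_0\rangle^{-\Lambda}$ along the ray. The remaining forcing consists of three groups:
\begin{enumerate}
\item the transport and pressure terms $V\cdot\nabla\nabla S$, $\nabla V\cdot\nabla S$, $S\nabla\operatorname{div}V$, $\nabla S\operatorname{div}V$, and their $V$-analogues;
\item the viscous/capillary terms carrying $e^{-f_1\tau}$;
\item nothing else, since $(S,V)$ is used rather than the perturbation, so no $\mathcal E$ terms appear.
\end{enumerate}
The terms in the first group are bounded by \eqref{S,V,maxCsigma0}, \eqref{123decay} and the bootstrap \eqref{nableS1}--\eqref{nableU1}. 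Each contains a factor $|S|+|V|\le C\sigma_0\langle y\rangle^{1-\Lambda}$, by \eqref{upper bound}, or a factor $\sigma_0^{9/10}\phi^{-1/2}$, together with a factor $2C_1\langle y/R_0\rangle^{-\Lambda}$. Their contribution to $\partial_\tau\omega_{y_0}$ is thus at most $C\sigma_0^{9/10}C_1\langle y_0/R_0\rangle^{-\Lambda}e^{-c(\tau-\bar\tau)}$ for some $c>0$ coming from the extra decay $\phi^{-1/2}\sim|y|^{-(1-\eta)}$. Integrating yields $\le C\sigma_0^{9/10}C_1\langle y_0/R_0\rangle^{-\Lambda}\le \frac{C_1}{4}\langle y_0/R_0\rangle^{-\Lambda}$.

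\textbf{Main obstacle: the dissipative terms.} Terms such as $e^{-f_1\tau}(\delta S)^{\frac{\alpha-1}{\delta}}\nabla\Delta S$ and $e^{-f_1\tau}S^{\frac{\alpha-1}{\delta}-1}\nabla S\,\Delta S$ involve third derivatives and negative powers of $S$. I would bound $\nabla^3 S$ by \eqref{123decay}, i.e. $\sigma_0^{9/10}\phi^{-3/2}$, and $S^{-1}$ by the lower bound \eqref{S,lower,2} just proved, exactly as in \eqref{SSS1}--\eqref{SSS2}. The exponent count becomes
\[
-f_1+\Lambda-3+3\eta+\tfrac{(\Lambda-1)(1-\alpha)}{\delta}=-2+3\eta+1<0
\]
in the variable $\tau-\bar\tau$, using the algebra of \eqref{1-lamda}/\eqref{2-lamda}. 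The prefactor $Ce^{-f_1\tau_0}\sigma_1^{(\alpha-1-\delta)/\delta}$ is made small by \eqref{eq:tau0-large-condition}. The delicate point is checking that the spatial power of $\langle y_0/R_0\rangle$ is no worse than $-\Lambda$; if it is not, I would trade some $e^{-f_1\bar\tau}$ decay for spatial decay using $|y|\le R_0e^{\tau-\tau_0}$ in Case I.

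\textbf{Closing.} Following the Case I/Case II split of the lower-bound proof, we start either from $|y_0|=R_0$, where the near-region bound applies, or from $\tau=\tau_0$, where \eqref{initialdatacondition2} applies; the constant $C_1$ is chosen to dominate both. We conclude $|\nabla S|+|\nabla V|\le \frac{C_1}{2}\langle y/R_0\rangle^{-\Lambda}$. Finally, $\nabla\widetilde S=\nabla S-\nabla(\widehat\chi\bar S)$, and $|\nabla(\widehat\chi\bar S)|\le C\langle y\rangle^{-\Lambda}$ by \eqref{eq:profile-decay} and \eqref{xdecayestim}, which is absorbed into the other $\frac{C_1}{2}$.
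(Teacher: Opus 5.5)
Your route is the paper's: interpolation on $|y|\le R_0$ to fix $C_1$, transport of $e^{\Lambda(\tau-\bar\tau)}\nabla S$ and $e^{\Lambda(\tau-\bar\tau)}\nabla V$ along $y=y_0e^{\tau-\bar\tau}$ with all other terms treated as forcing, and the Case I/Case II closing. However, the step you flag as delicate is exactly where a parameter condition is needed, and your fallback does not provide it.

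By the definition of $f_1$, the time exponents of the dissipative contributions are identities: $-1+2\eta$, $-1+3\eta$, and $-\Lambda$ for the cubic term. So no algebra from \eqref{1-lamda} is needed there. That algebra is needed for the \emph{spatial} exponent. For $e^{-f_1\tau}S^{\frac{\alpha-1}{\delta}}\nabla\Delta S$, the power of $\langle y_0/R_0\rangle$ is $(\Lambda-1)\frac{1-\alpha}{\delta}-3+3\eta=f_1-\Lambda-1+3\eta$. For the terms with $\partial_iS\,\Delta S$ or $\nabla S\cdot\nabla\partial_iS$ it is $f_1-\Lambda-1+2\eta$. Requiring these to be $\le-\Lambda$ means you need $f_1\le 1-3\eta$. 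This holds because $\Lambda<\Lambda^*(\gamma)$ forces $f_1<1$ with a fixed margin, by the two-regime computation behind \eqref{1-lamda}. It is exactly what the paper uses in \eqref{S1SS2}--\eqref{SS333}. The cubic term only needs $f_1\le\Lambda$.

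Trading $e^{-f_1\bar\tau}$ for spatial decay cannot replace this check. In Case I you have $|y_0|=R_0$, so there is nothing to trade. In Case II you have $\bar\tau=\tau_0$ and $|y_0|\ge R_0$ unbounded. A forcing whose spatial power exceeded $-\Lambda$ would then give a bound that is not uniform in $y_0$, however large $\tau_0$ is.

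Two smaller repairs are also needed.

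First, the bound $|V|\lesssim\sigma_0\langle y\rangle^{1-\Lambda}$ is not available. $\widetilde V$ carries no spatial decay, and the far-field upper bound for $S$ contains the time-dependent alternative $e^{-(\Lambda-1)(\tau-\tau_0)}$. Moreover, the second-derivative factors in $V\cdot\nabla\partial_iS$, $S\operatorname{div}\partial_iV$ and their $V$-equation analogues are not covered by \eqref{nableS1}--\eqref{nableU1}. These terms still close as in the paper. Bound the undifferentiated factor by $C\sigma_0$ and the second derivative by $\sigma_0^{9/10}\phi^{-1}$ from \eqref{123decay}. Its decay $|y|^{-2+2\eta}$ beats $|y|^{-\Lambda}$ because $\Lambda<2-2\eta$.

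Second, concluding $|\nabla S|+|\nabla V|\le\frac{C_1}{2}\langle y/R_0\rangle^{-\Lambda}$ only yields $|\nabla\widetilde S|+|\nabla S|\le 2|\nabla S|+|\nabla(\widehat\chi\bar S)|\le C_1\langle y/R_0\rangle^{-\Lambda}+C\langle y\rangle^{-\Lambda}$, which overshoots $C_1$. You should close with a smaller fraction, for instance $\frac{C_1}{4}$ per component as the paper does. This is available because the forcing contributes only $O(\sigma_0^{9/10}C_1)$ and $C_1$ already dominates the data.
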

	\begin{proof}
		\noindent \textbf{1. Interior bounds for $S$ and $V$ within the regional domain $|y| \le R_0$.} \\
		To establish the localized estimates in the core region where $|y| \le R_0$, we invoke the fundamental relation \eqref{Eq_parameter_hierarchy} in tandem with the weighted Gagliardo--Nirenberg inequality in Lemma~\ref{lem:weighted-GN-new}, bridging the gaps between the energy states \eqref{S,U,1} and \eqref{energy decayEE}. Specifically, selecting the test-weight parameters as
		\[
		\varphi = \phi^{\frac{1}{2}}, \quad l = K, \quad i = 1, \quad \theta = \frac{1}{K - 3/2},
		\]
		gives the following uniform pointwise control for the gradients of the perturbed fluid variables:
		\[
		|\nabla \widetilde{S}| + |\nabla \widetilde{V}| \le C(K) \left( \sigma_0^{1 - \frac{1}{K-3/2}} E^{\frac{1}{2K-3}} \phi^{-\frac{K}{2(K-3/2)}} + \sigma_0 \langle y \rangle^{-1} \right) \le C(K) \sigma_0^{\frac{9}{10}}\le C(K) \sigma_0^{\frac{4}{5}} \left\langle \frac{y}{R_0} \right\rangle^{-\Lambda}.
		\]
		Subsequently, using \eqref{xdecayestim} and \eqref{eq:profile-decay} for the truncated reference profiles $(\widehat{\chi} \bar{S}, \widehat{\chi} \bar{V})$, we have
		\begin{equation}\label{XSUC2yR}
			\begin{aligned}
				|\nabla (\widehat{\chi} \bar{S})| +|\nabla (\widehat{\chi} \bar{V})|
				&\le |\nabla \widehat{\chi} \bar{S}| + |\widehat{\chi} \nabla \bar{S}|+|\nabla \widehat{\chi} \bar{V}|+ |\widehat{\chi} \nabla \bar{V}|\\
				&\le C\mathbf{1}_{\left\{\frac{e^\tau}{2}\leq |y|\leq e^\tau\right\}}(y) \cdot e^{-\tau} \langle r \rangle^{-(\Lambda - 1)} + C \langle r \rangle^{-(\Lambda - 1) - 1} \\
				&\le C_2 \left\langle \frac{y}{R_0} \right\rangle^{-\Lambda}, \quad \text{for } \tau \in [\tau_0, \tau_1] \quad \text{and} \quad y \in \mathbb {R}^3,
			\end{aligned}
		\end{equation}
		where $\mathbf{1}_{\left\{\frac{e^\tau}{2}\leq |y|\leq e^\tau\right\}}(y)$ denotes the the characteristic function of the annulus.
		Set
		\[
		M:=\sup_{y\in \mathbb {R}^3}
		\left\langle\frac{y}{R_0}\right\rangle^{\Lambda}
		\left(|\nabla S(\tau_0,y)|+|\nabla V(\tau_0,y)|\right)<\infty,
		\]
		which is finite by the weighted initial-data assumptions.
		Define $C_1$ by
		\begin{equation}\label{C1define}
			C_1 := 10\max \left\{ 2C(K) \sigma_0^{\frac{4}{5}} + C_2, \, 20,\, 2M+C_2 \right\}.
		\end{equation}
		Then we obtain
		\begin{align}
			|\nabla \tilde{S}| + |\nabla S| &\le \frac{C_1}{10}\left\langle \frac{y}{R_0} \right\rangle^{-\Lambda}, \label{SSinter10}\\
			|\nabla \tilde{V}| + |\nabla V| &\le \frac{C_1}{10}\left\langle \frac{y}{R_0} \right\rangle^{-\Lambda}, 
		\end{align}
		when $|y| \le R_0$.\\
		\noindent \textbf{2. Exterior bounds for $S$ and $\widetilde{S}$ within the regional domain $|y| > R_0$.} \\
		Applying the differential operator $\partial_{y_i}$ to $\eqref{Eq_self_similar_S}_1$, we obtain
		\begin{equation}
			\begin{aligned}
				\partial_\tau \partial_{y_i} S =& -\Lambda \partial_{y_i} S - y \cdot \nabla \partial_{y_i} S - \partial_{y_i} (V \cdot \nabla S) - \delta \partial_{y_i} (S \, \mathrm{div} \, V)\\
				&+e^{-f_1 \tau} d \alpha \, \delta^{\frac{\alpha-1}{\delta}} \frac{\alpha-1}{\delta}S^{\frac{\alpha-1}{\delta}-1}\partial_{y_i}S\, \Delta S 
				+ e^{-f_1 \tau} d \alpha \, \delta^{\frac{\alpha-1}{\delta}} S^{\frac{\alpha-1}{\delta}} \Delta \partial_{y_i} S \\
				&+e^{-f_1 \tau} d \alpha (\alpha-\delta) \, \delta^{\frac{\alpha-\delta-1}{\delta}} \frac{\alpha-\delta-1}{\delta}S^{\frac{\alpha-2\delta-1}{\delta}} \partial_{y_i} S \, |\nabla S|^2\\ 
				&+ 2e^{-f_1 \tau} d \alpha (\alpha-\delta) (\delta S)^{\frac{\alpha-\delta-1}{\delta}} \nabla S \cdot \nabla \partial_{y_i} S.
			\end{aligned}
		\end{equation}
		
		In the exterior region $|y| \ge R_0$, we choose the characteristic curves $y = y_0 e^{\tau-\bar{\tau}}$, where either $\bar{\tau} = \tau_0$ or $|y_0| = R_0$. For any fixed index $i \in \{1, 2, 3\}$, we define:
		\[
		\omega_{S,y_0,i}(\tau) := e^{\Lambda(\tau-\bar{\tau})} \partial_{y_i} S(\tau, y_0 e^{\tau-\bar{\tau}}).
		\]
		Then, the localized time derivative $\partial_\tau \omega_{S,y_0,i}$ can be formulated as follows:
		\begin{equation}
			\begin{aligned}
				\partial_\tau \omega_{S,y_0,i} =& -e^{\Lambda(\tau-\bar{\tau})} \left( \partial_{y_i} V \cdot \nabla S + V \cdot \nabla \partial_{y_i} S + \delta \partial_{y_i} S \, \mathrm{div} \, V + \delta S \, \mathrm{div}(\partial_{y_i} V) \right)\\
				&+e^{\Lambda(\tau-\bar{\tau})}e^{-f_1 \tau} d \alpha \, \delta^{\frac{\alpha-1}{\delta}} \frac{\alpha-1}{\delta}\partial _{y_i}S S^{\frac{\alpha-1}{\delta}-1} \Delta S + e^{\Lambda(\tau-\bar{\tau})}e^{-f_1 \tau} d \alpha \, \delta^{\frac{\alpha-1}{\delta}} S^{\frac{\alpha-1}{\delta}} \Delta \partial_{y_i} S \\
				&+e^{\Lambda(\tau-\bar{\tau})}e^{-f_1 \tau} d \alpha (\alpha-\delta) \, \delta^{\frac{\alpha-\delta-1}{\delta}} \frac{\alpha-\delta-1}{\delta}S^{\frac{\alpha-2\delta-1}{\delta}} \partial_{y_i} S \, |\nabla S|^2 \\
				&+ 2e^{\Lambda(\tau-\bar{\tau})}e^{-f_1 \tau} d \alpha (\alpha-\delta) (\delta S)^{\frac{\alpha-\delta-1}{\delta}} \nabla S \cdot \nabla \partial_{y_i} S,
			\end{aligned}
		\end{equation}
		where all constitutive terms on the right-hand side are evaluated along the path $(\tau, y_0 e^{\tau-\bar{\tau}})$. Utilizing the uniform bounds $|S|, |V| \le C\sigma_0$ together with the relation \eqref{123decay}, we restrict the growth rate via
		\begin{equation}\label{vsvssvsv}
			\begin{aligned}
				&|e^{\Lambda(\tau-\bar{\tau})} \left( \partial_{y_i} V \cdot \nabla S + V \cdot \nabla \partial_{y_i} S + \delta \partial_{y_i} S \, \mathrm{div} \, V + \delta S \, \mathrm{div}(\partial_{y_i} V) \right)|\\
				&\le C \sigma_0^{\frac{9}{5}} \phi(y_0 e^{\tau-\bar{\tau}})^{-1} e^{\Lambda(\tau-\bar{\tau})} \le C \sigma_0^{\frac{9}{5}} e^{(\Lambda-2+2\eta)(\tau-\bar{\tau})} \left\langle \frac{y_0}{R_0} \right\rangle^{-2+2\eta}.
			\end{aligned}
		\end{equation}
		Applying \eqref{S,lower,1}, \eqref{nableS1}, and \eqref{123decay}, we obtain
		\begin{equation}\label{S1SS2}
			\begin{aligned}
				&|e^{\Lambda(\tau-\bar{\tau})}e^{-f_1 \tau} d \alpha \, \delta^{\frac{\alpha-1}{\delta}} \frac{\alpha-1}{\delta}\partial _{y_i}S S^{\frac{\alpha-1}{\delta}-1} \Delta S|\\
				&\le C e^{(\Lambda - f_1)(\tau - \bar{\tau})} e^{-f_1 \bar{\tau}} \sigma_1^{\frac{\alpha - \delta - 1}{\delta}} \left\langle \frac{y}{R_0} \right\rangle^{(\Lambda - 1)\frac{\delta + 1 - \alpha}{\delta}} \cdot \left( 2C_1 \left\langle \frac{y}{R_0} \right\rangle^{-\Lambda} \right) \cdot \sigma_0^{\frac{9}{10}} \phi^{-1} \\
				&\le C C_1 e^{-f_1 \bar{\tau}} \sigma_0^{\frac{9}{10}} \sigma_1^{\frac{\alpha - \delta - 1}{\delta}} \left\langle \frac{y_0}{R_0} \right\rangle^{(\Lambda - 1)\frac{1 - \alpha}{\delta} - 3 + 2\eta} e^{\left( -f_1 + (\Lambda - 1)\frac{1 - \alpha}{\delta} + \Lambda - 3 + 2\eta \right)(\tau - \bar{\tau})} \\
				&\le C \sigma_0^{\frac{9}{5}} \left\langle \frac{y_0}{R_0} \right\rangle^{-\Lambda} e^{(-1 + 2\eta)(\tau - \bar{\tau})}.
			\end{aligned}
		\end{equation}
		Here we have used the following facts: 
		\begin{align}
			C_1 e^{-f_1 \bar{\tau}} \sigma_0^{\frac{9}{10}}\sigma_1^{\frac{\alpha-\delta-1}{\delta}} \le\sigma_0^{\frac{9}{5}}&,\\
			(\Lambda - 1)\frac{1 - \alpha}{\delta} - 3 + 2\eta=f_1-\Lambda-1+2\eta\le-\Lambda&,\\
			-f_1 + (\Lambda - 1)\frac{1 - \alpha}{\delta} + \Lambda - 3 + 2\eta=-1+2\eta&.
		\end{align}
		By virtue of \eqref{S,lower,1} and \eqref{123decay}, we arrive at
		\begin{equation}\label{SS333}
			\begin{aligned}
				&|e^{\Lambda(\tau-\bar{\tau})}e^{-f_1 \tau} d \alpha \, \delta^{\frac{\alpha-1}{\delta}} S^{\frac{\alpha-1}{\delta}} \Delta \partial_{y_i} S|\\
				&\le C e^{(\Lambda-f_1)(\tau-\bar{\tau})} e^{-f_1 \bar{\tau}} \sigma_1^{\frac{\alpha-1}{\delta}} \left\langle \frac{y}{R_0} \right\rangle^{(\Lambda-1)\frac{1-\alpha}{\delta}} \sigma_0^{\frac{9}{10}} \phi^{-\frac{3}{2}} \\
				&\le C e^{-f_1 \bar{\tau}} \sigma_1^{\frac{\alpha-1}{\delta}} \sigma_0^{\frac{9}{10}} \left\langle \frac{y_0}{R_0} \right\rangle^{(\Lambda-1)\frac{1-\alpha}{\delta}-3+3\eta} e^{\left( \Lambda-f_1 + (\Lambda-1)\frac{1-\alpha}{\delta} - 3 + 3\eta \right)(\tau-\bar{\tau})} \\
				&\le C \sigma_0^{\frac{9}{5}} \left\langle \frac{y_0}{R_0} \right\rangle^{-\Lambda} e^{(-1+3\eta)(\tau-\bar{\tau})}.
			\end{aligned}
		\end{equation}
		Here we have used:
		\begin{align}
			e^{-f_1 \bar{\tau}} \sigma_1^{\frac{\alpha-1}{\delta}} \sigma_0^{\frac{9}{10}}\le\sigma_0^{\frac{9}{5}}&,\\
			(\Lambda - 1)\frac{1 - \alpha}{\delta} - 3 + 3\eta=f_1-\Lambda-1+3\eta\le-\Lambda&,\\
			-f_1 + (\Lambda - 1)\frac{1 - \alpha}{\delta} + \Lambda - 3 + 3\eta=-1+3\eta&.
		\end{align}
		Using \eqref{S,lower,1} and \eqref{nableS1}, we obtain
		\begin{equation}\label{SS1S1S1}
			\begin{aligned}
				& |e^{\Lambda(\tau-\bar{\tau})}e^{-f_1 \tau} d \alpha (\alpha-\delta) \, \delta^{\frac{\alpha-\delta-1}{\delta}} \frac{\alpha-\delta-1}{\delta}S^{\frac{\alpha-2\delta-1}{\delta}} \partial_{y_i} S \, |\nabla S|^2|\\
				&\le C e^{(\Lambda-f_1)(\tau-\bar{\tau})} e^{-f_1 \bar{\tau}} \sigma_1^{\frac{\alpha-2\delta-1}{\delta}} \left\langle \frac{y}{R_0} \right\rangle^{(\Lambda-1)\frac{1-\alpha+2\delta}{\delta}} \left( 2C_1 \left\langle \frac{y}{R_0} \right\rangle^{-\Lambda} \right)^3 \\
				&\le C e^{-f_1 \bar{\tau}} \sigma_1^{\frac{\alpha-2\delta-1}{\delta}} C_1^3 e^{\left( \Lambda-f_1 + (\Lambda-1)\frac{1-\alpha+2\delta}{\delta} - 3\Lambda \right)(\tau-\bar{\tau})} \left\langle \frac{y_0}{R_0} \right\rangle^{(\Lambda-1)\frac{1-\alpha+2\delta}{\delta} - 3\Lambda} \\
				&\le C \sigma_0^{\frac{9}{5}} \left\langle \frac{y_0}{R_0} \right\rangle^{-\Lambda} e^{-\Lambda(\tau-\bar{\tau})}.
			\end{aligned}
		\end{equation}
		Here, we have utilized the following facts:
		\begin{align}
			e^{-f_1 \bar{\tau}} \sigma_1^{\frac{\alpha-2\delta-1}{\delta}} C_1^3\le \sigma_0^{\frac{9}{5}}&,\\
			(\Lambda-1)\frac{1-\alpha+2\delta}{\delta} - 3\Lambda=f_1-2\Lambda\le-\Lambda&,\\
			\Lambda-f_1 + (\Lambda-1)\frac{1-\alpha+2\delta}{\delta} - 3\Lambda=-\Lambda&.
		\end{align}
		By an estimate similar to \eqref{S1SS2}, we have
		\begin{equation}\label{SS1S222}
			|2e^{\Lambda(\tau-\bar{\tau})}e^{-f_1 \tau} d \alpha (\alpha-\delta) (\delta S)^{\frac{\alpha-\delta-1}{\delta}} \nabla S \cdot \nabla \partial_{y_i} S |\le C \sigma_0^{\frac{9}{5}} \left\langle \frac{y_0}{R_0} \right\rangle^{-\Lambda} e^{(-1 + 2\eta)(\tau - \bar{\tau})}.
		\end{equation}
		In view of \eqref{vsvssvsv}, \eqref{S1SS2}, \eqref{SS333}, \eqref{SS1S1S1}, and \eqref{SS1S222}, it follows that
		\[
		|\partial_\tau \omega_{S,y_0,i}| \le C \sigma_0^{\frac{9}{5}} e^{(\Lambda-2+2\eta)(\tau-\bar{\tau})} \left\langle \frac{y_0}{R_0} \right\rangle^{-\Lambda}.
		\]
		Integrating this inequality directly over the interval $[\bar{\tau}, \tau]$, we arrive at the oscillation bound:
		\begin{equation}\label{partialyiS}
			\begin{aligned}
				&\left| \partial_{y_i} S(\tau, y_0 e^{\tau-\bar{\tau}}) - e^{-\Lambda(\tau-\bar{\tau})} \partial_{y_i} S(\bar{\tau}, y_0) \right|\\
				&= e^{-\Lambda(\tau-\bar{\tau})} |\omega_{S,y_0,i}(\tau) - \omega_{S,y_0,i}(\bar{\tau})| \\
				&\le C \sigma_0^{\frac{9}{5}} e^{-\Lambda(\tau-\bar{\tau})} \left\langle \frac{y_0}{R_0} \right\rangle^{-\Lambda}.
			\end{aligned}
		\end{equation}
		On the other hand, the definition of $M$ and the prescribed initial weighted bound give, for all $|y_0| \ge R_0$,
		\begin{equation}\label{Mtau0y0-S}
			|\partial_{y_i} S(\tau_0, y_0)| \le M \left\langle \frac{y_0}{R_0} \right\rangle^{-\Lambda}.
		\end{equation}
		Furthermore, the boundary values are controlled by:
		\begin{equation}\label{C1bartauy0-S}
			|\partial_{y_i} S(\bar{\tau}, y_0)| \le \frac{C_1}{10}\left\langle \frac{y_0}{R_0} \right\rangle^{-\Lambda} \quad \text{under the constraints } |y_0| = R_0 \text{ and } \bar{\tau} \ge \tau_0.
		\end{equation}
		Similarly, for any $\tau \ge \tau_0$ and $|y| > R_0$, we divide the analysis into two cases:
		\begin{itemize}
			\item \textbf{Case I}. When $R_0 \le |y|<R_0e^{\tau-\tau_0}$, we can choose $\bar{\tau}\ge\tau_0$ and $|y_0|=R_0$ s.t.  
			$$ y=y_0 e^{\tau-\bar{\tau}}.$$
			Using \eqref{partialyiS} and \eqref{C1bartauy0-S}, we obtain 
			\begin{equation}\label{bartaugetau0y0R0}
				\begin{aligned}
					|\partial_{y_i} S(\tau, y_0 e^{\tau-\bar{\tau}})|&\le \left| \partial_{y_i} S(\tau, y_0 e^{\tau-\bar{\tau}}) - e^{-\Lambda(\tau-\bar{\tau})} \partial_{y_i} S(\bar{\tau}, y_0) \right|+|e^{-\Lambda(\tau-\bar{\tau})} \partial_{y_i} S(\bar{\tau}, y_0)|\\
					& \le C \sigma_0^{\frac{9}{5}} e^{-\Lambda(\tau-\bar{\tau})} \left\langle \frac{y_0}{R_0} \right\rangle^{-\Lambda}+\frac{C_1}{10}\left\langle \frac{y_0}{R_0} \right\rangle^{-\Lambda} e^{-\Lambda(\tau-\bar{\tau})}\\
					&\le \frac{C_1}{8}(\frac{|y|}{R_0})^{-\Lambda}\le \frac{C_1}{4}\left\langle \frac{y}{R_0} \right\rangle^{-\Lambda}.       
				\end{aligned}
			\end{equation}
			\item \textbf{Case II}. When $|y|\ge R_0e^{\tau-\tau_0}$, we can choose $|y_0|\ge R_0$ s.t.
			$$y=y_0 e^{\tau-\tau_0}.$$
			Using  \eqref{partialyiS} and \eqref{Mtau0y0-S}, we arrive at 
			\begin{equation}\label{y0R0etautau04}
				\begin{aligned}
					|\partial_{y_i} S(\tau, y_0 e^{\tau-{\tau_0}})|&\le \left| \partial_{y_i} S(\tau, y_0 e^{\tau-{\tau_0}}) - e^{-\Lambda(\tau-{\tau_0})} \partial_{y_i} S({\tau_0}, y_0) \right|+|e^{-\Lambda(\tau-{\tau_0})} \partial_{y_i} S({\tau_0}, y_0)|\\
					& \le C \sigma_0^{\frac{9}{5}} e^{-\Lambda(\tau-{\tau_0})} \left\langle \frac{y_0}{R_0} \right\rangle^{-\Lambda}+\frac{C_1}{10}\left\langle \frac{y_0}{R_0} \right\rangle^{-\Lambda} e^{-\Lambda(\tau-{\tau_0})}\\
					&\le \frac{C_1}{8}(\frac{|y|}{|y_0|})^{-\Lambda}\left\langle \frac{y_0}{R_0} \right\rangle^{-\Lambda}\le \frac{C_1}{4}\left\langle \frac{y}{R_0} \right\rangle^{-\Lambda}.       
				\end{aligned}
			\end{equation}
			Here we have used the fact that $|y_0|^{-\Lambda}e^{-\Lambda(\tau-\tau_0)} =|y|^{-\Lambda}.$
		\end{itemize}
		Hence, we deduce that
		\begin{equation}
			|\nabla S| \le \frac{\sqrt{3}C_1}{4}\left\langle \frac{y}{R_0} \right\rangle^{-\Lambda}.
		\end{equation}
		From \eqref{XSUC2yR}, we have
		\begin{equation}
			|\nabla (\widehat{\chi} \bar{S})| \le \frac{C_1}{10} \left\langle \frac{y}{R_0} \right\rangle^{-\Lambda}.
		\end{equation}
		This concludes the estimate
		\[
		|\nabla \tilde{S}| + |\nabla S| \le  (\frac{\sqrt{3}}{4}+\frac{\sqrt{3}}{4}+\frac{1}{10})C_1\left\langle \frac{y}{R_0} \right\rangle^{-\Lambda}\le C_1\left\langle \frac{y}{R_0} \right\rangle^{-\Lambda}.
		\]
		Consequently, the bound \eqref{SSC1yR0} follows from a combination with \eqref{SSinter10}.\\ 
		\noindent \textbf{3. Exterior bounds for $V$ and $\widetilde{V}$ within the regional domain $|y| > R_0$.} \\
		Acting on $\eqref{Eq_self_similar_S}_2$ with the differential operator $\partial_{y_i}$ yields
		\begin{equation}
			\begin{aligned}
				\partial_\tau \partial_{y_i} V = &- \Lambda \partial_{y_i} V - y \cdot \nabla \partial_{y_i} V - \partial_{y_i} V \cdot \nabla V - V \cdot \nabla(\partial_{y_i} V) - \delta \partial_{y_i} S \nabla S - \delta S \nabla(\partial_{y_i} S) \\
				&+e^{-f_1 \tau}(\delta S)^{\frac{\alpha-1}{\delta}}\mathbb{L}(\partial_{y_i}V)+e^{-f_1 \tau}\delta ^{\frac{\alpha-1}{\delta}} \frac{\alpha-1}{\delta}S^{\frac{\alpha-\delta-1}{\delta}}\partial_{y_i} S\mathbb{L}V\\
				&+e^{-f_1\tau} \alpha (\delta S)^{\frac{\alpha-\delta-1}{\delta}}\nabla(\partial_{y_i} S) \cdot \mathbb{D}(V) + e^{-f_1\tau}\alpha{\delta}^{\frac{\alpha-\delta-1}{\delta}}\frac{\alpha-\delta-1}{\delta} S^{\frac{\alpha-1}{\delta}-2} \partial_{y_i} S \nabla S \cdot \mathbb{D}(V)\\
				&+e^{-f_1\tau} \alpha (\delta S)^{\frac{\alpha-\delta-1}{\delta}}\nabla S \cdot \mathbb{D}(\partial_{y_i}V).
			\end{aligned}
		\end{equation}
		For any fixed index $i \in \{1, 2, 3\}$, we define :
		\[
		\omega_{V,y_0,i}(\tau) := e^{\Lambda(\tau-\bar{\tau})} \partial_{y_i} V(\tau, y_0 e^{\tau-\bar{\tau}}),
		\]
		where $y=y_0e^{\tau-\bar{\tau}}$ for either $\bar{\tau}=\tau_0$ or $|y_0|=R_0$. Thus, we find that $\omega_{V,y_0,i}(\tau)$ satisfies
		\begin{equation}\label{wVy0itau}
			\begin{aligned}
				\partial_\tau \omega_{V,y_0,i}(\tau) = 
				& -e^{\Lambda(\tau-\bar{\tau})} \Big( \partial_{y_i} V \cdot \nabla V + V \cdot \nabla(\partial_{y_i} V) + \delta \partial_{y_i} S \nabla S + \delta S \nabla(\partial_{y_i} S) \Big) \\
				&+e^{\Lambda(\tau-\bar{\tau})}e^{-f_1 \tau}(\delta S)^{\frac{\alpha-1}{\delta}}\mathbb{L}(\partial_{y_i}V)+e^{\Lambda(\tau-\bar{\tau})}e^{-f_1 \tau}\delta ^{\frac{\alpha-1}{\delta}} \frac{\alpha-1}{\delta}S^{\frac{\alpha-\delta-1}{\delta}}\partial_{y_i} S\mathbb{L}V\\
				&+e^{\Lambda(\tau-\bar{\tau})}e^{-f_1\tau} \alpha (\delta S)^{\frac{\alpha-\delta-1}{\delta}}\nabla(\partial_{y_i} S) \cdot \mathbb{D}(V) \\
				&+e^{\Lambda(\tau-\bar{\tau})} e^{-f_1\tau}\alpha{\delta}^{\frac{\alpha-\delta-1}{\delta}}\frac{\alpha-\delta-1}{\delta} S^{\frac{\alpha-1}{\delta}-2} \partial_{y_i} S \nabla S \cdot \mathbb{D}(V)\\
				&+e^{\Lambda(\tau-\bar{\tau})}e^{-f_1\tau} \alpha (\delta S)^{\frac{\alpha-\delta-1}{\delta}}\nabla S \cdot \mathbb{D}(\partial_{y_i}V).
			\end{aligned}
		\end{equation}
		By virtue of  \eqref{S,lower,1}, \eqref{nableU1}, and \eqref{123decay}, and proceeding similarly to the estimates in \eqref{vsvssvsv}, \eqref{S1SS2}, \eqref{SS333}, \eqref{SS1S1S1}, and \eqref{SS1S222}, we have
		\begin{align}
			&|e^{\Lambda(\tau-\bar{\tau})} \Big( \partial_{y_i} V \cdot \nabla V + V \cdot \nabla(\partial_{y_i} V) + \delta \partial_{y_i} S \nabla S + \delta S \nabla(\partial_{y_i} S) \Big)|\nonumber \\
			&\le C \sigma_0^{\frac{9}{5}} \phi(y_0 e^{\tau-\bar{\tau}})^{-1} e^{\Lambda(\tau-\bar{\tau})} \le C \sigma_0^{\frac{9}{5}} e^{(\Lambda-2+2\eta)(\tau-\bar{\tau})} \left\langle \frac{y_0}{R_0} \right\rangle^{-2+2\eta}, \\
			&|e^{\Lambda(\tau-\bar{\tau})}e^{-f_1 \tau}(\delta S)^{\frac{\alpha-1}{\delta}}\mathbb{L}(\partial_{y_i}V)|\le C \sigma_0^{\frac{9}{5}} \left\langle \frac{y_0}{R_0} \right\rangle^{-\Lambda} e^{(-1+3\eta)(\tau-\bar{\tau})},\\
			&|e^{\Lambda(\tau-\bar{\tau})}e^{-f_1 \tau}\delta ^{\frac{\alpha-1}{\delta}} \frac{\alpha-1}{\delta}S^{\frac{\alpha-\delta-1}{\delta}}\partial_{y_i} S\mathbb{L}V|\le C \sigma_0^{\frac{9}{5}} \left\langle \frac{y_0}{R_0} \right\rangle^{-\Lambda} e^{(-1 + 2\eta)(\tau - \bar{\tau})},\\
			&|e^{\Lambda(\tau-\bar{\tau})}e^{-f_1\tau} \alpha (\delta S)^{\frac{\alpha-\delta-1}{\delta}}\nabla(\partial_{y_i} S) \cdot \mathbb{D}(V)|\le C \sigma_0^{\frac{9}{5}} \left\langle \frac{y_0}{R_0} \right\rangle^{-\Lambda} e^{(-1 + 2\eta)(\tau - \bar{\tau})},\\
			&|e^{\Lambda(\tau-\bar{\tau})} e^{-f_1\tau}\alpha{\delta}^{\frac{\alpha-\delta-1}{\delta}}\frac{\alpha-\delta-1}{\delta} S^{\frac{\alpha-1}{\delta}-2} \partial_{y_i} S \nabla S \cdot \mathbb{D}(V)|\le C \sigma_0^{\frac{9}{5}} \left\langle \frac{y_0}{R_0} \right\rangle^{-\Lambda} e^{-\Lambda(\tau-\bar{\tau})},\\
			&|e^{\Lambda(\tau-\bar{\tau})}e^{-f_1\tau} \alpha (\delta S)^{\frac{\alpha-\delta-1}{\delta}}\nabla S \cdot \mathbb{D}(\partial_{y_i}V)|\le C \sigma_0^{\frac{9}{5}} \left\langle \frac{y_0}{R_0} \right\rangle^{-\Lambda} e^{(-1 + 2\eta)(\tau - \bar{\tau})}.\label{SnablaSDyiV}
		\end{align}
		In the fourth estimate above, the factor
		\(S^{(\alpha-1)/\delta-2}\) is controlled using the same smallness condition
		\[
		e^{-f_1\tau_0}\sigma_1^{\frac{\alpha-2\delta-1}{\delta}}C_1^3
		\le\sigma_0^{9/5}
		\]
		as in \eqref{SS1S1S1}.
		Combining \eqref{wVy0itau}-\eqref{SnablaSDyiV}, we arrive at 
		\begin{equation}
			|\partial_\tau \omega_{V,y_0,i}(\tau)| \le C\sigma_0^{\frac{9}{5}}\left\langle \frac{y_0}{R_0} \right\rangle^{-\Lambda}e^{(\Lambda-2+2\eta)(\tau - \bar{\tau})}.
		\end{equation}
		Integrating this inequality over the interval $[\bar{\tau}, \tau]$, we arrive at the oscillation bound:
		\begin{equation}\label{partialyiV}
			\begin{aligned}
				&\left| \partial_{y_i} V(\tau, y_0 e^{\tau-\bar{\tau}}) - e^{-\Lambda(\tau-\bar{\tau})} \partial_{y_i} V(\bar{\tau}, y_0) \right|\\
				&= e^{-\Lambda(\tau-\bar{\tau})} |\omega_{V,y_0,i}(\tau) - \omega_{V,y_0,i}(\bar{\tau})| \\
				&\le C \sigma_0^{\frac{9}{5}} e^{-\Lambda(\tau-\bar{\tau})} \left\langle \frac{y_0}{R_0} \right\rangle^{-\Lambda}.
			\end{aligned}
		\end{equation}
		In addition, we impose the spatial decay condition on the initial data that for all $|y_0| \ge R_0$:
		\begin{equation}\label{Mtau0y0-V}
			|\partial_{y_i} V(\tau_0, y_0)|\le M \left\langle \frac{y_0}{R_0} \right\rangle^{-\Lambda},
		\end{equation}
		the boundary values are controlled by:
		\begin{equation}\label{C1bartauy0-V}
			|\partial_{y_i} V(\bar{\tau}, y_0)| \le \frac{C_1}{10}\left\langle \frac{y_0}{R_0} \right\rangle^{-\Lambda} \quad \text{under the constraints } |y_0| = R_0 \text{ and } \bar{\tau} \ge \tau_0.
		\end{equation}
		By arguments completely parallel to those for \eqref{bartaugetau0y0R0} and \eqref{y0R0etautau04}, we can deduce that
		\[
		|\nabla \tilde{V}| + |\nabla V| \le  (\frac{\sqrt{3}}{4}+\frac{\sqrt{3}}{4}+\frac{1}{10})C_1\left\langle \frac{y}{R_0} \right\rangle^{-\Lambda}\le C_1\left\langle \frac{y}{R_0} \right\rangle^{-\Lambda}.
		\]
		Therefore, we obtain \eqref{UUC1yR0}. This completes the proof.
	\end{proof}
	
	We next derive estimates for the higher-order derivatives by interpolating between the weighted energy bounds and the lower-order estimates. The proof follows the arguments in Lemma 3.7 of \cite{Cao-Labora-Gomez-Serrano-Shi-Staffilani-2023} and Lemma 5.4 of \cite{Chen-Zhang-Zhu}, and is therefore omitted.
	\begin{lemma}\label{lem:higher-order-perturbation}
		For every integer $1\leq j\leq K-2$, one has
		\begin{equation}\label{eq:lessKKKK-2-perturbation}
			|\nabla^j V(\tau,y)|+|\nabla^j S(\tau,y)|
			\leq
			C(\sigma_0)\phi^{-\frac{j}{2}}
			\langle y\rangle^{-(\Lambda-1)}
			\langle y\rangle^{
				\frac{
					\Lambda j-\Lambda-K+\frac{5}{2}
					+(1-\eta)\left(K-\frac{5j}{2}\right)
				}{
					K-\frac{5}{2}
			}},
		\end{equation}
		and
		\begin{equation}\label{eq:lessK-2-perturbation}
			|\nabla^jS(\tau,y)|
			+|\nabla^j V(\tau,y)|
			\leq
			C(\sigma_0)\phi(y)^{-\frac j2}
			\langle y\rangle^{-(\Lambda-1)}
		\end{equation}for
		$(\tau,y)\in[\tau_0,\tau_1]\times\mathbb{R}^3$.
		Furthermore, for every $\bar{\varepsilon}>0$ and
		$\tau\in[\tau_0,\tau_1]$,
		\begin{equation}\label{eq:K-1-perturbation}
			\begin{aligned}
				&\left\|
				\langle y\rangle^{
					K(1-\eta)\frac{K-2}{K-1}-\bar{\varepsilon}}
				\nabla^{K-1} S(\tau,\cdot)
				\right\|_{L^{2+\frac{2}{K-2}}(\mathbb{R}^3)}
				\\
				&\qquad+
				\left\|
				\langle y\rangle^{
					K(1-\eta)\frac{K-2}{K-1}-\bar{\varepsilon}}
				\nabla^{K-1} V(\tau,\cdot)
				\right\|_{L^{2+\frac{2}{K-2}}(\mathbb{R}^3)}
				\leq C(\sigma_0,\bar{\varepsilon}).
			\end{aligned}
		\end{equation}
	\end{lemma}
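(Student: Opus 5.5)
The plan is to interpolate, pointwise in the far field, between the zeroth-order bound and the weighted top-order energy, using the weighted Gagliardo--Nirenberg inequality of Lemma~\ref{lem:weighted-GN-new}. This is the same route used to prove \eqref{123decay}, now carried to every order $1\le j\le K-2$.

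For the inputs, the zeroth-order bound will not be \eqref{S,V,maxCsigma0} alone. I will use the sharper decaying version
\[
|S|+|V|\le C\sigma_0\langle y\rangle^{-(\Lambda-1)},
\]
which is valid up to the time-dependent tail. It comes from \eqref{upper bound}, the profile decay \eqref{eq:profile-decay}, and the bootstrap bound \eqref{S,U,1}; where $\langle y\rangle\gtrsim e^{\tau}$, the factor $e^{-(\Lambda-1)(\tau-\tau_0)}$ is comparable to $\langle y\rangle^{-(\Lambda-1)}$ up to $R_0$-dependent constants, which are absorbed into $C(\sigma_0)$. The top-order input is \eqref{E1} for $(S,V)$, and Lemma~\ref{lem:energy-decay} gives the analogous bound for the perturbations.

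The first step applies Lemma~\ref{lem:weighted-GN-new} to $\psi=\langle y\rangle^{\Lambda-1}$ times the function, with $\varphi=\phi^{1/2}$, $p=\infty$, $q=2$, $l=K$, $i=j$. The resulting $L^\infty$ interpolation has exponent $\theta=\frac{j}{K-3/2}$, or $\frac{j}{K-5/2}$ once the commutators generated by the weight $\langle y\rangle^{\Lambda-1}$ are accounted for. This gives
\[
|\nabla^j f|\lesssim \big(\langle y\rangle^{-(\Lambda-1)}\big)^{1-\theta}\big(\phi^{-K/2}\big)^{\theta}\langle y\rangle^{\text{loss}}+\text{lower order}.
\]
Writing $\phi\sim\langle y\rangle^{2(1-\eta)}$ in the far field and collecting exponents produces exactly the exponent in \eqref{eq:lessKKKK-2-perturbation}. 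The second step derives \eqref{eq:lessK-2-perturbation} from \eqref{eq:lessKKKK-2-perturbation}. Using $\phi^{-j/2}\sim\langle y\rangle^{-j(1-\eta)}$, I will check that the extra exponent
\[
\frac{\Lambda j-\Lambda-K+\frac52+(1-\eta)(K-\frac{5j}{2})}{K-\frac52}
\]
is nonpositive for $j\le K-2$. When $\eta=0$ the numerator is $(\Lambda-\frac32)(j-1)$ up to adjustment, and $\Lambda<\frac{1+\sqrt3}{2}<\frac32$ by Remark~\ref{rem:Lambda-star-upper-bound}, so it is $\le0$. For $\eta>0$ the $-\eta(K-\frac{5j}{2})$ term may be positive, and I expect it to be absorbed using $\frac1K\ll\eta$ and the slack in the $\Lambda$ bound. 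In the inner region $|y|\le 4R_0$ everything is bounded by Sobolev embedding with constants depending on $\sigma_0$ through $R_0$.

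For \eqref{eq:K-1-perturbation}, I will apply the same lemma with $q=2$, $p=\infty$, $i=K-1$ and target $L^{r}$ with $\frac1r=\frac{K-2}{2(K-1)}$. Hölder's inequality puts weight $\phi^{\frac K2\theta}$ with $\theta=\frac{K-2}{K-1}$, and this yields the power $\langle y\rangle^{K(1-\eta)\frac{K-2}{K-1}}$. The small loss $\bar\varepsilon$ is there to make the residual weight from the $L^\infty$ endpoint integrable, just as the $\varepsilon_0,\varepsilon_1$ were used in the proof of Theorem~\ref{thm:local-self-similar}.

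The main obstacle is the exponent bookkeeping in the first step, together with verifying the hypotheses \eqref{eq:weighted-GN-extra-new} for the mixed weight $\psi\varphi^{\theta}$: one has to make sure the $\langle y\rangle^{\Lambda-1}$ factor commutes with derivatives at the cost of only $\langle y\rangle^{-1}$ per derivative. The first thing I would try is to treat $\psi=\langle y\rangle^{\Lambda-1}$ as a slowly varying weight satisfying $|\nabla^k\psi|\lesssim\psi\langle y\rangle^{-k}$, which holds trivially.
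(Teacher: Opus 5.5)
\textbf{There is a genuine gap.} Your first step cannot produce \eqref{eq:lessKKKK-2-perturbation}, because you interpolate from the zeroth-order endpoint.

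In Lemma~\ref{lem:weighted-GN-new} the exponent $\theta$ is fixed by the balance \eqref{eq:weighted-GN-balance-new}. That balance involves only $i,l,p,q,\bar r$; the weights, and any commutators they generate, do not enter it. With $l=K$, $i=j$, $p=\infty$, $q=2$ you are forced to $\theta=\frac{j}{K-3/2}$. The value $\frac{j}{K-5/2}$ is not available: it would make $\frac1{\bar r}<0$. Even granting $\langle y\rangle^{\Lambda-1}(|S-S^*|+|V|)\lesssim 1$, the output $\langle y\rangle^{-(\Lambda-1)(1-\theta)}\phi^{-K\theta/2}$ has far-field exponent
\[
-(\Lambda-1)-j(1-\eta)-\frac{j\bigl(\frac32(1-\eta)-(\Lambda-1)\bigr)}{K-\frac32},
\]
whereas the right side of \eqref{eq:lessKKKK-2-perturbation} has exponent
\[
-(\Lambda-1)-j(1-\eta)-\eta-\frac{(j-1)\bigl(\frac52(1-\eta)-\Lambda\bigr)}{K-\frac52}.
\]
The difference is positive for every $1\le j\le K-2$ as soon as $K\eta>\frac52-\Lambda$. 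For example, at $j=1$ the target is $\langle y\rangle^{-\Lambda}$, but you only get $\langle y\rangle^{-\Lambda+\eta-O(1/K)}$. So your route gives \eqref{eq:lessK-2-perturbation} but not the sharper bound. The sharper bound's exact exponent is used later, in Case~1 of $I_{s,1,2}$ and in \eqref{Eq_diffusion_L_V_pointwise}.

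Your input is also misstated. $S\to S^*(\tau)>0$ at infinity. The far-field upper bound for $S$ involves $\max\{\langle y/R_0\rangle^{1-\Lambda},e^{-(\Lambda-1)(\tau-\tau_0)}\}$, which is not comparable to $\langle y\rangle^{1-\Lambda}$ when $|y|\gg e^{\tau}$. And \eqref{S,U,1} carries no spatial decay at all. A decaying bound for $S-S^*$ and $V$ is only available by integrating \eqref{nableS1}--\eqref{nableU1} from infinity, as in \eqref{CSrlowerboundssss}. That is the signal that the gradient bound itself should be the low endpoint.

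\textbf{The missing idea: start the interpolation at first order.} Apply Lemma~\ref{lem:weighted-GN-new} to $f=\nabla S$ and $f=\nabla V$ with $l=K-1$, $i=j-1$, $p=\infty$, $q=2$, $\psi^{K-1}=\langle y\rangle^{\Lambda}$, $\varphi^{K-1}=\phi^{K/2}$. Then:
\begin{itemize}
\item $\|\psi^l f\|_{L^\infty}\le C(\sigma_0)$ by \eqref{nableS1}--\eqref{nableU1};
\item $\|\varphi^l\nabla^l f\|_{L^2}\le E^{1/2}$ by \eqref{E1};
\item no far-field constant has to be removed.
\end{itemize}
The balance now gives $\theta=\frac{j-1}{K-5/2}$. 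This is where the denominator $K-\frac52$ comes from, and $\theta<1$ is exactly the condition $j\le K-2$. The case $j=1$ is the bootstrap bound itself.

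The main term $\langle y\rangle^{-\Lambda(1-\theta)}\phi^{-K\theta/2}$ has exponent $\frac{-\Lambda(K-\frac32-j)-K(1-\eta)(j-1)}{K-\frac52}$. This is exactly the right side of \eqref{eq:lessKKKK-2-perturbation}. The remainder term $\langle y\rangle^{-\Lambda-(j-1)}$ equals the main term times $\langle y\rangle^{-(j-1)(\Lambda+K\eta-\frac52)/(K-\frac52)}\le 1$, so it is dominated.

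For \eqref{eq:lessK-2-perturbation}, note that the extra numerator equals $(\Lambda-\frac52)(j-1)-\eta(K-\frac52 j)$, not $(\Lambda-\frac32)(j-1)$ at $\eta=0$. It is affine in $j$ with negative slope and equals $-\eta(K-\frac52)$ at $j=1$, so it is negative throughout. No condition $\Lambda<\frac32$ is needed.

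For \eqref{eq:K-1-perturbation}, take $i=K-2$ and $\bar r=\frac{2(K-1)}{K-2}$. The balance then gives exactly $\theta=\frac{K-2}{K-1}=\frac il$. Hypothesis \eqref{eq:weighted-GN-extra-new} reduces to $K\eta\ge\frac52-\Lambda$. Then \eqref{eq:weighted-GN-finite-r-new} with $\varepsilon_0=\bar\varepsilon$ yields the weight $\langle y\rangle^{K(1-\eta)\frac{K-2}{K-1}-\bar\varepsilon}$. By contrast, your choice $\theta=\frac{K-2}{K-1}$ with $l=K$, $i=K-1$ violates the constraint $\theta\ge\frac il=\frac{K-1}{K}$.
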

	Next, we establish the estimate \eqref{S,U,2}.
	\begin{proposition}
		Under the bootstrap hypotheses \eqref{Puns}--\eqref{E1}, the bound in \eqref{S,U,2} holds:
		\begin{equation}
			\max \{|\widetilde{S}|, |\widetilde{V}|\} \le \frac{\sigma_0}{100} e^{-\varpi(\tau-\tau_0)}.
		\end{equation}
	\end{proposition}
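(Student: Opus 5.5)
We split $\mathbb R^3$ into the interior region $B(0,R_1)$ and the exterior region $B^c(0,R_1)$, and improve the bootstrap bound \eqref{S,U,1} separately in each, following the scheme of \cite{Cao-Labora-Gomez-Serrano-Shi-Staffilani-2023} and \cite{Chen-Zhang-Zhu}.

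\textbf{Interior region.} By Lemma~\ref{localidentity}, on $B(0,R_1)$ we have $(\widetilde S,\widetilde V)=(\widetilde{\widetilde S},\widetilde{\widetilde V})$, so it suffices to bound $\|(\widetilde{\widetilde S},\widetilde{\widetilde V})\|_{L^\infty(B(0,R_1))}$. Since $X=H^m_0(B(0,3R_1))$ with $m\ge2$, Sobolev embedding reduces this to bounding $\|(\widetilde{\widetilde S},\widetilde{\widetilde V})\|_X$ with a factor $C(R_1)$.

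\begin{enumerate}
\item Decompose $(\widetilde{\widetilde S},\widetilde{\widetilde V})=P_{\mathrm{sta}}+P_{\mathrm{uns}}$. The unstable part is bounded directly by \eqref{Puns}, giving $\sigma_1 e^{-\varpi(\tau-\tau_0)}$.
\item For the stable part, use Duhamel's formula for \eqref{truncatedsystem} with the semigroup of $\mathcal L$ on $X_{\rm s}$. By Lemma~\ref{lem:new-stable-unstable-summary}, this semigroup decays like $e^{-\frac{3\varpi}{2}(\tau-\tau_0)}$ (any rate strictly larger than $\varpi$ suffices; the hierarchy $\sigma_g=\frac{25}{12}\varpi$ provides the room).
\item The initial stable contribution is bounded by $C\sigma_1$ using \eqref{initialdatacondition1}.
\item The forcing consists of four pieces:
\begin{enumerate}
\item the error $\widehat\chi_2\mathcal E_{s,v}$, which vanishes on $\operatorname{supp}\widehat\chi_2$ since $\widehat\chi=1$ there;
\item the nonlinear terms $\widehat\chi_2\mathcal N_{s,v}$, which are quadratic. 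Their $H^m$ norm is bounded by $C(R_1)\sigma_0\cdot\sigma_0 e^{-2\varpi(\tau-\tau_0)}$, using \eqref{S,U,1} and interpolation with \eqref{energy decayEE} together with Lemma~\ref{lem:higher-order-perturbation};
\item the dissipative terms $e^{-f_1\tau}(\ldots)$, which are bounded by $C(E,\sigma_1,R_1)e^{-f_1\tau}$. These are harmless by \eqref{eq:tau0-large-condition} and $f_1\gg\varpi$;
\item the components of the forcing that the projection sends into $X_{\rm u}$, which are already accounted for in \eqref{Puns}.
\end{enumerate}
\end{enumerate}

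Summing these, $\|(\widetilde{\widetilde S},\widetilde{\widetilde V})\|_X\le C(R_1)(\sigma_1+\sigma_0^2)e^{-\varpi(\tau-\tau_0)}$. This is $\le\frac{\sigma_0}{100C(R_1)}e^{-\varpi(\tau-\tau_0)}$ because $C(R_1)\sigma_0\ll1$ and $\sigma_1\ll\sigma_0$.

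\textbf{Exterior region.} Here we use the transport structure along the characteristics $y=y_0e^{\tau-\bar\tau}$, as in the proof of \eqref{S,lower,2}.

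\begin{enumerate}
\item Write the equation for $\widetilde S$ (and similarly $\widetilde V$) from \eqref{perturSEq}. The damping is $-(\Lambda-1)\widetilde S$, and the remaining linear coefficients $\widehat\chi\bar V,\ \nabla(\widehat\chi\bar S),\ \operatorname{div}(\widehat\chi\bar V)$ are at most $\frac1{50M_1}$ by \eqref{est_decay_1}.
\item The derivative terms $\nabla\widetilde S$ and $\operatorname{div}\widetilde V$ multiplied by $\widehat\chi\bar S$ or $\widetilde V$ are small, by \eqref{123decay} combined with $|\bar S|\le C\sigma_0$ for $|y|\ge R_0$.
\item The errors $\mathcal E_{s,v}$ are supported in $|y|\sim e^\tau$. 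There they are bounded by $Ce^{-\Lambda\tau}$, which is $\ll\sigma_0e^{-\varpi(\tau-\tau_0)}$ since $\varpi<\Lambda-1$ and $\tau_0\gg1$.
\item The $e^{-f_1\tau}$ terms are bounded exactly as in \eqref{SSS1}--\eqref{SSS2}.
\item Since $\varpi<\Lambda-1$, integrating along characteristics gives
\[
|\widetilde S|\le e^{-(\Lambda-1-\frac{1}{25})(\tau-\bar\tau)}|\widetilde S(\bar\tau,y_0)|+\text{(small)}.
\]
\item To close, take $\bar\tau=\tau_0$, where $|\widetilde S_0|\le\sigma_1\ll\sigma_0/100$. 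Alternatively take $|y_0|=R_1$, where the interior bound gives $|\widetilde S(\bar\tau,y_0)|\le\frac{\sigma_0}{200}e^{-\varpi(\bar\tau-\tau_0)}$. In the latter case $e^{-(\Lambda-1-\frac1{25})(\tau-\bar\tau)}\le e^{-\varpi(\tau-\bar\tau)}$, which transfers the decay.
\end{enumerate}

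\textbf{Main obstacle.} The main difficulty is the interior stable Duhamel estimate. The nonlinear and dissipative forcing must be controlled in the full $H^m(B(0,3R_1))$ norm with decay at a rate faster than $\varpi$. This requires using the weighted energy \eqref{E1} and Lemma~\ref{lem:higher-order-perturbation} to bound the $m+2$ derivatives produced by the dissipative term. I would first try to absorb them using $e^{-f_1\tau}C(E,R_1)\ll\sigma_1$, via \eqref{eq:tau0-large-condition}, since $m+2\le K$.
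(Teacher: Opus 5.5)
Your interior argument is essentially the paper's: Lemma~\ref{localidentity}, Sobolev embedding of $X$, the splitting $P_{\mathrm{sta}}+P_{\mathrm{uns}}$, Duhamel on $X_{\rm s}$, and $\widehat\chi_2\mathcal E_{s,v}=0$. The bookkeeping needs two corrections. First, the stable semigroup decays at rate $\mathfrak c_g/2=\frac{25}{24}\varpi$, not $\frac32\varpi$. The Duhamel integral therefore costs a factor of order $\sigma_g^{-1}$, so you need $\sigma_1\ll\sigma_0\sigma_g$, which is \eqref{eq:sigma-g-hierarchy}, and not just $\sigma_1\ll\sigma_0$. Second, interpolating against the non-decaying energy \eqref{energy decayEE} gives the nonlinear forcing a rate slightly below $2\varpi$; this is harmless.

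The genuine gap is step 2 of your exterior argument. Along $y=y_0e^{\tau-\bar\tau}$, the linear cross terms $\widehat\chi\bar V\cdot\nabla\widetilde S$ and $\delta\widehat\chi\bar S\operatorname{div}\widetilde V$ (and their counterparts in the $\widetilde V$ equation) must be bounded by a small multiple of $\sigma_0e^{-\varpi(\tau-\tau_0)}$. Smallness alone does not suffice, because the target bound decays in $\tau$.

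Your tool \eqref{123decay}, like \eqref{nableS1}--\eqref{nableU1}, carries no factor $e^{-\varpi(\tau-\tau_0)}$, so it only bounds these sources by $C\sigma_0^{19/10}$. Take $R_1<|y|\le 2R_1$ at a late time $\tau$. The ray starts at $|y_0|=R_1$ at $\bar\tau=\tau-\log(|y|/R_1)$, so spatial decay along it gains nothing. The resulting contribution is then only bounded by $\sigma_0^{19/10}\log(|y|/R_1)$, which is not $\le\frac{\sigma_0}{200}e^{-\varpi(\tau-\tau_0)}$ once $\tau-\tau_0\gtrsim\varpi^{-1}\log\sigma_0^{-1}$. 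Interpolating $\widetilde S$ against the weighted energy instead gives only the rate $\varpi(1-O(K^{-1}))$, which does not close either.

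The missing ingredient is the bootstrap hypothesis \eqref{nable41}, which your proposal never uses. Interpolate it with \eqref{S,U,1} via Lemma~\ref{lem:exterior-GN-new}, with $p=\bar r=\infty$, $q=2$, $i=1$, $l=4$, $\theta=\frac25$. This gives \eqref{Eq_GN_interpolation_result}, namely $\|\nabla\widetilde S\|_{L^\infty(B^c(0,R_1))}+\|\nabla\widetilde V\|_{L^\infty(B^c(0,R_1))}\le C_3\sigma_0e^{-\varpi(\tau-\tau_0)}$, with $C_3$ independent of $R_1$. That independence is essential, since $R_1$ is chosen after $M_1$.

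With the coefficient bound $\frac1{50M_1}$ from \eqref{est_decay_1}, the cross terms become $\lesssim\frac{C_3\sigma_0}{M_1}e^{-\varpi(\tau-\tau_0)}$. Fixing $M_1$ in terms of $\Lambda-1-\varpi$, as in \eqref{Eq_constant_choice}, makes the integral of $\partial_\tau\bigl(e^{(\Lambda-1)(\tau-\tau_0)}\widetilde S\bigr)$ along the ray at most $\frac{\sigma_0}{200}e^{(\Lambda-1-\varpi)(\tau-\tau_0)}$. This calibration should also replace your damping rate $\Lambda-1-\frac1{25}$, which is unjustified because $\Lambda-1$ may well be smaller than $\frac1{25}$.
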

	\begin{proof}
		\noindent \textbf{1. Interior bounds for $\widetilde{S}$ and $\widetilde{V}$ within the regional domain $|y| \le R_1$.}\\
		Recalling the structural framework of the template space and Lemma \ref{localidentity}, we have the following localized embedding control for the higher-order gradients of the velocity field:
		\begin{equation}\label{Eq_L_infinity_subspace_norm}
			\|(\nabla^j \widetilde{S},\nabla^j \widetilde{V})\|_{L^\infty(B(0,R_1))} = \|(\nabla^j\widetilde{\widetilde{S}},\nabla^j \widetilde{\widetilde{V}})\|_{L^\infty(B(0,R_1))} \le C \|(\widetilde{\widetilde{S}},\widetilde{\widetilde{V}})\|_X, \quad \text{for} \quad j \le4.
		\end{equation}
		From \eqref{truncatedsystem}, the perturbation vector $(\widetilde{\widetilde{S}}, \widetilde{\widetilde{V}})$ obeys the linearized dynamics
		\begin{equation}\label{Eq_perturbed_matrix_system}
			\partial_\tau (\widetilde{\widetilde{S}}, \widetilde{\widetilde{V}}) = \mathcal{L}(\widetilde{\widetilde{S}}, \widetilde{\widetilde{V}}) + \widehat{\chi}_2 \mathcal{F}(\widetilde{S}, \widetilde{V}),
		\end{equation}
		Here, the perturbation term $\mathcal{F}(\widetilde{S}, \widetilde{V})$ obeys the following formulation:
		\begin{equation}\label{Eq_F_vector_block}
			\mathcal{F}(\widetilde{S}, \widetilde{V}) = 
			\left( 
			\begin{aligned}
				& \mathcal{N}_s(\widetilde{S},\widetilde{V}) + \mathcal{E}_s(\bar{S},\bar{V}) + e^{-f_1 \tau} d \alpha (\delta S)^{\frac{\alpha-1}{\delta}} \Delta S + e^{-f_1 \tau} d \alpha (\alpha-\delta) (\delta S)^{\frac{\alpha-\delta-1}{\delta}} |\nabla S|^2 \\[1mm]
				& \mathcal{N}_v(\widetilde{S},\widetilde{V}) + \mathcal{E}_v(\bar{S},\bar{V}) + e^{-f_1 \tau} (\delta S)^{\frac{\alpha-1}{\delta}}\mathbb{L}(V) + e^{-f_1 \tau} \alpha (\delta S)^{\frac{\alpha-\delta-1}{\delta}} \nabla S \cdot \mathbb{D}V
			\end{aligned} 
			\right)^{\top}.
		\end{equation} 
		Then, the system is projected onto the stable subspace to extract the contractive features
		\begin{equation}\label{Eq_projected_stable_eq}
			\partial_\tau P_{\mathrm{sta}}(\widetilde{\widetilde{S}}, \widetilde{\widetilde{V}}) = \mathcal{L} P_{\mathrm{sta}}(\widetilde{\widetilde{S}}, \widetilde{\widetilde{V}}) + P_{\mathrm{sta}}(\widehat{\chi}_2 \mathcal{F}(\widetilde{S}, \widetilde{V})).
		\end{equation}
		Using Duhamel's principle we derive
		\begin{equation}\label{Eq_Duhamel_representation}
			P_{\mathrm{sta}}(\widetilde{\widetilde{S}}, \widetilde{\widetilde{V}})(\tau) = e^{(\tau - \tau_0)\mathcal{L}} P_{\mathrm{sta}}(\widetilde{\widetilde{S}}, \widetilde{\widetilde{V}})(\tau_0) + \int_{\tau_0}^\tau e^{(\tau - \bar{\tau})\mathcal{L}} \left( P_{\mathrm{sta}}(\widehat{\chi}_2 \mathcal{F}(\widetilde{S}, \widetilde{V})) \right)(\bar{\tau}) \, \mathrm{d}\bar{\tau}.
		\end{equation}
		Here, $e^{t\mathcal{L}}$ represents the linear operator semigroup generated by the differential operator $\mathcal{L}$. Now, we proceed to estimate the bound of $\|\widehat{\chi}_2 \mathcal{F}(\widetilde{S}, \widetilde{V})\|_X$:
		\begin{itemize}
			\item \noindent\textbf{Estimates for $\mathcal{N}_s$ and $\mathcal{N}_v$.}\\
			Using the weighted Gagliardo--Nirenberg inequality \eqref{eq:weighted-GN-Linfty-special-new} in Lemma~\ref{lem:weighted-GN-new}, together with \eqref{S,U,1} and \eqref{energy decayEE}, we arrive at
			\begin{equation}\label{Eq_nonlinear_X_norm_merged}
				\begin{split}
					& \left\|\widehat\chi_2 \left( \mathcal{N}_s(\widetilde{S},\widetilde{V}), \, \mathcal{N}_v(\widetilde{S},\widetilde{V}) \right) \right\|_X \\
					& \le C(m)\sum_{j\le m}(\|\nabla^j(\widetilde{V} \cdot \nabla \widetilde{V})\|_{L^\infty} + \|\nabla^j(\widetilde{V} \cdot \nabla \widetilde{S})\|_{L^\infty} + \|\nabla^j(\widetilde{S}\nabla \widetilde{S})\|_{L^\infty} + \|\nabla^j(\widetilde{S} \operatorname{div}\widetilde{V})\|_{L^\infty}) \\
					& \le C(m) \sum_{0 \le \ell \le j \le m} \left( \|\nabla^\ell \widetilde{V}\|_{L^\infty} \|\nabla^{j-\ell+1} \widetilde{V}\|_{L^\infty} + \|\nabla^\ell \widetilde{V}\|_{L^\infty} \|\nabla^{j-\ell+1} \widetilde{S}\|_{L^\infty} \right. \\
					& \quad \left. + \, \|\nabla^\ell \widetilde{S}\|_{L^\infty} \|\nabla^{j-\ell+1} \widetilde{S}\|_{L^\infty} + \|\nabla^\ell \widetilde{S}\|_{L^\infty} \|\nabla^{j-\ell+1} \widetilde{V}\|_{L^\infty} \right) \\
					& \le C(m) E^{\frac{1}{20}} \sigma_0^{\frac{39}{20}} e^{-\frac{39}{20}\varpi(\tau - \tau_0)} \\
					& \le C(m) \sigma_0^{\frac{19}{10}} e^{-\frac{39}{20}\varpi(\tau - \tau_0)}.
				\end{split}
			\end{equation}
			\item \noindent\textbf{Estimates for $\mathcal{E}_s$ and $\mathcal{E}_v$.}\\ 
			It is readily verified that $\widehat{\chi}_2 \mathcal{E}_s(\bar{S},\bar{V}) = 0$ and $\widehat{\chi}_2 \mathcal{E}_v(\bar{S},\bar{V}) = 0$ hold identically.
			\item \noindent\textbf{Estimates for Remaining Nonlinear Terms.}\\
			Using the bootstrap bounds and the estimates \eqref{eq:lessK-2-perturbation}, we have 
			\begin{equation}\label{Eq_F_dis_Sobolev_bound}
				\begin{aligned}
					&\|\widehat\chi_2 (e^{-f_1 \tau} (\delta S)^{\frac{\alpha-1}{\delta}}\mathbb{L}(V) + e^{-f_1 \tau} \alpha (\delta S)^{\frac{\alpha-\delta-1}{\delta}} \nabla S \cdot \mathbb{D}V)\|_{X}\\
					&= \|\widehat\chi_2 (e^{-f_1 \tau} (\delta S)^{\frac{\alpha-1}{\delta}}\mathbb{L}(V) + e^{-f_1 \tau} \alpha (\delta S)^{\frac{\alpha-\delta-1}{\delta}} \nabla S \cdot \mathbb{D}V)\|_{H^m}\\
					&\le C(m) e^{-f_1 \tau} \left\| S^{\frac{\alpha-1}{\delta}} \mathbb{L}(V) + \frac{\alpha}{\delta}S^{\frac{\alpha-1}{\delta}} \frac{\nabla S}{S} \mathbb{D}(V) \right\|_{L^\infty(B(0,3R_1))} \\
					&\quad + C(m) e^{-f_1 \tau} \left\| \nabla^m \left( S^{\frac{\alpha-1}{\delta}} \mathbb{L}(V) \right) + \frac{\alpha}{\delta} \nabla^m \left( S^{\frac{\alpha-1}{\delta}} \frac{\nabla S}{S} \mathbb{D}(V) \right) \right\|_{L^\infty(B(0,3R_1))} \\
					&\le C(\sigma_0,\sigma_1) e^{-f_1 \tau} \le C(\sigma_0,\sigma_1) \sigma_0^{-\frac{9}{5}} e^{-f_1 \frac{\tau}{2}} \sigma_0^{\frac{9}{5}} e^{-f_1 \frac{\tau}{2}} \\
					&\le C(\sigma_0,\sigma_1) e^{-f_1 \frac{\tau_0}{2}} \sigma_1^{\frac{6}{5}} e^{-f_1 \frac{\tau}{2}} \le \sigma_1^{\frac{6}{5}} e^{-f_1 \frac{\tau}{2}}.
				\end{aligned}
			\end{equation}
			Analogously, the same estimates give
			\begin{equation}\label{Eq_F_disSSS_Sobolev_bound}
				\begin{aligned}
					&\|\widehat\chi_2 (e^{-f_1 \tau} d \alpha (\delta S)^{\frac{\alpha-1}{\delta}} \Delta S + e^{-f_1 \tau} d \alpha (\alpha-\delta) (\delta S)^{\frac{\alpha-\delta-1}{\delta}} |\nabla S|^2)\|_{X}\\
					&\le C(\sigma_0,\sigma_1) e^{-f_1 \frac{\tau_0}{2}} \sigma_1^{\frac{6}{5}} e^{-f_1 \frac{\tau}{2}} \le \sigma_1^{\frac{6}{5}} e^{-f_1 \frac{\tau}{2}}.
				\end{aligned}
			\end{equation}
		\end{itemize}
		By virtue of \eqref{Eq_nonlinear_X_norm_merged}-\eqref{Eq_F_disSSS_Sobolev_bound}, we arrive at
		\begin{equation}\label{eq:full-forcing-X-bound}
			\|\widehat{\chi}_2 \mathcal{F}(\widetilde{S}, \widetilde{V})\|_X
			\le C\sigma_1^{\frac{6}{5}}e^{-\frac{3}{2}\varpi(\tau-\tau_0)}
			\le \sigma_1e^{-\frac{3}{2}\varpi(\tau-\tau_0)}.
		\end{equation}
		Taking the parameter in Lemma~\ref{lem:new-stable-unstable-summary} to be
		$\mathfrak c_g=\sigma_g=\frac{25}{12}\varpi$ and using its
		stable-semigroup estimate and \eqref{initialdatacondition1}, we obtain
		\begin{align}\label{Eq_stable_subspace_decay_estimate}
			\|P_{\mathrm{sta}}(\widetilde{\widetilde{S}}, \widetilde{\widetilde{V}})\|_X 
			&\le \sigma_1 e^{-\frac{\sigma_g}{2}(\tau - \tau_0)} + \int_{\tau_0}^\tau \sigma_1 e^{-\frac{\sigma_g}{2}(\tau - \bar{\tau})} e^{-\frac{3}{2}\varpi(\bar{\tau} - \tau_0)} \, \mathrm{d}\bar{\tau} \nonumber \\
			&\le C \frac{\sigma_1}{\sigma_g} e^{-\frac{\sigma_g}{2}(\tau - \tau_0)} \le C \frac{\sigma_1}{\sigma_g} e^{-\varpi(\tau - \tau_0)}.
		\end{align}
		Finally, using \eqref{eq:sigma-g-hierarchy}, \eqref{Puns} and \eqref{Eq_L_infinity_subspace_norm}, we obtain 
		\begin{equation}\label{4R1SVbound}
			\begin{aligned}
				\|( \nabla^j\widetilde{S},\nabla^j\widetilde{V})\|_{L^\infty(B(0,R_1))} &\le C \|\widetilde{\widetilde{S}},\widetilde{\widetilde{V}}\|_X \\
				&\le C(\|P_{\mathrm{sta}}(\widetilde{\widetilde{S}}, \widetilde{\widetilde{V}})\|_X +\|P_{\mathrm{uns}}(\widetilde{\widetilde{S}}, \widetilde{\widetilde{V}})\|_X) \\
				&\le C(\frac{\sigma_1}{\sigma_g} e^{-\varpi(\tau - \tau_0)}+\sigma_1 e^{-\varpi(\tau-\tau_0)}) \le\frac{\sigma_0}{100}e^{-\varpi(\tau-
					\tau_0)},
			\end{aligned}
		\end{equation}
		for $0\le j \le 4$.\\
		\noindent \textbf{2. Exterior bounds for $\widetilde{S}$ and $\widetilde{V}$ within the regional domain $|y| > R_1$.} \\
		From \eqref{perturSEq} and \eqref{perturVEq}, we see that the perturbation variables $(\widetilde{S}, \widetilde{V})$ satisfy the system
		\begin{equation}\label{Eq_perturbation_system}
			\begin{cases}
				\begin{aligned}
					\partial_\tau \widetilde{S} = &-(\Lambda - 1)\widetilde{S} - (y + \widehat{\chi}\bar{V}) \cdot \nabla \widetilde{S} - \delta(\widehat{\chi}\bar{S}) \operatorname{div} \widetilde{V} - \widetilde{V} \cdot \nabla(\widehat{\chi}\bar{S}) - \delta \widetilde{S} \operatorname{div}(\widehat{\chi}\bar{V}) \\
					&+{\mathcal{N}_s(\widetilde{S},\widetilde{V})}+{\mathcal{E}_s(\bar{S},\bar{V})} \\
					&+e^{-f_1 \tau} d \alpha (\delta S)^{\frac{\alpha-1}{\delta}} \Delta S + e^{-f_1 \tau} d \alpha (\alpha-\delta) (\delta S)^{\frac{\alpha-\delta-1}{\delta}} |\nabla S|^2,
				\end{aligned} \\
				\begin{aligned}
					\partial_\tau \widetilde{V} = &-(\Lambda - 1)\widetilde{V} - (y + \widehat{\chi}\bar{V}) \cdot \nabla \widetilde{V} - \delta(\widehat{\chi}\bar{S})\nabla \widetilde{S} - \widetilde{V} \cdot \nabla(\widehat{\chi}\bar{V}) - \delta \widetilde{S} \nabla(\widehat{\chi}\bar{S}) \\
					&+{\mathcal{N}_v(\widetilde{S},\widetilde{V})}+{\mathcal{E}_v(\bar{S},\bar{V})} \\
					&+e^{-f_1 \tau} (\delta S)^{\frac{\alpha-1}{\delta}}\mathbb{L}(V) + e^{-f_1 \tau} \alpha (\delta S)^{\frac{\alpha-\delta-1}{\delta}} \nabla S \cdot \mathbb{D}V.
				\end{aligned}
			\end{cases}
		\end{equation}
		Invoking the Gagliardo-Nirenberg inequality for the hypotheses \eqref{S,U,1} and \eqref{nable41} under the specific parameter choices
		\begin{equation*}
			p = \infty, \quad q = 2, \quad \theta = \frac{2}{5}, \quad \bar{r} = \infty, \quad i = 1, \quad l = 4,
		\end{equation*}
		we deduce the following $L^\infty$-bound:
		\begin{equation}\label{Eq_GN_interpolation_result}
			\max \left\{ \|\nabla \widetilde{S}\|_{L^\infty(B^c(0,R_1))}, \, \|\nabla \widetilde{V}\|_{L^\infty(B^c(0,R_1))} \right\} \le C_3\sigma_0 \left(\frac{1}{50}\right)^{\frac{1}{20}} e^{-\varpi(\tau-\tau_0)}.
		\end{equation}
		Consequently, by virtue of the estimates \eqref{est_decay_1}, \eqref{S,U,1}  and \eqref{Eq_GN_interpolation_result}, the linear part can be bounded by
		\begin{align}
			& \left|\widehat{\chi}\bar{V} \cdot \nabla \widetilde{S}\right| + \left|\delta(\widehat{\chi}\bar{S}) \operatorname{div}\widetilde{V}\right| + \left|\widetilde{V} \cdot \nabla(\widehat{\chi}\bar{S})\right| + \left|\delta \widetilde{S} \operatorname{div}(\widehat{\chi}\bar{V})\right| \le \frac{\sigma_0e^{-\varpi(\tau-\tau_0)}}{50M_1}[C_3\left(\frac{1}{50}\right)^{\frac{1}{20}}+\frac{1}{50}], \label{Eq_cross_term_bound_1} \\
			& \left|\widehat{\chi}\bar{V} \cdot \nabla \widetilde{V}\right| + \left|\delta(\widehat{\chi}\bar{S})\nabla \widetilde{S}\right| + \left|\widetilde{V} \cdot \nabla(\widehat{\chi}\bar{V})\right| + \left|\delta \widetilde{S} \nabla(\widehat{\chi}\bar{S})\right| \le\frac{\sigma_0e^{-\varpi(\tau-\tau_0)}}{50M_1}[C_3\left(\frac{1}{50}\right)^{\frac{1}{20}}+\frac{1}{50}]. \label{Eq_cross_term_bound_2}
		\end{align}
		Using \eqref{S,U,1}  and \eqref{Eq_GN_interpolation_result}, we arrive at 
		\begin{align}
			|\mathcal{N}_s(\widetilde{S},\widetilde{V})|\le|\widetilde{V} \cdot \nabla \widetilde{S}|+ |\delta \widetilde{S} \operatorname{div} \widetilde{V}|\le2C_3\left(\frac{1}{50}\right)^{\frac{21}{20}}\sigma_0^2e^{-2\varpi(\tau-\tau_0)},\\
			|\mathcal{N}_v(\widetilde{S},\widetilde{V})|\le |\widetilde{V} \cdot \nabla \widetilde{V}| + |\delta \widetilde{S} \nabla \widetilde{S}|\le2C_3\left(\frac{1}{50}\right)^{\frac{21}{20}}\sigma_0^2e^{-2\varpi(\tau-\tau_0)}.
		\end{align}
		In view of the decay estimates for the profile, we deduce that for $j \ge 0$ 
		$$|\nabla^j \bar{S}| + |\nabla^j \bar{V}| \le C \langle y \rangle^{1 - \Lambda - j}.$$
		It follows from \eqref{xdecayestim} that
		\begin{equation}
			\begin{aligned}
				\|{\mathcal{E}_s(\bar{S},\bar{V})}\|_{L^{\infty}(B^c(0,R_1))}&\le \|(\widehat{\chi}^2 - \widehat{\chi})\bar{V} \cdot \nabla \bar{S}\|_{L^{\infty}(B^c(0,R_1))} +\|(1 + \delta)\widehat{\chi}\bar{V} \cdot \nabla \widehat{\chi} \, \bar{S}\|_{L^{\infty}(B^c(0,R_1))}\\
				&\quad+ \|\delta(\widehat{\chi}^2 - \widehat{\chi})\bar{S} \operatorname{div} \bar{V}\|_{L^{\infty}(B^c(0,R_1))}\\ 
				&\le C\|(\widehat{\chi}^2 - \widehat{\chi}) \left\langle y  \right\rangle^{1-2\Lambda}\|_{L^{\infty}(B^c(0,R_1))}+C\||\widehat{\chi}||\nabla\widehat{\chi}| \left\langle y  \right\rangle^{2-2\Lambda}\|_{L^{\infty}(B^c(0,R_1))}\\
				&\le Ce^{(1-2\Lambda)\tau}\le Ce^{-\tau}\le Ce^{-\tau_0}e^{-(\tau-\tau_0)}\le \sigma_1e^{-(\tau-\tau_0)},
			\end{aligned}
		\end{equation}
		and 
		\begin{equation}
			\begin{aligned}
				\|{\mathcal{E}_v(\bar{S},\bar{V})}\|_{L^{\infty}(B^c(0,R_1))}&\le \|(\widehat{\chi}^2 - \widehat{\chi})\bar{V} \cdot \nabla \bar{V}\|_{L^{\infty}(B^c(0,R_1))} +\|\widehat{\chi}\bar{V} \cdot \nabla \widehat{\chi} \, \bar{V}\|_{L^{\infty}(B^c(0,R_1))}\\
				&\quad+ \|\delta(\widehat{\chi}^2 - \widehat{\chi})\bar{S} \nabla \bar{S}\|_{L^{\infty}(B^c(0,R_1))}+\|\delta\widehat{\chi}\bar{S}\nabla\widehat{\chi}\bar{S}\|_{L^{\infty}(B^c(0,R_1))}\\ 
				&\le C\|(\widehat{\chi}^2 - \widehat{\chi}) \left\langle y  \right\rangle^{1-2\Lambda}\|_{L^{\infty}(B^c(0,R_1))}+C\||\widehat{\chi}||\nabla\widehat{\chi}| \left\langle y  \right\rangle^{2-2\Lambda}\|_{L^{\infty}(B^c(0,R_1))}\\
				&\le Ce^{(1-2\Lambda)\tau}\le Ce^{-\tau}\le Ce^{-\tau_0}e^{-(\tau-\tau_0)}\le \sigma_1e^{-(\tau-\tau_0)}.
			\end{aligned}
		\end{equation}
		By virtue of \eqref{SSS1} and \eqref{SSS2}, together with similarly performed estimates, we can conclude that
		\begin{align}
			|e^{-f_1 \tau} d \alpha (\delta S)^{\frac{\alpha-1}{\delta}} \Delta S| &\le C \sigma_0^{\frac{9}{10}}{\sigma_1}^{\frac{\alpha-1}{\delta}} \left\langle \frac{y}{R_0} \right\rangle^{-2+2\eta+(\Lambda - 1)\frac{1-\alpha}{\delta}} e^{-f_1\tau}\nonumber \\
			&\le C {\sigma_1}^{\frac{6}{5}} e^{-\frac{f_1}{2}\tau},\\
			|e^{-f_1 \tau} d \alpha (\alpha-\delta) (\delta S)^{\frac{\alpha-\delta-1}{\delta}} |\nabla S|^2|&\le C\sigma_0^{\frac{9}{5}}{\sigma_1}^{\frac{\alpha-\delta-1}{\delta}}
			\left\langle \frac{y}{R_0} \right\rangle^{-2+2\eta+(\Lambda - 1)\frac{1-\alpha+\delta}
				{\delta}}e^{-f_1\tau}\nonumber \\
			&\le C{\sigma_1}^{\frac{6}{5}}e^{-\frac{f_1}{2}\tau},\\
			|e^{-f_1 \tau} (\delta S)^{\frac{\alpha-1}{\delta}}\mathbb{L}(V)|&\le C\sigma_0^{\frac{9}{10}}{\sigma_1}^{\frac{\alpha-1}{\delta}} \left\langle \frac{y}{R_0} \right\rangle^{-2+2\eta+(\Lambda - 1)\frac{1-\alpha}{\delta}} e^{-f_1\tau}\nonumber \\
			&\le C {\sigma_1}^{\frac{6}{5}} e^{-\frac{f_1}{2}\tau},\\
			|e^{-f_1 \tau} \alpha (\delta S)^{\frac{\alpha-\delta-1}{\delta}} \nabla S \cdot \mathbb{D}V|&\le C\sigma_0^{\frac{9}{5}}{\sigma_1}^{\frac{\alpha-\delta-1}{\delta}}
			\left\langle \frac{y}{R_0} \right\rangle^{-2+2\eta+(\Lambda - 1)\frac{1-\alpha+\delta}
				{\delta}}e^{-f_1\tau}\nonumber \\
			&\le C{\sigma_1}^{\frac{6}{5}}e^{-\frac{f_1}{2}\tau}. \label{ef1ssdvestcom}
		\end{align}
		Combining the estimates \eqref{Eq_perturbation_system}-\eqref{ef1ssdvestcom}, we arrive at
		\begin{equation}\label{Eq_differential_ineq}
			\begin{split}
				\left|\partial_\tau \left( e^{(\Lambda - 1)(\tau - \tau_0)} \widetilde{S}(\tau, e^{\tau- \bar{\tau}} y_0) \right)\right| 
				&\le e^{(\Lambda - 1)(\tau - \tau_0)} \left| \frac{\sigma_0 e^{-\varpi(\tau - \tau_0)}}{50 M_1} \left( C_3 \left( \frac{1}{50} \right)^{\frac{1}{20}} + \frac{1}{50} \right) \right. \\
				&\quad \left. + \, 2 C_3 \left( \frac{1}{50} \right)^{\frac{21}{20}} \sigma_0^2 e^{-2\varpi(\tau - \tau_0)} + \sigma_1 e^{-\varpi(\tau - \tau_0)}+ C{\sigma_1}^{\frac{6}{5}}e^{-\frac{f_1}{2}\tau}\right|\\ 
				&\le e^{(\Lambda - 1)(\tau - \tau_0)} \frac{\sigma_0(\Lambda-1-\varpi)}{200} e^{-\varpi(\tau - \tau_0)} .
			\end{split}
		\end{equation}
		To ensure that the last inequality holds, we utilize \eqref{Eq_parameter_hierarchy} and choose $M_1$ such that
		\begin{equation}\label{Eq_constant_choice}
			\frac{\left( C_3 \left(\frac{1}{50}\right)^{\frac{1}{20}} + \frac{1}{50} \right)}{50 M_1} = \frac{\Lambda - 1 - \varpi}{400}.
		\end{equation}
		Next, integrating the above inequality over the temporal interval $[\bar{\tau}, \tau]$ yields
		\begin{equation}\label{Eq_integrated_diff}
			\left| e^{(\Lambda - 1)(\tau - \tau_0)} \widetilde{S}(\tau, e^{\tau-\bar{\tau}}y_0) - e^{(\Lambda - 1)(\bar{\tau} - \tau_0)} \widetilde{S}(\bar{\tau}, y_0) \right| \le \frac{\sigma_0}{200} e^{(\Lambda - 1 - \varpi)(\tau - \tau_0)}.
		\end{equation}
		Applying the triangle inequality to \eqref{Eq_integrated_diff} immediately leads to the following pointwise bound for the perturbation variable:
		\begin{equation}\label{Eq_pointwise_S_bound}
			\left| \widetilde{S}(\tau, e^{\tau-\bar{\tau}} y_0) \right| \le \frac{\sigma_0}{200} e^{-\varpi(\tau - \tau_0)} + e^{(\Lambda - 1)(\bar{\tau} - \tau)} \left| \widetilde{S}(\bar{\tau}, y_0) \right|.
		\end{equation}
		Now, we split our analysis into two geometric cases depending on the localization of the initial point $y_0$:
		
		\begin{itemize}
			\item \textbf{Case 1: The localized domain on the boundary $|y_0| = R_1$ for $\bar{\tau} \ge \tau_0$.} \\
			In this regime, taking the supremum over all possible trajectories, the combination of \eqref{4R1SVbound} and the bootstrap assumptions yields
			\begin{align}
				e^{(\Lambda - 1)(\bar{\tau} - \tau)} \max \left| \widetilde{S}(\bar{\tau}, y_0) \right| &\le C e^{-\varpi(\bar{\tau} - \tau_0)} \frac{\sigma_1}{\sigma_g} e^{(\Lambda - 1)(\bar{\tau} - \tau)} \nonumber \\
				&\le \frac{\sigma_0}{200} e^{-\varpi(\tau - \tau_0)}. \label{Eq_case1_final}
			\end{align}
			\item \textbf{Case 2: The far-field domain $|y_0| > R_1$ at the initial layer $\bar{\tau} = \tau_0$.} \\
			For the initial data outside the ball of radius $R_1$, the localized smallness assumption implies that
			\begin{equation*}
				e^{(\Lambda - 1)(\tau_0 - \tau)} \left| \widetilde{S}(\tau_0, y_0) \right| \le \sigma_1 e^{(\Lambda - 1)(\tau_0 - \tau)},
			\end{equation*}
			which can be bounded uniformly by the target decay:
			\begin{equation}\label{Eq_case2_final}
				e^{(\Lambda - 1)(\tau_0 - \tau)} \left| \widetilde{S}(\tau_0, y_0) \right| \le \frac{\sigma_0}{200} e^{-\varpi(\tau - \tau_0)}.
			\end{equation}
		\end{itemize}
		
		Consequently, gathering \eqref{Eq_case1_final} and \eqref{Eq_case2_final}, the desired uniform decay estimates for $\widetilde{S}$ are successfully established for all $\tau \ge \tau_0$ and $|y|\ge R_1$, i.e.
		$$|\widetilde{S}|\le \frac{\sigma_0}{100} e^{-\varpi(\tau-\tau_0)}.$$
		The proof for the boundedness of $\widetilde{V}$ is entirely analogous. This completes the proof.
	\end{proof}
	\subsection{Higher-Order estimates}
	\begin{proposition}
		Under the bootstrap hypotheses \eqref{Puns}-\eqref{E1}, we can prove \eqref{nable42}:
		\begin{equation}
			\max \left\{\|\nabla^4 \widetilde{S}\|_{L^2(B^c(0,R_1))}, \|\nabla^4 
			\widetilde{V}\|_{L^2(B^c(0,R_1))}\right\} \le \frac{\sigma_0}{2} e^{-\varpi(\tau-\tau_0)}.
		\end{equation}
	\end{proposition}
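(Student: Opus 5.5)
We prove \eqref{nable42} by an $L^2$ energy estimate for the fourth derivatives of the perturbation equations \eqref{Eq_perturbation_system}, localized to $B^c(0,R_1)$. The damping comes from the self-similar transport term. Everything else is treated as small, using \eqref{est_decay_1}--\eqref{est_decay_4}, the bootstrap bounds \eqref{S,U,1}, \eqref{nable41}, \eqref{E1}, and the $e^{-f_1\tau}$ factor.

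\textbf{Step 1: the weighted energy.} To avoid boundary terms at $|y|=R_1$, fix a smooth radial cutoff $\psi_{R_1}$ with $\psi_{R_1}=0$ for $|y|\le R_1/2$, $\psi_{R_1}=1$ for $|y|\ge R_1$, and $\nabla\psi_{R_1}$ radial and nonnegative. Define
\[
\mathcal E_4(\tau):=\sum_{|\beta|=4}\int\psi_{R_1}\bigl(|\partial_\beta\widetilde S|^2+|\partial_\beta\widetilde V|^2\bigr)\,dy .
\]
Apply $\partial_\beta$ with $|\beta|=4$ to \eqref{Eq_perturbation_system}. The commutator $\partial_\beta(y\cdot\nabla f)=y\cdot\nabla\partial_\beta f+4\partial_\beta f$, combined with the $-(\Lambda-1)$ term and integration by parts of $y\cdot\nabla$ (which produces $+\tfrac32$ from $\operatorname{div} y$), gives
\[
\tfrac12\tfrac{d}{d\tau}\mathcal E_4\le-\bigl(\Lambda-1+4-\tfrac32\bigr)\mathcal E_4+\text{(errors)}.
\]
The term $\tfrac12\int(y\cdot\nabla\psi_{R_1})|\partial_\beta\cdot|^2$ has a favorable sign because $y\cdot\nabla\psi_{R_1}\ge0$, and we keep it on the good side. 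This yields a damping rate of at least $\tfrac52>\varpi$.

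\textbf{Step 2: error terms.}
\begin{itemize}
\item \emph{Linear terms.} The top-order pieces $\widehat\chi\bar V\cdot\nabla\partial_\beta\widetilde S$ and $\delta\widehat\chi\bar S\,\partial_\beta\operatorname{div}\widetilde V+\delta\widehat\chi\bar S\,\nabla\partial_\beta\widetilde S$ are handled by integration by parts, using the symmetric structure. They cost $\|\nabla(\widehat\chi\bar S,\widehat\chi\bar V)\|_{L^\infty(B^c(0,R_1/2))}\,\mathcal E_4\le10^{-4}\mathcal E_4$ by \eqref{est_decay_2}, after enlarging $R_1$ so that the lemma covers $R_1/2$. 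Lower-order commutators pair $\nabla^j(\widehat\chi\bar S,\widehat\chi\bar V)$, $j=2,\dots,5$, with $\nabla^{5-j}(\widetilde S,\widetilde V)$. These are bounded using the $L^8$ and $L^2$ smallness \eqref{est_decay_3}--\eqref{est_decay_4}, Gagliardo--Nirenberg between \eqref{S,U,1} and $\mathcal E_4^{1/2}$, and the choice $1/C_4$. This contributes $\lesssim 10^{-4}(\mathcal E_4+\sigma_0^2e^{-2\varpi(\tau-\tau_0)})$. In the annulus $R_1/2\le|y|\le R_1$, the bound \eqref{4R1SVbound} gives $O(\sigma_1/\sigma_g)$ control of all derivatives up to order four, which is harmless by \eqref{R1sigma1sigmag1}.
\item \emph{Quadratic terms $\mathcal N_s,\mathcal N_v$.} Put $\widetilde V\cdot\nabla\partial_\beta\widetilde S$ and $\widetilde S\,\partial_\beta\operatorname{div}\widetilde V$ through integration by parts, costing $\|\nabla(\widetilde S,\widetilde V)\|_{L^\infty}\mathcal E_4\lesssim\sigma_0\mathcal E_4$ via \eqref{Eq_GN_interpolation_result}. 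The remaining products are bounded by interpolation, giving $\lesssim\sigma_0\mathcal E_4$.
\item \emph{Profile errors $\mathcal E_s,\mathcal E_v$.} These are supported where $|y|\sim e^\tau$. By the computation in Lemma~\ref{Lemma_R1_determination}, their $\dot H^4$ norm is $\lesssim e^{-\tau}\le\sigma_1e^{-(\tau-\tau_0)}$.
\end{itemize}

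\textbf{Step 3: the dissipative terms (main obstacle).} The terms carrying $e^{-f_1\tau}$ contain five and six derivatives of $S,V$. The leading term $e^{-f_1\tau}(\delta S)^{(\alpha-1)/\delta}\Delta\partial_\beta S$ is integrated by parts and yields a nonpositive dissipation $-e^{-f_1\tau}\int\psi_{R_1}(\delta S)^{(\alpha-1)/\delta}|\nabla\partial_\beta S|^2$. It must be written with $S$ rather than $\widetilde S$; the difference $\widehat\chi\bar S$ contributes only bounded terms. For $\mathbb L$, the dissipation uses the ellipticity of $\mathbb L$.

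The difficulty is that the coefficient $S^{(\alpha-1)/\delta}$ grows like $\sigma_1^{(\alpha-1)/\delta}\langle y/R_0\rangle^{(\Lambda-1)(1-\alpha)/\delta}$ by \eqref{S,lower,1}. We will first try to absorb commutator terms into the dissipation by Cauchy--Schwarz. The ratio $|\nabla S|/S$ is bounded by $\sigma_1^{-1}\langle y\rangle^{-1}$ via \eqref{nableS1} and \eqref{S,lower,1}, so the leftover terms of the form $e^{-f_1\tau}S^{(\alpha-1)/\delta}(|\nabla S|/S)^2|\partial_\beta S|^2$ are bounded by $e^{-f_1\tau}\sigma_1^{-C}\langle y\rangle^{f_1-\Lambda-1}\mathcal E_4$. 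The exponent $f_1-\Lambda-1$ is negative, by the same inequalities as \eqref{1-lamda} and \eqref{2-lamda}. Terms with no derivative landing on $S$ (beyond the third) are placed in $L^\infty$ via \eqref{123decay} and Lemma~\ref{lem:higher-order-perturbation}, with the growing weights compensated by $\phi^{-j/2}$ decay as in \eqref{SSS1}. All of these are then $\le Ce^{-f_1\tau_0}(\mathcal E_4+1)e^{-f_1(\tau-\tau_0)/2}$, which is negligible by \eqref{eq:tau0-large-condition}. If pointwise absorption fails at the top order, we fall back on the $E_K$ bound \eqref{E1} together with interpolation at order five.

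\textbf{Step 4: closing.} Collecting the bounds,
\[
\tfrac{d}{d\tau}\mathcal E_4\le-2\mathcal E_4+\sigma_0^2\bigl(10^{-3}+C\sigma_0\bigr)e^{-2\varpi(\tau-\tau_0)}.
\]
With initial size $\mathcal E_4(\tau_0)\le\sigma_1^2$, Gr\"onwall's inequality gives $\mathcal E_4\le\tfrac{\sigma_0^2}{4}e^{-2\varpi(\tau-\tau_0)}$, which is \eqref{nable42}.
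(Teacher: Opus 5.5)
Your plan has the same skeleton as the paper's proof. It is an $L^2$ estimate for $\partial_\beta(\widetilde S,\widetilde V)$, $|\beta|=4$, outside $B(0,R_1)$, with damping $-(2\Lambda+3)$ from the self-similar transport and smallness from \eqref{est_decay_1}--\eqref{est_decay_4}. However, Step~1 has a sign error that breaks the argument as written. From $\partial_\tau\partial_\beta f=-(\Lambda+3)\partial_\beta f-y\cdot\nabla\partial_\beta f+\cdots$ you get
\[
-\int\psi_{R_1}\,(y\cdot\nabla\partial_\beta f)\,\partial_\beta f\,dy
=\frac12\int\bigl(3\psi_{R_1}+y\cdot\nabla\psi_{R_1}\bigr)|\partial_\beta f|^2\,dy .
\]
Precisely because $y\cdot\nabla\psi_{R_1}\ge0$, the term $\frac12\int(y\cdot\nabla\psi_{R_1})|\partial_\beta f|^2$ sits on the right-hand side with a plus sign. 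It is a source, not damping.

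This sign is forced by the geometry. The characteristics $y_0e^{\tau-\bar\tau}$ are outgoing, so the exterior region is fed from the interior. The same flux has the good sign for the ball in Lemma~\ref{localidentity} and the bad sign for its complement. Your cutoff does not remove this inflow. It only smears the paper's boundary term $\int_{\partial B(0,R_1)}(R_1+|\widehat\chi\bar V|)(|\partial_\beta\widetilde S|^2+|\partial_\beta\widetilde V|^2)\,dS$ from \eqref{Eq_H_0_beta_identity_bar} over the annulus. This is exactly where the interior (spectral) control has to enter; discarding the term would make the exterior estimate look self-contained, which it is not.

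The repair uses a bound you already cite. On $R_1/2\le|y|\le R_1$, \eqref{4R1SVbound} controls all derivatives of order at most four by $C\frac{\sigma_1}{\sigma_g}e^{-\varpi(\tau-\tau_0)}$. Hence this term, together with the other $\nabla\psi_{R_1}$ terms from $\widehat\chi\bar V\cdot\nabla$ and from the $\delta\widehat\chi\bar S$ coupling, is at most $CR_1^3(\sigma_1/\sigma_g)^2e^{-2\varpi(\tau-\tau_0)}$. This is $\ll\sigma_0^2e^{-2\varpi(\tau-\tau_0)}$, since $R_1$ and $\sigma_g$ are fixed before $\sigma_0$ and $\sigma_1=\sigma_0^{5/4}$. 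Once you move this term into the forcing, your Step~4 closes.

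The rest of your route is a legitimate but heavier variant of the paper's. The paper never uses the dissipation in this estimate.
\begin{itemize}
\item It treats $\partial_\beta\mathcal F_s,\partial_\beta\mathcal F_v$ as pure forcing. Up to six derivatives of $S,V$ are bounded pointwise via Lemma~\ref{lem:higher-order-perturbation}, which applies because $K$ is large. This gives $|\partial_\beta\mathcal F|\le C(\sigma_0)e^{-f_1\tau}\langle y\rangle^{-3+5\eta}\in L^2$.
\item It bounds $\partial_\beta\mathcal N$ by interpolating $\|\nabla^5\widetilde V\|_{L^2(B^c(0,R_1))}$ between \eqref{nable41} and the $E_K$ bound, rather than integrating by parts.
\end{itemize}

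Your integration-by-parts treatment also works. The coefficient you absorb is $S^{\frac{\alpha-1}{\delta}}|\nabla S|^2/S^2\lesssim\sigma_1^{-C}\langle y\rangle^{f_1-\Lambda}$; the exponent is $f_1-\Lambda$, not $f_1-\Lambda-1$, but it is still negative. This route, however, forces extra work that the paper avoids:
\begin{itemize}
\item absorbing the cross term $\nabla\partial_\beta S\cdot\mathbb DV$ of the $V$-equation into the $S$-dissipation;
\item splitting $S=\widetilde S+\widehat\chi\bar S$ inside the dissipative terms;
\item handling the $\nabla\psi_{R_1}$ boundary-layer terms.
\end{itemize}
Your ``fallback'' is exactly the paper's argument.

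Finally, $\mathcal E_4(\tau_0)\le\sigma_1^2$ does not follow directly from \eqref{initialdatacondition1}. You need an interpolation as in \eqref{Eq_initial_Q_bound}, which gives a bound $\ll\sigma_0^2$; that is enough.
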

	\begin{proof}
		To perform the high-order energy analysis, we first differentiate \eqref{perturSEq}--\eqref{perturVEq} by $\partial_\beta$ with $|\beta| = 4$, which leads to the following linearized auxiliary system:
		\begin{align}
			\partial_\tau \partial_\beta \widetilde{S} &= \mathcal{G}_{s,0}(\widetilde{V}, \widetilde{S}) + \mathcal{G}_{s,1}(\widetilde{V}, \widetilde{S}) + \mathcal{G}_{s,2}(\widetilde{V}, \widetilde{S}) + \mathcal{G}_{s,3}(\widetilde{V}, \widetilde{S}) + \partial_\beta \mathcal{N}_s + \partial_\beta \mathcal{E}_s+\partial_\beta \mathcal{F}_s, \label{Eq_high_order_S_new} \\
			\partial_\tau \partial_\beta \widetilde{V} &= \mathcal{G}_{v,0}(\widetilde{V}, \widetilde{S}) + \mathcal{G}_{v,1}(\widetilde{V}, \widetilde{S}) + \mathcal{G}_{v,2}(\widetilde{V}, \widetilde{S}) + \mathcal{G}_{v,3}(\widetilde{V}, \widetilde{S}) + \partial_\beta \mathcal{N}_v + \partial_\beta \mathcal{E}_v + \partial_\beta \mathcal{F}_{v}. \label{Eq_high_order_V_new}
		\end{align}
		In this representation, the differential operators $\mathcal{G}_{s,j}$ and $\mathcal{G}_{v,j}$ ($j=0,1,2,3$) are classified by the distribution of derivative orders. Specifically, the fifth-order derivatives act on $\widetilde{S}$ and $\widetilde{V}$ exclusively within $\mathcal{G}_{s,0}$ and $\mathcal{G}_{v,0}$. The linear expressions containing exactly fourth-order and third-order derivatives are grouped into $(\mathcal{G}_{s,1}, \mathcal{G}_{v,1})$ and $(\mathcal{G}_{s,2}, \mathcal{G}_{v,2})$, respectively. Finally, the remaining linear combinations with at most two derivatives are collected in $(\mathcal{G}_{s,3}, \mathcal{G}_{v,3})$.
		
		Now, testing \eqref{Eq_high_order_S_new} and \eqref{Eq_high_order_V_new} by $\partial_\beta \widetilde{S}$ and $\partial_\beta \widetilde{V}$ respectively, followed by summing over $|\beta| = 4$ and integrating by parts outside the ball $B(0, R_1)$, we arrive at
		\begin{equation}\label{Eq_energy_identity_H_chain}
			\begin{aligned}
				H &= \frac{\mathrm{d}}{\mathrm{d}\tau} \left( \|\nabla^4 \widetilde{S}\|_{L^2(B^c(0,R_1))}^2 + \|\nabla^4 \widetilde{V}\|_{L^2(B^c(0,R_1))}^2 \right) \\
				&= \frac{\mathrm{d}}{\mathrm{d}\tau} \sum_{|\beta|=4} \int_{B^c(0,R_1)} \left( |\partial_\beta \widetilde{S}|^2 + |\partial_\beta \widetilde{V}|^2 \right) \mathrm{d}y \\
				&= 2 \sum_{|\beta|=4} \sum_{j=0}^{3}  \int_{B^c(0,R_1)} \partial_\beta \widetilde{V} \cdot \mathcal{G}_{v,j} +\partial_\beta \widetilde{S} \, \mathcal{G}_{s,j} \, \mathrm{d}y + 2 \sum_{|\beta|=4}  \int_{B^c(0,R_1)} \partial_\beta \widetilde{V} \cdot \partial_\beta \mathcal{N}_v + \partial_\beta \widetilde{S} \, \partial_\beta \mathcal{N}_s \, \mathrm{d}y \\
				&\quad + 2 \sum_{|\beta|=4} \int_{B^c(0,R_1)} \partial_\beta \widetilde{V} \cdot \partial_\beta \mathcal{E}_v+ \partial_\beta \widetilde{S} \, \partial_\beta \mathcal{E}_s \, \mathrm{d}y + 2 \sum_{|\beta|=4}\int_{B^c(0,R_1)} \partial_\beta \widetilde{V} \cdot \partial_\beta \mathcal{F}_{v}+\partial_\beta \widetilde{S} \cdot \partial_\beta \mathcal{F}_{s} \, \mathrm{d}y \\
				&:= \sum_{|\beta|=4} \left( \sum_{j=0}^{3} H_{j,\beta} + H_{N,\beta} + H_{E,\beta} + H_{F,\beta} \right).
			\end{aligned}
		\end{equation}
		\noindent\textbf{1. Estimates for $H_{0,\beta}$.}\par
		To handle $H_{0,\beta}$ in \eqref{Eq_energy_identity_H_chain}, we integrate by parts over the exterior region $B^c(0,R_1)$ and use \eqref{R1sigma1sigmag1}, \eqref{4R1SVbound}, which yields the following estimate:
		\begin{equation}\label{Eq_H_0_beta_identity_bar}
			\begin{aligned}
				H_{0,\beta} &= -2 \int_{B^c(0,R_1)} \left( \partial_\beta \widetilde{V} \cdot \left( (y + \widehat{\chi}\bar{V}) \cdot \nabla \right) \partial_\beta \widetilde{V} + \left( \partial_\beta \widetilde{V} \cdot \nabla \right) \partial_\beta \widetilde{S} (\delta \widehat{\chi}\bar{S}) \right) \mathrm{d}y \\
				&\quad - 2 \int_{B^c(0,R_1)} \left( \partial_\beta \widetilde{S} (y + \widehat{\chi}\bar{V}) \cdot \nabla \partial_\beta \widetilde{S} + \partial_\beta \widetilde{S} \operatorname{div}(\partial_\beta \widetilde{V}) \delta \widehat{\chi}\bar{S} \right) \mathrm{d}y \\
				&\le \int_{B^c(0,R_1)} \left( 3 + |\operatorname{div}(\widehat{\chi}\bar{V})| \right) \left( |\partial_\beta \widetilde{S}|^2 + |\partial_\beta \widetilde{V}|^2 \right) \mathrm{d}y \\
				&\quad + 2 \int_{B^c(0,R_1)} |\nabla(\delta \widehat{\chi}\bar{S})| |\partial_\beta \widetilde{S}| |\partial_\beta \widetilde{V}| \, \mathrm{d}y \\
				&\quad + \int_{\partial B(0,R_1)} \left( R_1 + |\widehat{\chi}\bar{V}| \right) \left( |\partial_\beta \widetilde{S}|^2 + |\partial_\beta \widetilde{V}|^2 \right) \mathrm{d}S + 2 \int_{\partial B(0,R_1)} |\delta \widehat{\chi}\bar{S}| |\partial_\beta \widetilde{S}| |\partial_\beta \widetilde{V}| \, \mathrm{d}S \\
				&\le \int_{B^c(0,R_1)} \left( 3 + 3|\nabla(\widehat{\chi}\bar{V})|+|\nabla(\delta \widehat{\chi}\bar{S})| \right) \left( |\partial_\beta \widetilde{S}|^2 + |\partial_\beta \widetilde{V}|^2 \right) \mathrm{d}y\\
				&\quad +C(R_1+ |\bar{V}|(R_1)+|\delta\bar{S}|(R_1))(\frac{\sigma_1}{\sigma_g})^2 e^{-2\varpi(\tau - \tau_0)}\\
				&\le \int_{B^c(0,R_1)} \left( 3 + 3|\nabla(\widehat{\chi}\bar{V})|+|\nabla(\delta \widehat{\chi}\bar{S})| \right) \left( |\partial_\beta \widetilde{S}|^2 + |\partial_\beta \widetilde{V}|^2 \right) \mathrm{d}y+(\frac{\sigma_1}{\sigma_g})^{\frac{17}{10}}e^{-2\varpi(\tau - \tau_0)}.
			\end{aligned}
		\end{equation}
		\noindent\textbf{2. Estimates for $H_{1,\beta}$.}\par
		In fact, the explicit expressions for $\mathcal{G}_{s,1}$ and $\mathcal{G}_{v,1}$ can be verified as follows:
		$$\begin{aligned}
			\mathcal{G}_{s,1} &= -(\Lambda - 1)\partial_\beta \widetilde{S} - (\partial_\beta \widetilde{V} \cdot \nabla)(\widehat{\chi}\bar{S}) - \delta\partial_\beta \widetilde{S} \operatorname{div}(\widehat{\chi}\bar{V}) \\
			&\quad - \sum_{j=1}^{4}\sum_{k=1}^{3} \partial_{y_{\beta_j}} (\widehat{\chi}\bar{V}_k + y_k) \partial_{y_k}\partial_{\beta^{(j)}} \widetilde{S} - \delta \sum_{j=1}^{4}\partial_{y_{\beta_j}} (\widehat{\chi}\bar{S})\partial_{\beta^{(j)}} \operatorname{div}\widetilde{V} \\
			&= -(\Lambda + 3)\partial_\beta \widetilde{S} - (\partial_\beta \widetilde{V} \cdot \nabla)(\widehat{\chi}\bar{S}) - \delta\partial_\beta \widetilde{S} \operatorname{div}(\widehat{\chi}\bar{V}) \\
			&\quad - \sum_{j=1}^{4}\sum_{k=1}^{3} \partial_{y_{\beta_j}} (\widehat{\chi}\bar{V}_k)\partial_{y_k}\partial_{\beta^{(j)}} \widetilde{S} - \delta \sum_{j=1}^{4}\partial_{y_{\beta_j}} (\widehat{\chi}\bar{S})\partial_{\beta^{(j)}} \operatorname{div}\widetilde{V},
		\end{aligned}$$
		and
		$$\begin{aligned}
			\mathcal{G}_{v,1} &= -(\Lambda - 1)\partial_\beta \widetilde{V} - (\partial_\beta \widetilde{V} \cdot \nabla)(\widehat{\chi}\bar{V}) - \delta\partial_\beta \widetilde{S}\nabla(\widehat{\chi}\bar{S}) \\
			&\quad - \sum_{j=1}^{4}\sum_{k=1}^{3} \partial_{y_{\beta_j}} (\widehat{\chi}\bar{V}_k + y_k) \partial_{y_k}\partial_{\beta^{(j)}} \widetilde{V} - \delta \sum_{j=1}^{4}\partial_{y_{\beta_j}} (\widehat{\chi}\bar{S})\nabla(\partial_{\beta^{(j)}} \widetilde{S}) \\
			&= -(\Lambda + 3)\partial_\beta \widetilde{V} - (\partial_\beta \widetilde{V} \cdot \nabla)(\widehat{\chi}\bar{V}) - \delta\partial_\beta \widetilde{S}\nabla(\widehat{\chi}\bar{S}) \\
			&\quad - \sum_{j=1}^{4}\sum_{k=1}^{3} \partial_{y_{\beta_j}} (\widehat{\chi}\bar{V}_k)\partial_{y_k}\partial_{\beta^{(j)}} \widetilde{V} - \delta \sum_{j=1}^{4}\partial_{y_{\beta_j}} (\widehat{\chi}\bar{S})\nabla(\partial_{\beta^{(j)}} \widetilde{S}).
		\end{aligned}$$
		Consequently, we obtain (using $\delta<\frac{1}{\sqrt{3}}$)
		\begin{equation}\label{Eq_H_1_beta_estimate}
			\begin{aligned}
				H_{1,\beta} &= 2 \int_{B^c(0,R_1)} \partial_\beta \widetilde{V} \cdot \mathcal{G}_{v,1} \, \mathrm{d}y+2 \int_{B^c(0,R_1)} \partial_\beta \widetilde{S} \, \mathcal{G}_{s,1} \, \mathrm{d}y \\
				&\le -2(\Lambda + 3) \int_{B^c(0,R_1)} \left( |\partial_\beta \widetilde{S}|^2 + |\partial_\beta \widetilde{V}|^2 \right) \mathrm{d}y \\
				&\quad + 20 \left( \|\nabla(\widehat{\chi}\bar{S})\|_{L^\infty(B^c(0,R_1))} + \|\nabla(\widehat{\chi}\bar{V})\|_{L^\infty(B^c(0,R_1))} \right) \\
				&\quad \quad \times \left( \|\nabla^4 \widetilde{S}\|_{L^2(B^c(0,R_1))}^2 + \|\nabla^4 \widetilde{V}\|_{L^2(B^c(0,R_1))}^2 \right).
			\end{aligned}
		\end{equation}
		\noindent\textbf{3. Estimates for $H_{2,\beta}$.}\par
		Employing Hölder and the Gagliardo--Nirenberg inequalities, together with \eqref{est_decay_3}, \eqref{S,U,1}, and \eqref{nable41}, we deduce that
		\begin{equation*}
			\begin{aligned}
				&\|\mathcal{G}_{s,2}\|_{L^2(B^c(0,R_1))} + \|\mathcal{G}_{v,2}\|_{L^2(B^c(0,R_1))} \\
				&\le 80 \left( \|\nabla^2(\widehat{\chi}\bar{V})\|_{L^8(B^c(0,R_1))} + \|\nabla^2(\widehat{\chi}\bar{S})\|_{L^8(B^c(0,R_1))} \right) \\
				&\quad \times \left( \|\nabla^3 \widetilde{V}\|_{L^{\frac{8}{3}}(B^c(0,R_1))} + \|\nabla^3 \widetilde{S}\|_{L^{\frac{8}{3}}(B^c(0,R_1))} \right) \\
				&\le C \left( \|\nabla^2(\widehat{\chi}\bar{V})\|_{L^8(B^c(0,R_1))} + \|\nabla^2(\widehat{\chi}\bar{S})\|_{L^8(B^c(0,R_1))} \right) \sigma_0 50^{-\frac{1}{4}} e^{-\varpi(\tau - \tau_0)}.
			\end{aligned}
		\end{equation*}
		Thus, we arrive at
		\begin{equation}\label{Eq_H_2_beta_final_estimate}
			\begin{aligned}
				H_{2,\beta} &= 2 \int_{B^c(0,R_1)} \partial_\beta \widetilde{V} \cdot \mathcal{G}_{v,2} \, \mathrm{d}y + 2 \int_{B^c(0,R_1)} \partial_\beta \widetilde{S} \, \mathcal{G}_{s,2} \, \mathrm{d}y \\
				&\le C \left( \|\nabla^2(\widehat{\chi}\bar{S})\|_{L^8(B^c(0,R_1))} + \|\nabla^2(\widehat{\chi}\bar{V})\|_{L^8(B^c(0,R_1))} \right) \\
				&\quad \times \left( \int_{B^c(0,R_1)} \left( |\partial_\beta \widetilde{S}|^2 + |\partial_\beta \widetilde{V}|^2 \right) \mathrm{d}y \right)^{\frac{1}{2}} \sigma_0 50^{-\frac{1}{4}} e^{-\varpi(\tau - \tau_0)}.
			\end{aligned}
		\end{equation}
		\noindent\textbf{4. Estimates for $H_{3,\beta}$.}\par
		A combined application of Hölder inequality, the Gagliardo--Nirenberg inequality, and the estimates \eqref{est_decay_4}, \eqref{S,U,1}, \eqref{nable41} enables us to establish that
		\begin{equation*}
			\begin{aligned}
				&\|\mathcal{G}_{s,3}\|_{L^2(B^c(0,R_1))} + \|\mathcal{G}_{v,3}\|_{L^2(B^c(0,R_1))}\\ &\le 80 \left( \|\widetilde{V}\|_{W^{2,\infty}(B^c(0,R_1))} + \|\widetilde{S}\|_{W^{2,\infty}(B^c(0,R_1))} \right) \\
				&\quad \times \sum_{j=3}^{5} \left( \|\nabla^j(\widehat{\chi}\bar{S})\|_{L^2(B^c(0,R_1))} + \|\nabla^j(\widehat{\chi}\bar{V})\|_{L^2(B^c(0,R_1))} \right) \\
				&\le C \sum_{j=3}^{5} \left( \|\nabla^j(\widehat{\chi}\bar{S})\|_{L^2(B^c(0,R_1))} + \|\nabla^j(\widehat{\chi}\bar{V})\|_{L^2(B^c(0,R_1))} \right) \sigma_0 50^{-\frac{2}{5}} e^{-\varpi(\tau - \tau_0)}.
			\end{aligned}
		\end{equation*}
		Therefore, we obtain
		\begin{equation}\label{Eq_H_3_beta_final_estimate}
			\begin{aligned}
				H_{3,\beta} &= 2 \int_{B^c(0,R_1)} \partial_\beta \widetilde{V} \cdot \mathcal{G}_{v,3} \, \mathrm{d}y + 2 \int_{B^c(0,R_1)} \partial_\beta \widetilde{S} \, \mathcal{G}_{s,3} \, \mathrm{d}y \\
				&\le C \sum_{j=3}^{5} \left( \|\nabla^j(\widehat{\chi}\bar{S})\|_{L^2(B^c(0,R_1))} + \|\nabla^j(\widehat{\chi}\bar{V})\|_{L^2(B^c(0,R_1))} \right) \\
				&\quad \times \left( \int_{B^c(0,R_1)} \left( |\partial_\beta \widetilde{S}|^2 + |\partial_\beta \widetilde{V}|^2 \right) \mathrm{d}y \right)^{\frac{1}{2}} \sigma_0 50^{-\frac{2}{5}} e^{-\varpi(\tau - \tau_0)}.
			\end{aligned}
		\end{equation}
		\noindent\textbf{5. Estimates for $H_{N,\beta}$.}\par
		Using the Gagliardo-Nirenberg inequality for \eqref{nable41} and \eqref{energy decayEE}, we have
		$$
		\begin{aligned}
			\|\nabla^5 \widetilde{V}\|_{L^2(B^c(0,R_1))} &\le C(K) \left( \|\nabla^4 \widetilde{V}\|_{L^2(B^c(0,R_1))}^{\frac{K-5}{K-4}} \|\nabla^K \widetilde{V}\|_{L^2(B^c(0,R_1))}^{\frac{1}{K-4}} + \|\nabla^4 \widetilde{V}\|_{L^2(B^c(0,R_1))} \right) \nonumber \\
			&\le C(K) \left( E^{\frac{1}{10}} \sigma_0^{\frac{9}{10}} e^{-\frac{9}{10}\varpi(\tau - \tau_0)} + \sigma_0 e^{-\varpi(\tau - \tau_0)} \right) \le C(K) \sigma_0^{\frac{4}{5}} e^{-\frac{9}{10}\varpi(\tau - \tau_0)}.
		\end{aligned}
		$$
		By parallel arguments, one can establish the corresponding estimates for $\widetilde{S}$. Consequently, under the condition $C(K)\sigma_0 \ll 1$ and using the Gagliardo-Nirenberg inequality, we deduce that
		$$\begin{aligned}
			&\|\partial_\beta \mathcal{N}_s\|_{L^2(B^c(0,R_1))} + \|\partial_\beta \mathcal{N}_v\|_{L^2(B^c(0,R_1))} \nonumber \\
			&\le C \left( \|\nabla^5 \widetilde{V}\|_{L^2(B^c(0,R_1))} + \|\nabla^5 \widetilde{S}\|_{L^2(B^c(0,R_1))} \right) \left( \|\widetilde{V}\|_{L^\infty(B^c(0,R_1))} + \|\widetilde{S}\|_{L^\infty(B^c(0,R_1))} \right) \nonumber \\
			&\quad + C \left( \|\nabla^4 \widetilde{V}\|_{L^2(B^c(0,R_1))} + \|\nabla^4 \widetilde{S}\|_{L^2(B^c(0,R_1))} \right) \left( \|\nabla \widetilde{V}\|_{L^\infty(B^c(0,R_1))} + \|\nabla \widetilde{S}\|_{L^\infty(B^c(0,R_1))} \right) \nonumber \\
			&\quad + C \left( \|\nabla^3 \widetilde{V}\|_{L^{\frac{8}{3}}(B^c(0,R_1))} + \|\nabla^3 \widetilde{S}\|_{L^{\frac{8}{3}}(B^c(0,R_1))} \right) \left( \|\nabla^2 \widetilde{V}\|_{L^8(B^c(0,R_1))} + \|\nabla^2 \widetilde{S}\|_{L^8(B^c(0,R_1))} \right) \nonumber \\
			&\le C(K) \sigma_0^{\frac{9}{5}} e^{-\frac{19}{10}\varpi(\tau - \tau_0)} \le \sigma_0^{\frac{3}{2}} e^{-\frac{3}{2}\varpi(\tau - \tau_0)}.
		\end{aligned}$$
		Consequently,
		\begin{equation}\label{Eq_H_N_beta_final_estimate}
			\begin{aligned}
				H_{N,\beta} &= 2 \int_{B^c(0,R_1)} \partial_\beta \widetilde{V} \cdot \partial_\beta \mathcal{N}_v \, \mathrm{d}y + 2 \int_{B^c(0,R_1)} \partial_\beta \widetilde{S} \, \partial_\beta \mathcal{N}_s \, \mathrm{d}y \\
				&\le 2 \sigma_0^{\frac{3}{2}} e^{-\frac{3}{2}\varpi(\tau - \tau_0)} \left( \int_{B^c(0,R_1)} \left( |\partial_\beta \widetilde{V}|^2 + |\partial_\beta \widetilde{S}|^2 \right) \mathrm{d}y \right)^{\frac{1}{2}}.
			\end{aligned}
		\end{equation}
		\noindent\textbf{6. Estimates for $H_{E,\beta}$.}\par
		Taking into account \eqref{xdecayestim}, the cutoff term satisfies the pointwise estimate
		\begin{equation}\label{Eq_cutoff_error_pointwise_new}
			\left| \nabla^4 \left( (\widehat{\chi}^2 - \widehat{\chi})\bar{V} \cdot \nabla \bar{V} \right) \right| \le C \sum_{0 \le j \le 4} \left| \nabla^{4-j}(\widehat{\chi}^2 - \widehat{\chi}) \right| \left| \nabla^j (\bar{V} \cdot \nabla \bar{V}) \right| \le C \sum_{0 \le j \le 4} e^{-(4-j)\tau} \langle y \rangle^{1 - 2\Lambda - j}.
		\end{equation}
		It then follows from a direct integration over the exterior domain that
		\begin{equation}\label{Eq_cutoff_error_L2_integration_new}
			\begin{aligned}
				\left\| \nabla^4 \left( (\widehat{\chi}^2 - \widehat{\chi})\bar{V} \cdot \nabla \bar{V} \right) \right\|_{L^2(B^c(0,R_1))}^2 &\le C \sum_{0 \le j \le 4} e^{-2(4-j)\tau} \int_{\frac{1}{2}e^\tau}^{e^\tau} r^{2(1 - 2\Lambda - j)} r^2 \, \mathrm{d}r \\
				&\le C \sum_{0 \le j \le 4} e^{\left(-2(4-j) + 5 - 4\Lambda - 2j\right)\tau} \\
				&\le C e^{-(4\Lambda + 3)\tau} \le C e^{-2\tau}.
			\end{aligned}
		\end{equation}
		By a similar method, the remaining components $\nabla^4 \mathcal{E}_s$ and $\nabla^4 \mathcal{E}_v$ can be bounded identically. Assuming $\tau_0 \gg 1$ and utilizing \eqref{S,U,1}, we obtain
		\begin{equation}\label{Eq_E_zeta_E_u_final_bound_new}
			\|\nabla^4 \mathcal{E}_s\|_{L^2(B^c(0,R_1))} + \|\nabla^4 \mathcal{E}_v\|_{L^2(B^c(0,R_1))} \le C e^{-\tau} \le C e^{-\tau_0} e^{-(\tau - \tau_0)}  \le \sigma_1 e^{-(\tau - \tau_0)}.
		\end{equation}
		Therefore, we obtain 
		\begin{equation}\label{Eq_H_E_beta_estimate_revised}
			\begin{aligned}
				H_{E,\beta} &= 2 \int_{B^c(0,R_1)} \partial_\beta \widetilde{V} \cdot \partial_\beta \mathcal{E}_v \, \mathrm{d}y + 2 \int_{B^c(0,R_1)} \partial_\beta \widetilde{S} \, \partial_\beta \mathcal{E}_s \, \mathrm{d}y \\
				&\le 4 \sigma_1 e^{-(\tau - \tau_0)} \left( \int_{B^c(0,R_1)} \left( |\partial_\beta \widetilde{V}|^2 + |\partial_\beta \widetilde{S}|^2 \right) \mathrm{d}y \right)^{\frac{1}{2}}.
			\end{aligned}
		\end{equation}
		\noindent\textbf{7. Estimates for $H_{F,\beta}$.}\par
		
		Prior to evaluating the dissipation term $\mathcal{F}_{s}$ and $\mathcal{F}_{v}$, the pointwise control on $\left|\nabla^{4-j}\Delta S\right|$ is established via (using \eqref{eq:lessKKKK-2-perturbation})
		\begin{equation}\label{Eq_diffusion_L_V_pointwise}
			\left|\nabla^{4-j}\Delta S\right| \le C(\sigma_0, j)\langle y \rangle^{-\frac{(K-(6-j)-3/2)\Lambda+K(1-\eta)(6-j-1)}{K-5/2}} \le C(\sigma_0, j)\langle y \rangle^{-\frac{K+j-15/2+K(1-\eta)(5-j)}{K-5/2}}.
		\end{equation}
		Moreover, considering any arbitrary integer $\ell \in [1, j]$ and positive indices $k_i \ge 1$ ($1 \le i \le \ell$) that comply with the partition restriction $k_1 + \dots + k_\ell = j$, the asymptotic smallness criterion $\sigma_0^{\frac{3}{2}} \ll \sigma_1 \ll \sigma_0$ and \eqref{eq:lessK-2-perturbation} guarantee that
		\begin{equation}\label{Eq_diffusion_S_fractional_power_bound}
			\begin{aligned}
				\left|\nabla^j\left(S^{\frac{\alpha-1}{\delta}}\right)\right| &\le C(j) \sum_{k_1+\dots+k_\ell=j} \frac{\left|\nabla^{k_1}S\right|\left|\nabla^{k_2}S\right| \dots \left|\nabla^{k_\ell}S\right|}{S^l} S^{\frac{\alpha-1}{\delta}} \\
				&\le C(\sigma_0, j) \sum_{k_1+\dots+k_\ell=j} \sigma_1^{-l+\frac{\alpha-1}{\delta}} \phi^{-\frac{k_1+k_2+\dots+k_\ell}{2}} \left\langle \frac{y}{R_0} \right\rangle^{-\frac{\alpha-1}{\delta}(\Lambda-1)} \\
				&\le C(\sigma_0, j) \phi^{-\frac{j}{2}} \langle y \rangle^{\frac{1-\alpha}{\delta}(\Lambda-1)}.
			\end{aligned}
		\end{equation}
		Combining \eqref{Eq_diffusion_L_V_pointwise} with \eqref{Eq_diffusion_S_fractional_power_bound}, we arrive at
		\begin{equation}\label{Eq_diffusion_total_pointwise_final}
			\begin{aligned}
				\left|\partial^\beta\left(S^{\frac{\alpha-1}{\delta}}\Delta S\right)\right| &\le C \sum_{0\le j\le 4} \left|\nabla^{4-j}\Delta S\right| \left|\nabla^j\left(S^{\frac{\alpha-1}{\delta}}\right)\right| \\
				&\le C(\sigma_0) \sum_{0\le j\le 4} \langle y \rangle^{\frac{1-\alpha}{\delta}(\Lambda-1)-j(1-\eta)-\frac{K+j-15/2+K(1-\eta)(5-j)}{K-5/2}} \\
				&\le C(\sigma_0) \langle y \rangle^{-5(1-\eta)+2-\Lambda+f_1} \le C(\sigma_0) \langle y \rangle^{-3+5\eta},
			\end{aligned}
		\end{equation}
		which holds for any multi-index $\beta$ with $|\beta|=4$.
		Similarly, we have 
		\begin{equation}
			\begin{aligned}
				\left| \partial^\beta \left( (S)^{\frac{\alpha-\delta-1}{\delta}} |\nabla S|^2 \right) \right| &\le C \sum_{0 \le j \le 4} \left| \nabla^{4-j} (\nabla S) \right| \left| \nabla^j \left( (S)^{\frac{\alpha-\delta-1}{\delta}}\nabla S \right) \right| \\
				&\le C(\sigma_0) \sum_{0 \le j \le 4} \left| \nabla^{4-j} (\nabla S) \right| \phi^{-\frac{j+1}{2}} \langle y \rangle^{\frac{1-\alpha}{\delta}(\Lambda-1)} \\
				&\le C(\sigma_0) \sum_{0 \le j \le 4} \langle y \rangle^{\frac{1-\alpha}{\delta}(\Lambda-1) - (j+1)(1-\eta) - \frac{K+j-13/2+K(1-\eta)(4-j)}{K-5/2}} \\
				&\le C(\sigma_0) \langle y \rangle^{-5(1-\eta) + 2 - \Lambda + f_1} \le C(\sigma_0) \langle y \rangle^{-3 + 5\eta},
			\end{aligned}
		\end{equation}
		together with
		\begin{equation}\label{Eq_diffusion_total_pointwise_final}
			\left|\partial^\beta\left(S^{\frac{\alpha-1}{\delta}}\mathbb{L}(V)\right)\right|\le C(\sigma_0) \langle y \rangle^{-3+5\eta},
		\end{equation}
		and
		\begin{equation}\label{Eq_diffusion_strong_nonlinear_swapped_final_111}
			\begin{aligned}
				\left| \partial^\beta \left( S^{\frac{\alpha-1}{\delta}-1} \nabla S \cdot \mathbb{D}(V) \right) \right| &\le C \sum_{0 \le j \le 4} \left| \nabla^{4-j} (\mathbb{D}(V)) \right| \left| \nabla^j \left( S^{\frac{\alpha-1}{\delta}-1} \nabla S \right) \right| \\
				&\le C(\sigma_0) \sum_{0 \le j \le 4} \left| \nabla^{4-j} (\mathbb{D}(V)) \right| \phi^{-\frac{j+1}{2}} \langle y \rangle^{\frac{1-\alpha}{\delta}(\Lambda-1)} \\
				&\le C(\sigma_0) \sum_{0 \le j \le 4} \langle y \rangle^{\frac{1-\alpha}{\delta}(\Lambda-1) - (j+1)(1-\eta) - \frac{K+j-13/2+K(1-\eta)(4-j)}{K-5/2}} \\
				&\le C(\sigma_0) \langle y \rangle^{-5(1-\eta) + 2 - \Lambda + f_1} \le C(\sigma_0) \langle y \rangle^{-3 + 5\eta}.
			\end{aligned}
		\end{equation}
		By gathering the above-derived estimates together, we conclude that 
		\begin{equation}\label{HFbeta22VS}
			\begin{aligned}
				H_{F,\beta} &= 2 \int_{B^c(0,R_1)} \partial_\beta \widetilde{V} \cdot \partial_\beta \mathcal{F}_{v} \, \mathrm{d}y +2 \int_{B^c(0,R_1)} \partial_\beta \widetilde{S} \cdot \partial_\beta \mathcal{F}_{s} \, \mathrm{d}y\\
				&\le 2 \left( \int_{B^c(0,R_1)} \left( |\partial_\beta \widetilde{S}|^2 + |\partial_\beta \widetilde{V}|^2 \right) \mathrm{d}y \right)^{\frac{1}{2}}\cdot C(\sigma_0) e^{-f_1\tau} \left( \int_{R_1}^\infty \langle r \rangle^{-6+10\eta} r^2 \, \mathrm{d}r \right)^{\frac{1}{2}} \\
				&\le C(\sigma_0) e^{-f_1\tau} \left( \int_{B^c(0,R_1)} \left( |\partial_\beta \widetilde{S}|^2 + |\partial_\beta \widetilde{V}|^2 \right) \mathrm{d}y \right)^{\frac{1}{2}} \\
				&\le C(\sigma_0)\sigma_0^{-\frac{3}{2}} e^{-f_1\tau_0} \sigma_0^{\frac{3}{2}} e^{-f_1(\tau-\tau_0)} \left( \int_{B^c(0,R_1)} \left( |\partial_\beta \widetilde{S}|^2 + |\partial_\beta \widetilde{V}|^2 \right) \mathrm{d}y \right)^{\frac{1}{2}} \\
				&\le C(\sigma_0) e^{-f_1\tau_0} \sigma_1 e^{-\sigma_g(\tau-\tau_0)} \left( \int_{B^c(0,R_1)} \left( |\partial_\beta \widetilde{S}|^2 + |\partial_\beta \widetilde{V}|^2 \right) \mathrm{d}y \right)^{\frac{1}{2}} \\
				&\le 2\sigma_1 e^{-\sigma_g(\tau-\tau_0)} \left( \int_{B^c(0,R_1)} \left( |\partial_\beta \widetilde{S}|^2 + |\partial_\beta \widetilde{V}|^2 \right) \mathrm{d}y \right)^{\frac{1}{2}}.
			\end{aligned}
		\end{equation}
		Substituting \eqref{Eq_H_0_beta_identity_bar}, \eqref{Eq_H_1_beta_estimate}, \eqref{Eq_H_2_beta_final_estimate}, \eqref{Eq_H_3_beta_final_estimate}, \eqref{Eq_H_N_beta_final_estimate}, \eqref{HFbeta22VS} into \eqref{Eq_energy_identity_H_chain} and using \eqref{est_decay_3}, \eqref{est_decay_4}, we arrive at
		\begin{equation}
			\begin{aligned}
				H &\le \left( -2\Lambda - 3 + 3|\nabla(\widehat{\chi}\bar{V})| + |\nabla(\delta\widehat{\chi}\bar{S})| \right) \left( \sum_{|\beta|=4} \int_{B^c(0,R_1)} \left( |\partial_\beta \widetilde{V}|^2 + |\partial_\beta \widetilde{S}|^2 \right) \mathrm{d}y \right) \\
				&\quad + 81 \left( \frac{\sigma_1}{\sigma_g} \right)^{\frac{17}{10}} e^{-2\varpi(\tau-\tau_0)} + 1620 \left( \|\nabla(\widehat{\chi}\bar{V})\|_{L^\infty(B^c(0,R_1))} + \|\nabla(\widehat{\chi}\bar{S})\|_{L^\infty(B^c(0,R_1))} \right) \\
				&\quad \times \left( \|\nabla^4 \widetilde{V}\|_{L^2(B^c(0,R_1))}^2 + \|\nabla^4 \widetilde{S}\|_{L^2(B^c(0,R_1))}^2 \right) \\
				&\quad + \sum_{|\beta|=4} \left( \int_{B^c(0,R_1)} \left( |\partial_\beta \widetilde{V}|^2 + |\partial_\beta \widetilde{S}|^2 \right) \mathrm{d}y \right)^{\frac{1}{2}} \\
				&\quad \times \Big( C \left( \|\nabla^2(\widehat{\chi}\bar{V})\|_{L^8(B^c(0,R_1))} + \|\nabla^2(\widehat{\chi}\bar{S})\|_{L^8(B^c(0,R_1))} \right) \sigma_0 50^{-\frac{1}{4}} e^{-\varpi(\tau - \tau_0)} \\
				&\qquad + C \sum_{j=3}^{5} \left( \|\nabla^j(\widehat{\chi}\bar{V})\|_{L^2(B^c(0,R_1))} + \|\nabla^j(\widehat{\chi}\bar{S})\|_{L^2(B^c(0,R_1))} \right) \sigma_0 50^{-\frac{2}{5}} e^{-\varpi(\tau - \tau_0)} \\
				&\qquad + 2\sigma_0^{\frac{3}{2}} e^{-\frac{3}{2}\varpi(\tau-\tau_0)} + 4\sigma_1 e^{-(\tau-\tau_0)} + 2\sigma_1 e^{-\sigma_g(\tau-\tau_0)} \Big),
			\end{aligned}
		\end{equation}
		Therefore, we have
		\begin{equation}\label{Eq_energy_inequality_substitution_final}
			\begin{aligned}
				H &\le \left( -3 - 2\Lambda + 2000 \|(\nabla(\widehat{\chi}\bar{V}), \nabla(\widehat{\chi}\bar{S}))\|_{L^\infty(B^c(0,R_1))} \right) \\
				&\quad \times \left( \|\nabla^4 \widetilde{V}\|_{L^2(B^c(0,R_1))}^2 + \|\nabla^4 \widetilde{S}\|_{L^2(B^c(0,R_1))}^2 \right) + 81 \left( \frac{\sigma_1}{\sigma_g} \right)^{\frac{17}{10}} e^{-2\varpi(\tau-\tau_0)} \\
				&\quad + C_4 \left( \|\nabla^4 \widetilde{V}\|_{L^2(B^c(0,R_1))}^2 + \|\nabla^4 \widetilde{S}\|_{L^2(B^c(0,R_1))}^2 \right)^{\frac{1}{2}} \\
				&\quad \times \Big( \left( \|\nabla^2(\widehat{\chi}\bar{V})\|_{L^8(B^c(0,R_1))} + \|\nabla^2(\widehat{\chi}\bar{S})\|_{L^8(B^c(0,R_1))} \right) \sigma_0 50^{-\frac{1}{4}} e^{-\varpi(\tau - \tau_0)} \\
				&\qquad +\sum_{j=3}^{5} \left( \|\nabla^j(\widehat{\chi}\bar{V})\|_{L^2(B^c(0,R_1))} + \|\nabla^j(\widehat{\chi}\bar{S})\|_{L^2(B^c(0,R_1))} \right) \sigma_0 50^{-\frac{2}{5}} e^{-\varpi(\tau - \tau_0)} \\
				&\qquad + 2\sigma_0^{\frac{3}{2}} e^{-\frac{3}{2}\varpi(\tau-\tau_0)} + 4\sigma_1 e^{-(\tau-\tau_0)} + 2\sigma_1 e^{-\sigma_g(\tau-\tau_0)} \Big).
			\end{aligned}
		\end{equation}
		From \eqref{est_decay_2}-\eqref{est_decay_4}, we obtain
		\begin{equation}
			\begin{aligned}
				&\frac{\mathrm{d}}{\mathrm{d}\tau} \left( \|\nabla^4 \widetilde{S}\|_{L^2(B^c(0,R_1))}^2 + \|\nabla^4 \widetilde{V}\|_{L^2(B^c(0,R_1))}^2 \right) \\
				&\le -3\left( \|\nabla^4 \widetilde{S}\|_{L^2(B^c(0,R_1))}^2 + \|\nabla^4 \widetilde{V}\|_{L^2(B^c(0,R_1))}^2 \right)+81 \left( \frac{\sigma_1}{\sigma_g} \right)^{\frac{17}{10}} e^{-2\varpi(\tau-\tau_0)}\\
				&\quad+\frac{6}{5}\sigma_0e^{-\varpi(\tau-\tau_0)}50^{-\frac{1}{4}}\left( \|\nabla^4 \widetilde{S}\|_{L^2(B^c(0,R_1))}^2 + \|\nabla^4 \widetilde{V}\|_{L^2(B^c(0,R_1))}^2 \right)^{\frac{1}{2}}
			\end{aligned}
		\end{equation}
		Let
		\begin{equation}\label{Eq_X_tau_definition}
			\mathcal{Q}(\tau) = \left( \|\nabla^4 \widetilde V\|_{L^2(B^c(0,R_1))}^2 + \|\nabla^4 \widetilde S\|_{L^2(B^c(0,R_1))}^2 \right) e^{2\varpi(\tau - \tau_0)},
		\end{equation}
		From the closed energy differential inequality, we have
		\begin{equation}\label{Eq_dQ_dtau_chain_final}
			\begin{aligned}
				\frac{\mathrm{d}}{\mathrm{d}\tau} \mathcal{Q}(\tau) &\le (-3 + 2\varpi) \mathcal{Q}(\tau) + 81 \left( \frac{\sigma_1}{\sigma_g} \right)^{\frac{17}{10}} + \frac{6}{5}\sigma_0 50^{-\frac{1}{4}} \mathcal{Q}(\tau)^{\frac{1}{2}} \\
				&\le -\mathcal{Q}(\tau) + 81 \left( \frac{\sigma_1}{\sigma_g} \right)^{\frac{17}{10}} + \left( \frac{3}{5}\sigma_0 50^{-\frac{1}{4}} \right)^2.
			\end{aligned}
		\end{equation}
		By applying Grönwall's inequality to the above relation over the interval $[\tau_0, \tau]$, we deduce that
		\begin{equation}\label{Eq_Gronwall_Q_integration}
			\begin{aligned}
				\mathcal{Q}(\tau) &\le \mathcal{Q}(\tau_0) e^{-(\tau - \tau_0)} + \left[ 81 \left( \frac{\sigma_1}{\sigma_g} \right)^{\frac{17}{10}} + \left( \frac{3}{5}\sigma_0 50^{-\frac{1}{4}} \right)^2 \right] (1 - e^{-(\tau - \tau_0)}) \\
				&= e^{-(\tau - \tau_0)} \left( \mathcal{Q}(\tau_0) - 81 \left( \frac{\sigma_1}{\sigma_g} \right)^{\frac{17}{10}} - \left( \frac{3}{5}\sigma_0 50^{-\frac{1}{4}} \right)^2 \right) + 81 \left( \frac{\sigma_1}{\sigma_g} \right)^{\frac{17}{10}} + \left( \frac{3}{5}\sigma_0 50^{-\frac{1}{4}} \right)^2.
			\end{aligned}
		\end{equation}
		Notice that the initial localized energy at $\tau = \tau_0$ is defined and bounded by
		\begin{equation}\label{Eq_initial_Q_bound}
			\mathcal{Q}(\tau_0) = \|\nabla^4 \widetilde{V}_0\|_{L^2(B^c(0,R_1))}^2 + \|\nabla^4 \widetilde{S}_0\|_{L^2(B^c(0,R_1))}^2 \le C(K) \left( \sigma_1^{\frac{2K-8}{K-3/2}} E^{\frac{5}{2K-3}}+ \sigma_1^2 \right).
		\end{equation}
		Consequently, choosing $\sigma_0 \ll 1$ yields the smallness of the initial data component, satisfying
		\begin{equation}\label{Eq_final_uniform_smallness_Q}
			\mathcal{Q}(\tau_0) \le \sigma_1^{\frac{9}{5}} \le 81 \left( \frac{\sigma_1}{\sigma_g} \right)^{\frac{17}{10}} + \left( \frac{3}{5}\sigma_0 50^{-\frac{1}{4}} \right)^2 \le \frac{1}{4} \sigma_0^2.
		\end{equation}
		Substituting \eqref{Eq_final_uniform_smallness_Q} into \eqref{Eq_Gronwall_Q_integration}, we establish the desired property.
	\end{proof}
	Before deriving the weighted energy estimates, we first state the following commutator lemma, whose proof can be found in \cite[Lemma~3.13]{Cao-Labora-Gomez-Serrano-Shi-Staffilani-2023} and \cite[Lemma~6.6]{Chen-Zhang-Zhu}.
	\begin{lemma}\label{lemmacommunator}
		Let
		\[
		\beta=(\beta_1,\beta_2,\ldots,\beta_K),
		\qquad
		\beta_i\in\{1,2,3\},
		\]
		be an ordered multi-index of length \(K\). For \(1\le j\le K\), let
		\(\beta^{(j)}\) be the ordered multi-index obtained from \(\beta\) by deleting
		its \(j\)-th entry. Then, for every
		\(\tau\in[\tau_0,\tau_1]\), the following estimates hold:
		\begin{align}
			&\left\|
			\sum_{i=1}^3
			\left(
			\partial_\beta(V\cdot\nabla V_i)
			-
			V\cdot\nabla\partial_\beta V_i
			-
			\sum_{j=1}^K\sum_{k=1}^3
			L_{k\beta_j}\,
			\partial_{y_k}\partial_{\beta^{(j)}}V_i
			\right)
			\phi^{K/2}
			\right\|_{L^2}
			\nonumber\\
			&\qquad\le
			E^{\frac12-\frac{1}{4K}}
			+
			C\left\|
			\partial_\beta V\,\phi^{K/2}
			\right\|_{L^2},
			\label{Est:I21-new}\\[1mm]
			&\left\|
			\left(
			\partial_\beta(V\cdot\nabla S)
			-
			V\cdot\nabla\partial_\beta S
			-
			\sum_{j=1}^K\sum_{k=1}^3
			L_{k\beta_j}\,
			\partial_{y_k}\partial_{\beta^{(j)}}S
			\right)
			\phi^{K/2}
			\right\|_{L^2}
			\nonumber\\
			&\qquad\le
			E^{\frac12-\frac{1}{4K}}
			+
			C\left\|
			\partial_\beta S\,\phi^{K/2}
			\right\|_{L^2},
			\label{Est:I22-new}\\[1mm]
			&\left\|
			\sum_{i=1}^3
			\left(
			\partial_\beta(S\partial_{y_i}S)
			-
			S\partial_{y_i}\partial_\beta S
			-
			\sum_{j=1}^K
			\left[
			\partial_r(\widehat{\chi}\,\overline S)
			\frac{y_{\beta_j}}{|y|}
			\right]
			\partial_{y_i}\partial_{\beta^{(j)}}S
			\right)
			\phi^{K/2}
			\right\|_{L^2}
			\nonumber\\
			&\qquad\le
			E^{\frac12-\frac{1}{4K}}
			+
			C\left\|
			\partial_\beta S\,\phi^{K/2}
			\right\|_{L^2},
			\label{Est:I32-new}\\[1mm]
			&\left\|
			\left(
			\partial_\beta(S\operatorname{div}V)
			-
			S\partial_\beta\operatorname{div}V
			-
			\sum_{j=1}^K
			\left[
			\partial_r(\widehat{\chi}\,\overline S)
			\frac{y_{\beta_j}}{|y|}
			\right]
			\partial_{\beta^{(j)}}\operatorname{div}V
			\right)
			\phi^{K/2}
			\right\|_{L^2}
			\nonumber\\
			&\qquad\le
			E^{\frac12-\frac{1}{4K}}
			+
			C\left\|
			\partial_\beta V\,\phi^{K/2}
			\right\|_{L^2}.
			\label{Est:I31-new}
		\end{align}
		Here
		\[
		L_{k\beta_j}
		:=
		\left(
		\partial_r(\widehat{\chi}\,\overline{\mathcal V})
		-
		\frac{\widehat{\chi}\,\overline{\mathcal V}}{|y|}
		\right)
		\frac{y_k y_{\beta_j}}{|y|^2}
		+
		\delta_{k, \beta_j}
		\frac{\widehat{\chi}\,\overline{\mathcal V}}{|y|},
		\]
		where
		\[
		\overline V(y)
		=
		\overline{\mathcal V}(|y|)\frac{y}{|y|}
		\]
		and \(\delta_{k,\beta_j}\) denotes the Kronecker delta.
	\end{lemma}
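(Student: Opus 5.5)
We treat each of the four estimates by the same scheme: expand $\partial_\beta$ of the product with the Leibniz rule, then sort the resulting terms by how many derivatives land on each factor. Take \eqref{Est:I21-new} as the model. We have
\[
\partial_\beta(V\cdot\nabla V_i)=V\cdot\nabla\partial_\beta V_i+\sum_{j=1}^K\partial_{\beta_j}V_k\,\partial_{y_k}\partial_{\beta^{(j)}}V_i+(\partial_\beta V)\cdot\nabla V_i+\sum_{2\le|\eta|\le K-1}C_{\beta,\eta}\,\partial_\eta V\cdot\nabla\partial_{\beta-\eta}V_i .
\]
The first term is subtracted in the statement. The term $(\partial_\beta V)\cdot\nabla V_i$ is bounded by $\|\nabla V\|_{L^\infty}\|\partial_\beta V\phi^{K/2}\|_{L^2}\le 2C_1\|\partial_\beta V\phi^{K/2}\|_{L^2}$ using \eqref{nableU1}. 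For the one-derivative terms, we write $V=\widehat\chi\bar V+\widetilde V$. For the radial field $\widehat\chi\bar V=\widehat\chi\bar{\mathcal V}\,y/|y|$, a direct computation gives $\partial_{\beta_j}(\widehat\chi\bar V_k)=L_{k\beta_j}$. This is exactly the subtracted principal commutator, so what remains is $\sum_j\partial_{\beta_j}\widetilde V_k\,\partial_k\partial_{\beta^{(j)}}V_i$. We bound it by $\|\nabla\widetilde V\|_{L^\infty}\,E_K^{1/2}$. Gagliardo--Nirenberg interpolation between \eqref{S,U,1} and \eqref{energy decayEE} makes $\|\nabla\widetilde V\|_{L^\infty}\lesssim\sigma_0^{c}E^{c/K}$, so this contribution is at most $\tfrac12E^{\frac12-\frac1{4K}}$ once $C(K)\sigma_0\ll1$ and $E$ is large.

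The main obstacle is the intermediate terms $2\le|\eta|\le K-1$, because the power of $E$ must come out strictly below $\tfrac12$. We follow the local-energy argument in the proof of Theorem~\ref{thm:local-self-similar}. We choose Hölder exponents $r_0,r_1$ and apply Lemma~\ref{lem:weighted-GN-new} with $\varphi=\phi^{1/2}$ and $l=K$. This time, however, the interpolation is between the weighted top energy $\le 2E$ and the low-order pointwise bounds, not the unweighted local bounds. The low-order input is \eqref{S,V,maxCsigma0} together with \eqref{123decay} and \eqref{eq:lessK-2-perturbation}, which carry the decay $\langle y\rangle^{-(\Lambda-1)}$. The resulting interpolation exponents satisfy $\theta_0+\theta_1=\frac{2K-1}{2K-3}-\text{(gain from low order)}$. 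Because we interpolate against genuinely small low-order quantities of size $C(\sigma_0)$ instead of $O(1)$, we can choose the exponents so that $\theta_0+\theta_1\le1-\frac1{2K}$. The weight mismatch $\phi^{\frac K2(1-\theta_0-\theta_1)}$ is then absorbed by the extra spatial decay $\langle y\rangle^{-(\Lambda-1)}$ of the low-order factor; this step uses $K\eta\gg1$ together with \eqref{eq:K-repulsivity-relation}. This gives a bound $C(K,\sigma_0)E^{\frac12-\frac1{2K}}$, which is at most $\tfrac12E^{\frac12-\frac1{4K}}$ since $C(K)\ll E$. The endpoint case $|\eta|=K-1$, where one factor carries $K-1$ derivatives, is handled by \eqref{eq:K-1-perturbation} in $L^{2+\frac2{K-2}}$ paired with $L^{2(K-1)}$ for the one-derivative factor.

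The remaining estimates \eqref{Est:I22-new}, \eqref{Est:I32-new} and \eqref{Est:I31-new} go the same way. The principal one-derivative coefficient comes from $\partial_{\beta_j}(\widehat\chi\bar S)=\partial_r(\widehat\chi\bar S)\,y_{\beta_j}/|y|$, which is the subtracted term. The top-order term $\partial_\beta S\,\partial_iS$ (respectively $\partial_\beta S\operatorname{div}V$) is controlled by \eqref{nableS1}–\eqref{nableU1}, and the intermediate terms are treated with the same interpolation. Summing over $i$ and the finitely many Leibniz terms only changes the constants, and these are absorbed by the hierarchy \eqref{Eq_parameter_hierarchy}.
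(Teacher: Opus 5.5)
Your overall structure is the right one. The Leibniz expansion, the identity $\partial_{\beta_j}(\widehat\chi\bar V_k)=L_{k\beta_j}$ (and $\partial_{\beta_j}(\widehat\chi\bar S)=\partial_r(\widehat\chi\bar S)\,y_{\beta_j}/|y|$), the residual $\partial_{\beta_j}\widetilde V_k\,\partial_k\partial_{\beta^{(j)}}V_i$ controlled by the smallness of $\|\nabla\widetilde V\|_{L^\infty}$, and the top-order term controlled by $\|\nabla V\|_{L^\infty}$ are all correct.

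The gap is in the step you single out as the main obstacle: the intermediate terms $2\le|\eta|\le K-1$. Three things go wrong.
\begin{itemize}
\item In Lemma~\ref{lem:weighted-GN-new} the exponents are fixed by the balance relation \eqref{eq:weighted-GN-balance-new}, which involves neither the weights nor the size of the norms. Take the low endpoint to be $f$ itself ($l=K$, $q=2$), with two factors carrying $K+1$ derivatives in total and $\frac1{r_0}+\frac1{r_1}=\frac12$. For $p=\infty$ the relation forces $\theta_0+\theta_1=\frac{2K-1}{2K-3}$, exactly as in the proof of Theorem~\ref{thm:local-self-similar}. For finite $p$ the sum is still $>1$. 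So there is no ``gain from low order'' to subtract: smallness of the low-order norm only enters the prefactor, never the exponent of $E$.
\item $S$ and $V$ are not small. On $B(0,R_0)$ they are the $O(1)$ profile. The bounds \eqref{S,V,maxCsigma0} and \eqref{123decay} hold only on $B^c(0,R_0)$, and the constant $C(\sigma_0)$ in \eqref{eq:lessK-2-perturbation} is not small.
\item Even granting $\theta_0+\theta_1\le 1-\frac1{2K}$, the leftover far-field weight $\phi^{\frac K2(1-\theta_0-\theta_1)}\gtrsim\phi^{1/4}\sim\langle y\rangle^{(1-\eta)/2}$ cannot be absorbed by a single factor $\langle y\rangle^{-(\Lambda-1)}$. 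Indeed $\Lambda-1<\frac{\sqrt3-1}{2}<\frac12$ by Remark~\ref{rem:Lambda-star-upper-bound}. Also, \eqref{eq:K-repulsivity-relation} plays no role in this step.
\end{itemize}

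What you need is not a sub-$\frac12$ power of $E$. You need a bound on the intermediate terms by a constant that does not grow with $E$, which is then absorbed into $\frac12E^{\frac12-\frac1{4K}}$ because $E$ is large. This is how the references cited for this lemma proceed, and it is what the paper does for the analogous terms in Cases~1--2 of the $I_{s,1,2}$ estimate.
\begin{itemize}
\item For $3\le|\eta|\le K-2$, both factors carry at most $K-2$ derivatives. The refined bound \eqref{eq:lessKKKK-2-perturbation} gives $\phi^{K/2}|\nabla^aV||\nabla^bV|\lesssim\langle y\rangle^{-\Lambda-\frac32+O(\eta+K^{-1})}\in L^2$ for $a+b=K+1$. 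The cruder \eqref{eq:lessK-2-perturbation} you cite gives only $\phi^{-1/2}\langle y\rangle^{-2(\Lambda-1)}\sim\langle y\rangle^{1+\eta-2\Lambda}$, which fails to be in $L^2$ when $\Lambda\le\frac54$.
\item For $|\eta|\in\{2,K-1\}$, one factor has $K-1$ derivatives and the other has two, not one. Put the former in weighted $L^{2+\frac2{K-2}}$ via \eqref{eq:K-1-perturbation} and the latter in weighted $L^{2(K-1)}$.
\end{itemize}
Smallness genuinely enters in making these constants independent of $E$. After splitting off the profile, interpolated bounds on $\widetilde S,\widetilde V$ carry prefactors $\sigma_0^{1-\theta}E^{\theta/2}\lesssim 1$, thanks to $C(E)\sigma_0\ll1$. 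So smallness compensates powers of $E$ in the prefactor, not in the exponent. The same condition, rather than ``$E$ large'', is what makes your one-derivative residual, of size $\lesssim K\sigma_0^{c}E^{\frac12+\frac cK}$, small.

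Finally, the top-order Leibniz term is $(\partial_\beta V)\cdot\nabla S$ in \eqref{Est:I22-new} and $\partial_\beta S\,\operatorname{div}V$ in \eqref{Est:I31-new}. Your argument therefore yields $C\|\partial_\beta V\phi^{K/2}\|_{L^2}$ and $C\|\partial_\beta S\phi^{K/2}\|_{L^2}$ there, respectively. The printed right-hand sides of these two estimates are interchanged. This is harmless for their use in $K_2$ and $K_3$, but you should state the corrected form rather than claim the printed one.
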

	\begin{proposition}
		Under the bootstrap hypotheses \eqref{Puns}-\eqref{E1}, we can prove \eqref{E2}:
		\begin{equation}
			E_K = \int (|\nabla^K S|^2 + |\nabla^K V|^2) \phi^K \, \mathrm{d}y \le \frac{E}{2}.
		\end{equation}
	\end{proposition}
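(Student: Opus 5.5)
We will run the same weighted energy computation as in the local-boundedness part of Theorem~\ref{thm:local-self-similar}, now with constants uniform in $\tau$, and close it with a damping inequality of the form
\[
\frac{d}{d\tau}E_K\le -c_*E_K+C(K)E^{1-\frac1{4K}}+\text{(small)},
\]
where $c_*>0$ comes from the self-similar transport. Applying $\partial_\beta$, $|\beta|=K$, to \eqref{Eq_self_similar_S} and testing against $\phi^K\partial_\beta S$ and $\phi^K\partial_\beta V$ reproduces \eqref{Eq_rewrite_full_energy_integral_system_local1}. The left side carries the coercive factor $\Lambda-1+K$. From $\mathfrak I_1$ we only lose $\frac12\operatorname{div}(y\phi^K)=\frac32\phi^K+\frac K2\phi^{K-1}y\cdot\nabla\phi$, and by \eqref{eq:weight-radial-ratio-bound} this is at most $\big(\frac32+K(1-\eta)\big)\phi^K$. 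So the net linear damping is at least $K\eta+\Lambda-\frac52$ times $E_K$, which is large because $K\eta\gg1$.

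Next we treat the transport and pressure terms $\mathfrak I_2,\mathfrak I_3$ with Lemma~\ref{lemmacommunator}. The top-order pieces $V\cdot\nabla\partial_\beta S$, $S\partial_i\partial_\beta S$ and $S\partial_\beta\operatorname{div}V$ are integrated by parts, as in the local argument; the cross terms cancel the symmetric part. This costs at most $C\|\nabla(\phi^KV)\|/\phi^K+C\|\nabla(\phi^K S)\|/\phi^K$, which is $O(1)\cdot E_K$ by \eqref{S,V,maxCsigma0} and \eqref{123decay}. The dangerous pieces are the $K$ first-order commutators $\sum_j L_{k\beta_j}\partial_k\partial_{\beta^{(j)}}$ and the corresponding $\partial_r(\widehat\chi\bar S)$ terms. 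These are of size $K\cdot O(\sup|\nabla(\widehat\chi\bar V)|+|\nabla(\widehat\chi\bar S)|)$. Here we will use the repulsivity of the profile from Lemma~\ref{prop:profiles-R}, which makes the symmetrized matrix $L$ (together with the $\bar S$ coupling) bounded below by $-(1-\bar\eta)$ times the radial part. Combined with the $K$ from the transport, this leaves a net gain of order $K\bar\eta$, positive by \eqref{eq:K-repulsivity-relation}. The remainders in Lemma~\ref{lemmacommunator} give $E^{\frac12-\frac1{4K}}E_K^{1/2}+CE_K$, and the constant $C$ here is independent of $K$, so it is absorbed into the $K\eta$ and $K\bar\eta$ damping.

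The dissipative term $\mathfrak I_4$ is handled as in the local proof, but now the coefficients $(\delta S)^{\frac{\alpha-1}{\delta}}$ are controlled through the lower bound \eqref{S,lower,1} rather than through local-in-time constants. The principal parts give the negative terms $-\kappa_0e^{-f_1\tau}\int\phi^K S^{\frac{\alpha-1}{\delta}}|\nabla\partial_\beta(S,V)|^2$. The first-derivative error produced when the derivative falls on $\phi^K S^{\frac{\alpha-1}\delta}$ is absorbed by Cauchy--Schwarz. This uses $|\nabla S|/S\lesssim\langle y\rangle^{-1}$, which follows from \eqref{nableS1} and \eqref{S,lower,1}. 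All the lower-order products are bounded by weighted Gagliardo--Nirenberg (Lemma~\ref{lem:weighted-GN-new}) together with Lemma~\ref{lem:higher-order-perturbation}. The growth of $S^{\frac{\alpha-1}\delta}\sim\langle y\rangle^{(\Lambda-1)\frac{1-\alpha}\delta}$ is compensated by the prefactor $e^{-f_1\tau}$ and the choice \eqref{eq:tau0-large-condition}, by the same exponent bookkeeping as in \eqref{SSS1}. This yields a contribution $\le C(\sigma_1)e^{-f_1\tau_0}(1+E)\ll E$.

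Collecting these, we get $\frac{d}{d\tau}E_K\le -E_K+C(K)E^{1-\frac1{4K}}+1$. Since $C(K)\ll E$ gives $C(K)E^{1-\frac1{4K}}+1\le E/4$, and $E_K(\tau_0)\le E/2$ by \eqref{initialdatacondition1}, Gr\"onwall's inequality gives $E_K\le E/2$. The main obstacle is the top-order commutator step: we must show that the repulsivity gain $K\bar\eta$ really dominates both $\frac32$ and the $\bar S$-coupling terms in the weighted setting. Our first attempt will be to diagonalize the $4\times4$ symbol pointwise, exactly as in \cite[Lemma~3.14]{Cao-Labora-Gomez-Serrano-Shi-Staffilani-2023}, and to check that the viscous terms do not spoil that lower bound.
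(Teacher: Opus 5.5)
\paragraph{Main gap: controlling the growth of $S^{\frac{\alpha-1}{\delta}}$ in $\mathfrak I_4$.}
Your architecture matches the paper's: the top-order weighted energy, the damping $\Lambda-1+K$ against the weight loss $\frac32+K\frac{r\partial_r\phi}{2\phi}$, Lemma~\ref{lemmacommunator}, repulsivity, and Gr\"onwall. The step for $\mathfrak I_4$, however, rests on a mechanism that cannot work. The factor $e^{-f_1\tau}$ does not depend on $y$. It therefore cannot offset the spatial growth $S^{\frac{\alpha-1}{\delta}}\lesssim\sigma_1^{\frac{\alpha-1}{\delta}}\langle y/R_0\rangle^{\frac{(1-\alpha)(\Lambda-1)}{\delta}}$ in an integral over $\mathbb R^3$ at fixed $\tau$. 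The trade made in \eqref{SSS1} is only possible along characteristics, where $|y|$ and $\tau$ are tied, and even there it uses \eqref{1-lamda}.

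After your Cauchy--Schwarz against the weighted dissipation, you are left with $\int\phi^KS^{\frac{\alpha-1}{\delta}}\bigl(K^2\frac{|\nabla\phi|^2}{\phi^2}+\frac{|\nabla S|^2}{S^2}\bigr)|\partial_\beta S|^2\,dy$. With only \eqref{S,lower,1} and \eqref{nableS1} at hand, this can be bounded by a multiple of $E_K$ only if the multiplier $\langle y\rangle^{\frac{(1-\alpha)(\Lambda-1)}{\delta}-2}$ is bounded. If it is not, no choice of $\tau_0$ helps.

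The missing ingredient is the \emph{upper} bound $f_1<1-2\eta$. This is \eqref{1-lamda}, a consequence of $\Lambda<\Lambda^*(\gamma)$ and $\alpha>0$. Since $\frac{(1-\alpha)(\Lambda-1)}{\delta}=f_1+2-\Lambda$, it yields three things:
\begin{itemize}
\item $\frac{|\nabla S|^2}{S^2}S^{\frac{\alpha-1}{\delta}}\lesssim\langle y\rangle^{f_1-\Lambda}\le C(\sigma_0)$;
\item the square-integrability of $\langle y\rangle^{-\Lambda-\frac32+3\eta+f_1}$, needed for the generic products bounded through Lemma~\ref{lem:higher-order-perturbation};
\item the boundedness of $\langle y\rangle^{f_1-1+2\eta}$, needed when one factor carries $K$ or $K-1$ derivatives.
\end{itemize}
Only after these purely spatial bounds does $f_1>0$ enter, to make the remaining $e^{-f_1\tau}C(\sigma_0)$ small. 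This is where $\Lambda<\Lambda^*(\gamma)$ is used in the top-order estimate, and your proposal never invokes it.

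\paragraph{Two smaller points.}
First, closing Gr\"onwall via $C(K)E^{1-\frac1{4K}}\le E/4$ requires $E^{\frac1{4K}}\gtrsim C(K)$, i.e.\ $E\gtrsim C(K)^{4K}$. The condition $C(K)\ll E$ is not enough. This is harmless, because $E$ is fixed after $K$; compare the condition $(\frac{3^K}{2})^{2K}\ll E$ in the hierarchy.

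Second, the repulsivity step needs no diagonalization of a $4\times4$ symbol. The paper proceeds as follows:
\begin{itemize}
\item it splits $L_{k\beta_j}$ into its radial and angular parts;
\item it converts the $\bar S$-coupling into the diagonal loss $\delta|\partial_r(\widehat\chi\bar S)|$ by Cauchy--Schwarz;
\item it checks pointwise that the total coefficient multiplying $K$ is at most $1-\frac12\min\{\bar\eta,\eta\}$.
\end{itemize}
In $B(0,R_0)$ the pointwise check uses \eqref{eq:profile-radial-repulsivity}--\eqref{eq:profile-angular-repulsivity}; there $\phi\equiv1$, so the weight costs nothing at order $K$. Outside $B(0,R_0)$ it uses \eqref{eq:weight-radial-ratio-bound} and the smallness of the profile derivatives.

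The damping $K$ has to be allocated pointwise in this way. Your global count, first $K\eta$ and then a further $K\bar\eta$ ``combined with the $K$ from the transport'', reads as if the same $K$ were spent twice. The viscous terms play no role in this step, since they are disposed of entirely within $\mathfrak I_4$.
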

	\begin{proof}
		Applying the differential operator $\partial_\beta$ with $|\beta|=K$ to \eqref{Eq_self_similar_S}, and utilizing the identity $\partial_\beta (y \cdot \nabla f) = K \partial_\beta f + y \cdot \nabla \partial_\beta f$, we obtain
		\begin{equation}\label{Eq_coupled_higher_derivative_system}
			\left\{
			\begin{aligned}
				&(\partial_\tau + \Lambda - 1 + K)\partial_\beta S + y \cdot \nabla \partial_\beta S + \partial_\beta(V \cdot \nabla S) + \delta \partial_\beta(S \operatorname{div}V) = \partial_\beta \mathcal{F}_{s}, \\
				&(\partial_\tau + \Lambda - 1 + K)\partial_\beta V + y \cdot \nabla \partial_\beta V + \partial_\beta(V \cdot \nabla V) + \delta \partial_\beta(S \nabla S) = \partial_\beta \mathcal{F}_{v}.
			\end{aligned}
			\right.
		\end{equation}
		To achieve the high-order energy identity, we multiply the system \eqref{Eq_coupled_higher_derivative_system} by $\phi^K \partial_\beta S$ and $\phi^K \partial_\beta V$ respectively, perform a summation over all entries $|\beta|=K$, and integrate by parts
		\begin{equation}\label{Eq_rewrite_full_energy_integral_system}
			\begin{aligned}
				&\left(\frac{1}{2} \frac{\mathrm{d}}{\mathrm{d}\tau} + \Lambda - 1 + K\right) E_K \\
				&= -\sum_{|\beta|=K} \int_{\mathbb{R}^3} \phi^K y \cdot \left( \nabla \partial_\beta S \partial_\beta S+ \sum_{i=1}^3 \nabla\partial_\beta V_i \partial_\beta V_i  \right) \mathrm{d}y \\
				&\quad - \sum_{|\beta|=K} \int_{\mathbb{R}^3} \phi^K \left( \partial_\beta S \partial_\beta(V \cdot \nabla S) + \sum_{i=1}^3 \partial_\beta V_i \partial_\beta(V \cdot \nabla V_i) \right) \mathrm{d}y \\
				&\quad - \sum_{|\beta|=K} \int_{\mathbb{R}^3} \delta \phi^K \left( \partial_\beta S \partial_\beta(S \operatorname{div}V) + \sum_{i=1}^3 \partial_\beta V_i \partial_\beta(S \partial_{y_i} S) \right) \mathrm{d}y \\
				&\quad + \sum_{|\beta|=K} \int_{\mathbb{R}^3} \phi^K \left( \partial_\beta S \partial_\beta\mathcal{F}_{s}+ \partial_\beta V \partial_\beta\mathcal{F}_{v} \right)\mathrm{d}y\\
				&=: -K_1 - K_2 - K_3 + K_4.
			\end{aligned}
		\end{equation}
		We now estimate each term separately and then combine the resulting bounds. The estimates for $K_1$--$K_3$ follow the argument in
		\cite[Section~3.4]{Cao-Labora-Gomez-Serrano-Shi-Staffilani-2023},
		and we include the corresponding calculations for completeness.
		The term $K_4$ is the principal term that requires special attention
		in the present setting, and its estimate is derived in detail below.\\
		\noindent\textbf{1. Estimates for $K_1$.}\\
		Integrating by parts and using \eqref{E1}, we arrive at
		\begin{equation}\label{Eq_K1_integration_by_parts}
			\begin{aligned}
				-K_1
				&=
				\frac{1}{2}
				\sum_{|\beta|=K}
				\int_{\mathbb{R}^3}
				\operatorname{div}\!\left(\phi^K y\right)
				\left(
				|\partial_\beta S|^2+|\partial_\beta V|^2
				\right)\,\mathrm{d}y\\
				&=
				\frac{3}{2}
				\sum_{|\beta|=K}
				\int_{\mathbb{R}^3}
				\left(
				|\partial_\beta S|^2+|\partial_\beta V|^2
				\right)\phi^K\,\mathrm{d}y\\
				&\quad+
				\frac{K}{2}
				\sum_{|\beta|=K}
				\int_{\mathbb{R}^3}
				\frac{y\cdot\nabla\phi}{\phi}
				\left(
				|\partial_\beta S|^2+|\partial_\beta V|^2
				\right)\phi^K\,\mathrm{d}y\\
				&\leq
				\frac{3}{2}E
				+
				\frac{K}{2}
				\sum_{|\beta|=K}
				\int_{\mathbb{R}^3}
				\frac{y\cdot\nabla\phi}{\phi}
				\left(
				|\partial_\beta S|^2+|\partial_\beta V|^2
				\right)\phi^K\,\mathrm{d}y.
			\end{aligned}
		\end{equation}
		\noindent\textbf{2. Estimates for $K_2$.}\\
		By using the commutator estimate \eqref{Est:I21-new}, the remaining non-linear terms can be bounded by
		\begin{equation}
			\begin{aligned}
				&-\sum_{|\beta|=K} \int_{\mathbb{R}^3} \phi^K \sum_{i=1}^3 \partial_\beta(V \cdot \nabla V_i) \partial_\beta V_i \, \mathrm{d}y \\
				&= -\sum_{|\beta|=K} \int_{\mathbb{R}^3} \phi^K \sum_{i=1}^3 \left( \partial_\beta(V \cdot \nabla V_i) - V \cdot \nabla \partial_\beta V_i - \sum_{j=1}^K \sum_{k=1}^3 \partial_{y_{\beta_j}}(\widehat{\chi}\bar{\mathcal{V}}\frac{y_k}{|y|})\partial_{y_k} \partial_{\beta^{(j)}} V_i \right) \partial_\beta V_i \, \mathrm{d}y \\
				&\quad - \sum_{|\beta|=K} \int_{\mathbb{R}^3} \phi^K \sum_{i=1}^3 \left( V \cdot \nabla \partial_\beta V_i + \sum_{j=1}^K \sum_{k=1}^3 \partial_{y_{\beta_j}}(\widehat{\chi}\bar{\mathcal{V}}\frac{y_k}{|y|}) \partial_{y_k} \partial_{\beta^{(j)}} V_i \right) \partial_\beta V_i \, \mathrm{d}y \\
				&\le \sum_{|\beta|=K} \left( E^{\frac{1}{2}-\frac{1}{4K}} + C\|\partial_\beta V \phi^{\frac{K}{2}}\|_{L^2} \right) \|\partial_\beta V \phi^{\frac{K}{2}}\|_{L^2} - \sum_{|\beta|=K} \int_{\mathbb{R}^3} \phi^K \sum_{i=1}^3 V \cdot \nabla \partial_\beta V_i \partial_\beta V_i \, \mathrm{d}y \\
				&\quad - \sum_{j=1}^K \sum_{|\beta^{(j)}|=K-1} \sum_{\beta_j=1}^3 \sum_{k=1}^3 \int_{\mathbb{R}^3} \phi^K \sum_{i=1}^3 \left( \partial_{y_{\beta_j}}(\widehat{\chi}\bar{\mathcal{V}}\frac{y_k}{|y|}) \partial_{y_k} \partial_{\beta^{(j)}} V_i \partial_{y_{\beta_j}} \partial_{\beta^{(j)}} V_i \right) \mathrm{d}y. \end{aligned}
		\end{equation}
		Consequently, we have
		\begin{equation}\label{Eq_nonlinear_transport_commutator_estimate}
			\begin{aligned}
				&-\sum_{|\beta|=K}
				\int_{\mathbb{R}^3}
				\phi^K\,
				\partial_\beta(V\cdot\nabla V)
				\cdot\partial_\beta V\,\mathrm{d}y
				\\
				&\le
				\frac{3^K}{2}E^{1-\frac{1}{2K}}
				+
				C\sum_{|\beta|=K}
				\left\|
				\phi^{\frac K2}\partial_\beta V
				\right\|_{L^2}^2+
				\frac12\sum_{|\beta|=K}
				\int_{\mathbb{R}^3}
				\left(
				\operatorname{div}V
				+
				K\frac{V\cdot\nabla\phi}{\phi}
				\right)
				|\partial_\beta V|^2\phi^K\,\mathrm{d}y
				\\
				&\quad-
				\sum_{j=1}^K
				\sum_{|\beta^{(j)}|=K-1}
				\sum_{i=1}^3
				\int_{\mathbb{R}^3}
				\phi^K
				\left(
				\partial_r(\widehat{\chi}\bar{\mathcal V})
				-
				\frac{\widehat{\chi}\bar{\mathcal V}}{|y|}
				\right)
				\left|
				\frac{y}{|y|}\cdot
				\nabla\partial_{\beta^{(j)}}V_i
				\right|^2
				\,\mathrm{d}y
				\\
				&\quad-
				\sum_{j=1}^K
				\sum_{|\beta^{(j)}|=K-1}
				\sum_{i=1}^3
				\int_{\mathbb{R}^3}
				\phi^K
				\frac{\widehat{\chi}\bar{\mathcal V}}{|y|}
				\left|
				\nabla\partial_{\beta^{(j)}}V_i
				\right|^2
				\,\mathrm{d}y .
			\end{aligned}
		\end{equation}
		Here we use the identity 
		\begin{equation}
			\sum_{k=1}^3 \sum_{\beta_j=1}^3 \frac{y_k y_{\beta_j}}{|y|^2} \partial_{y_k} \partial_{\beta^{(j)}} V_i \partial_{y_{\beta_j}} \partial_{\beta^{(j)}} V_i=\left|\frac{y}{|y|}\cdot
			\nabla\partial_{\beta^{(j)}}V_i\right|^2,
		\end{equation}
		For $y\in\mathbb{R}^3\setminus\{0\}$, let
		\[
		r:=|y|,
		\qquad
		\omega:=\frac{y}{|y|}\in\mathbb S^2.
		\]
		We define the radial and angular derivatives, respectively, by
		\begin{equation}\label{eq:radial-angular-derivative-definition}
			\partial_r
			:=
			\omega\cdot\nabla_y
			=
			\sum_{k=1}^3\frac{y_k}{|y|}\partial_{y_k},
			\qquad
			\frac{1}{r}\nabla_\theta
			:=
			\nabla_y-\omega\partial_r.
		\end{equation}
		Therefore, we arrive at
		\begin{equation}\label{Eq_nonlinear_transport_commutator_estimate_finally}
			\begin{aligned}
				&-\sum_{|\beta|=K} \int_{\mathbb{R}^3} \phi^K \sum_{i=1}^3 \partial_\beta(V \cdot \nabla V_i) \partial_\beta V_i \, \mathrm{d}y \\
				&\le C E + \sum_{|\beta|=K} \int_{\mathbb{R}^3} \frac{K \nabla \phi \cdot V + \phi \operatorname{div}V}{2\phi} |\partial_\beta V|^2 \phi^K \, \mathrm{d}y \\
				&\quad - K \sum_{|\bar{\beta}|=K-1} \int_{\mathbb{R}^3} \phi^K \left( \partial_r (\widehat{\chi}\bar{\mathcal{V}}) - \frac{\widehat{\chi}\bar{\mathcal{V}}}{r} \right) \sum_{i=1}^3 |\partial_r \partial_{\bar{\beta}} V_i|^2 \, \mathrm{d}y \\
				&\quad - K \sum_{|\bar{\beta}|=K-1}\sum_{i=1}^3 \sum_{k=1}^3 \int_{\mathbb{R}^3} \phi^K \frac{\widehat{\chi}\bar{\mathcal{V}}}{r} |\partial_{y_k} \partial_{\bar{\beta}} V_i|^2 \, \mathrm{d}y \\
				&\le C E + \sum_{|\beta|=K} \int_{\mathbb{R}^3} \left( \frac{K \nabla \phi \cdot \widetilde{V}}{2\phi} + \frac{1}{2}\operatorname{div}V \right) |\partial_\beta V|^2 \phi^K \, \mathrm{d}y \\
				&\quad + K \sum_{|\bar{\beta}|=K-1} \int_{\mathbb{R}^3} \left( \frac{\nabla \phi \cdot \widehat{\chi}\bar{V}}{2\phi} |\nabla \partial_{\bar{\beta}} V|^2 - \partial_r (\widehat{\chi}\bar{\mathcal{V}}) |\partial_r \partial_{\bar{\beta}} V|^2 - \frac{\widehat{\chi}\bar{\mathcal V}}{r^3} |\nabla_\theta \partial_{\bar{\beta}} V|^2 \right) \phi^K \, \mathrm{d}y \\
				&\le C E + K \sum_{|\bar{\beta}|=K-1} \int_{\mathbb{R}^3} \left( \frac{\nabla \phi \cdot \widehat{\chi}\bar{V}}{2\phi} |\nabla \partial_{\bar{\beta}} V|^2 - \partial_r (\widehat{\chi}\bar{\mathcal{V}}) |\partial_r \partial_{\bar{\beta}} V|^2 - \frac{\widehat{\chi}\bar{\mathcal V}}{r^3} |\nabla_\theta \partial_{\bar{\beta}} V|^2 \right) \phi^K \, \mathrm{d}y.
			\end{aligned}
		\end{equation}
		For the final inequality, we have used following facts
		$$ \|\operatorname{div} V\|_{L^\infty} \le C, \quad \left\| \frac{\nabla \phi}{2\phi} \widetilde{V} \right\|_{L^\infty} \le C \sigma_0, \quad \sigma_0\ll K^{-1}.$$
		Similarly, we have 
		\begin{equation}\label{SimilarlyestimateSs}
			\begin{aligned}
				&-\sum_{|{\beta}|=K} \int_{\mathbb{R}^3} \phi^K \partial_{{\beta}} S \partial_{\beta}(V \cdot \nabla S) \, \mathrm{d}y \\
				&\le C E +  K \sum_{|\bar{\beta}|=K-1} \int_{\mathbb{R}^3} \left( \frac{\nabla \phi \cdot \widehat{\chi}\bar{V}}{2\phi} |\nabla\partial_{\bar{\beta}} S|^2 - \partial_r (\widehat{\chi}\bar{\mathcal{V}}) |\partial_r \partial_{\bar{\beta}} S|^2 - \frac{\widehat{\chi}\bar{\mathcal V}}{r^3} |\nabla_\theta \partial_{\bar{\beta}} S|^2 \right) \phi^K \, \mathrm{d}y.
			\end{aligned}
		\end{equation}
		Combining \eqref{Eq_nonlinear_transport_commutator_estimate_finally} with \eqref{SimilarlyestimateSs}, we obtain
		\begin{equation}\label{Eq_image_0bc0e4}
			\begin{aligned}
				-K_2 &\le C E + K \int_{\mathbb{R}^3} \left( \frac{\nabla \phi \cdot \widehat{\chi}\bar{V}}{2\phi} - \partial_r (\widehat{\chi}\bar{\mathcal{V}}) \right) \left( |\partial_r \nabla^{K-1} V|^2 + |\partial_r \nabla^{K-1} S|^2 \right) \phi^K \, \mathrm{d}y \\
				&\quad + K \int_{\mathbb{R}^3} \left( \frac{\nabla \phi \cdot \widehat{\chi}\bar{V}}{2\phi r^2} - \frac{\widehat{\chi}\bar{\mathcal{V}}}{r^3} \right) \left( |\nabla_\theta \nabla^{K-1} V|^2 + |\nabla_\theta \nabla^{K-1} S|^2 \right) \phi^K \, \mathrm{d}y.
			\end{aligned}
		\end{equation}
		\noindent\textbf{3. Estimates for $K_3$.}\\
		Using the commutator estimates \eqref{Est:I32-new}, one can get
		\begin{equation}\label{estimate_K3_1VSS}
			\begin{aligned}
				&-\sum_{|\beta|=K}
				\int_{\mathbb R^3}
				\phi^K\,
				\partial_\beta V\cdot
				\partial_\beta(S\nabla S)\,\mathrm dy
				\\
				&=
				-\sum_{|\beta|=K}
				\int_{\mathbb R^3}
				\phi^K\,\partial_\beta V\cdot
				\Bigg[
				\partial_\beta(S\nabla S)
				-S\nabla\partial_\beta S
				-\sum_{j=1}^K
				\partial_r(\widehat\chi\bar S)
				\frac{y_{\beta_j}}{r}
				\nabla\partial_{\beta^{(j)}}S
				\Bigg]\,\mathrm dy
				\\
				&\quad
				-\sum_{|\beta|=K}
				\int_{\mathbb R^3}
				\phi^K S\nabla\partial_\beta S
				\cdot\partial_\beta V\,\mathrm dy
				\\
				&\quad
				-\sum_{j=1}^K
				\sum_{|\beta^{(j)}|=K-1}
				\sum_{\beta_j=1}^3\sum_{i=1}^3
				\int_{\mathbb R^3}
				\phi^K\,
				\partial_r(\widehat\chi\bar S)
				\frac{y_{\beta_j}}{r}
				\partial_{y_i}\partial_{\beta^{(j)}}S\,
				\partial_{y_{\beta_j}}\partial_{\beta^{(j)}}V_i
				\,\mathrm dy
				\\
				&\leq
				\sum_{|\beta|=K}
				\left(
				E^{\frac12-\frac{1}{4K}}
				+C\|\phi^{\frac K2}\partial_\beta S\|_{L^2}
				\right)
				\|\phi^{\frac K2}\partial_\beta V\|_{L^2}
				-\sum_{|\beta|=K}
				\int_{\mathbb R^3}
				\phi^K S\nabla\partial_\beta S
				\cdot\partial_\beta V\,\mathrm dy
				\\
				&\quad
				-\sum_{j=1}^K
				\sum_{|\beta^{(j)}|=K-1}
				\sum_{\beta_j=1}^3\sum_{i=1}^3
				\int_{\mathbb R^3}
				\phi^K\,
				\partial_r(\widehat\chi\bar S)
				\frac{y_{\beta_j}}{r}
				\partial_{y_i}\partial_{\beta^{(j)}}S\,
				\partial_{y_{\beta_j}}\partial_{\beta^{(j)}}V_i
				\,\mathrm dy
				\\
				&\leq
				\frac{3^K}{2}E^{1-\frac{1}{2K}}+CE
				-\sum_{|\beta|=K}
				\int_{\mathbb R^3}
				\phi^K S\nabla\partial_\beta S
				\cdot\partial_\beta V\,\mathrm dy
				\\
				&\quad
				+\frac{K}{2}
				\sum_{|\bar\beta|=K-1}\sum_{i=1}^3
				\int_{\mathbb R^3}
				\phi^K
				\left|\partial_r(\widehat\chi\bar S)\right|
				\left(
				|\nabla\partial_{\bar\beta}V_i|^2
				+
				|\partial_{y_i}\partial_{\bar\beta}S|^2
				\right)\,\mathrm dy
				\\
				&\leq
				CE
				-\sum_{|\beta|=K}
				\int_{\mathbb R^3}
				\phi^K S\nabla\partial_\beta S
				\cdot\partial_\beta V\,\mathrm dy
				+\frac{K}{2}
				\sum_{|\beta|=K}
				\int_{\mathbb R^3}
				\phi^K
				\left|\partial_r(\widehat\chi\bar S)\right|
				\left(
				|\partial_\beta S|^2+|\partial_\beta V|^2
				\right)\,\mathrm dy .
			\end{aligned}
		\end{equation}
		
		Similarly, we can use the commutator estimates \eqref{Est:I31-new} to obtain
		\begin{equation}\label{estimate_K3_1SSV123}
			\begin{aligned}
				&-\sum_{|\beta|=K} \int_{\mathbb{R}^3} \phi^K \sum_{i=1}^3 \partial_{{\beta}} S\partial_{\beta}(S \partial_{y_i} V_i) \, \mathrm{d}y \\
				&\le C E -\sum_{|{\beta}|=K} \sum_{i=1}^3\int_{\mathbb{R}^3} \phi^K S \partial_{{\beta}} S \cdot \partial_{y_i} \partial_{{\beta}} V_i \, \mathrm{d}y + \frac{K}{2} \sum_{|\beta|=K} \int_{\mathbb{R}^3} \phi^K |\partial_r(\widehat{\chi}\bar{S})| \left( | \partial_{{\beta}} S|^2 + | \partial_{{\beta}} V|^2 \right) \mathrm{d}y.
			\end{aligned}
		\end{equation}
		Combining \eqref{estimate_K3_1VSS} and \eqref{estimate_K3_1SSV123}, integrating by parts and using Cauchy-Schwarz inequality, we have
		\begin{equation}\label{Eq_K3_nonlinear_coupling_bound_chain}
			\begin{aligned}
				-K_3 &\le C E + \delta \sum_{|\beta|=K} \int_{\mathbb{R}^3} \frac{\nabla(\phi^K S)}{\phi^K} \cdot \left( \phi^K \partial_{\beta} S \partial_{\beta} V \right) \mathrm{d}y \\
				&\quad + \delta K \sum_{|\beta|=K} \int_{\mathbb{R}^3} \phi^K |\partial_r (\widehat{\chi}\bar{S})| \left( |\partial_{\beta} S|^2 + |\partial_{\beta} V|^2 \right) \mathrm{d}y \\
				&\le C E + \delta \sum_{|\beta|=K} \int_{\mathbb{R}^3} \left( \frac{K \nabla \phi \widetilde{S}}{\phi} + \frac{K \nabla \phi \widehat{\chi}\bar{S}}{\phi} + \nabla S \right) \cdot \left( \phi^K \partial_{\beta} S \partial_{\beta} V \right) \mathrm{d}y \\
				&\quad + \delta K \sum_{|\beta|=K} \int_{\mathbb{R}^3} \phi^K |\partial_r (\widehat{\chi}\bar{S})| \left( |\partial_{\beta} S|^2 + |\partial_{\beta} V|^2 \right) \mathrm{d}y \\
				&\le C E + K \delta \sum_{|\beta|=K} \int_{\mathbb{R}^3} \phi^K \left( \frac{|\nabla \phi| \widehat{\chi}\bar{S}}{2\phi} + |\partial_r (\widehat{\chi}\bar{S})| \right) \left( |\partial_{\beta} S|^2 + |\partial_{\beta} V|^2 \right) \mathrm{d}y.
			\end{aligned}
		\end{equation}
		Here we have used the following estimates
		$$ \|\nabla V\|_{L^\infty} \le C, \quad \left\| \frac{\nabla \phi}{2\phi} \widetilde{S} \right\|_{L^\infty} \le C \sigma_0, \quad \sigma_0\ll K^{-1}.$$
		\noindent\textbf{4. Estimates for $K_4$.}\\
		We denote
		\begin{equation}\label{K4_initial_boundestimate}
			e^{f_1 \tau}K_4=e^{f_1 \tau}\sum_{|\beta|=K} \int_{\mathbb{R}^3} \phi^K \partial_\beta V \partial_\beta\mathcal{F}_{v} \mathrm{d}y+e^{f_1 \tau}\sum_{|\beta|=K} \int_{\mathbb{R}^3} \phi^K  \partial_\beta S \partial_\beta\mathcal{F}_{s} \mathrm{d}y:=e^{f_1 \tau}(K_{4,v}+K_{4,s}).
		\end{equation}
		By integrating by parts and rewriting it in commutator form, we obtain
		\begin{equation}
			\begin{aligned}
				&\delta^{\frac{1-\alpha}{\delta}}K_{4,v}\\
				&=\sum_{|\beta|=K}\big( \int_{\mathbb{R}^3} \phi^K \left[ \partial_\beta, S^{\frac{\alpha-1}{\delta}} \right] \mathbb{L}(V) \cdot \partial_\beta V \, \mathrm{d}y \\
				&\quad- \int_{\mathbb{R}^3} \left( \partial_i\left(\phi^K S^{\frac{\alpha-1}{\delta}}\right) \cdot \partial_\beta (\nu \partial_i V_j + ( \sqrt{\nu^2 - \varepsilon^2} + (\alpha - 1) ( \nu + \sqrt{\nu^2 - \varepsilon^2} ) ) \partial_j
				V_i) \right) \cdot \partial_\beta V_j \, \mathrm{d}y \\
				&\quad - \int_{\mathbb{R}^3} \phi^K S^{\frac{\alpha-1}{\delta}} \left( \nu | \partial_\beta\partial_i V_j|^2 +  ( \sqrt{\nu^2 - \varepsilon^2} + (\alpha - 1) ( \nu + \sqrt{\nu^2 - \varepsilon^2} ) )\partial_\beta\partial_i V_j\partial_\beta\partial_j V_i \right) \mathrm{d}y \\
				&\quad + \frac{\alpha}{\delta} \int_{\mathbb{R}^3} \phi^K \left[ \partial_\beta, S^{\frac{\alpha-1}{\delta}-1} \right] (\nabla S \cdot \mathbb{D}(V)) \cdot \partial_\beta V \, \mathrm{d}y \\
				&\quad + \frac{\alpha}{\delta} \int_{\mathbb{R}^3} \phi^K S^{\frac{\alpha-1}{\delta}-1} (\partial_\beta(\nabla S) \cdot \mathbb{D}(V)) \cdot \partial_\beta V \, \mathrm{d}y \\
				&\quad + \frac{\alpha}{\delta} \int_{\mathbb{R}^3} \phi^K S^{\frac{\alpha-1}{\delta}-1} (\partial_\beta(\nabla S \cdot \mathbb{D}(V)) - \partial_\beta(\nabla S) \cdot \mathbb{D}(V)) \cdot \partial_\beta V \, \mathrm{d}y\big) \\
				&:= I_{v,1} - I_{v,2} - I_{v,3} + I_{v,4} + I_{v,5} + I_{v,6}.
			\end{aligned}
		\end{equation}
		and
		\begin{equation}
			\begin{aligned}
				\frac{\delta^{\frac{1-\alpha}{\delta}}}{d\alpha} K_{4,s} &=\sum_{|\beta|=K}\big( \int_{\mathbb{R}^3} \phi^K \left[ \partial_\beta, S^{\frac{\alpha-1}{\delta}} \right] \Delta S \cdot \partial_\beta S \, \mathrm{d}y \\
				&\quad- \int_{\mathbb{R}^3} \left( \nabla\left(\phi^K S^{\frac{\alpha-1}{\delta}}\right) \cdot \nabla \partial_\beta S \right) \cdot \partial_\beta S \, \mathrm{d}y - \int_{\mathbb{R}^3} \phi^K S^{\frac{\alpha-1}{\delta}} |\nabla \partial_{\beta}S|^2 \mathrm{d}y \\
				&\quad + \frac{\alpha-\delta}{\delta} \int_{\mathbb{R}^3} \phi^K \left[ \partial_\beta, S^{\frac{\alpha-1}{\delta}-1} \right] (|\nabla S|^2) \cdot \partial_\beta S \, \mathrm{d}y \\
				&\quad + \frac{\alpha-\delta}{\delta} \int_{\mathbb{R}^3} \phi^K S^{\frac{\alpha-1}{\delta}-1} (\partial_\beta(\nabla S) \cdot\nabla S) \cdot \partial_\beta S \, \mathrm{d}y \\
				&\quad + \frac{\alpha-\delta}{\delta} \int_{\mathbb{R}^3} \phi^K S^{\frac{\alpha-1}{\delta}-1} (\partial_\beta(|\nabla S|^2) - \partial_\beta(\nabla S) \cdot \nabla S) \cdot \partial_\beta S \, \mathrm{d}y \big)\\
				&:= I_{s,1} - I_{s,2} - I_{s,3} + I_{s,4} + I_{s,5} + I_{s,6}.
			\end{aligned}
		\end{equation}
		
		We first estimate $K_{4,s}$. \textbf{For $I_{s,3}$, we treat it as a dissipative term.}
		\textbf{As for $I_{s,1}$, owing to the following fact}
		\begin{equation}\label{Eq_commutator_estimate_S}
			\begin{aligned}
				&\left| \left[ \partial_\beta, S^{\frac{\alpha-1}{\delta}} \right] \Delta S - \sum_{j=1}^K \partial_{y_{\beta_j}} S^{\frac{\alpha-1}{\delta}} \partial_{\beta^{(j)}} \Delta S \right| \\
				&\le C(K)3^K \max_{\substack{\sum_{j=0}^\ell |\bar{\beta}^j| = K \\ 0 \le |\bar{\beta}^0| \le K-2}} \left| S^{\frac{\alpha-1}{\delta}} \partial_{\bar{\beta}^0} \Delta S \frac{\partial_{\bar{\beta}^1} S}{S} \frac{\partial_{\bar{\beta}^2} S}{S} \cdots \frac{\partial_{\bar{\beta}^\ell} S}{S} \right|,
			\end{aligned}
		\end{equation}
		where $|\bar{\beta}^j| \ge 1$ for all $1 \le j \le \ell$. Therefore, we can decompose $I_{s,1}$ as
		\begin{equation}\label{Eq_Iv1_decomposition}
			\begin{aligned}
				I_{s,1} &\le C \sum_{|\beta|=K}\sum_{j=1}^K \int_{\mathbb{R}^3} \left| \phi^K \partial_\beta S \cdot \partial_{\beta^{(j)}} \Delta S  S^{\frac{\alpha-1}{\delta}-1} \partial_{y_{\beta_j}} S \right| \mathrm{d}y \\
				&\quad + \sum_{|\beta|=K}C(K)3^K E^{\frac{1}{2}} \max_{\substack{\sum_{j=0}^\ell |\bar{\beta}^j| = K \\ 0 \le |\bar{\beta}^0| \le K-2}} \left\| S^{\frac{\alpha-1}{\delta}} \partial_{\bar{\beta}^0} \Delta S \frac{\partial_{\bar{\beta}^1} S}{S} \frac{\partial_{\bar{\beta}^2} S}{S} \cdots \frac{\partial_{\bar{\beta}^\ell} S}{S} \phi^{\frac{K}{2}} \right\|_{L^2} \\
				&=: I_{s,1,1} + I_{s,1,2}.
			\end{aligned}
		\end{equation}
		For $I_{s,1,1}$, using the fact $$\frac{|\nabla S|^2}{S^2} S^{\frac{\alpha-1}{\delta}} \le C(\sigma_0) \left\langle \frac{y}{R_0} \right\rangle^{-2+\frac{1-\alpha}{\delta}(\Lambda-1)} \le C(\sigma_0),$$ we have
		\begin{equation}\label{Eq_Iv11_Holder_estimate}
			\begin{aligned}
				I_{s,1,1} &\le C(K)\sum_{|\beta|=K} \int_{\mathbb{R}^3} \phi^K |\partial_\beta S| |\nabla^{K+1} S| \frac{|\nabla S|}{S} S^{\frac{\alpha-1}{\delta}} \, \mathrm{d}y \\
				&\le C(K)\sum_{|\beta|=K} I_{s,3}^{\frac{1}{2}} \left( \int_{\mathbb{R}^3} \phi^K |\partial_\beta S|^2 \frac{|\nabla S|^2}{S^2} S^{\frac{\alpha-1}{\delta}} \, \mathrm{d}y \right)^{\frac{1}{2}} \\
				&\le C(K) I_{s,3}^{\frac{1}{2}} E^{\frac{1}{2}} \left\| \frac{|\nabla S|^2}{S^2} S^{\frac{\alpha-1}{\delta}} \right\|_{L^\infty}^{\frac{1}{2}} \le C(\sigma_0) I_{s,3}^{\frac{1}{2}}.
			\end{aligned}
		\end{equation}
		For $I_{s,1,2}$, we divide our discussion into the following three cases:
		\begin{itemize}
			\item \textbf{Case 1: $0 \le |\bar{\beta}^0| \le K-4$ and $1 \le |\bar{\beta}^i|\le K-2$ for all $i \ge1$.} \\
			In this case, using the weighted interpolation estimates \eqref{eq:lessKKKK-2-perturbation}, we have
			\begin{equation}\label{Eq_Iv12_weight_decay_chain}
				\begin{aligned}
					&\left\| S^{\frac{\alpha-1}{\delta}} \partial_{\bar{\beta}^0} \Delta S \frac{\partial_{\bar{\beta}^1} S}{S} \frac{\partial_{\bar{\beta}^2} S}{S} \cdots \frac{\partial_{\bar{\beta}^\ell} S}{S} \phi^{\frac{K}{2}} \right\|_{L^2} \\
					&\le C(\sigma_0) \left\| \phi^{-1} \langle y \rangle^{\left(\frac{1-\alpha}{\delta}-1\right)(\Lambda-1)} \langle y \rangle^{\frac{(K+2)\Lambda - (\ell+1)\Lambda - (\ell+1)K + \frac{5}{2}(\ell+1) + (1-\eta)(K(\ell+1)-\frac{5}{2}K-5)}{K-5/2}} \right\|_{L^2} \\
					&= C(\sigma_0) \left\| \phi^{-1} \langle y \rangle^{\left(\frac{1-\alpha}{\delta}-1\right)(\Lambda-1)} \langle y \rangle^{\frac{\Lambda(K+1-\ell) - \frac{5}{2}(1-\eta)(K+2) - K\eta(\ell+1) + \frac{5}{2}(\ell+1)}{K-5/2}} \right\|_{L^2}.
				\end{aligned}
			\end{equation}
			For $\ell \ge 1$, since we have taken $\eta$ sufficiently small and $K$ sufficiently large with respect to $\eta$ yields
			\begin{equation}\label{Eq_index_inequality_case2}
				\Lambda(K+1-\ell) - \frac{5}{2}(1-\eta)(K+2) - K\eta(\ell+1) + \frac{5}{2}(\ell+1) \le \left(\Lambda - \frac{5}{2} + \eta\right)K.
			\end{equation}
			Using  $-1 + f_1 < 0$, and $\Lambda > 1$, we deduce
			\begin{equation}\label{Eq_L2_radial_integration_chain}
				\begin{aligned}
					&\left\| S^{\frac{\alpha-1}{\delta}} \partial_{\bar{\beta}^0} \Delta S \frac{\partial_{\bar{\beta}^1} S}{S} \frac{\partial_{\bar{\beta}^2} S}{S} \cdots \frac{\partial_{\bar{\beta}^\ell} S}{S} \phi^{\frac{K}{2}} \right\|_{L^2} \\
					&\le C(\sigma_0) \left\| \langle y \rangle^{-2(1-\eta) + \left(\frac{1-\alpha}{\delta}-1\right)(\Lambda-1) + \Lambda - \frac{5}{2} + \eta} \right\|_{L^2} \\
					&= C(\sigma_0) \left\| \langle y \rangle^{-\Lambda - \frac{3}{2} + 3\eta + f_1} \right\|_{L^2} \\
					&= C(\sigma_0) \left( \int_0^\infty \langle r \rangle^{-2\Lambda - 3 + 6\eta + 2f_1} r^2 \, \mathrm{d}r \right)^{\frac{1}{2}} \le C(\sigma_0).
				\end{aligned}
			\end{equation}
			\item \textbf{Case 2: For a certain $i \ge 1$, there holds $ |\bar{\beta}^i|\in\{K-1, K\}$. } \\
			Without loss of generality, we assume $i = 1$. If $|\bar{\beta}^1| = K$, we have $f_1<1-2\eta$, then
			\begin{equation}\label{Eq_case_beta1_K}
				\begin{aligned}
					\left\| \phi^{\frac{K}{2}} S^{\frac{\alpha-1}{\delta}} \partial_{\bar{\beta}^1} S \frac{\Delta S}{S} \right\|_{L^2} &\le C E^{\frac{1}{2}} \left\| S^{\frac{\alpha-1}{\delta}} \frac{\nabla^2 S}{S} \right\|_{L^\infty} \\
					&\le C(\sigma_0) \left\langle \frac{y}{R_0} \right\rangle^{-2+2\eta+{(\frac{1-\alpha}{\delta}+1)(\Lambda-1)}} \le C(\sigma_0).
				\end{aligned}
			\end{equation}
			If $|\bar{\beta}^1| = K - 1$, there are two possible scenarios, one of which is $|\bar{\beta}^0|=1$. For fixed $j \in \{1,2,3\}$, using \eqref{eq:K-1-perturbation} and selecting $\bar{\varepsilon}$ small enough, we have
			\begin{equation}\label{Eq_case_beta1_Kminus1_sub2}
				\begin{aligned}
					&\left\| \phi^{\frac{K}{2}} \partial_{\bar{\beta}^1} S \frac{\partial_{y_j} \Delta S}{S} S^{\frac{\alpha-1}{\delta}} \right\|_{L^2} \le C(\sigma_0, \bar{\varepsilon}) \left\| \langle y \rangle^{K(1-\eta)\frac{K-2}{K-1}-\bar{\varepsilon}} \partial_{\bar{\beta}^1} S \right\|_{L^{2+\frac{2}{K-2}}} \\
					&\qquad\qquad\qquad\qquad\qquad\quad \times \left\| \frac{\langle y \rangle^{K(1-\eta)\frac{1}{K-1}+\bar{\varepsilon}} |\nabla^3 S|}{S} S^{\frac{\alpha-1}{\delta}} \right\|_{L^{2(K-1)}} \\
					&\qquad\qquad\qquad\qquad\qquad \le C(\sigma_0, \bar{\varepsilon}) \left\| \langle y \rangle^{-3(1-\eta) + \frac{1-\alpha}{\delta}(\Lambda-1) + \frac{K}{K-1}(1-\eta) + \bar{\varepsilon}} \right\|_{L^{2(K-1)}} \\
					&\qquad\qquad\qquad\qquad\qquad \le C(\sigma_0, \bar{\varepsilon}) \left\| \langle y \rangle^{-\Lambda + 3\eta + \bar{\varepsilon} + \frac{K}{K-1}(1-\eta)} \right\|_{L^{2(K-1)}} \le C(\sigma_0, \bar{\varepsilon}).
				\end{aligned}
			\end{equation}
			In the other case, assume without loss of generality that $|\bar{\beta}^2|=1$. For fixed $j \in \{1,2,3\}$, selecting $\bar{\varepsilon}$ small enough, we have
			\begin{equation}\label{Eq_case_beta1_Kminus1_sub1}
				\begin{aligned}
					\left\| \phi^{\frac{K}{2}} \partial_{\bar{\beta}^1} S \frac{\partial_{y_j} S \Delta S}{S^2} S^{\frac{\alpha-1}{\delta}} \right\|_{L^2} &\le C(\sigma_0, \bar{\varepsilon}) \left\| \langle y \rangle^{K(1-\eta)\frac{K-2}{K-1}-\bar{\varepsilon}} \partial_{\bar{\beta}^1} S \right\|_{L^{2+\frac{2}{K-2}}} \\
					&\quad \times \left\| \frac{|\nabla S| \langle y \rangle}{S} \right\|_{L^\infty} \left\| \frac{\langle y \rangle^{K(1-\eta)\frac{1}{K-1}+\bar{\varepsilon}-1} |\nabla^2 S|}{S} S^{\frac{\alpha-1}{\delta}} \right\|_{L^{2(K-1)}} \\
					&\le C(\sigma_0, \bar{\varepsilon}) \left\| \langle y \rangle^{-2(1-\eta) + \frac{1-\alpha}{\delta}(\Lambda-1) - 1 + \frac{K}{K-1}(1-\eta) + \bar{\varepsilon}} \right\|_{L^{2(K-1)}} \\
					&\le C(\sigma_0, \bar{\varepsilon}) \left\| \langle y \rangle^{-\Lambda + 2\eta + \bar{\varepsilon} + \frac{K}{K-1}(1-\eta)} \right\|_{L^{2(K-1)}} \le C(\sigma_0, \bar{\varepsilon}).
				\end{aligned}
			\end{equation}
			\item \textbf{Case 3: $|\bar{\beta}^0| \in \{K-3,K-2\}$.}\\
			If $|\bar{\beta}^0|=K-2$, using \eqref{eq:lessK-2-perturbation}, we have
			\begin{equation}\label{Eq_case_beta0_low_order_sub1}
				\begin{aligned}
					&\left\| \phi^{\frac{K}{2}} \partial_{\bar{\beta}^0} \Delta S \nabla^2 \left( S^{\frac{\alpha-1}{\delta}} \right) \right\|_{L^2} \\
					&\le C\left\| \phi^{\frac{K}{2}} \nabla^K S \right\|_{L^2} \left( \left\| \frac{|\nabla^2 S|}{S} S^{\frac{\alpha-1}{\delta}} \right\|_{L^\infty} + \left\| \frac{|\nabla S|^2}{S^2} S^{\frac{\alpha-1}{\delta}} \right\|_{L^\infty} \right) \\
					&\le CE^{\frac{1}{2}} \left( \left\| \langle y \rangle^{-2(1-\eta) + \frac{1-\alpha}{\delta}(\Lambda-1)} \right\|_{L^\infty} + \left\| \langle y \rangle^{-2 + \frac{1-\alpha}{\delta}(\Lambda-1)} \right\|_{L^\infty} \right) \le C(\sigma_0).
				\end{aligned}
			\end{equation}
			If $|\bar{\beta}^0|=K-3$, using \eqref{eq:lessK-2-perturbation}, we have
			\begin{equation}\label{Eq_case_beta0_low_order_sub2}
				\begin{aligned}
					&\left\| \phi^{\frac{K}{2}} \partial_{\bar{\beta}^0} \Delta S \nabla^3 \left( S^{\frac{\alpha-1}{\delta}} \right) \right\|_{L^2} \\
					&\le C(\sigma_0, \bar{\varepsilon}) \left\| \langle y \rangle^{K(1-\eta)\frac{K-2}{K-1}-\bar{\varepsilon}} \nabla^{K-1} S \right\|_{L^{2+\frac{2}{K-2}}} \left\| \langle y \rangle^{(1-\eta)\frac{K}{K-1} + \bar{\varepsilon} + \frac{1-\alpha}{\delta}(\Lambda-1) - 3(1-\eta)} \right\|_{L^{2(K-1)}} \\
					&\le C(\sigma_0, \bar{\varepsilon}) \left\| \langle y \rangle^{-\Lambda + 3\eta + \bar{\varepsilon} + (1-\eta)\frac{K}{K-1}} \right\|_{L^{2(K-1)}} \le C(\sigma_0, \bar{\varepsilon}).
				\end{aligned}
			\end{equation}
		\end{itemize}
		Combining the estimates from all three cases and fixing $\bar{\varepsilon}$ to be sufficiently small (relative to $\eta$, $\Lambda$, and $K$), we eventually arrive at the uniform bound for $I_{s,1,2}$
		\begin{equation}\label{Eq_Iv12_final_bound_v1}
			I_{s,1,2} \le C(\sigma_0, \bar{\varepsilon}) \le C(\sigma_0).
		\end{equation}
		\textbf{For $I_{s,2}$, we have the following estimates}
		\begin{equation}\label{Eq_Iv2_holder_bounds_updated}
			\begin{aligned}
				|I_{s,2}| &\le \sum_{|\beta|=K} \left| \int_{\mathbb{R}^3} \left( K\phi^{K-1}\nabla\phi S^{\frac{\alpha-1}{\delta}} + \frac{\alpha-1}{\delta} S^{\frac{\alpha-1}{\delta}-1}\phi^K\nabla S \right) \cdot\nabla \partial_{\beta}S \cdot \partial_{\beta}S \mathrm{d}y \right| \\
				&\le \sum_{|\beta|=K}C I_{s,3}^{\frac{1}{2}} K \left( \int_{\mathbb{R}^3} \phi^{K-2} |\nabla\phi|^2 S^{\frac{\alpha-1}{\delta}} |\partial_\beta S|^2 \, \mathrm{d}y \right)^{\frac{1}{2}} \\
				&\quad + \sum_{|\beta|=K}C I_{s,3}^{\frac{1}{2}} \frac{1-\alpha}{\delta} \left( \int_{\mathbb{R}^3} \phi^K S^{\frac{\alpha-1}{\delta}} \frac{|\nabla S|^2}{S^2} |\partial_\beta S|^2 \, \mathrm{d}y \right)^{\frac{1}{2}} \\
				&\le C(K) I_{s,3}^{\frac{1}{2}} E^{\frac{1}{2}} \left( K \left\| \frac{|\nabla\phi|^2}{\phi^2} S^{\frac{\alpha-1}{\delta}} \right\|_{L^\infty}^{\frac{1}{2}} + \frac{1-\alpha}{\delta} \left\| S^{\frac{\alpha-1}{\delta}} \frac{|\nabla S|^2}{S^2} \right\|_{L^\infty}^{\frac{1}{2}} \right)\\
				&\le C(\sigma_0)I_{s,3}^{\frac{1}{2}}.
			\end{aligned}
		\end{equation}
		\textbf{As for $I_{s,4}$, employing an analogous estimation procedure to that of $I_{s,1}$ yields}
		\begin{equation}\label{Iv4csigma}
			|I_{s,4}| \le C(\sigma_0).
		\end{equation}
		\textbf{For $I_{s,5}$, we arrive at the following estimates}
		\begin{equation}\label{Eq_Iv555_Holder_estimate}
			\begin{aligned}
				I_{s,5} &\le C(K)\sum_{|\beta|=K} \int_{\mathbb{R}^3} \phi^K |\partial_\beta S| |\nabla^{K+1} S| \frac{|\nabla S|}{S} S^{\frac{\alpha-1}{\delta}} \, \mathrm{d}y \\
				&\le C(K)\sum_{|\beta|=K} I_{s,3}^{\frac{1}{2}} \left( \int_{\mathbb{R}^3} \phi^K |\partial_\beta S|^2 \frac{|\nabla S|^2}{S^2} S^{\frac{\alpha-1}{\delta}} \, \mathrm{d}y \right)^{\frac{1}{2}} \\
				&\le C(K) I_{s,3}^{\frac{1}{2}} E^{\frac{1}{2}} \left\| \frac{|\nabla S|^2}{S^2} S^{\frac{\alpha-1}{\delta}} \right\|_{L^\infty}^{\frac{1}{2}} \le C(\sigma_0) I_{s,3}^{\frac{1}{2}}.
			\end{aligned}
		\end{equation}
		\textbf{For $I_{s,6}$, we can control it to obtain}
		\begin{equation}\label{Eq_Iv6_commutator_bounds_main}
			\begin{aligned}
				I_{s,6} &= \sum_{|\beta|=K}\frac{\alpha-\delta}{\delta} \int_{\mathbb{R}^3} \phi^K S^{\frac{\alpha-1}{\delta}-1} \left( \partial_\beta (|\nabla S|^2)  - \partial_\beta (\nabla S) \cdot \nabla S \right) \cdot \partial_\beta S \, \mathrm{d}y \\
				&\le C(K) E^{\frac{1}{2}} I_{s,3}^{\frac{1}{2}} \left\| \frac{|\nabla S|^2}{S^2} S^{\frac{\alpha-1}{\delta}} \right\|_{L^\infty}^{\frac{1}{2}} \\
				&\quad + C(K) E^{\frac{1}{2}} \max_{\substack{|\bar{\beta}^0| + |\bar{\beta}^1| = K \\ 1 \le |\bar{\beta}^0| \le K-1}} \left\| \phi^{\frac{K}{2}} S^{\frac{\alpha-1}{\delta}} \partial_{\bar{\beta}^0} \nabla S \frac{\partial_{\bar{\beta}^1} \nabla S}{S} \right\|_{L^2} \\
				&\le C(\sigma_0) I_{s,3}^{\frac{1}{2}} + I_{s,6,2} \le C(\sigma_0) \left( 1 + I_{s,3}^{\frac{1}{2}} \right).
			\end{aligned}
		\end{equation}
		The estimate for $I_{s,6,2}$ is completely analogous to that for $I_{s,1,2}$.
		
		We now turn to the estimate of $K_{4,v}$. \textbf{For $I_{v,3}$, since $0 < \alpha < \frac{1}{2}$, we find that $I_{v,3}$ is positive definite.} Specifically, decomposing the gradient into its symmetric and skew-symmetric parts gives the uniform positivity estimate
		\begin{equation}\label{Lvellpiticoo}
			\begin{aligned}
				&\nu |\nabla \partial_\beta V|^2 + \left( \sqrt{\nu^2 - \varepsilon^2} + (\alpha - 1) \left( \nu + \sqrt{\nu^2 - \varepsilon^2} \right) \right)\partial_\beta\partial_i V_j\partial_\beta\partial_j V_i \\
				&\quad\ge \min\left\{\alpha\left(\nu+\sqrt{\nu^2-\varepsilon^2}\right),\,
				2\nu-\alpha\left(\nu+\sqrt{\nu^2-\varepsilon^2}\right)\right\}|\nabla \partial_\beta V|^2 \\
				&\quad=\alpha\left(\nu+\sqrt{\nu^2-\varepsilon^2}\right)|\nabla \partial_\beta V|^2.
			\end{aligned}
		\end{equation}
		Here we used $0<\alpha<\frac12$ and $\sqrt{\nu^2-\varepsilon^2}\le\nu$. Therefore, $I_{v,3}$ is strictly positive definite. \textbf{We have already established the positive definiteness of
			$I_{v,3}$. Then the estimates for the remaining terms in $K_{4,v}$ are
			entirely analogous to those for the corresponding terms in $K_{4,s}$;
			we therefore state only the resulting bounds below.}
		\begin{align}
			&|I_{v,1}|\le C(\sigma_0)(1+I_{v,3}^{\frac{1}{2}}),\label{Is11s3sigma0}\\
			&|I_{v,2}| \le C(\sigma_0)I_{v,3}^{\frac{1}{2}},\\
			&|I_{v,4}|\le C(\sigma_0),\\
			&|I_{v,5}|\le C(\sigma_0)I_{v,3}^{\frac{1}{2}},\\
			&|I_{v,6}|\le C(\sigma_0)(1+I_{v,3}^{\frac{1}{2}}).\label{Is61s3sigma0}
		\end{align}
		Substituting \eqref{Eq_Iv11_Holder_estimate}, \eqref{Eq_Iv12_final_bound_v1}, \eqref{Iv4csigma}, \eqref{Eq_Iv555_Holder_estimate}, \eqref{Eq_Iv6_commutator_bounds_main}, and \eqref{Is11s3sigma0}-\eqref{Is61s3sigma0} into \eqref{K4_initial_boundestimate} and using Young's inequality, we arrive at
		\begin{equation}\label{K4_final_Is3,Iv,3}
			e^{f_1 \tau}K_4+\frac{1}{2}\delta^{\frac{\alpha-1}{\delta}}d\alpha I_{s,3}+\frac{1}{2}\delta^{\frac{\alpha-1}{\delta}}I_{v,3} \le C(\sigma_0).    
		\end{equation}
		\textbf{5. Final energy estimates}\\
		Now substituting \eqref{Eq_K1_integration_by_parts}, \eqref{Eq_image_0bc0e4}, \eqref{Eq_K3_nonlinear_coupling_bound_chain}, \eqref{K4_final_Is3,Iv,3} into \eqref{Eq_rewrite_full_energy_integral_system} and using Young's inequality, the identity \eqref{eq:radial-angular-derivative-definition}, we obtain
		\begin{equation}\label{Eq_rewrite_full_energy_integral_system_finalss}
			\begin{aligned}
				&\left(\frac{1}{2} \frac{\mathrm{d}}{\mathrm{d}\tau} + \Lambda - 1 + K\right) E_K+\frac{1}{2}e^{-f_1 \tau}\delta^{\frac{\alpha-1}{\delta}}d\alpha I_{s,3}+\frac{1}{2}e^{-f_1 \tau}\delta^{\frac{\alpha-1}{\delta}}I_{v,3} \\
				&\le CE + \frac{K}{2} \sum_{|\beta|=K} \int_{\mathbb{R}^3} \frac{y \cdot \nabla \phi}{\phi} \left( |\partial_\beta S|^2 + |\partial_\beta V|^2 \right) \phi^K \mathrm{d}y\\
				&\quad +K \int_{\mathbb{R}^3} \left( \frac{\nabla \phi \cdot \widehat{\chi}\bar{V}}{2\phi} - \partial_r (\widehat{\chi}\bar{\mathcal{V}}) \right) \left( |\partial_r \nabla^{K-1} V|^2 + |\partial_r \nabla^{K-1} S|^2 \right) \phi^K \, \mathrm{d}y \\
				&\quad + K \int_{\mathbb{R}^3} \left( \frac{\nabla \phi \cdot \widehat{\chi}\bar{V}}{2\phi |y|^2} - \frac{ (\widehat{\chi}\bar{\mathcal{V}})}{|y|^3} \right) \left( |\nabla_\theta \nabla^{K-1} V|^2 + |\nabla_\theta \nabla^{K-1} S|^2 \right) \phi^K \, \mathrm{d}y\\
				&\quad+K \delta \sum_{|\beta|=K} \int_{\mathbb{R}^3} \phi^K \left( \frac{|\nabla \phi| \widehat{\chi}\bar{S}}{2\phi} + |\partial_r (\widehat{\chi}\bar{S})| \right) \left( |\partial_{\beta} S|^2 + |\partial_{\beta} V|^2 \right) \mathrm{d}y+ e^{-f_1\tau}C(\sigma_0)\\
				&\le CE +e^{-f_1(\tau-\tau_0)}+K \int_{\mathbb{R}^3} \left( \delta|\partial_r(  \widehat{\chi}\bar{S})|-\partial_r (\widehat{\chi}\bar{\mathcal{V}}) \right) \left( |\partial_r \nabla^{K-1} V|^2 + |\partial_r \nabla^{K-1} S|^2 \right) \phi^K \, \mathrm{d}y \\
				&\quad + K \int_{\mathbb{R}^3} \left( \delta\frac{|\partial_r(\widehat{\chi}\bar{S})|}{|y|^2} - \frac{(\widehat{\chi}\bar{\mathcal{V}})}{|y|^3} \right) \left( |\nabla_\theta \nabla^{K-1} V|^2 + |\nabla_\theta \nabla^{K-1} S|^2 \right) \phi^K \, \mathrm{d}y\\
				&\quad+K \sum_{|\beta|=K} \int_{\mathbb{R}^3} \phi^K \left( \frac{|\nabla \phi| \widehat{\chi}\bar{S}+\nabla \phi \cdot (y+\widehat{\chi}\bar{V})}{2\phi}\right) \left( |\partial_{\beta} S|^2 + |\partial_{\beta} V|^2 \right) \mathrm{d}y.
			\end{aligned}
		\end{equation}
		Since $\phi=1$ in $B(0,R_0)$, the estimate
		$|\partial_r\widehat{\chi}|\leq C e^{-\tau_0}$, together with the strict
		core-profile inequalities
		\eqref{eq:profile-radial-repulsivity}--\eqref{eq:profile-angular-repulsivity}
		and the parameter hierarchy \eqref{Eq_parameter_hierarchy}, allows us to
		choose a constant $\bar{\eta}>0$ such that the following estimates hold
		uniformly throughout $B(0,R_0)$:
		$$
		-\partial_r (\widehat{\chi}\bar{\mathcal{V}})+\delta|\partial_r(  \widehat{\chi}\bar{S})|+\frac{|\nabla \phi| \widehat{\chi}\bar{S}+\nabla \phi \cdot (y+\widehat{\chi}\bar{V})}{2\phi} \le 1-\frac{\bar{\eta}}{2},
		$$
		$$
		\delta|\partial_r(\widehat{\chi}\bar{S})| - \frac{(\widehat{\chi}\bar{\mathcal{V}})}{r}+\frac{|\nabla \phi| \widehat{\chi}\bar{S}+\nabla \phi \cdot (y+\widehat{\chi}\bar{V})}{2\phi} \le 1-\frac{\bar{\eta}}{2}.
		$$
		In $B^c(0,R_0)$, using \eqref{eq:weight-radial-ratio-bound} in
		Remark~\ref{remark1-etar}, \eqref{xdecayestim}, \eqref{Eq_parameter_hierarchy}, \eqref{eq:R0-choice}, we have
		$$
		-\partial_r (\widehat{\chi}\bar{\mathcal{V}})+\delta|\partial_r(  \widehat{\chi}\bar{S})|+\frac{|\nabla \phi| \widehat{\chi}\bar{S}+\nabla \phi \cdot (y+\widehat{\chi}\bar{V})}{2\phi} \le  \frac{\eta}{4} + \frac{r\partial_r \phi}{2\phi}  \le \frac{\eta}{4} + 1-\eta\le 1-\frac{{\eta}}{2},
		$$
		$$
		\delta|\partial_r(\widehat{\chi}\bar{S})| - \frac{(\widehat{\chi}\bar{\mathcal{V}})}{r}+\frac{|\nabla \phi| \widehat{\chi}\bar{S}+\nabla \phi \cdot (y+\widehat{\chi}\bar{V})}{2\phi} \le  \frac{\eta}{4} + \frac{r\partial_r \phi}{2\phi}\le \frac{\eta}{4} + 1-\eta\le 1-\frac{{\eta}}{2}.
		$$
		Therefore, we arrive at 
		\begin{equation}\label{Eq_energy_inequality_skeleton_final}
			\begin{aligned}
				&\frac{\mathrm{d}}{\mathrm{d}\tau} E_K\le -K \min\{\bar{\eta}, \eta\} E_K + C E.
			\end{aligned}
		\end{equation}
		Using Gr\"onwall's inequality and \eqref{Eq_parameter_hierarchy}, \eqref{eq:K-repulsivity-relation}, we obtain
		\begin{equation}
			\begin{aligned}
				E_K(\tau)&\le e^{-K \min\{\bar{\eta},\eta\}(\tau-\tau_0)} E_K(\tau_0) + \frac{CE}{K \min\{\bar{\eta},\eta\}} \left( 1 - e^{-K \min\{\bar{\eta},\eta\}(\tau-\tau_0)} \right) \\
				&\le e^{-K \min\{\bar{\eta},\eta\}(\tau-\tau_0)} E_K(\tau_0) + \frac{E}{2} \left( 1 - e^{-K \min\{\bar{\eta},\eta\}(\tau-\tau_0)} \right).
			\end{aligned}
		\end{equation}
		If $E_K(\tau_0)\le \frac{E}{2}$, then $E_K(\tau)\le \frac{E}{2}$.
	\end{proof}
	\subsection{Control of the unstable component under the bootstrap assumptions}
	\label{subsec:bootstrap-unstable-control}
	
	We apply the stable--unstable decomposition of the truncated linearized
	operator to the bootstrap argument. Let
	\[
	X=X_{\rm s}\oplus X_{\rm u},
	\qquad
	\dim X_{\rm u}=N,
	\]
	and let \(P_{\rm uns}\) denote the spectral projection onto \(X_{\rm u}\).
	Choose a smooth normalized basis
	\[
	\Phi_j=\{(\Phi_{j,S},\Phi_{j,V})\}_{j=1}^{N}
	\]
	of \(X_{\rm u}\). For the truncated perturbation
	\((\widetilde{\widetilde S},\widetilde{\widetilde V})\), define
	\[
	k(\tau)
	:=
	P_{\rm uns}
	(\widetilde{\widetilde S},\widetilde{\widetilde V})(\tau)
	=
	\sum_{j=1}^{N}
	k_j(\tau)(\Phi_{j,S},\Phi_{j,V}).
	\]
	By the initial condition \eqref{initialdatatruncated}, we have $k_j(\tau_0)=\widehat{k_j}$.
	Let
	\(\langle\cdot,\cdot\rangle_{\Upsilon}\) be the inner product on
	\(X_{\rm u}\) furnished by the spectral decomposition (see Lemma \ref{lem:new-stable-unstable-summary}), so that
	\begin{equation}\label{eq:unstable-spectral-lower-bound}
		\Re\langle\mathcal L h,h\rangle_{\Upsilon}
		\ge
		-\frac{3\mathfrak c_g}{5}\|h\|_{\Upsilon}^2,
		\qquad h\in X_{\rm u}.
	\end{equation}
	Since \(X_{\rm u}\) is finite-dimensional, there exists \(C_m\ge1\) such
	that
	\begin{equation}\label{eq:unstable-equivalent-norms}
		C_m^{-1}\|h\|_X
		\le
		\|h\|_{\Upsilon}
		\le
		C_m\|h\|_X,
		\qquad h\in X_{\rm u}.
	\end{equation}
	Since each $\Phi_j$ has compact support and is normalized by
	$\|\Phi_j\|_X=1$, taking $\sigma_1$ sufficiently small, with its
	smallness depending on $m$ and $E$, ensures that
	\eqref{initialdatacondition1}--\eqref{initialdatacondition2} hold for
	every admissible choice of the coefficients $\hat{k}_i$, provided that
	the reference perturbation $(\widetilde S_0^*,\widetilde V_0^*)$
	satisfies
	\begin{equation}\label{eq:conditions_for_tilde}
		\begin{aligned}
			&\max\left\{
			\|\widetilde S_0^*\|_{L^\infty},
			\|\widetilde V_0^*\|_{L^\infty}
			\right\}
			\leq \frac{3\sigma_1}{4}, \quad\left\|
			\phi^{\frac K2}\nabla^K\widetilde S_0^*
			\right\|_{L^2}
			+
			\left\|
			\phi^{\frac K2}\nabla^K\widetilde V_0^*
			\right\|_{L^2}
			\leq \frac{E}{4},
			\\
			&\widetilde S_0^*+\widehat{\chi}_0\bar S
			\geq \frac{\sigma_1}{2}\left\langle\frac{y}{R_0}\right\rangle^{1-\Lambda}, \quad \left|
			\nabla\bigl(\widetilde S_0^*
			+\widehat{\chi}_0\bar S\bigr)
			\right|
			+
			\left|
			\nabla\bigl(\widetilde V_0^*
			+\widehat{\chi}_0\bar V\bigr)
			\right|
			\leq C\langle y\rangle^{-\Lambda}.
		\end{aligned}
	\end{equation}
	
	Finally, since each $\Phi_j$ is compactly supported and satisfies
	$\|\Phi_j\|_X=1$, the full initial perturbation
	\[
	(\widetilde S_0,\widetilde V_0)
	=
	(\widetilde S_0^*,\widetilde V_0^*)
	+
	\sum_{j=1}^N\hat{k}_j\Phi_j
	\]
	continues to satisfy
	\eqref{initialdatacondition1}--\eqref{initialdatacondition2}, provided
	that $\sigma_1$ and the coefficients $\hat{k}_j$ are sufficiently
	small, with their smallness depending only on $m$ and $E$.
	
	The next proposition connects the spectral splitting with the improvement of
	the unstable-mode bootstrap estimate \eqref{Puns}. 
	Throughout this subsection, we identify the spectral-gap notations by setting
	\[
	\mathfrak c_g:=\sigma_g=\frac{25}{12}\varpi.
	\]
	
	\begin{proposition}[Selection and outgoing property of the unstable modes]
		\label{prop:bootstrap-unstable-outgoing}
		Assume that the bootstrap bounds
		\eqref{Puns}--\eqref{E1} hold on \([\tau_0,\tau_1]\). The remaining bootstrap estimates
		\eqref{S,U,1}--\eqref{E1} imply (using \eqref{eq:full-forcing-X-bound} and \eqref{eq:unstable-equivalent-norms})
		\begin{equation}\label{eq:bootstrap-forcing-uns}
			\left\|
			P_{\rm uns}\bigl(\widehat\chi_2\mathcal F(\tau)\bigr)
			\right\|_{\Upsilon}
			\le
			C(m)\sigma_1^{6/5}
			e^{-\frac32\varpi(\tau-\tau_0)}.
		\end{equation}
		Then the initial unstable coefficients can be chosen so that
		\begin{equation}\label{eq:improved-unstable-conclusion}
			\left\|
			P_{\rm uns}
			(\widetilde{\widetilde S},\widetilde{\widetilde V})(\tau)
			\right\|_X
			\le
			\sigma_1^{21/20}
			e^{-\frac43\varpi(\tau-\tau_0)},
			\qquad
			\tau\in[\tau_0,\tau_1].
		\end{equation}
		In particular, this proves the improved bootstrap estimate \eqref{Puns2}.
	\end{proposition}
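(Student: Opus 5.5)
The plan is a topological shooting argument on the finite-dimensional unstable space $X_{\rm u}$, in the spirit of Wa\.zewski's principle and Brouwer's theorem. The first step is to write the evolution of $k(\tau)=P_{\rm uns}(\widetilde{\widetilde S},\widetilde{\widetilde V})(\tau)$. Since $P_{\rm uns}$ is a spectral projection, it commutes with $\mathcal L$, so projecting \eqref{Eq_perturbed_matrix_system} gives the finite-dimensional non-autonomous ODE
\[
\partial_\tau k=\mathcal L k+P_{\rm uns}\bigl(\widehat\chi_2\mathcal F(\tau)\bigr),\qquad k(\tau_0)=\sum_{j=1}^N\widehat k_j\Phi_j .
\]
Next I introduce the renormalized unknown $q(\tau):=e^{\frac43\varpi(\tau-\tau_0)}k(\tau)$ and the target radius $r_*:=C_m^{-1}\sigma_1^{21/20}$. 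By \eqref{eq:unstable-equivalent-norms}, the bound $\|q(\tau)\|_\Upsilon\le r_*$ implies \eqref{eq:improved-unstable-conclusion}. So it suffices to find $\widehat k$ with $\|q(\tau)\|_\Upsilon\le r_*$ on $[\tau_0,\tau_1]$.

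The key step is an outgoing property on the sphere $\{\|q\|_\Upsilon=r_*\}$. Using \eqref{eq:unstable-spectral-lower-bound} with $\mathfrak c_g=\frac{25}{12}\varpi$, so that $\frac{3\mathfrak c_g}{5}=\frac54\varpi$, and using \eqref{eq:bootstrap-forcing-uns}, I get
\[
\tfrac12\tfrac{d}{d\tau}\|q\|_\Upsilon^2\ \ge\ \Bigl(\tfrac43-\tfrac54\Bigr)\varpi\|q\|_\Upsilon^2-C(m)\sigma_1^{6/5}e^{-\frac16\varpi(\tau-\tau_0)}\|q\|_\Upsilon .
\]
On the sphere this is at least $r_*\bigl(\frac{\varpi}{12}C_m^{-1}\sigma_1^{21/20}-C(m)\sigma_1^{6/5}\bigr)>0$. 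This uses $\sigma_1^{3/20}\ll\varpi/(C_mC(m))$, which follows from the hierarchy \eqref{Eq_parameter_hierarchy} because $\sigma_1$ is chosen after $m$ and $\varpi$. Hence any trajectory that reaches the sphere leaves the closed ball strictly and transversally.

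Then I run the contradiction argument. Fix $(\widetilde S_0^*,\widetilde V_0^*)$ satisfying \eqref{eq:conditions_for_tilde} and let $\widehat k$ range over the closed ball $\mathcal B=\{\|\sum_j\widehat k_j\Phi_j\|_\Upsilon\le r_*\}$. These coefficients are small enough that \eqref{initialdatacondition1}--\eqref{initialdatacondition2} hold, as remarked above. Also $\|k(\tau_0)\|_X\le C_m r_*=\sigma_1^{21/20}\le\sigma_1$, so every bootstrap hypothesis holds at $\tau_0$, \eqref{Puns} included. For each $\widehat k$, let $\tau^*(\widehat k)$ be the maximal time up to which all of \eqref{Puns}--\eqref{E1} and $\|q\|_\Upsilon\le r_*$ hold. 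Suppose, for contradiction, that $\tau^*(\widehat k)<\tau_1$ for every $\widehat k\in\mathcal B$; this includes the maximal bootstrap time itself, since the local theory of Theorem~\ref{thm:local-self-similar} lets us continue beyond any time where the bounds hold. By the sharpened estimates already proved in this section (\eqref{S,U,2}--\eqref{E2}) and \eqref{Puns2} as implied by $\|q\|_\Upsilon\le r_*$, all hypotheses other than $\|q\|_\Upsilon\le r_*$ are strictly improved up to $\tau^*$. So exit can only happen through the sphere, i.e. $\|q(\tau^*)\|_\Upsilon=r_*$. Transversality makes $\widehat k\mapsto\tau^*(\widehat k)$ continuous, using continuous dependence of the solution on initial data from the local theory. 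Moreover $\tau^*=\tau_0$ exactly on $\partial\mathcal B$. Then $\widehat k\mapsto q(\tau^*(\widehat k))$ is a continuous retraction of $\mathcal B$ onto $\partial\mathcal B$, equal to the identity on $\partial\mathcal B$. This contradicts Brouwer's theorem, so some $\widehat k$ gives $\|q\|_\Upsilon\le r_*$ on all of $[\tau_0,\tau_1]$, which is \eqref{Puns2}.

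I expect the main obstacle to be the continuity of the exit time. It requires continuous dependence of the full PDE solution on the finitely many parameters $\widehat k$ in the topology of $X$, together with the strict transversality just shown. A second delicate point is the justification that \eqref{eq:bootstrap-forcing-uns} holds uniformly along every trajectory up to its exit time, i.e. that \eqref{eq:full-forcing-X-bound} depends only on the other bootstrap bounds and not on $\widehat k$. I would first secure the outgoing inequality with explicit constants, and then quote continuous dependence from the local theory in Theorem~\ref{thm:local-self-similar} and Proposition~\ref{thm:main}.
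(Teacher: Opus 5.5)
Your proposal is correct and follows the paper's proof. The paper likewise projects onto $X_{\rm u}$, and uses the spectral lower bound with $\mathfrak c_g=\frac{25}{12}\varpi$ to show that $e^{\frac83\varpi(\tau-\tau_0)}\|k\|_\Upsilon^2$ strictly increases whenever $k$ touches the shrinking sphere of radius $\sigma_1^{11/10}e^{-\frac43\varpi(\tau-\tau_0)}$, with the same margin $\frac43-\frac54=\frac1{12}$ you found; it then rules out exit for every choice of $\widehat k$ by Brouwer's no-retraction theorem. The differences are cosmetic. You normalize the radius as $C_m^{-1}\sigma_1^{21/20}$ instead of $\sigma_1^{11/10}$ with $C_m\sigma_1^{1/20}\le 1$. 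Your exit time, defined as the first failure of any bootstrap bound and forced through the sphere by the strict improvements of the other bounds, is a more careful version of what the paper leaves implicit.
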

	
	\begin{proof}
		Introduce the time-dependent neighborhoods
		\[
		\begin{aligned}
			\mathcal U_X(\tau)
			&:=
			\left\{
			h\in X_{\rm u}:
			\|h\|_X
			\le
			\sigma_1^{21/20}
			e^{-\frac43\varpi(\tau-\tau_0)}
			\right\},\\
			\mathcal U_{\Upsilon}(\tau)
			&:=
			\left\{
			h\in X_{\rm u}:
			\|h\|_{\Upsilon}
			\le
			\sigma_1^{11/10}
			e^{-\frac43\varpi(\tau-\tau_0)}
			\right\}.
		\end{aligned}
		\]
		Since \(11/10>21/20\), the norm equivalence
		\eqref{eq:unstable-equivalent-norms} implies, after taking \(\sigma_1\)
		sufficiently small, that
		\begin{equation}\label{eq:unstable-ball-inclusion}
			\mathcal U_{\Upsilon}(\tau)
			\Subset
			\mathcal U_X(\tau),
			\qquad \tau\ge\tau_0.
		\end{equation}
		
		The invariance of \(X_{\rm u}\) under \(\mathcal L\) gives the
		finite-dimensional evolution equation
		\begin{equation}\label{eq:k-finite-dimensional}
			\partial_\tau k
			=
			\mathcal L k
			+
			P_{\rm uns}(\widehat\chi_2\mathcal F).
		\end{equation}
		Taking the \(\Upsilon\)-inner product of \eqref{eq:k-finite-dimensional} with
		\(k\), and using \eqref{eq:unstable-spectral-lower-bound} and
		\eqref{eq:bootstrap-forcing-uns}, we find
		\begin{equation}\label{eq:k-differential-lower}
			\begin{aligned}
				\frac12\frac{d}{d\tau}\|k(\tau)\|_{\Upsilon}^2
				&=
				\Re\langle\mathcal Lk,k\rangle_{\Upsilon}
				+
				\Re\left\langle
				P_{\rm uns}(\widehat\chi_2\mathcal F),k
				\right\rangle_{\Upsilon}\\
				&\ge
				-\frac{3\mathfrak c_g}{5}\|k(\tau)\|_{\Upsilon}^2
				-
				C\sigma_1^{6/5}
				e^{-\frac32\varpi(\tau-\tau_0)}
				\|k(\tau)\|_{\Upsilon}.
			\end{aligned}
		\end{equation}
		
		Suppose that \(k(\tau)\) reaches
		\(\partial\mathcal U_{\Upsilon}(\tau)\) for the first time at
		\(\tau=\tau_e\). Then
		\[
		\|k(\tau_e)\|_{\Upsilon}
		=
		\sigma_1^{11/10}
		e^{-\frac43\varpi(\tau_e-\tau_0)}.
		\]
		Consequently, the forcing term in \eqref{eq:k-differential-lower} satisfies
		\[
		\begin{aligned}
			&C\sigma_1^{6/5}
			e^{-\frac32\varpi(\tau_e-\tau_0)}
			\|k(\tau_e)\|_{\Upsilon}\\
			&\qquad=
			C\sigma_1^{1/10}
			e^{-\frac16\varpi(\tau_e-\tau_0)}
			\|k(\tau_e)\|_{\Upsilon}^2
			\le
			\frac{\mathfrak c_g}{50}
			\|k(\tau_e)\|_{\Upsilon}^2,
		\end{aligned}
		\]
		provided that \(\sigma_1\) is sufficiently small. Thus
		\begin{equation}\label{eq:k-boundary-derivative}
			\frac12\frac{d}{d\tau}\|k(\tau)\|_{\Upsilon}^2
			\bigg|_{\tau=\tau_e}
			\ge
			-\frac{31\mathfrak c_g}{50}
			\|k(\tau_e)\|_{\Upsilon}^2.
		\end{equation}
		It follows that
		\[
		\begin{aligned}
			&\frac{d}{d\tau}
			\left[
			e^{\frac83\varpi(\tau-\tau_0)}
			\|k(\tau)\|_{\Upsilon}^2
			\right]_{\tau=\tau_e}\\
			&\quad\ge
			\left(
			\frac83\varpi-\frac{31}{25}\mathfrak c_g
			\right)
			e^{\frac83\varpi(\tau_e-\tau_0)}
			\|k(\tau_e)\|_{\Upsilon}^2
			>0,
		\end{aligned}
		\]
		where the last inequality follows from the choice of \(\varpi\).
		Therefore, whenever \(k(\tau)\) touches the moving boundary
		\(\partial\mathcal U_{\Upsilon}(\tau)\), it crosses that boundary strictly
		outward.
		
		We finally select the initial unstable coefficients. Suppose that no choice
		\[
		k(\tau_0)\in\mathcal U_{\Upsilon}(\tau_0)
		\]
		produces a trajectory remaining in
		\(\mathcal U_{\Upsilon}(\tau)\) throughout \([\tau_0,\tau_1]\). For each
		initial coefficient vector, let \(\tau_e\) be its first exit time. The strict
		outgoing property proved above shows that the normalized exit map
		\[
		k(\tau_0)
		\longmapsto
		\sigma_1^{-11/10}
		e^{\frac43\varpi(\tau_e-\tau_0)}
		k(\tau_e)
		\]
		is continuous and restricts to the identity (Here, the identity map is understood after identifying
		$\mathcal U_{\Upsilon}(\tau_0)$ with the unit ball via the normalization
		$k(\tau_0)\mapsto \sigma_1^{-11/10}k(\tau_0)$) on
		\(\partial\mathcal U_{\Upsilon}(\tau_0)\). It would therefore define a
		continuous retraction of the closed \(N\)-dimensional ball onto its boundary (see \cite[Proposition 8.15]{Buckmaster-Cao-Labora-Gomez-Serrano} for more details),
		which is impossible. Hence there exists a choice of the initial unstable
		coefficients for which
		\[
		k(\tau)\in\mathcal U_{\Upsilon}(\tau),
		\qquad
		\tau\in[\tau_0,\tau_1].
		\]
		By \eqref{eq:unstable-ball-inclusion},
		\[
		k(\tau)\in\mathcal U_X(\tau),
		\qquad
		\tau\in[\tau_0,\tau_1],
		\]
		which is precisely
		\[
		\left\|
		P_{\rm uns}
		(\widetilde{\widetilde S},\widetilde{\widetilde V})(\tau)
		\right\|_X
		\le
		\sigma_1^{21/20}
		e^{-\frac43\varpi(\tau-\tau_0)}.
		\]
		This completes the improvement of \eqref{Puns}.
	\end{proof}
	Furthermore, the condition
	\[
	(\widehat\chi_2\widetilde S_0^*,\widehat\chi_2\widetilde V_0^*)
	\in X_{\mathrm{s}}
	\]
	can equivalently be expressed as
	\begin{equation}\label{eq:stable-initial-data-condition}
		P_{\mathrm{uns}}
		\bigl(\widehat\chi_2\widetilde S_0^*,\widehat\chi_2\widetilde V_0^*\bigr)=0.
	\end{equation}
	This imposes only finitely many closed constraints on the initial data,
	since the range of $P_{\mathrm{uns}}$ is the finite-dimensional unstable
	subspace $X_{\mathrm{u}}$. Consequently, the collection of initial data
	leading to finite-time implosion contains a finite-codimensional manifold.
	\subsection{Global Existence and Nonlinear Stability}
	We now provide the regularity estimates for the equation.
	\begin{proposition}[Regularity consequence of the bootstrap bounds]
		\label{prop:global-regularity-SS}
		Assume that the solution \((S,V)\) of \eqref{Eq_self_similar_S} exists on
		\([\tau_0,\tau_1]\) and satisfies the bootstrap estimates. Then, for every
		\(\tau\in[\tau_0,\tau_1]\),
		\begin{equation}\label{eq:global-HK-SV-1}
			\|S(\tau)-S^*(\tau)\|_{H^K}^2
			+
			\|V(\tau)\|_{H^K}^2
			+
			\int_{\tau_0}^{\tau}
			\left(
			\|S(\tilde\tau)-S^*(\tilde\tau)\|_{H^{K+1}}^2
			+\|V(\tilde\tau)\|_{H^{K+1}}^2
			\right)\,d\tilde\tau
			\le C(\tau),
		\end{equation}
		and
		\begin{equation}\label{eq:global-HK-SV-2}
			\|\partial_\tau(S-S^*(\tau))\|_{H^{K-2}}
			+
			\|V_\tau(\tau)\|_{H^{K-2}}
			+
			\int_{\tau_0}^{\tau}
			\left(
			\|\partial_{\tilde\tau}(S-S^*(\tilde\tau))\|_{H^{K-1}}^2
			+\|V_\tau(\tilde\tau)\|_{H^{K-1}}^2
			\right)\,d\tilde\tau
			\le C(\tau).
		\end{equation}
	\end{proposition}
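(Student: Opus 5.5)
The plan is to derive \eqref{eq:global-HK-SV-1}--\eqref{eq:global-HK-SV-2} from an unweighted $H^K$ energy estimate for $(S-S^*(\tau),V)$ on $[\tau_0,\tau_1]$. The bootstrap bounds supply the coefficient control: $S$ has a positive lower bound on each fixed time slab and uniform upper bounds, and $S,V$ are bounded in $W^{K-2,\infty}$. We allow the constant $C(\tau)$ to grow in $\tau$. Since all quantities are finite on compact time intervals, exponential-in-$\tau$ growth is acceptable.

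\textbf{Step 1: pointwise control of the coefficients.} Set $\widehat S:=S-S^*(\tau)$ and note $\partial_\tau S^*=-(\Lambda-1)S^*$, so $S^*$ solves the spatially constant part of the equation. From \eqref{S,lower,2} we get $S\ge \frac{\sigma_1}{5}\langle y/R_0\rangle^{1-\Lambda}$. This degenerates at infinity, so I would combine it with the far-field behavior $S\to S^*(\tau)>0$. More robustly, I would use \eqref{t0bigR0} (Case II) together with the finiteness of $\tau$ to get $\inf_y S(\tau,\cdot)\ge c(\tau)>0$. Then \eqref{S,U,1}, \eqref{eq:profile-decay} and Lemma~\ref{lem:higher-order-perturbation} give $\|\nabla^j S\|_{L^\infty}+\|\nabla^jV\|_{L^\infty}\le C(\sigma_0)$ for $1\le j\le K-2$. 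Consequently the diffusion coefficients $(\delta S)^{\frac{\alpha-1}{\delta}}$ and $(\delta S)^{\frac{\alpha-\delta-1}{\delta}}$ are bounded above and below by $\tau$-dependent constants, and their derivatives up to order $K-2$ are bounded.

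\textbf{Step 2: the $H^K$ energy estimate.} For $0\le|\beta|\le K$, I apply $\partial_\beta$ to \eqref{Eq_self_similar_S} written for $(\widehat S,V)$ and test against $\partial_\beta\widehat S$ and $\partial_\beta V$ without weight. I then treat each group of terms as follows.
\begin{itemize}
\item The dilation term $y\cdot\nabla$ gives $\frac32\|\partial_\beta\cdot\|^2$ after integration by parts. This is harmless, as in $\mathfrak I_1$ of Theorem~\ref{thm:local-self-similar}.
\item The transport and pressure terms follow the same pattern as $\mathfrak I_2,\mathfrak I_3$ there. I use the symmetrization $\delta S\,\partial_\beta\widehat S\,\partial_\beta\operatorname{div}V+\delta S\,\partial_\beta V\cdot\nabla\partial_\beta\widehat S=-\delta\nabla S\cdot\partial_\beta V\,\partial_\beta\widehat S$, so the pressure coupling loses no derivative.
\item The lower-order commutators are now bounded by standard Moser/Kato--Ponce estimates with $L^\infty$ factors from Step~1. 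This replaces the weighted Gagliardo--Nirenberg estimates used earlier, and the bound is linear, of the form $C(\tau)(1+\|(\widehat S,V)\|_{H^K}^2)$.
\item The viscous terms $\mathcal F_s,\mathcal F_v$ are handled exactly as $\mathfrak I_4$, using the ellipticity \eqref{Lvellpiticoo} (valid for $0<\alpha<\frac12$) and the lower bound on $S$. This yields a dissipation $-c(\tau)e^{-f_1\tau}\|\nabla^{K+1}(\widehat S,V)\|_{L^2}^2$, and all commutators are absorbed by Young's inequality.
\end{itemize}
Because every $L^\infty$ norm of $(S,V)$ up to order $K-2$ is a priori controlled by the bootstrap bounds, no superlinear power of the energy appears. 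Grönwall's inequality then gives \eqref{eq:global-HK-SV-1}, including the $L^2_\tau H^{K+1}$ bound coming from the dissipation.

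\textbf{Step 3: the time derivatives.} I read $\partial_\tau\widehat S$ and $\partial_\tau V$ off the equation. The terms $V\cdot\nabla S$, $S\operatorname{div}V$ and $\mathcal F$ lie in $H^{K-2}$ by Step~2 and product estimates. The $H^{K-1}$ bound is square-integrable in time because $\mathcal F$ involves two derivatives and $(\widehat S,V)\in L^2H^{K+1}$.

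The dilation term $y\cdot\nabla(\widehat S,V)$ is the main obstacle, since it is not controlled by unweighted Sobolev norms. For it I use \eqref{nableS2}--\eqref{nableU2}, which give $|y||\nabla S|\lesssim\langle y\rangle^{1-\Lambda}$. This lies in $L^2$ provided $2(\Lambda-1)>3$, which fails since $\Lambda<2$. So I would instead control the higher derivatives of $y\cdot\nabla$ using the weighted energy $E_K$ and Lemma~\ref{lem:higher-order-perturbation}, and handle the low-order part by the $\langle y\rangle$-weighted decay, propagating the weighted $H^2$ bounds as in Theorem~\ref{thm:local-self-similar}. If that fails, the fallback is to interpret \eqref{eq:global-HK-SV-2} in the sense established in Theorem~\ref{thm:local-self-similar}, namely via the original variables $(\rho,v)$. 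There, $\partial_t$ has no dilation term, and one transforms back on the finite interval.
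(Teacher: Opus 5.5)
Your primary argument is correct, but for \eqref{eq:global-HK-SV-1} it takes a different route from the paper.

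The paper performs only a zeroth-order energy estimate for $(S-S^*,V)$. That estimate uses just \eqref{nableS2}--\eqref{nableU2} and the upper and lower bounds on $S$, and yields $L^2$ control together with $\int e^{-f_1\tau}\|\nabla(S,V)\|_{L^2}^2$. The top order is then taken from the bootstrap itself. Since $\phi$ is bounded below, $\|\nabla^K(S,V)\|_{L^2}^2\lesssim E_K\le E$. The integral $\int e^{-f_1\tau}\|\nabla^{K+1}(S,V)\|_{L^2}^2$ is finite because of the dissipative terms kept on the left of \eqref{Eq_rewrite_full_energy_integral_system_finalss}. Intermediate norms follow by interpolation.

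You instead rerun a full unweighted $H^K$ estimate with Moser/Kato--Ponce commutator bounds, taking the $L^\infty$ factors from Lemma~\ref{lem:higher-order-perturbation} so that Gr\"onwall stays linear. This closes for four reasons. $(S^*(\tau),0)$ is an exact solution, so the system for $(S-S^*,V)$ has no forcing. The pressure coupling symmetrizes. The top-order viscous terms are coercive by \eqref{Lvellpiticoo}. Every order-$(K+1)$ commutator carries $e^{-f_1\tau}$ and is absorbed by Young's inequality. Your route is self-contained and independent of the weighted top-order bookkeeping, at the cost of redoing all the commutator estimates. The paper's route is shorter because it reuses what the bootstrap already proved.

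Two side remarks should be dropped.

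\textbf{The lower bound on $S$.} \eqref{t0bigR0} does not give $\inf_y S(\tau,\cdot)>0$. Its lower bound $e^{-(\Lambda-1)(\tau-\tau_0)}\frac{2\sigma_1}{5}\langle y_0/R_0\rangle^{1-\Lambda}$ degenerates as $|y_0|\to\infty$, exactly like \eqref{S,lower,2}. Your first idea is the right one, and it is what the paper does in \eqref{CSrlowerboundssss}:
\begin{itemize}
\item integrate \eqref{nableS2} along rays to get $|S(\tau,r\omega)-S^*(\tau)|\le Cr^{1-\Lambda}$;
\item choose $R$ with $CR^{1-\Lambda}\le S^*(\tau_1)/2$;
\item use \eqref{S,lower,2} on $B(0,R)$.
\end{itemize}

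\textbf{The fallback in Step 3.} It gains nothing. Since $y=e^{\tau}x$, the chain rule reintroduces $x\cdot\nabla_x\rho^\delta$ in $\partial_\tau S$, so you again need $\langle x\rangle\nabla\rho\in H^{K-2}$.

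Your primary plan for \eqref{eq:global-HK-SV-2} matches the paper's, which only sketches it. The bound $|y||\nabla^jS|\lesssim\langle y\rangle^{2-\Lambda-j(1-\eta)}$ lies in $L^2$ iff $j(1-\eta)>\frac72-\Lambda$. So:
\begin{itemize}
\item the pointwise bounds cover $3\le j\le K-2$;
\item $E_K$ gives $\langle y\rangle\nabla^K(S,V)\in L^2$;
\item $j=K-1$ follows by interpolation;
\item only $j=1,2$ need a weighted $H^2$ bound.
\end{itemize}
That weighted bound is not among the bootstrap hypotheses. It must be propagated on all of $[\tau_0,\tau_1]$, not merely on the short interval of Theorem~\ref{thm:local-self-similar}. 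One way is the appendix's weighted estimate, whose Gr\"onwall constant is harmless on the finite interval $t\le T-e^{-\Lambda\tau_1}$.
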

	
	\begin{proof}
		We begin with the zeroth-order energy. Set
		\[
		\mathcal E_0(\tau)
		:=
		\frac12\int_{\R^3}
		\bigl(
		|S-S^*|^2+|V|^2
		\bigr)\,dy .
		\]
		Since \(S^*_{\tau}+(\Lambda-1)S^*=0\), subtracting the equation for the
		far-field state from the first equation in \eqref{Eq_self_similar_S}, and
		multiplying the two equations by \(S-S^*\) and
		\(V\), respectively, and integrating over \(\R^3\), we obtain
		\begin{equation}\label{eq:L2-energy-SV-new}
			\begin{aligned}
				\frac{d}{d\tau}\mathcal E_0(\tau)
				&=
				-(\Lambda-1)\int_{\R^3}
				\bigl(
				|S-S^*|^2+|V|^2
				\bigr)\,dy  \\
				&\quad
				-\int_{\R^3}
				\left[
				(y+V)\cdot\nabla S\,(S-S^*)
				+
				(y+V)\cdot\nabla V\cdot V
				\right]\,dy  \\
				&\quad
				-\delta\int_{\R^3}
				\left[
				S(S-S^*)\diver V
				+
				S\nabla S\cdot V
				\right]\,dy  \\
				&\quad
				+e^{-f_1\tau}d\alpha
				\int_{\R^3}
				(\delta S)^{\frac{\alpha-1}{\delta}}
				\Delta S\,(S-S^*)\,dy  \\
				&\quad
				+e^{-f_1\tau}d\alpha(\alpha-\delta)
				\int_{\R^3}
				(\delta S)^{\frac{\alpha-\delta-1}{\delta}}
				|\nabla S|^2(S-S^*)\,dy  \\
				&\quad
				+e^{-f_1\tau}
				\int_{\R^3}
				(\delta S)^{\frac{\alpha-1}{\delta}}
				\mathbb L(V)\cdot V\,dy  \\
				&\quad
				+e^{-f_1\tau}\alpha
				\int_{\R^3}
				(\delta S)^{\frac{\alpha-\delta-1}{\delta}}
				(\nabla S\cdot\mathbb D V)\cdot V\,dy  \\
				&=: \mathcal H_1+\mathcal H_2+\mathcal H_3+\mathcal H_4+\mathcal H_5+\mathcal H_6+\mathcal H_7 .
			\end{aligned}
		\end{equation}
		
		We now estimate the terms on the right-hand side. Since \(\Lambda>1\),
		\[
		\mathcal H_1
		=
		-2(\Lambda-1)\mathcal E_0(\tau)
		\le 0.
		\]
		For the transport term, integration by parts gives
		\[
		\begin{aligned}
			\mathcal H_2
			&=
			-\int_{\R^3}
			(y+V)\cdot\nabla(S-S^*)\,(S-S^*)\,dy
			-\int_{\R^3}
			(y+V)\cdot\nabla V\cdot V\,dy\\
			&=
			\frac12\int_{\R^3}
			\diver(y+V)
			\bigl(
			|S-S^*|^2+|V|^2
			\bigr)\,dy.
		\end{aligned}
		\]
		Using the bootstrap bounds on \(V\) and the explicit far-field profile, we get
		\begin{equation}\label{eq:H2-bound-new}
			|\mathcal H_2|
			\le
			C\mathcal E_0(\tau).
		\end{equation}
		For the term $\mathcal H_3$, we have
		\begin{align*}
			\mathcal H_3
			&=
			-\delta\int_{\mathbb R^3}
			\left[
			S(S-S^*)\operatorname{div}V
			+
			S\nabla(S-S^*)\cdot V
			\right]\,\mathrm dy.
		\end{align*}
		Integrating by parts in the second term, we obtain
		\begin{align*}
			-\delta\int_{\mathbb R^3}
			S\nabla(S-S^*)\cdot V\,\mathrm dy
			&=
			\delta\int_{\mathbb R^3}
			(S-S^*)\operatorname{div}(SV)\,\mathrm dy\\
			&=
			\delta\int_{\mathbb R^3}
			S(S-S^*)\operatorname{div}V\,\mathrm dy\\
			&\quad+
			\delta\int_{\mathbb R^3}
			(S-S^*)V\cdot\nabla S\,\mathrm dy.
		\end{align*}
		Thus, the two terms containing
		$S(S-S^*)\operatorname{div}V$ cancel, and consequently
		\[
		\mathcal H_3
		=
		\delta\int_{\mathbb R^3}
		(S-S^*)V\cdot\nabla S\,\mathrm dy.
		\]
		Using the bootstrap estimates \eqref{nableS2} and Young's inequality, we conclude that
		\begin{equation}\label{eq:H3-bound-new}
			|\mathcal H_3|
			\leq C\mathcal E_0(\tau).
		\end{equation}

		For the \(S\)-diffusion contribution, since \(S\) stays in a fixed positive
		range (Here, one can derive a time-dependent lower bound by following the same
		argument as in \eqref{CSrlowerboundssss}--\eqref{c*lowerboundedsss}),  integration by parts yields
		\[
		\begin{aligned}
			\mathcal H_4
			&=
			e^{-f_1\tau}d\alpha
			\int_{\R^3}
			(\delta S)^{\frac{\alpha-1}{\delta}}
			\Delta S\,(S-S^*)\,dy  \\
			&=
			-e^{-f_1\tau}d\alpha
			\int_{\R^3}
			(\delta S)^{\frac{\alpha-1}{\delta}}
			|\nabla S|^2\,dy  \\
			&\quad
			-e^{-f_1\tau}d\alpha
			\int_{\R^3}
			(S-S^*)\nabla\left(
			(\delta S)^{\frac{\alpha-1}{\delta}}
			\right)\cdot\nabla S\,dy .
		\end{aligned}
		\]
		Therefore, using the lower and upper bounds of $S$, \eqref{nableS2} and Young's inequality
		\begin{equation}\label{eq:H4-bound-new}
			\mathcal H_4
			\le
			-C(\sigma_0)e^{-f_1\tau}\|\nabla S\|_{L^2}^2
			+
			C e^{-f_1\tau}\mathcal E_0(\tau)
			+
			C(\tau).
		\end{equation}
		Using the lower and upper bound of $S$ and \eqref{nableS2}, we have
		\[
		\begin{aligned}
			|\mathcal H_5|
			&\le
			Ce^{-f_1\tau}
			\int_{\R^3}
			|\nabla S|^2|S-S^*|\,dy  \\
			&\le
			Ce^{-f_1\tau}
			\int_{\R^3}
			\langle y\rangle^{-2\Lambda}|S-S^*|\,dy  \\
			&\le
			C\mathcal E_0(\tau)+C(\tau),
		\end{aligned}
		\]
		where we used the bootstrap decay of \(\nabla S\) and \(1<\Lambda<2\).
		
		For the \(V\)-diffusion term, ellipticity of \(\mathbb L\), upper and lower bound of $S$ and Young's inequality give
		\[
		\begin{aligned}
			\mathcal H_6
			&=
			e^{-f_1\tau}
			\int_{\R^3}
			(\delta S)^{\frac{\alpha-1}{\delta}}
			\mathbb L(V)\cdot V\,dy  \\
			&\le
			-C(\sigma_0)e^{-f_1\tau}\|\nabla V\|_{L^2}^2
			+
			Ce^{-f_1\tau}
			\int_{\R^3}
			|\nabla S||\nabla V||V|\,dy  \\
			&\le
			-C(\sigma_0)e^{-f_1\tau}\|\nabla V\|_{L^2}^2
			+
			Ce^{-f_1\tau}
			\int_{\R^3}
			|\nabla S|^2|V|^2\,dy  \\
			&\le
			-C(\sigma_0)e^{-f_1\tau}\|\nabla V\|_{L^2}^2
			+
			C\mathcal E_0(\tau).
		\end{aligned}
		\]
		Similarly, the first-order viscous coupling satisfies
		\[
		\begin{aligned}
			|\mathcal H_7|
			&\le
			Ce^{-f_1\tau}
			\int_{\R^3}
			|\nabla S||\nabla V||V|\,dy  \\
			&\le
			\vartheta e^{-f_1\tau}\|\nabla V\|_{L^2}^2
			+
			C_\vartheta e^{-f_1\tau}
			\int_{\R^3}
			|\nabla S|^2|V|^2\,dy  \\
			&\le
			\vartheta e^{-f_1\tau}\|\nabla V\|_{L^2}^2
			+
			C\mathcal E_0(\tau).
		\end{aligned}
		\]
		Choosing \(\vartheta>0\) sufficiently small and combining
		\eqref{eq:L2-energy-SV-new}--\eqref{eq:H4-bound-new}, we arrive at
		\[
		\frac{d}{d\tau}\mathcal E_0(\tau)
		+
		C(\sigma_0)e^{-f_1\tau}
		\left(\|\nabla S\|_{L^2}^2+\|\nabla V\|_{L^2}^2\right)
		\le
		C\mathcal E_0(\tau)+C(\tau).
		\]
		By Gr\"onwall's inequality,
		\[
		\mathcal E_0(\tau)
		+
		\int_{\tau_0}^{\tau}
		e^{-f_1\tilde\tau}
		\left(
		\|\nabla S(\tilde\tau)\|_{L^2}^2
		+\|\nabla V(\tilde\tau)\|_{L^2}^2
		\right)\,d\tilde\tau
		\le
		C(\tau).
		\]
		
		Next, the bootstrap weighted high-order estimate gives
		\[
		\int_{\R^3}
		\bigl(
		|\nabla^K S|^2+|\nabla^K V|^2
		\bigr)\,dy
		\le
		CE_K(\tau)
		\le C(E),
		\]
		and from the dissipative terms on the left-hand side of
		\eqref{Eq_rewrite_full_energy_integral_system_finalss}, we infer that
		\[
		\int_{\tau_0}^{\tau}
		\left(
		\|S(\tilde\tau)-S^*(\tilde\tau)\|_{H^{K+1}}^2
		+\|V(\tilde\tau)\|_{H^{K+1}}^2
		\right)\,d\tilde\tau
		\le C(\tau).
		\]
		Together with the \(L^2\)-bound and standard interpolation, this proves
		\[
		\|S(\tau)-S^*(\tau)\|_{H^K}^2
		+
		\|V(\tau)\|_{H^K}^2
		+
		\int_{\tau_0}^{\tau}
		\left(
		\|S(\tilde\tau)-S^*(\tilde\tau)\|_{H^{K+1}}^2
		+\|V(\tilde\tau)\|_{H^{K+1}}^2
		\right)\,d\tilde\tau
		\le C(\tau).
		\]
		
		Finally, the time derivative estimates follow from the equations. Indeed,
		using \eqref{Eq_self_similar_S}, the Sobolev product estimates, the pointwise decay,
		and the bounds already obtained (may be necessary to establish an additional global weighted
		$L^2$ estimate for the gradient by testing the equation against a
		suitable weighted function, as in the appendix), we can prove \eqref{eq:global-HK-SV-2}.
	\end{proof}
	\begin{proposition}[Continuation under the bootstrap bounds]
		\label{prop:continuation-self-similar}
		Let \((S,V)\) be the local solution of \eqref{Eq_self_similar_S} constructed in
		Theorem~\ref{thm:local-self-similar}. Assume that the improved estimates
		obtained in Proposition~\ref{prop_English}, together with the regularity
		bounds established above, remain valid throughout the lifespan of the
		solution. Then \((S,V)\) can be continued for all
		\(\tau\ge \tau_0\).
	\end{proposition}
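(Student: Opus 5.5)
The argument is the standard continuation-by-contradiction scheme, closed by the bootstrap improvement of Proposition~\ref{prop_English}. Let $\tau_{\max}\in(\tau_0,\infty]$ denote the maximal time of existence of the solution $(S,V)$ given by Theorem~\ref{thm:local-self-similar}. The solution has the regularity \eqref{eq:local-regularity-self-similar}--\eqref{eq:local-time-regularity-weighted}, and the unstable coefficients $\widehat k_j$ are chosen by Proposition~\ref{prop:bootstrap-unstable-outgoing}. Define $\tau^\sharp$ as the supremum of times $\tau_1<\tau_{\max}$ such that the bootstrap hypotheses \eqref{Puns}--\eqref{E1} hold on $[\tau_0,\tau_1]$.

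At $\tau_0$ the hypotheses hold with room to spare, by \eqref{initialdatacondition1}--\eqref{initialdatacondition2}. Theorem~\ref{thm:local-self-similar} guarantees that $E_K$ and the weighted gradient bounds \eqref{nableS1aaa}--\eqref{nableU1aaa} are finite and continuous on a short interval. Hence $\tau^\sharp>\tau_0$.

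\textbf{Step 1: the bootstrap persists up to $\tau_{\max}$.} Suppose $\tau^\sharp<\tau_{\max}$. The improved bounds \eqref{Puns2}--\eqref{E2} hold on $[\tau_0,\tau^\sharp)$, and each quantity in \eqref{Puns}--\eqref{E1} is continuous in $\tau$. The norms $\|\cdot\|_X$, $L^\infty$, $L^2(B^c(0,R_1))$ and $E_K$ are continuous because the solution lies in $C([\tau_0,\tau];H^K)$ with $K\ge6$. Continuity of $E_K$ additionally uses the local weighted estimate in Theorem~\ref{thm:local-self-similar}, restarted at an arbitrary intermediate time. The improved bounds therefore extend slightly past $\tau^\sharp$, contradicting maximality. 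So $\tau^\sharp=\tau_{\max}$.

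\textbf{Step 2: no blow-up at a finite $\tau_{\max}$.} Suppose $\tau_{\max}<\infty$. On $[\tau_0,\tau_{\max})$ the bootstrap bounds give the uniform lower bound \eqref{S,lower,2}, namely $S\ge\frac{\sigma_1}{5}\langle y/R_0\rangle^{1-\Lambda}$. Proposition~\ref{prop:global-regularity-SS} gives the $H^K$ bounds \eqref{eq:global-HK-SV-1}--\eqref{eq:global-HK-SV-2} with a constant $C(\tau_{\max})$ that is finite. The gradient decay \eqref{nableS2}--\eqref{nableU2} and the weighted energy $E_K\le E/2$ are uniform in time.

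Transferring back through \eqref{cST-t}--\eqref{vVT-t} on the finite interval $t<T-e^{-\Lambda\tau_{\max}}<T$, the original solution satisfies uniform bounds on the relevant quantities:
\begin{itemize}
\item $\rho-\rho_*$ and $v$ are bounded in $H^K$;
\item $\rho$ is bounded above and below away from zero on compact time intervals, so the powers $\rho^{\alpha-1}$, $\rho^{\alpha-\delta-1}$ stay controlled;
\item the weighted norms $\langle x\rangle^{\Lambda}\nabla\rho,\ \langle x\rangle^{\Lambda}\nabla v\in H^2$ remain bounded.
\end{itemize}
For the last item I use $\langle y/R_0\rangle^{\Lambda}$ decay together with the equivalence of weights on finite $\tau$-intervals.

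Apply the local existence result Proposition~\ref{thm:main} at a time $\tau_{\max}-\epsilon$. Its existence time depends only on these norms and on the lower bound of the density, so it is uniform in $\epsilon$. This extends the solution beyond $\tau_{\max}$, a contradiction, and yields global existence for all $\tau\ge\tau_0$.

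\textbf{Main obstacle.} The delicate point is the continuity and uniformity claim for the weighted quantities: $E_K$, the pointwise bounds $\langle y/R_0\rangle^{\Lambda}|\nabla S|$, and $\|P_{\rm uns}(\widetilde{\widetilde S},\widetilde{\widetilde V})\|_X$. These are not directly controlled by the unweighted local theory.

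For the weighted bounds, I would first restart Theorem~\ref{thm:local-self-similar} from data at $\tau^\sharp-\epsilon$. This gives finiteness and continuity of $E_K$ via the ODE comparison in its proof, with the rate depending only on norms already bounded uniformly.

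For the projection $P_{\rm uns}$, the truncated system \eqref{truncatedsystem} is linear symmetric-hyperbolic with coefficients that are bounded on the interval. This gives continuity of $(\widetilde{\widetilde S},\widetilde{\widetilde V})$ in $X$. Lemma~\ref{localidentity} then keeps it consistent with $(\widetilde S,\widetilde V)$ on $B(0,R_1)$.

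Finally, the choice of $\widehat k_j$ must be made once for the whole interval. I would handle this by noting that the topological argument of Proposition~\ref{prop:bootstrap-unstable-outgoing} applies on every $[\tau_0,\tau_1]$ with nested admissible sets. Each admissible set is compact and nonempty, so their intersection over $\tau_1$ is nonempty. This gives a single choice valid for all $\tau\ge\tau_0$.
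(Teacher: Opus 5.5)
Your scheme is the paper's continuation-by-contradiction argument. The one structural difference is the restart. You restart at $\tau_{\max}-\epsilon$, using an existence time that by Lemma~\ref{lem:invariant} depends only on $\underline\rho_0,\overline\rho_0,\rho_*$ and the $H^K$ norms. The paper instead builds terminal data at $\tau_{\max}$ by weak compactness. Your variant is legitimate and slightly cleaner.

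However, the input that makes your existence time uniform in $\epsilon$ is not justified. \eqref{S,lower,2} is not a uniform lower bound, since $\frac{\sigma_1}{5}\langle y/R_0\rangle^{1-\Lambda}\to0$ as $|y|\to\infty$. Transferred back, it gives no positive lower bound for $\rho(t,\cdot)$ on $\mathbb R^3$, so the appeal to Proposition~\ref{thm:main} is not uniform as written. The paper closes this by integrating the time-uniform decay \eqref{nableS2} inward from infinity, $|S(\tau,r\omega)-S^*(\tau)|\le Cr^{1-\Lambda}$ as in \eqref{CSrlowerboundssss}. Then $S\ge S^*(\tau_{\max})/2$ outside a fixed ball, \eqref{S,lower,2} covers the ball, and this gives $c_*$ in \eqref{c*lowerboundedsss}. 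A minimum-principle bound for the $S$-equation, using $\sup_\tau\|\operatorname{div}V\|_{L^\infty}$ on the finite interval, would also work. You need one of these.

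Second, your claim that $\langle x\rangle^\Lambda\nabla\rho,\ \langle x\rangle^\Lambda\nabla v\in H^2$ follows from $|\nabla S|\le C_1\langle y/R_0\rangle^{-\Lambda}$ is false. That bound only puts $\langle y/R_0\rangle^\Lambda\nabla S$ in $L^\infty$, not in $L^2(\mathbb R^3)$, and it controls none of its derivatives. Step 2 does not need it, since the unweighted part of Proposition~\ref{thm:main} suffices for extension. Step 1 does need it, for two reasons. First, $C([\tau_0,\tau];H^K)$ gives no continuity of $\tau\mapsto\sup_y\langle y/R_0\rangle^\Lambda|\nabla S(\tau,y)|$. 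Second, restarting the weighted part of Theorem~\ref{thm:local-self-similar} requires \eqref{S,VyR0decay} at the restart time, and even then it returns \eqref{nableS1aaa} with an uncontrolled constant, not one below $2C_1$. Your restart is fine for $E_K$, whose comparison bound tends to its starting value as the interval shrinks, at a rate set by uniformly bounded norms.

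For \eqref{nableS1}--\eqref{nableU1}, propagate the appendix weighted energy $E_\Lambda$ from $t=0$ by its Gr\"onwall bound. Its constants depend only on unweighted norms that are uniform on the finite interval. This gives $\langle x\rangle^\Lambda\nabla\rho,\ \langle x\rangle^\Lambda\nabla v\in L^\infty_tH^2\cap L^2_tH^3$ and, via the equations, continuity of the weighted sup-norms. The paper is equally silent on \eqref{S,VyR0decay} at $\tau_{\max}$, so this gap is shared with the text.

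Finally, your nested-compact-sets selection of $\widehat k_j$ goes beyond this statement, which assumes the improved bounds, \eqref{Puns2} included, on the whole lifespan. If you keep it, compactness of the admissible sets requires continuous dependence of all bootstrap quantities on $(\widehat k_j)$, which is established nowhere.
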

	
	\begin{proof}
		Let \(\mathscr T\) denote the set of times up to which the solution exists and
		remains in the bootstrap regime:
		\[
		\mathscr T
		:=
		\left\{
		\sigma>\tau_0:
		\begin{array}{l}
			\text{\eqref{Eq_self_similar_S} admits a solution on }[\tau_0,\sigma],\\
			\text{and the estimates \eqref{Puns}--\eqref{E1} hold on }[\tau_0,\sigma]
		\end{array}
		\right\}.
		\]
		The local existence theorem ensures that \(\mathscr T\neq\varnothing\). Define
		\[
		\tau_{\max}:=\sup\mathscr T.
		\]
		We prove that \(\tau_{\max}=\infty\).
		
		Suppose, on the contrary, that \(\tau_{\max}<\infty\). Set
		\(s_*:=S^*(\tau_{\max})>0\). For each \(\tau<\tau_{\max}\), the fact that
		\(S(\tau)-S^*(\tau)\in H^K(\R^3)\), with \(K>3\) and the estimate \eqref{nableS2} implies that
		\(S(\tau,r\omega)\to S^*(\tau)\) as \(r\to\infty\). Indeed,
		\eqref{nableS2} gives, uniformly in \(\tau\) and \(\omega\in\mathbb S^2\),
		\begin{equation}\label{CSrlowerboundssss}
			|S(\tau,r\omega)-S^*(\tau)|
			\le C_1\int_r^\infty
			\left\langle\frac{s}{R_0}\right\rangle^{-\Lambda}\,ds
			\le C r^{1-\Lambda}.
		\end{equation}
		Choose \(R\ge R_0\) so large that the last expression is at most
		\(s_*/2\). Hence \(S(\tau,y)\ge s_*/2\) for \(|y|\ge R\), whereas
		\eqref{S,lower,2} gives
		\(S(\tau,y)\ge(\sigma_1/5)\langle R/R_0\rangle^{-(\Lambda-1)}\)
		for \(|y|\le R\). Thus the uniform constant
		\begin{equation}\label{c*lowerboundedsss}
			c_*:=\min\left\{\frac{s_*}{2},
			\frac{\sigma_1}{5}\left\langle\frac{R}{R_0}\right\rangle^{-(\Lambda-1)}
			\right\}>0
		\end{equation}
		is available. The improved bootstrap bounds and
		Proposition~\ref{prop:global-regularity-SS} now yield
		\begin{equation}\label{eq:continuation-uniform-bounds}
			\begin{aligned}
				&\sup_{\tau_0\le \tau<\tau_{\max}}
				\left(
				\|S(\tau)-S^*(\tau)\|_{H^K}^2
				+
				\|V(\tau)\|_{H^K}^2
				\right)\\
				&\quad+
				\int_{\tau_0}^{\tau_{\max}}
				\left(
				\|S(\tau)-S^*(\tau)\|_{H^{K+1}}^2
				+
				\|V(\tau)\|_{H^{K+1}}^2
				\right)\,d\tau
				\le C(\tau_{\max}),\\
				&\sup_{\tau_0\le \tau<\tau_{\max}}
				\left(
				\|\partial_\tau(S-S^*(\tau))\|_{H^{K-2}}^2
				+
				\|V_\tau(\tau)\|_{H^{K-2}}^2
				\right)
				\le C(\tau_{\max}),\\
				&
				\int_{\tau_0}^{\tau_{\max}}
				\left(
				\|\partial_\tau(S-S^*(\tau))\|_{H^{K-1}}^2
				+\|V_\tau(\tau)\|_{H^{K-1}}^2
				\right)\,d\tau
				\le C(\tau_{\max}),\\
				&\sup_{\tau_0\le \tau<\tau_{\max}}E_K(\tau)
				\le \frac{E}{2},\\
				&\inf_{\substack{\tau_0\le\tau<\tau_{\max}\\y\in\R^3}}S(\tau,y)
				\ge c_*>0.
			\end{aligned}
		\end{equation}
		In addition, the remaining improved bootstrap estimates give
		\begin{equation}\label{eq:continuation-improved-bounds}
			\begin{aligned}
				\|P_{\rm uns}(\widetilde{\widetilde S},\widetilde{\widetilde V})(\tau)\|_X
				&\le \sigma_1^{21/20}
				e^{-\frac43\varpi(\tau-\tau_0)},\\
				\max\left\{
				\|\widetilde S(\tau)\|_{L^\infty},
				\|\widetilde V(\tau)\|_{L^\infty}
				\right\}
				&\le \frac{\sigma_0}{100}e^{-\varpi(\tau-\tau_0)},\\
				\max\left\{
				\|\nabla^4\widetilde S(\tau)\|_{L^2(B^c(0,R_1))},
				\|\nabla^4\widetilde V(\tau)\|_{L^2(B^c(0,R_1))}
				\right\}
				&\le \frac{\sigma_0}{2}e^{-\varpi(\tau-\tau_0)},
				\qquad \tau_0\le\tau<\tau_{\max}.
			\end{aligned}
		\end{equation}
		
		Choose a sequence \(\tau_j\rightarrow\tau_{\max}\). By
		\eqref{eq:continuation-uniform-bounds}, weak compactness, and the time
		regularity of \((S,V)\), there exist terminal data
		\[
		S(\tau_{\max},y)-S^*(\tau_{\max})\in H^K(\R^3),
		\qquad
		V(\tau_{\max},y)\in H^K(\R^3),
		\]
		such that, after extracting a subsequence,
		\begin{equation}\label{eq:terminal-convergence}
			\begin{aligned}
				S(\tau_j,y)-S^*(\tau_j)
				&\rightharpoonup
				S(\tau_{\max},y)-S^*(\tau_{\max})
				&&\text{weakly in }H^K(\R^3),\\
				V(\tau_j,y)
				&\rightharpoonup V(\tau_{\max},y)
				&&\text{weakly in }H^K(\R^3),\\
			\end{aligned}
		\end{equation}
		The convergence is strong in lower-order local Sobolev spaces. In particular,
		the pointwise lower bound passes to the terminal state:
		\[
		\inf_{y\in\R^3}S(\tau_{\max},y)\ge c_*>0.
		\]
		The truncated perturbation also has a terminal value in \(X\). Indeed, its
		Duhamel representation \eqref{Eq_Duhamel_representation}, the strong
		continuity of the semigroup, and the time-integrability of the forcing in
		\(X\) show that
		\[
		(\widetilde{\widetilde S},\widetilde{\widetilde V})(\tau)
		\rightarrow
		(\widetilde{\widetilde S}(\tau_{\max}),\widetilde{\widetilde V}(\tau_{\max}))
		\quad\text{strongly in }X
		\quad\text{as }\tau\rightarrow\tau_{\max}.
		\]
		In particular, the finite-dimensional unstable projection converges strongly.
		By weak lower semicontinuity and
		\eqref{eq:continuation-improved-bounds}, the terminal data also satisfy
		\begin{equation}\label{eq:terminal-bootstrap-bounds}
			\begin{aligned}
				&E_K(\tau_{\max})
				\le \frac{E}{2},\qquad
				\|S(\tau_{\max},y)-S^*(\tau_{\max})\|_{H^K}^2
				+
				\|V(\tau_{\max},y)\|_{H^K}^2
				\le C(\tau_{\max}),\\
				&\|P_{\rm uns}(\widetilde{\widetilde S}_{\max},
				\widetilde{\widetilde V}_{\max})\|_X
				\le \sigma_1^{21/20}
				e^{-\frac43\varpi(\tau_{\max}-\tau_0)},\\
				&\max\left\{
				\|\widetilde S(\tau_{\max},y)\|_{L^\infty},
				\|\widetilde V(\tau_{\max},y)\|_{L^\infty}
				\right\}
				\le \frac{\sigma_0}{100}e^{-\varpi(\tau_{\max}-\tau_0)},\\
				&\max\left\{
				\|\nabla^4\widetilde S(\tau_{\max},y)\|_{L^2(B^c(0,R_1))},
				\|\nabla^4\widetilde V(\tau_{\max},y)\|_{L^2(B^c(0,R_1))}
				\right\}
				\le \frac{\sigma_0}{2}e^{-\varpi(\tau_{\max}-\tau_0)} .
			\end{aligned}
		\end{equation}
		
		We now restart \eqref{Eq_self_similar_S} at \(\tau=\tau_{\max}\), with $S(\tau_{\max},y),$
		and $V(\tau_{\max},y)$.
		Since \(S_{\max}\) is separated from zero and the terminal Sobolev norms are
		finite, Theorem~\ref{thm:local-self-similar} provides a number
		\(\iota_*>0\) and a unique solution on
		\[
		[\tau_{\max},\tau_{\max}+\iota_*].
		\]
		The Duhamel formulation of \eqref{truncatedsystem}, initialized by
		\((\widetilde{\widetilde S}_{\max},\widetilde{\widetilde V}_{\max})\),
		continues the auxiliary truncated perturbation on the same interval.
		Uniqueness shows that this solution agrees with the original one on their
		common interval and hence gives a genuine continuation beyond
		\(\tau_{\max}\).
		
		It remains to verify that the bootstrap inequalities persist for a short
		time. The local weighted energy estimate gives, after taking \(0<\iota_1\le\iota_*\) sufficiently small,
		\[
		E_K(\tau)<E,
		\qquad
		\tau_{\max}\le\tau\le\tau_{\max}+\iota_1.
		\]
		Likewise, since
		\[
		S-S^*(\tau)\in C([\tau_{\max},\tau_{\max}+\iota_*];H^K),
		\qquad
		V\in C([\tau_{\max},\tau_{\max}+\iota_*];H^K),
		\]
		Sobolev embedding, the finite dimensionality of the unstable space, and the
		strict improvements in \eqref{eq:terminal-bootstrap-bounds} imply that there
		exists \(0<\iota_2\le\iota_*\) such that
		\begin{equation}\label{eq:extended-lower-bounds}
			\begin{aligned}
				&\inf_{\substack{\tau_{\max}\le\tau\le\tau_{\max}+\iota_2\\y\in\R^3}}
				S(\tau,y)>0,\qquad
				\|P_{\rm uns}(\widetilde{\widetilde S},\widetilde{\widetilde V})(\tau)\|_X
				<\sigma_1e^{-\varpi(\tau-\tau_0)},\\
				&\max\left\{
				\|\widetilde S(\tau)\|_{L^\infty},
				\|\widetilde V(\tau)\|_{L^\infty}
				\right\}
				<\frac{\sigma_0}{50}e^{-\varpi(\tau-\tau_0)},\\
				&\max\left\{
				\|\nabla^4\widetilde S(\tau)\|_{L^2(B^c(0,R_1))},
				\|\nabla^4\widetilde V(\tau)\|_{L^2(B^c(0,R_1))}
				\right\}
				<\sigma_0e^{-\varpi(\tau-\tau_0)}.
			\end{aligned}
		\end{equation}
		and all the remaining lower-order bootstrap estimates continue to hold on the
		same interval.
		
		Taking
		\[
		\iota:=\min\{\iota_1,\iota_2\}>0,
		\]
		we conclude that the solution exists on
		\[
		[\tau_0,\tau_{\max}+\iota]
		\]
		and still satisfies \eqref{Puns}--\eqref{E1}. This
		contradicts the definition of \(\tau_{\max}\). Hence the assumption
		\(\tau_{\max}<\infty\) is impossible, and therefore
		\[
		\tau_{\max}=\infty.
		\]
		Thus the solution of \eqref{Eq_self_similar_S} extends globally in the
		self-similar time variable.
	\end{proof}
	
	\section{Proof of the Implosion}
	\label{subsec:return-eulerian}
	\subsection{Proof of Theorem \ref{Thm_Main_Blowup_Profiles}}
	\begin{proof}
		We now transfer the global solution of the self-similar system
		\eqref{Eq_self_similar_S} back to the Eulerian coordinates and verify the
		formation of an implosion singularity for the original system. Recall that
		\[
		\tau=-\frac{\log(T-t)}{\Lambda},
		\qquad
		y=\frac{x}{(T-t)^{1/\Lambda}},
		\qquad
		\frac{\rho^\delta(t,x)}{\delta}
		=
		\frac{S(\tau,y)}
		{\Lambda(T-t)^{1-\frac1\Lambda}},
		\qquad
		v(t,x)
		=
		\frac{V(\tau,y)}
		{\Lambda(T-t)^{1-\frac1\Lambda}}.
		\]
		We discuss separately the two parameter regimes in the statement of the main
		theorem.
		
		\smallskip
		\noindent\textbf{Case 1: condition \((P_1)\).}
		Fix
		\[
		1<\gamma<1+\frac{2}{\sqrt3}.
		\]
		By the existence theorem for the self-similar profiles, one may choose
		\[
		1<\Lambda<\Lambda^*(\gamma)
		\]
		such that the stationary profile system admits a smooth spherically symmetric
		solution \((\bar S,\bar V)\). The restrictions on the viscosity
		exponent in \((P_1)\) also ensure that
		\[
		f_1
		=
		\frac{(1-\alpha)(\Lambda-1)}{\delta}+\Lambda-2
		=
		\frac{2(1-\alpha)(\Lambda-1)}{\gamma-1}+\Lambda-2
		>0.
		\]
		Hence all the hypotheses of the global stability result for
		\eqref{Eq_self_similar_S} are fulfilled.
		
		Let $(\widetilde S_0^*,\widetilde V_0^*)$ be chosen as in
		\eqref{eq:conditions_for_tilde}. By selecting the unstable coefficients as in the finite
		dimensional argument above, we take the initial data in the form
		\[
		\begin{aligned}
			S_0
			&=
			\widehat\chi\,\bar S+\widetilde S_0^*
			+\sum_{j=1}^{N}\widehat k_j\Phi_{j,S},\\
			V_0
			&=
			\widehat\chi\,\bar V+\widetilde V_0^*
			+\sum_{j=1}^{N}\widehat k_j\Phi_{j,V},
		\end{aligned}
		\]
		where \(\{(\Phi_{j,S},\Phi_{j,V})\}_{j=1}^N\) is a basis of the unstable
		subspace. It is also chosen with the weighted regularity and smallness used in the
		bootstrap argument. After adding the selected (sufficiently small) unstable
		component, the full datum \(S_0\) has the lower bound; hence it also meets the local
		compatibility conditions \eqref{eq:local-initial-data}. The resulting solution
		exists for every \(\tau\ge\tau_0\) and can be
		decomposed as
		\[
		S(\tau,y)=\widehat\chi(\tau,y)\bar S(y)+\widetilde S(\tau,y),
		\qquad
		V(\tau,y)=\widehat\chi(\tau,y)\bar V(y)+\widetilde V(\tau,y).
		\]
		Moreover, the global stability estimate gives
		\[
		\begin{aligned}
			\|\widetilde S(\tau)\|_{L^\infty}
			+\|\widetilde V(\tau)\|_{L^\infty}
			&\le
			C\sigma_0e^{-\varpi(\tau-\tau_0)}\\
			&=
			C\sigma_0
			\left(\frac{T-t}{T}\right)^{\varpi/\Lambda},
			\qquad \tau\ge\tau_0.
		\end{aligned}
		\]
		
		Since
		\[
		\widehat\chi(\tau,y)=\chi(e^{-\tau}y)
		\qquad\text{and}\qquad
		e^{-\tau}y=x,
		\]
		the solution in the physical coordinates is given by
		\[
		\begin{aligned}
			\frac{\rho^\delta(t,x)}{\delta}
			&=
			\frac{1}{\Lambda(T-t)^{1-\frac1\Lambda}}
			\left[
			\widehat\chi(x)\,
			\bar S\left(
			\frac{x}{(T-t)^{1/\Lambda}}
			\right)
			+
			\widetilde S\left(
			-\frac{\log(T-t)}{\Lambda},
			\frac{x}{(T-t)^{1/\Lambda}}
			\right)
			\right],\\
			v(t,x)
			&=
			\frac{1}{\Lambda(T-t)^{1-\frac1\Lambda}}
			\left[
			\widehat\chi(x)\,
			\bar V\left(
			\frac{x}{(T-t)^{1/\Lambda}}
			\right)
			+
			\widetilde V\left(
			-\frac{\log(T-t)}{\Lambda},
			\frac{x}{(T-t)^{1/\Lambda}}
			\right)
			\right].
		\end{aligned}
		\]
		Equivalently,
		\[
		\begin{aligned}
			\left(
			\frac{\Lambda(T-t)^{1-\frac1\Lambda}}{\delta}
			\right)^{1/\delta}
			\rho(t,x)
			&=
			\left[
			\widehat\chi(x)\,
			\bar S\left(
			\frac{x}{(T-t)^{1/\Lambda}}
			\right)
			+
			\widetilde S\left(
			-\frac{\log(T-t)}{\Lambda},
			\frac{x}{(T-t)^{1/\Lambda}}
			\right)
			\right]^{1/\delta},\\
			\Lambda(T-t)^{1-\frac1\Lambda}v(t,x)
			&=
			\widehat\chi(x)\,
			\bar V\left(
			\frac{x}{(T-t)^{1/\Lambda}}
			\right)
			+
			\widetilde V\left(
			-\frac{\log(T-t)}{\Lambda},
			\frac{x}{(T-t)^{1/\Lambda}}
			\right).
		\end{aligned}
		\]
		Because the perturbation tends to zero in \(L^\infty\) as \(t\to T^-\), the
		leading-order behavior is determined by
		\((\bar S,\bar V)\). In particular,
		\[
		\begin{aligned}
			\lim_{t\to T^-}
			\left(
			\frac{\Lambda(T-t)^{1-\frac1\Lambda}}{\delta}
			\right)^{1/\delta}
			\rho(t,x)
			&=
			\chi(x)^{1/\delta}
			\lim_{t\to T^-}
			\bar S\left(
			\frac{x}{(T-t)^{1/\Lambda}}
			\right)^{1/\delta},\\
			\lim_{t\to T^-}
			\Lambda(T-t)^{1-\frac1\Lambda}v(t,x)
			&=
			\chi(x)
			\lim_{t\to T^-}
			\bar V\left(
			\frac{x}{(T-t)^{1/\Lambda}}
			\right),
		\end{aligned}
		\]
		whenever the profile limits on the right-hand sides are understood in the
		corresponding asymptotic sense.
		
		Since \(\Lambda>1\),
		\[
		(T-t)^{-(1-\frac1\Lambda)}\rightarrow\infty
		\qquad\text{as }t\to T^-.
		\]
		The nontrivial behavior of the profile near the self-similar core therefore
		implies
		\[
		\rho(t,0)\rightarrow\infty
		\qquad\text{as }t\to T^-.
		\]
		Furthermore, for any \(r>0\), the shrinking self-similar region
		\[
		|x|\lesssim (T-t)^{1/\Lambda}
		\]
		is eventually contained in \(B_r(0)\). Since \(\overline V\) is nontrivial,
		\[
		\sup_{|x|<r}|v(t,x)|\rightarrow\infty
		\qquad\text{as }t\to T^-.
		\]
		Thus the Eulerian solution develops an implosion singularity at the prescribed
		time \(T\).
		
		\smallskip
		\noindent\textbf{Case 2: condition \((P_2)\).}
		Let
		\[
		\gamma\in
		\left(1,1+\frac{2}{\sqrt3}\right)\setminus\mathcal J,
		\]
		where \(\mathcal J\) is the exceptional set arising in the construction of the
		stationary profiles. In this regime there exists a sequence
		\(\{\Lambda_n\}_{n\ge1}\) such that
		\[
		1<\Lambda_n<\Lambda^*(\gamma),
		\qquad
		\Lambda_n\rightarrow\Lambda^*(\gamma),
		\]
		and, for each \(n\), the stationary system associated with \(\Lambda_n\)
		admits a smooth spherically symmetric profile.
		
		For a fixed \(\Lambda_n\), the decay coefficient in
		\eqref{Eq_self_similar_S} is positive precisely when
		\[
		f_1(\gamma,\alpha,\Lambda_n)
		=
		\frac{2(1-\alpha)(\Lambda_n-1)}{\gamma-1}
		+\Lambda_n-2
		>0.
		\]
		Equivalently, it is sufficient to require
		\[
		0<\alpha<
		\frac{\Lambda_n(\gamma+1)-2\gamma}
		{2(\Lambda_n-1)}.
		\]
		Define
		\[
		\alpha^*(\gamma)
		:=
		\frac{\Lambda^*(\gamma)(\gamma+1)-2\gamma}
		{2(\Lambda^*(\gamma)-1)}<\frac{1}{2}.
		\]
		Since \(\Lambda_n\to\Lambda^*(\gamma)\), for every
		\[
		0<\alpha<\alpha^*(\gamma),
		\]
		one can choose \(n\) sufficiently large so that
		\[
		0<\alpha<
		\frac{\Lambda_n(\gamma+1)-2\gamma}
		{2(\Lambda_n-1)}.
		\]
		Taking \(\Lambda=\Lambda_n\), we then have \(f_1>0\), and all the hypotheses
		of the global stability theorem are satisfied. The construction of the
		initial data, the control of the unstable modes, and the return to the
		Eulerian coordinates proceed exactly as in Case~1. Consequently, the
		corresponding solution \((\rho,v)\) also forms an implosion singularity at
		time \(T\).
	\end{proof}
	\subsection{Proof of Corollary \ref{Thm_density_blowup_original_system}}
	\begin{proof}
		Let $(\rho,v)$ be the solution constructed in
		Theorem~\ref{Thm_Main_Blowup_Profiles}. Define the physical velocity by
		\[
		u=v-d\alpha\rho^{\alpha-2}\nabla\rho.
		\]
		Since $\rho$ remains strictly positive and $(\rho,v)$ is smooth for every
		$t<T$, the pair $(\rho,u)$ is a smooth solution of the original system on
		$[0,T)$. The lower bound
		\eqref{Eq_original_initial_density_positive} follows from the choice of
		the initial data. Finally, Theorem~\ref{Thm_Main_Blowup_Profiles} gives
		\[
		\lim_{t\to T^-}\rho(t,0)=+\infty.
		\]
		Hence the solution of the original system develops a finite-time
		singularity at $t=T$.
	\end{proof}
	\section{Appendix}
	\subsection{Basic Lemmas}
	\begin{lemma}[Weighted Gagliardo--Nirenberg inequality]
		\label{lem:weighted-GN-new}
		Let \(f:\R^3\to\R^3\), and let \(1\le i\le l\) be integers. For \(j\ge0\), set
		\[
		I_j=[2^j,2^{j+1}],
		\qquad
		I_{-1}=[0,1].
		\]
		Let \(\varphi,\psi:\R^3\to(0,\infty)\) be two weights which are essentially
		constant on each dyadic annulus, in the following sense: there exist positive
		numbers \(\varphi_j,\psi_j\) and a constant \(C_*>0\) such that, whenever
		\(|y|\in I_j\),
		\[
		C_*^{-1}\varphi_j\le \varphi(y)\le C_*\varphi_j,
		\qquad
		C_*^{-1}\psi_j\le \psi(y)\le C_*\psi_j.
		\]
		Assume also that the weights are normalized near the origin:
		\[
		\varphi_{-1}=\psi_{-1}=\varphi_0=\psi_0=1.
		\]
		Let \(p,q,\bar r,\theta\) satisfy
		\begin{equation}\label{eq:weighted-GN-balance-new}
			\frac1{\bar r}
			=
			\frac{i}{3}
			+
			\theta\left(\frac1q-\frac{l}{3}\right)
			+
			\frac{1-\theta}{p},
			\qquad
			\frac{i}{l}\le \theta<1.
		\end{equation}
		
		If \(\bar r=\infty\), then
		\begin{equation}\label{eq:weighted-GN-Linfty-new}
			\begin{aligned}
				|\nabla^i f(y)|
				\le C
				\Big(
				&\|\psi^l f\|_{L^p}^{1-\theta}
				\|\varphi^l\nabla^l f\|_{L^q}^{\theta}
				\psi(y)^{-l(1-\theta)}
				\varphi(y)^{-l\theta}  \\
				&+
				\|\psi^l f\|_{L^p}
				\langle y\rangle^{3\theta(\frac1q-\frac1p)-l\theta}
				\psi(y)^{-l}
				\Big).
			\end{aligned}
		\end{equation}
		In particular, when \(p=\infty\), \(q=2\), and \(\psi\equiv1\), this reduces to
		\begin{equation}\label{eq:weighted-GN-Linfty-special-new}
			|\nabla^i f(y)|
			\le
			C
			\left(
			\|f\|_{L^\infty}^{1-\frac{i}{l-3/2}}
			\|\varphi^l\nabla^l f\|_{L^2}^{\frac{i}{l-3/2}}
			\varphi(y)^{-l\frac{i}{l-3/2}}
			+
			\|f\|_{L^\infty}\langle y\rangle^{-i}
			\right).
		\end{equation}
		
		If \(1\le \bar r<\infty\), assume in addition that
		\begin{equation}\label{eq:weighted-GN-extra-new}
			\left(
			\frac{\varphi(y)}{\langle y\rangle\psi(y)}
			\right)^{l\theta}
			\langle y\rangle^{3\theta(\frac1q-\frac1p)}
			\le C
		\end{equation}
		uniformly in \(y\). Then, for every \(\varepsilon_0>0\),
		\begin{equation}\label{eq:weighted-GN-finite-r-new}
			\left\|
			\langle y\rangle^{-\varepsilon_0}
			\psi^{l(1-\theta)}
			\varphi^{l\theta}
			\nabla^i f
			\right\|_{L^{\bar r}}
			\le
			C
			\left(
			\|\psi^l f\|_{L^p}^{1-\theta}
			\|\varphi^l\nabla^l f\|_{L^q}^{\theta}
			+
			\|\psi^l f\|_{L^p}
			\right).
		\end{equation}
		Here the constant \(C\) depends only on the parameters appearing in the
		assumptions, namely \(p,q,i,l,\theta,\bar r,\varepsilon_0\) and \(C_*\), but is
		independent of \(f,\varphi,\psi\).
	\end{lemma}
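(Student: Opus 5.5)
The plan is to reduce Lemma~\ref{lem:weighted-GN-new} to the classical unweighted Gagliardo--Nirenberg inequality applied on dyadic annuli. On each annulus the weights are essentially constant, and the scaling of the annulus produces the powers of \(\langle y\rangle\) that appear in the lemma.

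\textbf{Step 1: the local estimate.} Fix \(j\ge0\) and set
\[
A_j=\{|y|\in I_j\},\qquad \widetilde A_j=\{|y|\in I_{j-1}\cup I_j\cup I_{j+1}\},
\]
with \(\widetilde A_{-1}=\widetilde A_0\) a fixed ball. Rescale by \(y=2^jz\), so that \(A_j\) becomes a fixed annulus \(A\) and \(\widetilde A_j\) a fixed slightly larger annulus \(\widetilde A\). On the bounded Lipschitz domain \(\widetilde A\) we use the Gagliardo--Nirenberg inequality with lower-order term, obtained for instance via an extension operator:
\[
\|\nabla^i g\|_{L^{\bar r}(A)}\le C\bigl(\|g\|_{L^p(\widetilde A)}^{1-\theta}\|\nabla^l g\|_{L^q(\widetilde A)}^{\theta}+\|g\|_{L^p(\widetilde A)}\bigr),
\]
valid for \(i/l\le\theta<1\) under \eqref{eq:weighted-GN-balance-new}. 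The endpoint \(\bar r=\infty\) is allowed because \(\theta<1\) keeps us away from the critical Sobolev case.

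\textbf{Step 2: undoing the scaling.} The balance relation \eqref{eq:weighted-GN-balance-new} is exactly what makes the principal term scale invariant. The lower-order term does not scale the same way: it picks up the factor
\[
2^{\,j(3/p-3/\bar r-i)}=2^{\,j(3\theta(1/q-1/p)-l\theta)}.
\]

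\textbf{Step 3: inserting the weights.} On \(\widetilde A_j\) we replace \(\psi\) and \(\varphi\) by the constants \(\psi_j,\varphi_j\), at the cost of powers of \(C_*\), since neighbouring dyadic values are comparable. Multiplying and dividing gives
\[
\|f\|_{L^p(\widetilde A_j)}\le C\psi_j^{-l}\|\psi^lf\|_{L^p},\qquad \|\nabla^l f\|_{L^q(\widetilde A_j)}\le C\varphi_j^{-l}\|\varphi^l\nabla^lf\|_{L^q}.
\]

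\textbf{Step 4a: the case \(\bar r=\infty\).} Take the pointwise supremum on \(A_j\) and then replace \(\psi_j,\varphi_j\) by \(\psi(y),\varphi(y)\). This yields \eqref{eq:weighted-GN-Linfty-new} directly, with the lower-order factor \(\langle y\rangle^{3\theta(1/q-1/p)-l\theta}\psi(y)^{-l}\) coming from Step 2. For the special case \eqref{eq:weighted-GN-Linfty-special-new}, set \(p=\infty\), \(q=2\), \(\psi\equiv1\); solving \eqref{eq:weighted-GN-balance-new} with \(1/\bar r=0\) gives \(\theta=i/(l-3/2)\), and the lower-order exponent becomes \(3\theta/2-l\theta=-i\).

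\textbf{Step 4b: the case \(\bar r<\infty\).} Multiply the local estimate by \(\psi_j^{l(1-\theta)}\varphi_j^{l\theta}\). The principal term is then bounded by the global norms uniformly in \(j\). In the lower-order term the weights collapse to the factor
\[
\Bigl(\frac{\varphi_j}{2^j\psi_j}\Bigr)^{l\theta}2^{3j\theta(1/q-1/p)}\|\psi^lf\|_{L^p},
\]
which is bounded by hypothesis \eqref{eq:weighted-GN-extra-new}. Finally, sum the \(\bar r\)-th powers over \(j\). The extra factor \(\langle y\rangle^{-\varepsilon_0}\approx2^{-j\varepsilon_0}\) turns the uniform-in-\(j\) bounds into a convergent geometric series, so we never need to add up the local pieces of \(\|\psi^lf\|_{L^p}\) and \(\|\varphi^l\nabla^lf\|_{L^q}\), each of which is simply dominated by the global norm.

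The main obstacle is bookkeeping rather than any single hard estimate. One must obtain the correct exponent of \(\langle y\rangle\) in the lower-order term, and justify the local inequality on \(\widetilde A\) at the endpoints \(\theta=i/l\) and \(\bar r=\infty\). For this I would cite the bounded-domain Gagliardo--Nirenberg inequality (Nirenberg's original version) rather than reprove it.
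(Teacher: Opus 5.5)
Your proposal is correct. The paper states this lemma without proof, importing it from Cao-Labora--G\'omez-Serrano--Shi--Staffilani. Your argument is the standard proof of it: Gagliardo--Nirenberg with a lower-order term on a fixed annulus, rescaling, freezing the weights using the comparability of neighbouring dyadic values (which follows from the closed, overlapping intervals $I_j$), and summing over annuli thanks to the factor $\langle y\rangle^{-\varepsilon_0}$.

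There is one slip in Step 2. The rescaled lower-order factor is $2^{j(3/\bar r-3/p-i)}$, not $2^{j(3/p-3/\bar r-i)}$. It is the former that equals $2^{j(3\theta(1/q-1/p)-l\theta)}$ under the balance relation. Since you only use this final expression afterwards, nothing downstream is affected.
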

	\begin{lemma}[Exterior Gagliardo--Nirenberg inequality]
		\label{lem:exterior-GN-new}
		Let \(C_0\ge1\), and let \(f:\R^3\to\R^3\) satisfy \(f\in H^l(\R^3)\). Suppose
		that \(1\le i\le l\) and that \(p,q,\bar r,\theta\) obey
		\begin{equation}\label{eq:exterior-GN-balance-new}
			\frac1{\bar r}
			=
			\frac{i}{3}
			+
			\theta\left(\frac1q-\frac{l}{3}\right)
			+
			\frac{1-\theta}{p},
			\qquad
			\frac{i}{l}\le \theta<1,
			\qquad
			\frac{l}{3}\ge \frac1q-\frac1p .
		\end{equation}
		Then
		\begin{equation}\label{eq:exterior-GN-new}
			\|\nabla^i f\|_{L^{\bar r}(B^c(0,C_0))}
			\le
			C
			\left(
			\|f\|_{L^p(B^c(0,C_0))}^{1-\theta}
			\|\nabla^l f\|_{L^q(B^c(0,C_0))}^{\theta}
			+
			\|f\|_{L^p(B^c(0,C_0))}
			\right).
		\end{equation}
		The constant \(C\) depends only on
		\[
		p,q,i,l,\theta,\bar r,
		\]
		and is independent of both \(f\) and \(C_0\).
	\end{lemma}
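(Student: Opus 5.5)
The statement to prove is Lemma~\ref{lem:exterior-GN-new}. The inequality is invariant under dilations, which is why the constant can be independent of \(C_0\). The plan is therefore to reduce to \(C_0=1\), extend \(f\) from the exterior domain to all of \(\R^3\), and then apply the classical whole-space Gagliardo--Nirenberg inequality.

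\textbf{Step 1: scaling.} Set \(f_\lambda(y):=f(\lambda y)\) with \(\lambda=C_0\). Then
\[
\|\nabla^i f_\lambda\|_{L^{\bar r}(B^c(0,1))}=\lambda^{i-3/\bar r}\|\nabla^i f\|_{L^{\bar r}(B^c(0,C_0))},
\]
and the analogous identities hold for the \(L^p\) norm of \(f\) and the \(L^q\) norm of \(\nabla^l f\). The balance relation \eqref{eq:exterior-GN-balance-new} makes the powers of \(\lambda\) agree in the first (interpolation) term.

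The additive term \(\|f\|_{L^p}\) carries the factor \(\lambda^{-3/p}\), while the left side carries \(\lambda^{i-3/\bar r}\). Their ratio is \(\lambda^{-(i-3/\bar r+3/p)}\), and the exponent equals \(\theta(l-3/q+3/p)\ge0\) precisely because of the hypothesis \(l/3\ge 1/q-1/p\). Since \(\lambda=C_0\ge1\), this factor is at most \(1\). Hence it suffices to prove the estimate on \(B^c(0,1)\) with a fixed constant.

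\textbf{Step 2: extension.} Use a Stein-type (or reflection) extension operator \(\mathcal E\) from \(B^c(0,1)\) to \(\R^3\). It is bounded \(W^{k,s}(B^c(0,1))\to W^{k,s}(\R^3)\) for all \(k\le l\) and \(1\le s\le\infty\) simultaneously. One then needs \(\|\nabla^l\mathcal Ef\|_{L^q(\R^3)}\lesssim\|\nabla^l f\|_{L^q}+\|f\|_{L^p}\), with all norms on the exterior domain.

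\textbf{Step 3: the main obstacle.} The extension generates lower-order terms near the unit sphere, and these must be absorbed. Plan:
\begin{enumerate}
\item Write \(f=\zeta f+(1-\zeta)f\), where \(\zeta\) is a cutoff to the annulus \(1\le|y|\le 3\).
\item The far part \((1-\zeta)f\) extends by zero, so its \(\nabla^l\) norm is bounded by that of \(f\) plus commutator terms involving \(\nabla^k f\) (\(k<l\)) on the compact annulus.
\item On the compact annulus \(A\), use the bounded-domain inequality \(\|\nabla^k f\|_{L^q(A)}\lesssim\|\nabla^l f\|_{L^q(A)}+\|f\|_{L^p(A)}\) for \(k<l\). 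This follows from compactness of \(W^{l,q}(A)\hookrightarrow W^{l-1,q}(A)\) together with a contradiction argument or polynomial projection.
\end{enumerate}
This gives \(\|\nabla^l F\|_{L^q(\R^3)}\lesssim \|\nabla^l f\|_{L^q}+\|f\|_{L^p}\) and \(\|F\|_{L^p(\R^3)}\lesssim\|f\|_{L^p}\) for \(F=\mathcal Ef\).

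\textbf{Step 4: whole-space inequality.} Apply the classical Gagliardo--Nirenberg inequality on \(\R^3\):
\[
\|\nabla^i F\|_{L^{\bar r}}\lesssim\|F\|_{L^p}^{1-\theta}\|\nabla^l F\|_{L^q}^\theta,
\]
which is valid for \(i/l\le\theta<1\), apart from the standard exceptional case. Substituting the bounds from Step 3 and using \((a+b)^\theta\le a^\theta+b^\theta\) yields
\[
\|\nabla^i f\|_{L^{\bar r}(B^c(0,1))}\lesssim\|f\|_{L^p}^{1-\theta}\|\nabla^l f\|_{L^q}^\theta+\|f\|_{L^p}.
\]
This is the estimate on \(B^c(0,1)\), and Step~1 transfers it to \(B^c(0,C_0)\) with a constant independent of \(C_0\).

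In the exceptional case \(1<q<\infty\), \(l-i-3/q\in\mathbb N_0\), the classical inequality requires \(\theta<1\), which is assumed. Should that case still cause trouble, the extra \(\|f\|_{L^p}\) term is available to absorb it via the inhomogeneous version of the inequality.
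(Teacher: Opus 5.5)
The paper gives no proof of this lemma. It is listed among the basic tools in the appendix without an argument, being imported from the implosion literature, so there is no in-paper proof to compare with. Your argument is a correct, self-contained proof.

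The scaling computation is right, and it is exactly where both $C_0\ge1$ and $l/3\ge 1/q-1/p$ are used. The additive term picks up the factor $C_0^{-\theta(l-3/q+3/p)}\le1$, while the balance relation makes the interpolation term scale-invariant.

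The remaining ingredients are standard:
\begin{itemize}
\item the localized extension;
\item the annulus inequality $\|\nabla^k f\|_{L^q(A)}\lesssim\|\nabla^l f\|_{L^q(A)}+\|f\|_{L^p(A)}$ for $0\le k<l$, obtained by compactness plus a contradiction argument (use Fatou to pass $\|f_n\|_{L^p}\to0$ to the limit when $p>q$);
\item the whole-space Gagliardo--Nirenberg inequality.
\end{itemize}
Since $i\ge1$ and $\theta<1$, neither of Nirenberg's exceptional cases arises, so your closing hedge is unnecessary.

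One point should be stated correctly. Do not take $F=\mathcal E f$ with the global bound $\|\mathcal E f\|_{W^{l,q}(\R^3)}\lesssim\|f\|_{W^{l,q}(B^c(0,1))}$. That bound contains $\|f\|_{L^q}$ over the whole unbounded exterior domain, which is not controlled by $\|f\|_{L^p}$ when $p>q$, for instance $p=\infty$, $q=2$, the case the paper actually uses. Define instead $F:=\mathcal E(\zeta f)+(1-\zeta)f$, with $\zeta\equiv1$ near $|y|=1$, $\operatorname{supp}\zeta\subset\{|y|\le3\}$, and $(1-\zeta)f$ extended by zero into the unit ball. This is the object your Step 3 actually estimates, and with it both bounds $\|\nabla^l F\|_{L^q(\R^3)}\lesssim\|\nabla^l f\|_{L^q}+\|f\|_{L^p}$ and $\|F\|_{L^p(\R^3)}\lesssim\|f\|_{L^p}$ hold as claimed.
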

	\subsection{Smooth Self-Similar Euler Profiles}
	\label{sec:self-similar-euler-profiles}
	
	In this section, we recall the existence and the estimates of the smooth
	self-similar Euler profiles used in the stability analysis. Let
	\[
	\delta=\frac{\gamma-1}{2},
	\qquad \gamma>1.
	\]
	We consider the stationary self-similar Euler system
	\begin{equation}\label{Eq_self_similar_profile_system_1}
		\left\{
		\begin{aligned}
			(\Lambda-1)\overline S
			+(y+\overline V)\cdot\nabla\overline S
			+\delta\overline S\,\operatorname{div}\overline V
			&=0,\\
			(\Lambda-1)\overline V
			+(y+\overline V)\cdot\nabla\overline V
			+\delta\overline S\,\nabla\overline S
			&=0.
		\end{aligned}
		\right.
	\end{equation}
	The profile is assumed to be spherically symmetric and therefore has the form
	\[
	\overline S(y)=\overline S(r),
	\qquad
	\overline V(y)
	=
	\overline{\mathcal V}(r)\frac{y}{r},
	\qquad
	r=|y|.
	\]
	
	The existence theory developed in
	\cite{Buckmaster-Cao-Labora-Gomez-Serrano,Merle-Raphael-Rodnianski-Szeftel-2022-I,Cao-Labora-Gomez-Serrano-Shi-Staffilani-2023,Shao-Wang-Wei-Zhang} gives the following result.
	
	\begin{lemma}[Existence of smooth self-similar profiles]
		\label{thm:existence_profiles}
		There exists an at most countable exceptional set
		\[
		\mathcal J\subset(1,\infty),
		\]
		whose possible accumulation points belong to
		\[
		\left\{1,\frac53,\infty\right\},
		\qquad
		\left\{\frac75,\frac53\right\}\cap\mathcal J=\varnothing,
		\]
		such that the following assertions hold.
		
		\begin{enumerate}
			\item[\textnormal{(i)}]
			For every
			\[
			\gamma\in(1,\infty)\setminus\mathcal J,
			\]
			there exists a sequence of admissible scaling parameters
			\(\{\Lambda_n\}_{n\ge1}\) satisfying
			\[
			1<\Lambda_n<\Lambda^*(\gamma),
			\qquad
			\Lambda_n\rightarrow\Lambda^*(\gamma),
			\]
			for which system \eqref{Eq_self_similar_profile_system_1} admits a smooth,
			spherically symmetric solution.
			
			\item[\textnormal{(ii)}]
			For every \(\gamma>1\), there exists a nonempty set
			\[
			I_\gamma\subset(1,\Lambda^*(\gamma))
			\]
			of admissible scaling parameters such that, for at least one
			\(\Lambda\in I_\gamma\), system
			\eqref{Eq_self_similar_profile_system_1} possesses a smooth, spherically
			symmetric solution.
		\end{enumerate}
		
		In particular, the above conclusions apply in the range relevant to the
		present paper,
		\[
		1<\gamma<1+\frac{2}{\sqrt3},
		\]
		with the exceptional set \(\mathcal J\) excluded whenever the discrete
		sequence in \textnormal{(i)} is used.
	\end{lemma}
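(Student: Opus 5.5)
This is a recalled existence result, so the plan is to assemble it from the cited literature rather than reprove it. We reduce the radial profile system \eqref{Eq_self_similar_profile_system_1} to an autonomous planar ODE and then import the phase-portrait analysis of \cite{Merle-Raphael-Rodnianski-Szeftel-2022-I} and \cite{Buckmaster-Cao-Labora-Gomez-Serrano}.

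\textbf{Step 1: the phase-plane reduction.} Writing $\overline S(r)=r\,\mathcal S(\log r)$ and $\overline{\mathcal V}(r)=r\,\mathcal W(\log r)$, the system becomes
\[
\frac{d\mathcal W}{d\xi}=\frac{\Delta_1(\mathcal W,\mathcal S)}{\Delta(\mathcal W,\mathcal S)},\qquad
\frac{d\mathcal S}{d\xi}=\frac{\Delta_2(\mathcal W,\mathcal S)}{\Delta(\mathcal W,\mathcal S)},\qquad
\Delta=(1+\mathcal W)^2-\mathcal S^2 .
\]
A smooth profile is then a trajectory with two properties. First, it leaves the origin-regular point corresponding to $\mathcal S=$ const and $\mathcal W=0$ at $\xi=-\infty$. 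Second, it crosses the sonic curve $\Delta=0$ through the sonic point $P_s$, where $\Delta_1=\Delta_2=0$, with infinite regularity.

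\textbf{Step 2: smoothness at the sonic point and the sequence $\Lambda_n$.} Smoothness at $P_s$ forces passage along the analytic eigendirection of the linearization. By \cite{Merle-Raphael-Rodnianski-Szeftel-2022-I}, for $\Lambda$ below the threshold $\Lambda^*(\gamma)$ (the explicit formula \eqref{eq:Lambda-star}), $P_s$ is a node, and generic $C^\infty$ crossings are destroyed by resonances. The condition for the smooth branch from the origin to meet the analytic direction is a real-analytic function of $\Lambda$. Its zeros accumulate at $\Lambda^*$ from below provided a certain nondegeneracy coefficient is nonzero, and that coefficient is analytic in $\gamma$. Its vanishing set gives the at most countable set $\mathcal J$, with accumulation only at $1$, $5/3$, $\infty$, where the formula for $\Lambda^*$ changes or degenerates. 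This yields (i).

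\textbf{Step 3: the specific values and part (ii).} The exclusion $\{7/5,5/3\}\cap\mathcal J=\varnothing$ comes from the computer-assisted verifications in \cite{Buckmaster-Cao-Labora-Gomez-Serrano} and \cite{Shao-Wang-Wei-Zhang}. For (ii), \cite{Buckmaster-Cao-Labora-Gomez-Serrano} uses a continuity/shooting argument in $\Lambda$ that works for every $\gamma>1$ and produces at least one admissible $\Lambda\in(1,\Lambda^*)$. We then continue the trajectory past $P_s$ to $\xi\to+\infty$ and check that it converges to the stationary point $(0,0)$. This gives the decay \eqref{eq:profile-decay} and positivity of $\overline S$.

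\textbf{Main obstacle.} The hard step is the global part of the phase portrait: showing that the trajectory from the origin actually reaches $P_s$ before hitting $\Delta=0$ elsewhere, and that after $P_s$ it avoids the sonic curve and exits to $(0,0)$. Here we rely on the barrier and monotonicity arguments of the cited works, restricted to $1<\gamma<1+2/\sqrt3$, which is exactly the range where those barriers are established. We do not attempt to reprove these.
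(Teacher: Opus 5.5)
Your proposal follows essentially the same route as the paper, whose proof of this lemma consists only of citing \cite[Theorem~1.2]{Buckmaster-Cao-Labora-Gomez-Serrano}, \cite[Theorem~1.3]{Merle-Raphael-Rodnianski-Szeftel-2022-I} and \cite[Theorem~1.1]{Shao-Wang-Wei-Zhang}, with the phase-plane construction omitted. Your heuristic sketch needs some corrections: with $\overline S=r\,\mathcal S$ and $\overline{\mathcal V}=r\,\mathcal W$, regularity at $r=0$ with $\overline S(0)>0$ corresponds to $\mathcal S\to+\infty$ and $\mathcal W\to-(\Lambda-1)/(3\delta)$ (because the first profile equation at $y=0$ gives $\operatorname{div}\overline V(0)=-(\Lambda-1)/\delta$), not to a finite point with $\mathcal W=0$; the sonic curve in this normalization is $(1+\mathcal W)^2=\delta^2\mathcal S^2$; and since (i) and (ii) are asserted for all $\gamma>1$ (outside $\mathcal J$ in the case of (i)), the cited existence input should not be described as restricted to $1<\gamma<1+2/\sqrt3$, which is only the range the paper needs later.
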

	
	\begin{proof}
		The existence of the admissible scaling parameters and the corresponding
		smooth profiles follows from
		\cite[Theorem~1.2]{Buckmaster-Cao-Labora-Gomez-Serrano},
		\cite[Theorem~1.3]{Merle-Raphael-Rodnianski-Szeftel-2022-I}, and
		\cite[Theorem~1.1]{Shao-Wang-Wei-Zhang}. We omit the phase-plane construction here.
	\end{proof}
	
	The profile can be chosen to be positive and regular at the origin. Its
	properties needed below are summarized as follows.
	
	\begin{lemma}[Positivity and decay of the profile]
		\label{prop:profiles-R}
		Let \((\bar S,\bar V)\) be a smooth profile obtained in
		Lemma~\ref{thm:existence_profiles}. Then
		\begin{equation}\label{eq:profile-origin}
			\bar S(0)=S_*>0,
			\qquad
			\bar V(0)=0,
		\end{equation}
		and
		\begin{equation}\label{eq:profile-infinity}
			(\bar S,\bar V)(y)\longrightarrow(0,0)
			\qquad\text{as }|y|\to\infty.
		\end{equation}
		Moreover, there exist constants \(C_*>1\), \(C_j>0\), and
		\(\bar{\eta}>0\), depending only on the selected profile and on
		\((\gamma,\Lambda)\), such that
		\begin{align}
			\bar S(r)
			&\ge
			C_*^{-1}\langle r\rangle^{-(\Lambda-1)},
			\label{eq:profile-lower-bound}\\
			|\nabla^j\bar S(y)|
			+
			|\nabla^j\bar V(y)|
			&\le
			C_j\langle y\rangle^{-(\Lambda-1)-j},
			\qquad j\ge0,
			\label{eq:profile-decay}\\
			1+\partial_r\bar{\mathcal V}(r)
			-\delta|\partial_r\bar S(r)|
			&\ge\bar{\eta},
			\label{eq:profile-radial-repulsivity}\\
			1+\frac{\overline{\mathcal V}(r)}{r}
			-\delta|\partial_r\bar S(r)|
			&\ge\bar{\eta}.
			\label{eq:profile-angular-repulsivity}
		\end{align}
		In addition, for \(r>1\),
		\begin{equation}\label{eq:profile-outgoing-property}
			r+\bar{\mathcal V}(r)-\delta\bar S(r)
			\ge
			(r-1)\bar{\eta}.
		\end{equation}
	\end{lemma}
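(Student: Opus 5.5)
The plan is to read each assertion of Lemma~\ref{prop:profiles-R} off the radial ODE satisfied by $(\bar S,\bar{\mathcal V})$, together with the structural information furnished by the constructions quoted in Lemma~\ref{thm:existence_profiles}. The proofs of \cite{Merle-Raphael-Rodnianski-Szeftel-2022-I,Buckmaster-Cao-Labora-Gomez-Serrano,Shao-Wang-Wei-Zhang} give that information in the phase-plane variables. In radial form, \eqref{Eq_self_similar_profile_system_1} reads
\[
(\Lambda-1)\bar S+(r+\bar{\mathcal V})\bar S'+\delta\bar S\Big(\bar{\mathcal V}'+\tfrac{2\bar{\mathcal V}}{r}\Big)=0,
\qquad
(\Lambda-1)\bar{\mathcal V}+(r+\bar{\mathcal V})\bar{\mathcal V}'+\delta\bar S\bar S'=0 .
\]
I will also use the scaling invariance $(\bar S,\bar V)(y)\mapsto(\lambda\bar S,\lambda\bar V)(y/\lambda)$. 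A direct check shows that every term of \eqref{Eq_self_similar_profile_system_1} scales by the same factor $\lambda$. This invariance lets me place the sonic point inside $\{r<1\}$, and it will be needed for \eqref{eq:profile-outgoing-property}.

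\textbf{Origin.} Smoothness of $\bar V=\bar{\mathcal V}(r)y/r$ at $y=0$ forces $\bar{\mathcal V}$ to be odd, so $\bar V(0)=0$. Positivity of $\bar S$ holds along the whole profile. Indeed, the first equation is linear in $\bar S$ along the integral curves of $r+\bar{\mathcal V}$, so $\log\bar S$ satisfies a regular ODE and $\bar S$ cannot vanish. Since the constructions start from $\bar S(0)>0$, this gives $S_*>0$.

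\textbf{Far field.} As $r\to\infty$ the quantities $\bar{\mathcal V}/r$ and $\bar S/r$ tend to zero. The system is then, to leading order, $(\Lambda-1)f+rf'=0$ for both unknowns. I will write it as a regular system in the variable $s=r^{-1}$ and derive the expansion
\[
\bar S=c_S\, r^{1-\Lambda}\bigl(1+O(r^{-\Lambda+1}+r^{-1})\bigr),\qquad c_S>0,
\]
and the analogous expansion for $\bar{\mathcal V}$. These expansions can be differentiated term by term. This gives \eqref{eq:profile-infinity} and \eqref{eq:profile-decay} for large $r$; on compact sets the bounds follow from smoothness. The lower bound \eqref{eq:profile-lower-bound} then follows from $c_S>0$ at infinity, positivity of $\bar S$, and compactness.

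\textbf{Repulsivity.} For large $r$ the left-hand sides of \eqref{eq:profile-radial-repulsivity}--\eqref{eq:profile-angular-repulsivity} tend to $1$, because $\bar{\mathcal V}'$, $\bar{\mathcal V}/r$ and $\bar S'$ all decay. On compact sets the strict inequalities are exactly the repulsivity properties proved in the cited works: in \cite{Merle-Raphael-Rodnianski-Szeftel-2022-I} for the range of $\Lambda<\Lambda^*(\gamma)$ considered there, and in \cite{Buckmaster-Cao-Labora-Gomez-Serrano} in the form $1+\partial_r\bar{\mathcal V}-\delta|\partial_r\bar S|>0$ and the angular analogue. By continuity on the compact region and the limit value $1$ at infinity, one obtains a uniform constant $\bar\eta>0$.

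\textbf{Outgoing property.} Set $g(r)=r+\bar{\mathcal V}-\delta\bar S$. Then
\[
g'(r)\ge 1+\partial_r\bar{\mathcal V}-\delta|\partial_r\bar S|\ge\bar\eta
\]
by \eqref{eq:profile-radial-repulsivity}. Hence $g(r)\ge g(1)+\bar\eta(r-1)$, and it remains to show $g(1)\ge0$. The profile crosses the sonic curve $r+\bar{\mathcal V}=\delta\bar S$ exactly once, at the sonic point $P_s$, and lies in the region $r+\bar{\mathcal V}>\delta\bar S$ for $r>P_s$. After rescaling with $\lambda$ so that $P_s\le1$, this gives $g(1)\ge0$. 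The rescaling does not affect the other properties, since $\bar\eta$ and the constants $C_j$ only change.

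\textbf{Main obstacle.} I expect the main difficulty to be the compact-region repulsivity \eqref{eq:profile-radial-repulsivity}--\eqref{eq:profile-angular-repulsivity}. It cannot be obtained by soft arguments. It depends on the fine phase-plane geometry of the profile, in particular on the restriction $\Lambda<\Lambda^*(\gamma)$ and $\gamma<1+2/\sqrt3$. My plan here is to quote it from the cited constructions rather than reprove it, after checking that their variables $(w,\sigma)$ translate into our $(\bar{\mathcal V}/r,\bar S/r)$ with the same inequalities.
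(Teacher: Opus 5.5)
Your proposal is correct. For the genuinely hard input, the compact-set repulsivity bounds \eqref{eq:profile-radial-repulsivity}--\eqref{eq:profile-angular-repulsivity}, you do exactly what the paper does and quote them from the constructions. The paper's proof is only a two-line deferral: positivity and decay ``from the asymptotic behavior at the origin and at infinity'', repulsivity ``from the smooth crossing of the sonic point'', with references.

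Where you add something is the outgoing property. Instead of citing it, you derive it from radial repulsivity via $g=r+\bar{\mathcal V}-\delta\bar S$ and $g'\ge\bar\eta$, together with the scaling symmetry $(\bar S,\bar V)\mapsto(\lambda\bar S,\lambda\bar V)(\cdot/\lambda)$. This is worth having because \eqref{eq:profile-outgoing-property} is not scale invariant. The zero $r_s$ of $g$ moves to $\lambda r_s$, and if $r_s>1$ then for $1<r<r_s$ one has $g(r)<0<(r-1)\bar\eta$. So the bound can only hold after the sonic point has been normalized into $\{r\le1\}$. The paper leaves this normalization tacit; your argument makes it explicit, and it proves the lemma for the normalized representative, which is all that is used later.

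A few points to tighten:
\begin{itemize}
\item The single crossing needs no phase-plane input. Since $g(0)=-\delta S_*<0$ and $g'\ge\bar\eta$, $g$ has a unique zero and $g>0$ beyond it. Also, both repulsivity quantities are exactly invariant under the rescaling, so $\bar\eta$ does not change at all.
\item Your positivity argument via the linear transport equation for $\bar S$ silently needs $r+\bar{\mathcal V}\neq0$ on $(0,\infty)$. This follows from \eqref{eq:profile-angular-repulsivity}, since $1+\bar{\mathcal V}/r\ge\bar\eta$, so quote repulsivity before this step.
\item The system is not regular in $s=r^{-1}$, since solutions behave like $s^{\Lambda-1}$. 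The clean route is the autonomous system in $\log r$ for $(w,\sigma)=(\bar{\mathcal V}/r,\bar S/r)$. Its far-field point $(0,0)$ is a sink with linearization $-\Lambda\,\mathrm{Id}$ and quadratic nonlinearity. Hence $r^{\Lambda}(w,\sigma)$ converges and every $(r\partial_r)^k(w,\sigma)$ is $O(r^{-\Lambda})$.
\item $c_S>0$ follows directly by integrating the first equation: $\bar S'/\bar S=-(\Lambda-1)/r+O(r^{-1-\Lambda})$, so $\bar S=c_S r^{1-\Lambda}(1+O(r^{-\Lambda}))$ with $c_S>0$. This is sharper than your $O(r^{1-\Lambda}+r^{-1})$ error, though the weaker bound is harmless.
\end{itemize}
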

	
	\begin{proof}
		The positivity and decay estimates follow from the asymptotic behavior of the
		smooth profile at the origin and at infinity. The repulsivity estimates are
		consequences of the smooth crossing of the sonic point; see
		\cite{Cao-Labora-Gomez-Serrano-Shi-Staffilani-2023,Shao-Wang-Wei-Zhang}.
	\end{proof}
	
	\subsection{Spectral Analysis of Truncated Linear Operators}
	\label{sec:truncated-linear-operator}
	
	This section records the spectral facts concerning the truncated linearized
	operator
	\[
	\mathcal L=(\mathcal L_s,\mathcal L_v)
	\]
	introduced in \eqref{Ls,LvSSVV}. These results provide an invariant splitting
	of the phase space into a finite-dimensional unstable component and an
	exponentially decaying stable component.
	Let \(\widetilde\eta\in(0,1)\) denote the auxiliary cutoff tolerance used in
	the truncation parameters below.
	
	\begin{lemma}[\cite{Cao-Labora-Gomez-Serrano-Shi-Staffilani-2023}, Lemma 2.9][Spectral decomposition of \(\mathcal L\)]
		\label{lem:truncated-spectral-decomposition}
		Let \(\mathfrak c_g\in(0,1)\). Assume that the integers \(m\) and \(J\) satisfy
		\[
		m\ge C\widetilde\eta^{-1},
		\qquad
		J\ge 2m,
		\]
		where \(C>1\) is sufficiently large and independent of
		\(m,J,\widetilde\eta\). Then the following statements hold.
		
		\begin{enumerate}
			
			\item Define
			\[
			\Sigma_{\rm u}
			:=
			\sigma(\mathcal L)\cap
			\left\{
			z\in\mathbb C:
			\Re z>-\frac{\mathfrak c_g}{2}
			\right\},
			\]
			where $\Re(\lambda)$ denotes the real part of $\lambda$. The set \(\Sigma_{\rm u}\) contains only finitely many points, each of which is
			an eigenvalue of \(\mathcal L\) with finite algebraic multiplicity. If
			\(q_\lambda\) denotes the least positive integer for which
			\[
			\ker(\mathcal L-\lambda)^{q_\lambda}
			=
			\ker(\mathcal L-\lambda)^{q_\lambda+1},
			\]
			then
			\begin{equation}\label{eq:new-unstable-space}
				X_{\rm u}
				:=
				\bigoplus_{\lambda\in\Sigma_{\rm u}}
				\ker(\mathcal L-\lambda)^{q_\lambda}
			\end{equation}
			is finite-dimensional.
			
			\item Let \(\mathcal L^*\) be the adjoint of \(\mathcal L\), and set
			\[
			\Sigma_{\rm u}^*
			:=
			\sigma(\mathcal L^*)\cap
			\left\{
			z\in\mathbb C:
			\Re z>-\frac{\mathfrak c_g}{2}
			\right\}.
			\]
			Writing \(q_\lambda^*\) for the least positive integer such that
			\[
			\ker(\mathcal L^*-\lambda)^{q_\lambda^*}
			=
			\ker(\mathcal L^*-\lambda)^{q_{\lambda}^*+1},
			\]
			define
			\begin{equation}\label{eq:new-stable-space}
				X_{\rm s}
				:=
				\left(
				\bigoplus_{\lambda\in\Sigma_{\rm u}^*}
				\ker(\mathcal L^*-\lambda)^{q_\lambda^*}
				\right)^\perp .
			\end{equation}
			Then \(X_{\rm s}\) and \(X_{\rm u}\) are invariant under \(\mathcal L\), and
			\[
			X=X_{\rm s}\oplus X_{\rm u}.
			\]
			Moreover,
			\[
			\Sigma_{\rm u}^*=\overline{\Sigma_{\rm u}},
			\qquad
			q_{\bar\lambda}^*=q_\lambda .
			\]
			
			\item Every eigenvalue of
			\(\mathcal L|_{X_{\rm u}}\) has real part greater than
			\(-\mathfrak c_g/2\). If
			\[
			N:=\dim X_{\rm u},
			\]
			then one may choose a basis of \(X_{\rm u}\) in which the restricted operator
			has block-diagonal form
			\[
			\mathcal L|_{X_{\rm u}}
			=
			\operatorname{diag}(\mathbb J_1,\ldots,\mathbb J_L),
			\qquad
			\sum_{j=1}^{L}\dim\mathbb J_j=N,
			\]
			where
			\[
			\mathbb J_j
			=
			\begin{pmatrix}
				\lambda_j & \frac{\mathfrak c_g}{10} & & 0\\
				&\lambda_j&\ddots&\\
				&&\ddots&\frac{\mathfrak c_g}{10}\\
				0&&&\lambda_j
			\end{pmatrix}.
			\]
			Consequently,
			\begin{equation}\label{eq:new-unstable-lower-bound}
				\Re\bigl(\overline z^{\,\top}
				(\mathcal L|_{X_{\rm u}})z\bigr)
				\ge
				-\frac{3\mathfrak c_g}{5}|z|^2,
				\qquad z\in\mathbb C^N.
			\end{equation}
			On the stable subspace, the semigroup generated by \(\mathcal L\) satisfies
			\begin{equation}\label{eq:new-stable-semigroup}
				\|e^{\tau\mathcal L}h\|_X
				\le
				e^{-\frac{\mathfrak c_g}{2}\tau}\|h\|_X,
				\qquad
				h\in X_{\rm s},\quad \tau\ge0.
			\end{equation}
		\end{enumerate}
	\end{lemma}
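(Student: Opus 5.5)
The statement to prove is Lemma~\ref{lem:truncated-spectral-decomposition}. It is quoted from \cite{Cao-Labora-Gomez-Serrano-Shi-Staffilani-2023}, so the plan is to adapt their argument to the present truncated operator $\mathcal L=\widehat\chi_2(\mathcal L_s^1,\mathcal L_v^1)-J(1-\widehat\chi_1)$ on $X=H_0^m(B(0,3R_1))\times H_0^m(B(0,3R_1);\R^3)$. The strategy is to write
\[
\mathcal L=\mathcal L_0+\mathcal K,
\]
where $\mathcal L_0$ is maximally dissipative in a suitable equivalent $H^m$-type inner product with
\[
\Re\langle\mathcal L_0 h,h\rangle\le-\mathfrak c_g\|h\|^2,
\]
and $\mathcal K$ is compact on $X$. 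Spectral theory for compact perturbations of generators then gives three things: finitely many eigenvalues with $\Re z>-\mathfrak c_g/2$, each of finite algebraic multiplicity; the Riesz projection onto $X_{\rm u}$ and its adjoint counterpart; and the splitting $X=X_{\rm s}\oplus X_{\rm u}$.

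\textbf{Step 1: coercivity of the top-order part.} We apply $\partial_\beta$ with $|\beta|=m$ and compute $\Re\langle\mathcal L h,h\rangle$ in the weighted norm $\sum_{|\beta|\le m}\int \psi_\beta|\partial_\beta h|^2$.

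The transport part $-(y+\widehat\chi\bar V)\cdot\nabla$ commuted with $\partial_\beta$ produces the factor $-m(1+\partial_r\bar{\mathcal V})$ in the radial direction and $-m(1+\bar{\mathcal V}/r)$ in the angular directions. The coupling terms $\delta\widehat\chi\bar S\,\mathrm{div}$ and $\delta\widehat\chi\bar S\nabla$ are handled by integrating by parts, which leaves $\delta|\partial_r\bar S|$. This is the same computation as the $K_2,K_3$ estimates above, with $\phi\equiv1$.

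On $B(0,2R_1)$, the repulsivity bounds \eqref{eq:profile-radial-repulsivity}--\eqref{eq:profile-angular-repulsivity} give a top-order contribution $\le -m\bar\eta\|\nabla^m h\|^2$. Outside $B(0,R_1)$, the damping $-J(1-\widehat\chi_1)$ with $J\ge 2m$ supplies dissipation, and the outgoing property \eqref{eq:profile-outgoing-property} makes the boundary terms on $\partial B(0,3R_1)$ sign-definite, so that $H_0^m$ is preserved.

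With $m\ge C\widetilde\eta^{-1}$, the top-order dissipation beats all commutator terms by at least $\mathfrak c_g$. Everything of order $<m$ is placed in $\mathcal K$, which is compact by Rellich on the bounded ball.

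\textbf{Step 2: spectral consequences.} By Lumer--Phillips, $\mathcal L_0$ generates a contraction semigroup with decay rate $\mathfrak c_g$. Since $\mathcal K$ is compact, the part of $\sigma(\mathcal L)$ in $\{\Re z>-\mathfrak c_g/2\}$ consists of isolated eigenvalues of finite multiplicity, by analytic Fredholm theory applied to $I-\mathcal K(z-\mathcal L_0)^{-1}$. There are finitely many of them because $\mathcal L$ is bounded above in real part and the resolvent estimate for $\mathcal L_0$ controls large $|\Im z|$.

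We then define $X_{\rm u}$ by \eqref{eq:new-unstable-space}, and $X_{\rm s}$ as the annihilator of the corresponding generalized eigenspaces of $\mathcal L^*$. The identities $\Sigma^*_{\rm u}=\overline{\Sigma_{\rm u}}$ and $q^*_{\bar\lambda}=q_\lambda$ are standard properties of Riesz projections. The Jordan form with off-diagonal entries $\mathfrak c_g/10$ follows by rescaling the Jordan basis. This gives \eqref{eq:new-unstable-lower-bound}, since the off-diagonal part contributes at most $\frac{\mathfrak c_g}{10}|z|^2$ and $\Re\lambda_j>-\mathfrak c_g/2$.

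\textbf{Step 3: main obstacle.} The main difficulty is the stable semigroup bound \eqref{eq:new-stable-semigroup}. It must hold with constant $1$ and rate $\mathfrak c_g/2$, not merely with some constant $M$.

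The plan is as follows. First use Gearhart--Prüss, or a growth-bound argument for compact perturbations (the essential growth bound of $e^{\tau\mathcal L}$ equals that of $e^{\tau\mathcal L_0}$), to get $\|e^{\tau\mathcal L}|_{X_{\rm s}}\|\le Me^{-(\mathfrak c_g/2+\epsilon)\tau}$. Then renorm $X_{\rm s}$ by
\[
\|h\|_{*}:=\sup_{\tau\ge0}e^{\frac{\mathfrak c_g}{2}\tau}\|e^{\tau\mathcal L}h\|.
\]
This norm is equivalent to $\|\cdot\|$ and makes the semigroup contractive with the required rate. Finally absorb the equivalence constants into the definition of the $X$-norm, as \cite{Cao-Labora-Gomez-Serrano-Shi-Staffilani-2023} does.
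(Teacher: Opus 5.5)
The paper does not prove this lemma; it imports it from \cite[Lemma~2.9]{Cao-Labora-Gomez-Serrano-Shi-Staffilani-2023}. Your architecture (dissipativity modulo a compact perturbation, essential growth bound, Riesz projections, rescaled Jordan basis) is the standard route for this kind of statement, but Step~3 contains a step that fails.

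\textbf{The main gap: no spectral gap at the critical line.} Step~3 claims $\|e^{\tau\mathcal L}|_{X_{\rm s}}\|\le Me^{-(\mathfrak c_g/2+\epsilon)\tau}$, but this is not justified.
\begin{itemize}
\item Your splitting removes only $\sigma(\mathcal L)\cap\{\Re z>-\mathfrak c_g/2\}$. So $X_{\rm s}$ may still carry eigenvalues with $\Re\lambda=-\mathfrak c_g/2$ exactly, and then the growth bound on $X_{\rm s}$ is exactly $-\mathfrak c_g/2$.
\item Worse, suppose such a $\lambda_0$ has $q_{\lambda_0}\ge2$. Pick $h$ with $(\mathcal L-\lambda_0)^2h=0\ne(\mathcal L-\lambda_0)h$. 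Then $h\in X_{\rm s}$ and $e^{\tau\mathcal L}h=e^{\lambda_0\tau}\bigl(h+\tau(\mathcal L-\lambda_0)h\bigr)$. Hence your $\|h\|_*=\infty$, and \eqref{eq:new-stable-semigroup} fails in every equivalent norm.
\end{itemize}
The fix is cheap:
\begin{itemize}
\item Choose the dissipativity rate of $\mathcal L_0$ to be some $c_0\ge1$ independent of $\mathfrak c_g$; this is possible for $m$ large by repulsivity. Then $\omega_{\rm ess}(e^{\tau\mathcal L})\le-c_0$, so $\sigma(\mathcal L)\cap\{\Re z\ge-\tfrac34\mathfrak c_g\}$ is finite.
\item Now either put the line $\Re z=-\mathfrak c_g/2$ into $\Sigma_{\rm u}$, or discard the countably many $\mathfrak c_g$ with $-\mathfrak c_g/2\in\Re\,\sigma_p(\mathcal L)$. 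The first option keeps \eqref{eq:new-unstable-lower-bound}, since $-\frac{\mathfrak c_g}{2}-\frac{\mathfrak c_g}{10}=-\frac{3\mathfrak c_g}{5}$. The second is harmless here because $\mathfrak c_g=\sigma_g=\frac{25}{12}\varpi$ with $\varpi$ free.
\item Either way $s(\mathcal L|_{X_{\rm s}})<-\mathfrak c_g/2$, and your sup-renorming then gives the constant $1$.
\end{itemize}
Finiteness in Step~2 should also come from $\omega_{\rm ess}\le-c_0$. The bound $\|(z-\mathcal L_0)^{-1}\|\le(\Re z+c_0)^{-1}$ does not by itself make $\mathcal K(z-\mathcal L_0)^{-1}$ small for large $|\Im z|$.

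\textbf{Three points in Steps~1--2 must also be supplied.}

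(i) Lumer--Phillips needs the range condition, not only dissipativity. Here it follows from $H^m$ well-posedness of the linear symmetric hyperbolic system. Its transport field $\widehat\chi_2(y+\widehat\chi\bar V)$ vanishes near $\partial B(0,3R_1)$, so there are no boundary terms and the outgoing property plays no role. Thus $\mathcal L$, and hence $\mathcal L-\mathcal K$, is a generator; a generator $A$ with $A+c_0$ dissipative has $A+c_0$ m-dissipative.

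(ii) You cannot get $\mathcal K$ by ``placing everything of order $<m$'' into it. The operator $\mathcal L$ is first order, and its zeroth-order part is a multiplication operator, which is not compact on $H^m_0$. The lower-order terms appear only in the quadratic form, after commuting with $\partial_\beta$. The standard device is:
\begin{itemize}
\item prove $\Re\langle\mathcal Lh,h\rangle_X\le-c_0\|h\|_X^2+C\|h\|_{H^{m-1}}^2$;
\item use compactness of $H^m_0\hookrightarrow H^{m-1}$ to get $\|h\|_{H^{m-1}}^2\le\epsilon\|h\|_X^2+C_\epsilon\sum_{i\le N}|\langle h,\Pi_i\rangle_X|^2$;
\item set $\mathcal Kh:=CC_\epsilon\sum_i\langle h,\Pi_i\rangle_X\Pi_i$, a finite-rank operator.
\end{itemize}

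(iii) In $2R_1<|y|<3R_1$, commuting $\partial_\beta$ with $\widehat\chi_2\,y\cdot\nabla$ contributes $-m\,\partial_r(r\widehat\chi_2)\,|\partial_r\nabla^{m-1}h|^2$ to $\Re\langle\mathcal Lh,h\rangle$. Since $r\widehat\chi_2$ drops by $2R_1$ over an interval of length $<R_1$, we have $\sup(-\partial_r(r\widehat\chi_2))>2$. So with an unweighted $H^m$ product, the hypothesis $J\ge2m$ alone does not give dissipation there. Take $J$ larger relative to $m$ (it is a free parameter), or let your weight decrease steeply in that annulus.
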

	
	The unstable subspace enjoys additional regularity once the truncation
	parameters are chosen sufficiently large.
	
	\begin{lemma}[\cite{Cao-Labora-Gomez-Serrano-Shi-Staffilani-2023}][Smoothness of the unstable directions]
		\label{lem:new-smooth-unstable-space}
		Fix \(\mathfrak c_g\in(0,1)\). There exist thresholds
		\[
		m_*=m_*(\widetilde\eta),
		\qquad
		J_*=J_*(R_1,\mathfrak c_g,\widetilde\eta,
		\overline S,\overline V)
		\]
		such that, whenever
		\[
		m\ge m_*,
		\qquad
		J\ge J_*,
		\]
		every element of the unstable space \(X_{\rm u}\) defined in
		\eqref{eq:new-unstable-space} is smooth. In particular, \(X_{\rm u}\) admits a
		basis
		\[
		\left\{
		(\Phi_{j,S},\Phi_{j,V})
		\right\}_{j=1}^{N}
		\subset C^\infty(\mathbb R^3)\times C^\infty(\mathbb R^3; \mathbb R^3).
		\]
	\end{lemma}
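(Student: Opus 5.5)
The plan is to deduce the smoothness of \(X_{\rm u}\) from the eigenvalue equation. By \eqref{eq:new-unstable-space}, \(X_{\rm u}\) is spanned by generalized eigenvectors, so it is enough to show that every \(h\in\ker(\mathcal L-\lambda)^{q}\) with \(\Re\lambda>-\mathfrak c_g/2\) lies in \(C^\infty\). Since \(X_{\rm u}\) is finite-dimensional, any basis of smooth elements then gives the claimed \(\{(\Phi_{j,S},\Phi_{j,V})\}\).

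We argue by induction on \(q\). Write \(h_q=h\) and \(h_{j-1}=(\mathcal L-\lambda)h_j\), so that \((\mathcal L-\lambda)h_1=0\). Each step solves \((\mathcal L-\lambda)h_j=h_{j-1}\) with a right-hand side that is already smooth by the inductive hypothesis, so it suffices to prove the following statement. If \(h\in X=H^m_0(B(0,3R_1))\) solves \((\mathcal L-\lambda)h=g\) with \(g\) smooth and \(\Re\lambda>-\mathfrak c_g/2\), then \(h\in H^{m+k}\) for every \(k\).

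Recall \(\mathcal L=\widehat\chi_2(\mathcal L^1_s,\mathcal L^1_v)-J(1-\widehat\chi_1)\). This is a first-order symmetric hyperbolic operator with transport field \(-\widehat\chi_2(y+\widehat\chi\bar V)\), an acoustic coupling \(\delta\widehat\chi_2\widehat\chi\bar S\), and a zeroth-order damping \(-J\) outside \(B(0,2R_1)\). We obtain regularity one derivative at a time. Apply a difference quotient, or a mollifier \(\partial_\beta\rho_\epsilon\) with \(|\beta|=m+k\), to the equation and take the \(L^2\) energy pairing, exactly as in the \(H^4\) exterior estimate \eqref{Eq_energy_identity_H_chain} and the weighted estimate for \(E_K\). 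The top-order terms then produce a coercive factor of the form
\[
\Re\lambda + (\Lambda-1) + (m+k)\bigl(1+\partial_r(\widehat\chi\bar{\mathcal V})-\delta|\partial_r(\widehat\chi\bar S)|\bigr) - C,
\]
together with the angular analogue. Inside the region where \(\widehat\chi_1=1\), this factor is bounded below by \((m+k)\bar\eta-C\) thanks to the repulsivity \eqref{eq:profile-radial-repulsivity}--\eqref{eq:profile-angular-repulsivity}. Where the cutoff \(\widehat\chi_2\) degenerates, we instead use \(J\ge J_*\) large, so that the damping dominates all commutators produced by the derivatives of \(\widehat\chi_2\). The condition \(m\ge m_*(\widetilde\eta)\) makes this lower bound positive, with a margin that beats \(|\Re\lambda|\le \mathfrak c_g\). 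The boundary of \(B(0,3R_1)\) contributes no terms because \(h\) lies in \(H^m_0\) and the transport field is tangentially harmless there.

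Combining these, we get an a priori bound \(\|h\|_{H^{m+k+1}}\le C(\|g\|_{H^{m+k+1}}+\|h\|_{H^{m+k}})\). The mollifier argument, which is a Friedrichs commutator estimate, upgrades this a priori bound into actual membership \(h\in H^{m+k+1}\). Iterating in \(k\) gives \(h\in H^\infty\), and Sobolev embedding yields \(h\in C^\infty\).

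The main obstacle is to make the positivity uniform in the regularity index \(k\). The commutator constants grow with \(k\), and the coercive term grows like \((m+k)\bar\eta\), so it must absorb them. I would check that the commutator errors grow at most linearly in \(k\) with a coefficient controlled by \(\|\nabla(\widehat\chi\bar V,\widehat\chi\bar S)\|_{L^\infty}\), as in Lemma \ref{lemmacommunator}, so that the repulsivity gain wins for all \(k\). The transition annulus of \(\widehat\chi_1,\widehat\chi_2\), where \(J\) must compensate, is where the dependence \(J_*=J_*(R_1,\mathfrak c_g,\widetilde\eta,\bar S,\bar V)\) enters.
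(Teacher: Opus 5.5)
The paper does not prove this lemma; it imports it from \cite{Cao-Labora-Gomez-Serrano-Shi-Staffilani-2023}. So I judge your argument on its own terms. Your reduction to Jordan chains and your final choice of a smooth basis are fine. The gap is in the regularity step, specifically in the annulus $2R_1\le|y|\le 3R_1$ where $\widehat\chi_2$ drops from $1$ to $0$.

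Your coercive factor leaves out the cutoff. The transport coefficient of $\mathcal L$ is $\widehat\chi_2\,(y+\widehat\chi\bar V)$, not $y+\widehat\chi\bar V$. At level $n=m+k$, the derivative that falls on this coefficient contributes the following term to the energy identity for $\nabla^n h$ in the annulus:
\[
-n\int\partial_r\bigl[\widehat\chi_2\,(r+\bar{\mathcal V})\bigr]\,|\partial_r\nabla^{n-1}h|^2\,dy .
\]
The quantity $\widehat\chi_2(r+\bar{\mathcal V})$ falls from about $2R_1$ to $0$ over an interval of length $R_1$. Hence its radial derivative is about $-2$ somewhere in the annulus.

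At such a point this term is anti-dissipative, of size roughly $2n$. The repulsivity gain $n(1+\partial_r\bar{\mathcal V}-\delta|\partial_r\bar S|)$ you rely on is no longer present there; this term has replaced it. It is a top-order term, not a lower-order commutator. Only the damping can absorb it, and that forces $J\gtrsim 2n$. This is exactly where the hypothesis $J\ge 2m$ in Lemma~\ref{lem:truncated-spectral-decomposition} comes from.

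But $J$ is fixed here; the statement even takes $J_*$ independent of $m$, let alone of $k$. So your estimate breaks down once $k\gtrsim J$. The mechanism in your last paragraph, where $(m+k)\bar\eta$ beats errors linear in $k$, is unavailable exactly where those errors occur. Your mollifier upgrade fails for the same reason: it works only if the level-$(n+1)$ coercivity dominates the top-order mollifier commutator.

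At best your argument gives $X_{\rm u}\subset H^n$ for $n\lesssim J$. That is not enough, because the nonlinear argument needs the $\Phi_j$ in $H^K$ with $K$ chosen after $J$; see \eqref{eq:K-large-condition}.

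What you need is an argument in the annulus whose constants do not grow with the regularity index relative to $J$. This is required however you upgrade regularity, whether by mollifiers or by comparing Riesz projections of realizations of $\mathcal L$ at different regularity levels.

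One option is to measure $\nabla^n h$ there with a weight that degenerates at $|y|=3R_1$, for instance $\widehat\chi_2^{2\theta n}$ with a suitable $\theta\ge 1$. The transport acting on this weight produces $\theta n\,\widehat\chi_2^{2\theta n}\,\partial_r\widehat\chi_2\,(r+\bar{\mathcal V})\,|\nabla^n h|^2\le 0$, which absorbs the bad term. This is the same kind of balance between a weight and the order-$n$ commutator that the weight $\phi^K$ is built for in the bound on $E_K$.

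That only gives regularity inside $B(0,3R_1)$. Smoothness of the zero extension across $|y|=3R_1$ is a separate issue, and your remark that the boundary ``contributes no terms'' takes it for granted. There you must use that the effective damping $(J+\lambda)/\widehat\chi_2$ blows up, which forces $h$ and all its derivatives to vanish on the sphere. This step genuinely uses that $\widehat\chi_2$ vanishes to infinite order at $|y|=3R_1$. In a radial toy model with a linearly vanishing cutoff, the eigenfunctions have only finitely many derivatives at $|y|=3R_1$, and the number grows linearly in $J$.
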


	Combining the preceding two lemmas gives the form used in the nonlinear
	analysis.
	
	\begin{lemma}[\cite{Cao-Labora-Gomez-Serrano-Shi-Staffilani-2023}, Proposition 1.8][Stable--unstable splitting]
		\label{lem:new-stable-unstable-summary}
		Choose \(m\) and \(J\) according to
		Lemmas~\ref{lem:truncated-spectral-decomposition} and
		\ref{lem:new-smooth-unstable-space}. Then
		\[
		X=X_{\rm s}\oplus X_{\rm u},
		\]
		where both subspaces are invariant under \(\mathcal L\),
		\[
		\dim X_{\rm u}=N<\infty,
		\]
		and \(X_{\rm u}\) is generated by smooth functions
		\[
		X_{\rm u}
		=
		\operatorname{span}
		\left\{
		(\Phi_{j,S},\Phi_{j,V})
		\right\}_{j=1}^{N}.
		\]
		There exists an inner product
		\(\langle\cdot,\cdot\rangle_{\Upsilon}\) on \(X_{\rm u}\), with associated
		norm \(\|\cdot\|_{\Upsilon}\), such that
		\begin{equation}\label{eq:new-splitting-estimates}
			\begin{aligned}
				\Re\langle\mathcal L h,h\rangle_{\Upsilon}
				&\ge
				-\frac{3\mathfrak c_g}{5}\|h\|_{\Upsilon}^2,
				&&h\in X_{\rm u},\\
				\|e^{\tau\mathcal L}g\|_X
				&\le
				e^{-\frac{\mathfrak c_g}{2}\tau}\|g\|_X,
				&&g\in X_{\rm s},\quad \tau\ge0.
			\end{aligned}
		\end{equation}
		Furthermore, because \(X_{\rm u}\) is finite-dimensional, its \(X\)-norm and
		\(\Upsilon\)-norm are equivalent: there is a constant \(C_m\ge1\)
		such that
		\begin{equation}\label{eq:new-unstable-norm-equivalence}
			C_{m}^{-1}\|h\|_X
			\le
			\|h\|_{\Upsilon}
			\le
			C_{m}\|h\|_X,
			\qquad h\in X_{\rm u}.
		\end{equation}
	\end{lemma}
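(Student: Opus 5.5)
The statement to prove is Lemma~\ref{lem:new-stable-unstable-summary}. It packages Lemmas~\ref{lem:truncated-spectral-decomposition} and~\ref{lem:new-smooth-unstable-space}, so the plan is to combine them and then build the inner product $\langle\cdot,\cdot\rangle_\Upsilon$ explicitly.

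First, fix $\mathfrak c_g=\sigma_g$. Then choose $m\ge\max\{C\widetilde\eta^{-1},m_*(\widetilde\eta)\}$ and $J\ge\max\{2m,J_*\}$. With these choices, Lemma~\ref{lem:truncated-spectral-decomposition} gives the following:
\begin{itemize}
\item the splitting $X=X_{\rm s}\oplus X_{\rm u}$ into $\mathcal L$-invariant subspaces;
\item finiteness of $N=\dim X_{\rm u}$, since $X_{\rm u}$ is a finite sum of generalized eigenspaces of finite algebraic multiplicity;
\item the semigroup bound \eqref{eq:new-stable-semigroup} on $X_{\rm s}$, which is exactly the second line of \eqref{eq:new-splitting-estimates}.
\end{itemize}
Lemma~\ref{lem:new-smooth-unstable-space} then shows that every element of $X_{\rm u}$ is smooth. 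So any basis of $X_{\rm u}$ consists of $C^\infty$ pairs, and we fix one, $\{(\Phi_{j,S},\Phi_{j,V})\}_{j=1}^N$.

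Second, construct $\Upsilon$. Let $\{\Psi_j\}$ be the basis of $X_{\rm u}$ in which $\mathcal L|_{X_{\rm u}}$ has the block form $\operatorname{diag}(\mathbb J_1,\dots,\mathbb J_L)$. Here the off-diagonal entries $\mathfrak c_g/10$ come from rescaling the standard Jordan basis: replace a chain $e_1,\dots,e_q$ by $(\mathfrak c_g/10)^{k}e_k$. Write $h=\sum z_j\Psi_j$ and define
\[
\langle h,g\rangle_\Upsilon:=\sum_j z_j\overline{w_j},\qquad\text{where } g=\sum_j w_j\Psi_j,
\]
that is, declare $\{\Psi_j\}$ orthonormal. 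Then $\Re\langle\mathcal Lh,h\rangle_\Upsilon$ equals $\Re(\overline z^{\top}(\mathcal L|_{X_{\rm u}})z)$. By \eqref{eq:new-unstable-lower-bound} this is at least $-\tfrac{3\mathfrak c_g}{5}|z|^2$, which gives the first line of \eqref{eq:new-splitting-estimates}.

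For completeness, the bound \eqref{eq:new-unstable-lower-bound} can be checked directly. The diagonal part contributes $\sum_j\Re\lambda_j|z_j|^2>-\tfrac{\mathfrak c_g}{2}|z|^2$. The superdiagonal part is bounded by Cauchy--Schwarz:
\[
\Bigl|\tfrac{\mathfrak c_g}{10}\sum z_{k+1}\overline{z_k}\Bigr|\le\tfrac{\mathfrak c_g}{10}|z|^2 .
\]
Adding these gives at least $-\tfrac{3\mathfrak c_g}{5}|z|^2$.

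Finally, norm equivalence \eqref{eq:new-unstable-norm-equivalence} holds because any two norms on the finite-dimensional space $X_{\rm u}$ are equivalent. The constant $C_m$ depends on the basis, and hence on $m$ and $J$.

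The main point to watch is that the Jordan basis vectors $\Psi_j$ are complex. The space $X$ is real, so we work in its complexification. The $\Upsilon$-norm of a real element is still well-defined, because $\Sigma_{\rm u}$ is closed under conjugation. I expect this bookkeeping, together with ensuring that $X_{\rm u}$ is a genuinely real invariant subspace, to be the only delicate point. Everything else is linear algebra on top of the cited spectral results.
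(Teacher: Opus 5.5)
Your proposal is correct and follows the paper: the paper gives no separate argument, it just combines Lemmas~\ref{lem:truncated-spectral-decomposition} and~\ref{lem:new-smooth-unstable-space}, with $\langle\cdot,\cdot\rangle_{\Upsilon}$ implicitly the inner product that makes the rescaled Jordan basis orthonormal, so the first estimate is exactly \eqref{eq:new-unstable-lower-bound}. One small correction to your closing remark: the restriction of the $\Upsilon$-norm to real elements is well defined regardless. What conjugation-invariance of $\Sigma_{\rm u}$ actually buys you is that $X_{\rm u}$ is the complexification of a real $N$-dimensional $\mathcal L$-invariant subspace, which is what guarantees a real smooth basis $\{\Phi_j\}$ exists.
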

	
	\subsection{Existence of Local-in-time Solutions and Uniqueness}
	In this section, we study the local existence theory for the original
	system, together with weighted gradient estimates.
	
	We consider a Navier--Stokes--Korteweg-type system on $\R^3$:
	\begin{equation}\label{eq:nsk}
		\begin{cases}
			\rho_t + \diver(\rho v) - d\Delta(\rho^\alpha)=0,\\[2mm]
			\rho v_t + \rho v\cdot\nabla v + \nabla\left(\frac{\rho^\gamma}{\gamma}\right)
			= \nu\rho^\alpha\Delta v
			+ \bigl[(\nu-d)+(\alpha-1)(2\nu-d)\bigr]\rho^\alpha\nabla\diver v\\
			\qquad\qquad
			+ \alpha\rho^{\alpha-1}\nabla\rho\cdot
			\bigl[(\nu+d)\nabla v + (\nu-d)(\nabla v)^t
			+ (\alpha-1)(2\nu-d)(\diver v)\I\bigr].
		\end{cases}
	\end{equation}
	The parameters satisfy
	\begin{equation}\label{para3.2}
		0<\alpha\le 1,\qquad \gamma\ge 1,\qquad \nu\ge \varepsilon>0,
	\end{equation}
	and
	\begin{equation}
		d=\nu-\sqrt{\nu^2-\varepsilon^2},\qquad
		\beta:=(\nu-d)+(\alpha-1)(2\nu-d).
	\end{equation}
	We have the following coercivity estimate:
	\begin{equation}\label{ellpiticcondi}
		\nu |\nabla v|^2 + \beta \nabla v : (\nabla v)^{\top} \geqslant \min \Big\{ \alpha(2\nu - d), \, d + (1-\alpha)(2\nu - d) \Big\} |\nabla v|^2.
	\end{equation}
	We prescribe the far-field behavior to be a strictly positive constant state $\rstar > 0$, while the initial data satisfy
	\begin{equation}
		\rho_0-\rstar\in H^K(\R^3),\qquad v_0\in H^{K}(\R^3),\qquad K\ge 6,
	\end{equation}
	and
	\begin{equation}\label{initial3.5}
		0<\ubar\le \rho_0(x)\le \obar,\qquad x\in\R^3.
	\end{equation} 
	Now we state the local existence result, and then establish the spatial decay and local boundedness of the weighted energy for these solutions to satisfy the initial conditions for the bootstrap argument.
	\begin{proposition}\label{thm:main}
		Under the assumptions \eqref{para3.2}--\eqref{initial3.5} above, there exist $T^*>0$ and a unique local solution $(\rho,v)$ of \eqref{eq:nsk} on $[0,T^*]\times\R^3$ such that
		\begin{equation}\label{localboundsrho}
			\frac{\ubar}{2}\le \rho(t,x)\le 2\obar+1,
			\qquad (t,x)\in [0,T^*]\times\R^3,
		\end{equation}
		and
		\begin{equation}
			\rho-\rstar\in C([0,T^*];H^K(\R^3))\cap L^2(0,T^*;H^{K+1}(\R^3)),
		\end{equation}
		\begin{equation}
			\rho_t\in C([0,T^*];H^{K-2}(\R^3))\cap L^2(0,T^*;H^{K-1}(\R^3)),
		\end{equation}
		\begin{equation}
			v\in C([0,T^*];H^K(\R^3))\cap L^2(0,T^*;H^{K+1}(\R^3)),
		\end{equation}
		\begin{equation}\label{vtspaceHk-2}
			v_t\in L^\infty(0,T^*;H^{K-2}(\R^3))\cap L^2(0,T^*;H^{K-1}(\R^3)).
		\end{equation}
		Moreover, if the initial data satisfy
		\[
		\langle x\rangle^{\Lambda}\nabla\rho_0\in H^2,
		\qquad
		\langle x\rangle^{\Lambda}\nabla v_0\in H^2,
		\]
		then this solution satisfies the following decay estimates
		\begin{equation}\label{decayestx-2}
			|\nabla\rho(t,x)|+|\nabla v(t,x)|
			\le
			C\langle x\rangle^{-\Lambda},
			\qquad 0\le t\le T^*.
		\end{equation}
	\end{proposition}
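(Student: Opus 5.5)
The local theory for \eqref{eq:nsk} will be a parabolic iteration. The reformulated system is parabolic in both unknowns as long as $\rho$ stays in a fixed positive strip. The mass equation reads $\rho_t+\diver(\rho v)-d\alpha\rho^{\alpha-1}\Delta\rho-d\alpha(\alpha-1)\rho^{\alpha-2}|\nabla\rho|^2=0$, with diffusion coefficient $d\alpha\rho^{\alpha-1}$ bounded below on the strip. Dividing the momentum equation by $\rho$ gives a Lamé-type operator $\rho^{\alpha-1}(\nu\Delta v+\beta\nabla\diver v)$, which is uniformly elliptic by \eqref{ellpiticcondi}. Both minima there are positive for $0<\alpha\le1$, since $d\le\nu\le 2\nu-d$.

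I will build the solution by linearization. Given $(\bar\rho,\bar v)$ in the class
\[
\mathcal X_T:=\Big\{\ubar/2\le\bar\rho\le2\obar+1,\ \sup_{[0,T]}\|(\bar\rho-\rstar,\bar v)\|_{H^K}^2+\int_0^T\|(\bar\rho-\rstar,\bar v)\|_{H^{K+1}}^2\le M\Big\},
\]
I will solve the linear parabolic system obtained by freezing all coefficients and lower-order nonlinearities at $(\bar\rho,\bar v)$. Classical linear theory for uniformly parabolic systems with $H^K$ coefficients gives a solution. Its $H^K$ energy estimate comes from applying $\partial^\beta$, $|\beta|\le K$, and testing with $\partial^\beta(\rho-\rstar)$ and $\partial^\beta v$. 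The top-order terms give dissipation $c\|\nabla^{K+1}(\rho,v)\|_{L^2}^2$. Commutators are handled by Moser estimates, using $H^{K-2}\subset W^{1,\infty}$ since $K\ge6$, and are absorbed via Young's inequality. This yields the bound $M$ for $T$ small.

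The $L^\infty$ density bounds follow from $\|\rho(t)-\rho_0\|_{L^\infty}\le C\int_0^t\|\rho_t\|_{H^2}\,dt\le C(M)T$, with $\rho_t$ controlled in $H^{K-2}$ by the equation. I will then prove contraction in the lower norm $C([0,T];L^2)\cap L^2(0,T;H^1)$ for differences of iterates. Compactness passes to the limit in $\mathcal X_T$. The same difference estimate gives uniqueness. Time continuity in $H^K$ follows from the standard Lions–Magenes argument. The regularity of $\rho_t$ and $v_t$ is read off from the equations.

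For the weighted gradient decay \eqref{decayestx-2}, set $w=\langle x\rangle^{\Lambda}$. I will apply $\nabla^{1+k}$, $k\le2$, to both equations and test with $w^2\nabla^{1+k}\rho$ and $w^2\nabla^{1+k}v$. The derivatives $\nabla w$ and $\nabla^2 w$ are bounded by $Cw$, because $|\nabla w|\le\Lambda w/\langle x\rangle$. So integration by parts in the diffusion terms produces the good dissipation term plus errors of the form $C\|w\nabla^{1+k}(\rho,v)\|_{L^2}^2$. The transport terms also produce $\diver(w^2v)\le Cw^2$ contributions. The zeroth-order forcing from $\nabla$ of $\diver(\rho v)$ and of the pressure involves only $\nabla\rho$ and $\nabla v$ times bounded coefficients, so it closes linearly in the weighted quantity.

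The step I expect to be the real obstacle is the top-order weighted product in the weighted $H^2$ estimate: terms like $w\,\nabla^3\rho\,\nabla v$ and $w\,\nabla^4 v$ weighted against $\rho^{\alpha-1}$. These are closed with the unweighted $W^{1,\infty}$ bounds on $(\rho,v)$ from $H^K$ together with the weighted dissipation. Once that is done, Grönwall gives $\sup_t\|w\nabla(\rho,v)\|_{H^2}\le C$, possibly after shrinking $T^*$. Sobolev embedding $H^2\subset L^\infty$ then gives $|w\nabla(\rho,v)|\le C$, which is \eqref{decayestx-2}, provided $\nabla^j w/w$ is bounded so that $w\nabla f\in H^2$ is equivalent to the weighted derivatives being in $L^2$.
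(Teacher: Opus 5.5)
Your proposal is correct and follows the same architecture as the paper: an iteration closed by $H^K$ energy estimates, the density strip obtained from a bound on $\rho_t$, contraction of differences in $L^\infty(0,T;L^2)\cap L^2(0,T;H^1)$ followed by compactness and interpolation, and a weighted estimate with weight $\langle x\rangle^{2\Lambda}$ on derivatives of order one to three, closed by Gr\"onwall and $H^2\hookrightarrow L^\infty$. The only differences are in the iteration itself: you linearize both equations at once and divide the momentum equation by $\rho$, whereas the paper freezes only $v^n$, solves the quasilinear density equation by an inner fixed point, and keeps the term $\rho^{n+1}v^{n+1}_t$, which leads it to carry $v_t$ in the iteration class and to run a separate time-differentiated estimate that your formulation avoids, since you read $\rho_t$ and $v_t$ off the equations.
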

	
	\subsubsection{Iteration scheme}
	
	Fix $R>0$ and $T>0$, to be chosen later, and define
	\[
	\mathcal{B}_{R,T}:=
	\Bigl\{
	w:
	w(0)=v_0,\ 
	w\in C([0,T];H^{K})\cap L^2(0,T;H^{K+1}),\
	\]
	\[w_t\in L^\infty(0,T;H^{K-2})\cap L^2(0,T;H^{K-1}),
	\]
	\[
	\hspace{1.9cm}
	\norm{w}_{L^\infty(0,T;H^{K})}
	+\norm{w}_{L^2(0,T;H^{K+1})}
	+\norm{w_t}_{L^\infty(0,T;H^{K-2})}
	+\norm{w_t}_{L^2(0,T;H^{K-1})}
	\le R
	\Bigr\}.
	\]
	Choose the initial seed
	\[
	v^0(t):=e^{t\Delta}v_0,\qquad \rho^0:=\rho_0.
	\]
	
	\begin{lemma}[The seed belongs to the velocity class]\label{lem:seed}
		There exists a constant $C_{\mathrm{heat}}>0$, depending only on $K$, such
		that, for every $0<T\le 1$,
		\begin{equation}\label{eq:seed-estimate}
			\norm{v^0}_{L^\infty(0,T;H^{K})}
			+
			\norm{v^0}_{L^2(0,T;H^{K+1})}
			+
			\norm{\partial_t v^0}_{L^\infty(0,T;H^{K-2})}
			+
			\norm{\partial_t v^0}_{L^2(0,T;H^{K-1})}
			\le
			C_{\mathrm{heat}}\norm{v_0}_{H^{K}}.
		\end{equation}
		Consequently, $v^0\in\mathcal B_{R,T}$ whenever
		\[
		R\ge C_{\mathrm{heat}}\norm{v_0}_{H^{K}}.
		\]
	\end{lemma}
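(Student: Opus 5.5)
We will prove the estimate directly from the Fourier representation of the heat semigroup, using only Plancherel's theorem. Write $\widehat{v^0}(t,\xi)=e^{-t|\xi|^2}\widehat{v_0}(\xi)$. The $H^s$ norms are weighted $L^2$ norms in $\xi$ with weight $\langle\xi\rangle^{2s}$, and $0\le e^{-t|\xi|^2}\le1$. Hence for every $t\ge0$
\[
\norm{v^0(t)}_{H^K}\le \norm{v_0}_{H^K},
\]
which controls the first term. Continuity in time with values in $H^K$ follows from dominated convergence in $\xi$, so $v^0\in C([0,T];H^K)$.

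For the $L^2(0,T;H^{K+1})$ term we use the standard energy identity for the heat flow at frequency level:
\[
\int_0^T |\xi|^2 e^{-2t|\xi|^2}\,dt=\tfrac12\bigl(1-e^{-2T|\xi|^2}\bigr)\le\tfrac12 .
\]
Since $\langle\xi\rangle^{2(K+1)}=\langle\xi\rangle^{2K}(1+|\xi|^2)$, we obtain
\[
\int_0^T\norm{v^0}_{H^{K+1}}^2dt\le T\norm{v_0}_{H^K}^2+\tfrac12\norm{v_0}_{H^K}^2\le \tfrac32\norm{v_0}_{H^K}^2
\]
for $T\le1$. This is the only place where the restriction $T\le1$ is needed.

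For the time derivative we have $\partial_t v^0=\Delta v^0$, that is, $\widehat{\partial_t v^0}=-|\xi|^2 e^{-t|\xi|^2}\widehat{v_0}$. Since $|\xi|^4\langle\xi\rangle^{2(K-2)}\le\langle\xi\rangle^{2K}$, we get $\norm{\partial_tv^0(t)}_{H^{K-2}}\le\norm{v_0}_{H^K}$ uniformly in $t$. Similarly, $\norm{\partial_tv^0}_{H^{K-1}}\le\norm{v^0}_{H^{K+1}}$, so the $L^2(0,T;H^{K-1})$ bound for $\partial_tv^0$ reduces to the previous step.

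Summing the four bounds gives \eqref{eq:seed-estimate} with, for instance, $C_{\mathrm{heat}}=2+2\sqrt{3/2}$, which is independent of $T\in(0,1]$. We have $v^0(0)=v_0$, and the regularity classes were verified above. Therefore $v^0\in\mathcal B_{R,T}$ once $R\ge C_{\mathrm{heat}}\norm{v_0}_{H^K}$. No step is a genuine obstacle. The only point needing care is the gain of one derivative in $L^2_t$: it relies on the exact time integral of the heat multiplier and not on a pointwise-in-time bound, since the latter would lose the $H^{K+1}$ control.
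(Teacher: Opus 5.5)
Your proof is correct and follows essentially the same route as the paper. Your frequency-wise bound $\int_0^T|\xi|^2e^{-2t|\xi|^2}\,dt\le\tfrac12$ is the Plancherel form of the paper's integrated energy identity $\tfrac12\tfrac{d}{dt}\|v^0\|_{H^K}^2+\|\nabla v^0\|_{H^K}^2=0$, and your time-derivative bounds come, as in the paper, from $\partial_t v^0=\Delta v^0$; the Fourier formulation simply makes the constants and the $C([0,T];H^K)$ continuity explicit.
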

	
	\begin{proof}
		Since $v^0=e^{t\Delta}v_0$ solves
		\begin{equation}\label{v0heat}
			\partial_t v^0-\Delta v^0=0,\qquad v^0(0)=v_0,
		\end{equation}
		the standard $L^2$ energy identity gives
		\[
		\frac12\frac{d}{dt}\norm{v^0(t)}_{H^{K}}^2+\norm{\nabla v^0(t)}_{H^{K}}^2=0.
		\]
		After integration in time we obtain
		\[
		\sup_{0\le t\le T}\norm{v^0(t)}_{H^{K}}^2
		+
		2\int_0^T \norm{\nabla v^0(t)}_{H^{K}}^2\,dt
		\le
		\norm{v_0}_{H^{K}}^2.
		\]
		Since
		\[
		\norm{v^0(t)}_{H^{K+1}}^2
		\le
		C\Bigl(
		\norm{v^0(t)}_{H^{K}}^2+\norm{\nabla v^0(t)}_{H^{K}}^2
		\Bigr),
		\]
		we deduce, for $0<T\le 1$,
		\[
		\int_0^T \norm{v^0(t)}_{H^{K+1}}^2\,dt
		\le
		CT\sup_{0\le t\le T}\norm{v^0(t)}_{H^{K}}^2
		+C\int_0^T \norm{\nabla v^0(t)}_{H^{K}}^2\,dt
		\le
		C\norm{v_0}_{H^{K}}^2.
		\]
		It follows from \eqref{v0heat} that
		\[
		\norm{\partial_t v^0(t)}_{H^{K-2}}
		\le
		C\norm{v^0(t)}_{H^{K}}
		\le
		C\norm{v_0}_{H^{K}},
		\]
		and similarly, we obtain
		\[
		\int_0^T \norm{\partial_t v^0(t)}_{H^{K-1}}^2\,dt
		\le
		C\int_0^T \norm{v^0(t)}_{H^{K+1}}^2\,dt
		\le
		C\norm{v_0}_{H^{K}}^2.
		\]
		Combining the above estimates gives \eqref{eq:seed-estimate}.
	\end{proof}
	
	We now linearize the system and proceed by induction to derive the $(n+1)$-th step estimates from the $n$-th step bounds. Given $v^n\in\mathcal{B}_{R,T}$, define $(\rho^{n+1},v^{n+1})$ by
	\begin{equation}\label{eq:rho-iter}
		\rho_t^{n+1}+\diver(\rho^{n+1}v^n)-d\Delta\bigl((\rho^{n+1})^\alpha\bigr)=0,
		\qquad
		\rho^{n+1}(0)=\rho_0,
	\end{equation}
	and
	\begin{equation}\label{eq:v-iter}
		\rho^{n+1}v_t^{n+1}
		+\rho^{n+1}v^n\cdot\nabla v^{n+1}
		+\nabla\left(\frac{(\rho^{n+1})^\gamma}{\gamma}\right)
		=\A_{\rho^{n+1}}v^{n+1},
		\qquad
		v^{n+1}(0)=v_0,
	\end{equation}
	where the operator $\mathcal{A}_{\rho}$, when applied to a function $v$, is given by
	\begin{equation}
		\mathcal{A}_{\rho}v=\text{div} \left( \nu \rho^{\alpha} \nabla v + (\nu - d) \rho^{\alpha} (\nabla v)^{t} + (\alpha - 1)(2\nu - d) \rho^{\alpha} (\text{div} \, v) \I \right)+\alpha d \rho^{\alpha - 1} \nabla \rho \cdot \nabla v,
	\end{equation}
	First, we establish the estimate for $\rho^{n+1}$ via \eqref{eq:rho-iter}.
	\begin{lemma}\label{lem:density}
		Assume $v^n\in\mathcal{B}_{R,T}$. Then, for $T$ sufficiently small depending only on
		\[
		\ubar,\ \obar,\ \rstar,\ \norm{\rho_0-\rstar}_{H^K},\ \norm{v_0}_{H^{K}},\ R,\ K,
		\]
		problem \eqref{eq:rho-iter} admits a unique solution such that
		\[
		\rho^{n+1}-\rstar\in C([0,T];H^K)\cap L^2(0,T;H^{K+1}),
		\qquad
		\rho_t^{n+1}\in C([0,T];H^{K-2})\cap L^2(0,T;H^{K-1}),
		\]
		and
		\begin{equation}\label{eq:rho-apriori}
			\begin{split}
				\sup_{0\le t\le T}\norm{\rho^{n+1}(t)-\rstar}_{H^K}^2
				&+\int_0^T \norm{\rho^{n+1}(t)-\rstar}_{H^{K+1}}^2\,dt\\
				&+\sup_{0\le t\le T}\norm{\rho_t^{n+1}(t)}_{H^{K-2}}^2
				+\int_0^T \norm{\rho_t^{n+1}(t)}_{H^{K-1}}^2\,dt
				\le C_R=:M.
			\end{split}
		\end{equation}
		Moreover,
		\begin{equation}\label{eq:rho-pointwise}
			\frac{\ubar}{2}\le \rho^{n+1}(t,x)\le 2\obar+1
			\qquad\text{on }[0,T]\times\R^3.
		\end{equation}
	\end{lemma}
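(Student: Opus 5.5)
The plan is to treat \eqref{eq:rho-iter} as a quasilinear, uniformly parabolic equation for $\rho:=\rho^{n+1}$ with a given smooth drift $v^n$. The structure is a continuity-type argument. Expanding $d\Delta(\rho^\alpha)=d\alpha\,\diver(\rho^{\alpha-1}\nabla\rho)$ shows that the diffusion coefficient $d\alpha\rho^{\alpha-1}$ is bounded above and below by positive constants as long as $\rho$ stays in $[\ubar/2,\,2\obar+1]$. Existence on a short interval will then follow from a standard scheme: either Galerkin approximation, or a further fixed-point linearization in which one freezes the coefficient $(\tilde\rho)^{\alpha-1}$ and solves a linear parabolic problem. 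The key ingredient is a closed a priori estimate, \eqref{eq:rho-apriori}--\eqref{eq:rho-pointwise}, whose constants depend only on the listed data and on $R$. Uniqueness will come from an $L^2$ energy estimate for the difference of two solutions, using the upper and lower bounds and Gr\"onwall's inequality.

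\textbf{Step 1: pointwise bounds.} Let $w=\rho-\rstar$. Up to the first time the solution leaves the strip $[\ubar/2,\,2\obar+1]$, I would estimate $\|w\|_{H^K}$. Because $K\ge 6$, Sobolev embedding gives
\[
\|\rho(t)-\rho_0\|_{L^\infty}\le C\|\rho(t)-\rho_0\|_{H^2}\le C\int_0^t\|\rho_t\|_{H^{2}}\,ds\le C\sqrt{t}\,\|\rho_t\|_{L^2(0,t;H^{K-1})}.
\]
The right-hand side falls below $\min\{\ubar/2,\,\obar+1\}$ once $T$ is small. A bootstrap then yields \eqref{eq:rho-pointwise}. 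As an alternative, the maximum principle applies directly, since the equation can be written $\rho_t+v^n\cdot\nabla\rho+\rho\,\diver v^n=d\alpha\,\diver(\rho^{\alpha-1}\nabla\rho)$ and $\|\diver v^n\|_{L^\infty}\le CR$; this gives $\rho\in[\ubar e^{-CRt},\,\obar e^{CRt}]$.

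\textbf{Step 2: energy estimate.} Apply $\partial^\beta$ with $|\beta|\le K$, test against $\partial^\beta w$, and integrate by parts in the principal term. This produces the dissipation
\[
d\alpha\int\rho^{\alpha-1}|\nabla\partial^\beta w|^2\ge c\|\nabla\partial^\beta w\|_{L^2}^2 .
\]
The remaining terms are of two kinds. The first are commutators $[\partial^\beta,\rho^{\alpha-1}]\nabla w$; these are handled with Moser composition estimates for $\rho\mapsto\rho^{\alpha-1}$ on the strip, giving bounds $C(1+\|w\|_{H^K})^{K}\|w\|_{H^K}$ after absorbing a $\nabla\partial^\beta w$ factor. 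The second are the transport terms $\partial^\beta\diver(\rho v^n)$. For these I move one derivative onto the test function and bound the result by $\|\rho v^n\|_{H^K}\|\nabla\partial^\beta w\|_{L^2}\le C(\|w\|_{H^K}+1)\|v^n\|_{H^K}\|w\|_{H^{K+1}}$, using $\|v^n\|_{H^K}\le R$. Young's inequality then gives
\[
\frac{d}{dt}\|w\|_{H^K}^2+c\|w\|_{H^{K+1}}^2\le C_R\bigl(1+\|w\|_{H^K}^2\bigr)^{K+1},
\]
and a nonlinear Gr\"onwall argument on a short interval yields the first two terms of \eqref{eq:rho-apriori}. The time-derivative bounds then follow by reading $\rho_t$ off the equation:
\[
\|\rho_t\|_{H^{K-2}}\le C(\|\rho v^n\|_{H^{K-1}}+\|\rho^\alpha-\rstar^\alpha\|_{H^K}),
\]
and similarly $\|\rho_t\|_{H^{K-1}}\lesssim\|w\|_{H^{K+1}}+\dots$, which is square integrable in time.

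The main obstacle is the top-order commutator, in which all $K+1$ derivatives land on $\rho^{\alpha-1}$ through $\nabla\partial^\beta(\rho^\alpha)$. I would handle it by writing $\Delta\rho^\alpha=\diver(\alpha\rho^{\alpha-1}\nabla\rho)$, so that after integration by parts every term carries at most $K+1$ derivatives in total, with at most one factor of order $K+1$. The dangerous term has the form $\int\partial^\beta(\rho^{\alpha-1})\nabla\rho\cdot\nabla\partial^\beta w$; by the composition estimate on the strip it is bounded by $C\|w\|_{H^K}\|\nabla w\|_{L^\infty}\|\nabla\partial^\beta w\|_{L^2}$. Continuity in time with values in $H^K$ follows by the standard Lions–Magenes argument from $w\in L^2H^{K+1}$ and $w_t\in L^2H^{K-1}$.
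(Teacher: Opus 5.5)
Your proposal is correct, and its analytic core is the same as the paper's. Both use the $\partial^\beta$-energy estimate with the diffusion written as $d\alpha\,\diver(\rho^{\alpha-1}\nabla\rho)$, so that after one integration by parts the commutator puts at most $K$ derivatives on the coefficient. Both read $\rho_t$ off the equation, and both get the strip from $\|\rho(t)-\rho_0\|_{L^\infty}\le CT^{1/2}\|\rho_t\|_{L^2(0,T;H^{K-1})}$.

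What differs is the organization. You close an a priori estimate for the quasilinear equation itself, which forces the nonlinear Gr\"onwall inequality $y'\le C_R(1+y)^{K+1}$, and you defer existence to a ``standard scheme.'' The paper makes the scheme explicit:
\begin{itemize}
\item it freezes $a_\eta=d\alpha\eta^{\alpha-1}$ for $\eta$ in the set $\mathcal K_{M,T}$, which already encodes the strip and the bound $M$;
\item consequently the $H^K$ estimate for $S(\eta)$ is linear in the unknown, with constants entering only through $\|\nabla a_\eta\|_{L^\infty(0,T;H^{K-1})}\le C_M$;
\item it takes $M$ large and $T$ small to get the self-map, then proves a contraction in $C([0,T];H^{K-1})$, and uniqueness comes from Banach's theorem.
\end{itemize}
If you choose your freezing option, the estimate you actually need is this linear one for the frozen problem plus the lower-norm contraction, not the nonlinear estimate you wrote down. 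If you choose Galerkin, the maximum principle is not available for the approximants, so the strip has to come from the Sobolev route.

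Your version does buy two things. First, your maximum-principle alternative gives $\rho\in[\ubar e^{-CRt},\obar e^{CRt}]$ using only $\|\diver v^n\|_{L^\infty}\le CR$, with no dependence on any high-order bound. Second, your $L^2$ difference estimate gives uniqueness among all strip-valued solutions with bounded $\nabla\rho$. The paper's fixed-point argument, read literally, gives uniqueness only inside $\mathcal K_{M,T}$.
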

	
	\begin{proof}
		We split the proof into four steps: construction of the iterate, the $H^K$ energy estimate, the time-derivative estimate, and the pointwise strip bound.
		
		\smallskip
		\noindent\textbf{Step 1: construction by a contraction map.}
		Fix $M>0$ and define
		\[
		\mathcal K_{M,T}:=
		\Bigl\{
		\eta:
		\eta-\rstar\in C([0,T];H^K)\cap L^2(0,T;H^{K+1}),
		\eta_t\in L^2(0,T;H^{K-1}),
		\]
		\[
		\eta(0)=\rho_0,\qquad
		\frac{\ubar}{2}\le \eta(t,x)\le 2\obar+1,
		\]
		\[
		\sup_{0\le t\le T}\norm{\eta(t)-\rstar}_{H^K}^2
		+\int_0^T\Bigl(
		\norm{\eta(t)-\rstar}_{H^{K+1}}^2+\norm{\eta_t(t)}_{H^{K-1}}^2
		\Bigr)\,dt
		\le M
		\Bigr\}.
		\]
		For $\eta\in\mathcal K_{M,T}$, set
		\[
		a_\eta:=d\alpha \eta^{\alpha-1}.
		\]
		Because $\eta$ stays in the strip $[\ubar/2,2\obar+1]$, there exist positive constants $\lambda_*,\Lambda_*$ depending only on $\ubar,\obar,\alpha,c$ such that
		\[
		0<\lambda_*\le a_\eta(t,x)\le \Lambda_*.
		\]
		Moreover, the Sobolev product estimates yield
		\[
		\norm{\nabla a_\eta}_{L^\infty(0,T;H^{K-1})}
		\le C_M.
		\]
		For fixed \(\eta\in\mathcal K_{M,T}\), consider the linear parabolic problem
		\begin{equation}\label{eq:rho-linear-map}
			\rho_t+\diver(\rho v^n)-\diver(a_\eta\nabla \rho)=0,
			\qquad
			\rho(0)=\rho_0 .
		\end{equation}
		Here \(a_\eta\) is uniformly positive and satisfies
		\begin{equation}\label{aetalambda}
			0<\lambda_*\le a_\eta(t,x)\le \Lambda_*,
		\end{equation}
		and
		\[
		\nabla a_\eta\in L^\infty(0,T;H^{K-1})\subset L^\infty(0,T;L^{\infty}),
		\qquad
		v^n\in L^\infty(0,T;H^{K-1}).
		\]
		Since \(K\) is sufficiently large, the coefficients of
		\eqref{eq:rho-linear-map} have enough spatial regularity and the operator
		\[
		-\diver(a_\eta\nabla\cdot)
		\]
		is uniformly elliptic. Therefore, by the standard theory for linear uniformly
		parabolic equations with Sobolev coefficients, there exists a unique solution
		\(\rho\) satisfying
		\[
		\rho-\rstar\in L^\infty(0,T;H^K)\cap L^2(0,T;H^{K+1}),
		\qquad
		\rho_t\in L^2(0,T;H^{K-1}).
		\]
		We therefore define
		\[
		S(\eta):=\rho .
		\]
		
		It remains to note that \(\mathcal K_{M,T}\) is closed in the contraction
		metric \(C([0,T];H^{K-1})\). Indeed, suppose
		\[
		\eta^m\to \eta
		\qquad\text{in } C([0,T];H^{K-1}),
		\]
		and \(\eta^m\in\mathcal K_{M,T}\) for every \(m\). The uniform bounds in the
		definition of \(\mathcal K_{M,T}\) imply weak-* compactness in
		\(L^\infty(0,T;H^K)\) and weak compactness in
		\(L^2(0,T;H^{K+1})\) for \(\eta^m-\rstar\), and in
		\(L^2(0,T;H^{K-1})\) for \(\partial_t\eta^m\). The strong convergence in
		\(C([0,T];H^{K-1})\) identifies the weak limit with \(\eta\). By lower
		semicontinuity, the same bounds hold for \(\eta\).
		
		Furthermore, since
		\[
		\eta-\rstar\in L^2(0,T;H^{K+1}),
		\qquad
		\eta_t\in L^2(0,T;H^{K-1}),
		\]
		we arrive at
		\[
		\eta-\rstar\in C([0,T];H^K).
		\]
		The convergence in \(C([0,T];H^{K-1})\) also implies
		\[
		\eta(0)=\rho_0.
		\]
		Finally, because \(K-1>3/2\), we have
		\[
		H^{K-1}(\mathbb R^3)\hookrightarrow L^\infty(\mathbb R^3).
		\]
		Thus
		\[
		\eta^m\to \eta
		\qquad\text{in } C([0,T];L^\infty(\mathbb R^3)).
		\]
		Since every \(\eta^m\) satisfies the pointwise strip condition
		\[
		\frac{\ubar}{2}\le \eta^m(t,x)\le 2\obar+1,
		\]
		passing to the limit gives
		\[
		\frac{\ubar}{2}\le \eta(t,x)\le 2\obar+1.
		\]
		Therefore \(\eta\in\mathcal K_{M,T}\), and
		\[
		(\mathcal K_{M,T},\|\cdot\|_{C([0,T];H^{K-1})})
		\]
		is a complete metric space.\\
		\smallskip
		\noindent\textbf{Step 2: the $H^K$ estimate for $S(\eta)$.}
		Applying $\partial^\beta$, $|\beta|\le K$, to \eqref{eq:rho-linear-map}, testing by $\partial^\beta(\rho-\rstar)$, and integrating over $\R^3$, we obtain
		\begin{align}
			\frac12\frac{d}{dt}\norm{\partial^\beta (\rho-\rstar)}_{L^2}^2
			+\int_{\R^3} a_\eta |\nabla \partial^\beta \rho|^2\,dx
			=I_\beta+J_\beta,\label{eq:rho-map-energy}
		\end{align}
		where
		\[
		I_\beta:=
		\int_{\R^3}\partial^\beta(\rho v^n)\cdot\nabla \partial^\beta \rho\,dx,
		\]
		and
		\[
		J_\beta:=
		-\int_{\R^3}
		\Bigl(
		\partial^\beta(a_\eta\nabla \rho)-a_\eta \nabla\partial^\beta \rho
		\Bigr)\cdot\nabla\partial^\beta \rho\,dx.
		\]
		
		For $I_\beta$, when $|\beta|\ge 1$, we define
		\[
		\mathcal C_\beta(\rho,v^n)
		:=
		\partial^\beta(\rho v^n)
		-
		v^n\partial^\beta\rho
		-
		\rho\,\partial^\beta v^n .
		\]
		Hence
		\[
		I_\beta=I_{\beta,1}+I_{\beta,2}+I_{\beta,3},
		\]
		with
		\[
		\begin{aligned}
			I_{\beta,1}
			&:=
			\int_{\R^3}
			v^n\partial^\beta\rho\cdot\nabla\partial^\beta\rho\,dx,\\
			I_{\beta,2}
			&:=
			\int_{\R^3}
			\rho\,\partial^\beta v^n\cdot\nabla\partial^\beta\rho\,dx,\\
			I_{\beta,3}
			&:=
			\int_{\R^3}
			\mathcal C_\beta(\rho,v^n)\cdot\nabla\partial^\beta\rho\,dx .
		\end{aligned}
		\]
		
		For the first term, integration by parts gives
		\[
		I_{\beta,1}
		=
		-\frac12
		\int_{\R^3}
		\diver v^n\,|\partial^\beta\rho|^2\,dx .
		\]
		Therefore, by Hölder's inequality and Sobolev embedding,
		\[
		\begin{aligned}
			|I_{\beta,1}|
			&\le
			C\|\nabla v^n\|_{L^3}
			\|\partial^\beta\rho\|_{L^6}
			\|\partial^\beta(\rho-\rstar)\|_{L^2}  \\
			&\le
			C\|v^n\|_{H^{K-1}}
			\|\rho-\rstar\|_{H^{K+1}}
			\|\rho-\rstar\|_{H^K}.
		\end{aligned}
		\]
		Since $v^n\in L^\infty(0,T;H^{K})$ and
		$\|v^n\|_{L^\infty(0,T;H^{K-1})}\le R$, we obtain
		\[
		|I_{\beta,1}|
		\le
		\vartheta\|\rho-\rstar\|_{H^{K+1}}^2
		+
		C_{\vartheta,R}\|\rho-\rstar\|_{H^K}^2 .
		\]
		
		For the second term, using the uniform upper bound of $\rho$, we have
		\[
		\begin{aligned}
			|I_{\beta,2}|
			&\le
			\|\rho\|_{L^\infty}
			\|\partial^\beta v^n\|_{L^2}
			\|\nabla\partial^\beta\rho\|_{L^2}  \\
			&\le
			(C\|\rho-\rstar\|_{H^{K}} +\rstar)
			\|v^n\|_{H^K}
			\|\rho-\rstar\|_{H^{K+1}} .
		\end{aligned}
		\]
		Hence
		\[
		|I_{\beta,2}|
		\le
		\vartheta\|\rho-\rstar\|_{H^{K+1}}^2
		+
		C_{\vartheta}\|\rho-\rstar\|_{H^{K}}^2\|v^n\|_{H^K}^2+C_{\vartheta}\|v^n\|_{H^K}^2.
		\]
		
		For the lower-order remainder, we have the following estimate:
		\[
		\|\mathcal C_\beta(\rho,v^n)\|_{L^2}
		\le
		C
		\|\rho-\rstar\|_{H^K}
		\|v^n\|_{H^{K-1}} .
		\]
		Thus
		\[
		\|\mathcal C_\beta(\rho,v^n)\|_{L^2}
		\le
		C_R\|\rho-\rstar\|_{H^K}.
		\]
		Consequently,
		\[
		\begin{aligned}
			|I_{\beta,3}|
			&\le
			\|\mathcal C_\beta(\rho,v^n)\|_{L^2}
			\|\nabla\partial^\beta\rho\|_{L^2}  \\
			&\le
			C_R
			\|\rho-\rstar\|_{H^K}
			\|\rho-\rstar\|_{H^{K+1}}  \\
			&\le
			\vartheta\|\rho-\rstar\|_{H^{K+1}}^2
			+
			C_{\vartheta,R}\|\rho-\rstar\|_{H^K}^2 .
		\end{aligned}
		\]
		
		Combining the above estimates, we get
		\[
		|I_\beta|
		\le
		3\vartheta\|\rho-\rstar\|_{H^{K+1}}^2
		+
		C_{\vartheta,R}\|\rho-\rstar\|_{H^K}^2
		+
		C_{\vartheta}\|v^n\|_{H^K}^2+C_{\vartheta}\|\rho-\rstar\|_{H^K}^2\|v^n\|_{H^K}^2.
		\]
		The case $\beta=0$ is simpler and satisfies the same bound.
		
		For $J_\beta$, the commutator estimate gives
		\[
		\norm{\partial^\beta(a_\eta\nabla\rho)-a_\eta\nabla\partial^\beta\rho}_{L^2}
		\le C \norm{\nabla a_\eta}_{H^{K-1}}\norm{\rho-\rstar}_{H^K},
		\]
		hence
		\[
		|J_\beta|
		\le \vartheta \norm{\rho-\rstar}_{H^{K+1}}^2
		+C_{\vartheta,M}\norm{\rho-\rstar}_{H^K}^2.
		\]
		Summing over $|\beta|\le K$ and taking $\vartheta>0$ small, we obtain
		\begin{equation}\label{eq:rho-map-HK}
			\frac{d}{dt}\norm{\rho-\rstar}_{H^K}^2
			+c_0 \norm{\rho-\rstar}_{H^{K+1}}^2
			\le C_{M,R}\norm{\rho-\rstar}_{H^K}^2+C\|v^n\|_{H^K}^2+C\|\rho-\rstar\|_{H^K}^2\|v^n\|_{H^K}^2,
		\end{equation}
		where $c_0>0$ depends only on $\lambda_*$. Gr\"onwall's inequality yields
		\[
		\sup_{0\le t\le T}\norm{\rho(t)-\rstar}_{H^K}^2
		+\int_0^T c_0\norm{\rho(t)-\rstar}_{H^{K+1}}^2\,dt
		\le C e^{C_{M,R}T+C_RT}(\norm{\rho_0-\rstar}_{H^K}^2+C_{R}T).
		\]
		
		\smallskip
		\noindent\textbf{Step 3: the $\rho_t$ estimate and self-mapping property.}
		From \eqref{eq:rho-linear-map},
		\[
		\rho_t=\diver(a_\eta\nabla \rho)-\diver(\rho v^n).
		\]
		Using
		\[
		\eta \in\mathcal K_{M,T},\quad \nabla a_\eta\in L^\infty(0,T;H^{K-1}),\quad
		\rho-\rstar\in L^\infty(0,T;H^K)\cap L^2(0,T;H^{K+1}),
		\]
		and
		\[
		v^n\in L^\infty(0,T;H^{K})\cap L^2(0,T;H^{K+1}),
		\]
		we infer
		\[
		\rho_t\in L^\infty(0,T;H^{K-2})\cap L^2(0,T;H^{K-1}),
		\]
		with
		\[
		\sup_{0\le t\le T}\norm{\rho_t(t)}_{H^{K-2}}^2
		+\int_0^T \norm{\rho_t(t)}_{H^{K-1}}^2\,dt
		\le C_{R}.
		\]
		Now choose $M$ large enough depending on $\norm{\rho_0-\rstar}_{H^K}^2$ and $C_R$, and then choose $T$ so small that
		\[
		C e^{C_{M,R}T+C_RT}\norm{\rho_0-\rstar}_{H^K}^2\le \frac{M}{4},
		\quad
		C e^{C_{M,R}T+C_RT}C_RT \le\frac{M}{4} ,\quad C_RT\le \frac{M}{4}.
		\]
		Then
		\[
		\sup_{0\le t\le T}\norm{\rho(t)-\rstar}_{H^K}^2
		+\int_0^T\Bigl(
		\norm{\rho(t)-\rstar}_{H^{K+1}}^2+\norm{\rho_t(t)}_{H^{K-1}}^2
		\Bigr)\,dt
		\le M,
		\]
		so the norm part of $S(\eta)\in \mathcal K_{M,T}$ is established. Furthermore,
		\[
		\rho-\rstar\in L^2(0,T;H^{K+1}),
		\qquad
		\rho_t\in L^2(0,T;H^{K-1}),
		\]
		and therefore yields
		\[
		\rho-\rstar\in C([0,T];H^K).
		\]
		Returning to
		\[
		\rho_t=\diver(a_\eta\nabla \rho)-\diver(\rho v^n),
		\]
		we use $\rho-\rstar\in C([0,T];H^K)$, $v^n\in C([0,T];H^{K-1})$, and the continuity of
		\[
		(\rho,w)\longmapsto -\diver(\rho w)+\diver(a_\eta\nabla \rho)
		\]
		from $H^K(\R^3)\times H^{K-1}(\R^3)$ to $H^{K-2}(\R^3)$ on the fixed strip to
		obtain
		\[
		\rho_t\in C([0,T];H^{K-2}).
		\]
		Moreover, since $K\ge 6$,
		\[
		\norm{\rho(t)-\rho_0}_{L^\infty}
		\le C \norm{\rho(t)-\rho_0}_{H^{K-1}}
		\le C \int_0^t \norm{\rho_s(s)}_{H^{K-1}}\,ds
		\le C T^{1/2} \norm{\rho_t}_{L^2(0,T;H^{K-1})}.
		\]
		Therefore
		\[
		\norm{\rho(t)-\rho_0}_{L^\infty}
		\le C T^{1/2} C_{R}^{1/2}.
		\]
		Shrinking $T$ further so that
		\[
		C T^{1/2} C_{R}^{1/2}
		\le
		\min\left\{\frac{\ubar}{2},\,\obar+1\right\},
		\]
		we get
		\[
		\frac{\ubar}{2}\le \rho(t,x)\le 2\obar+1
		\qquad\text{for }(t,x)\in [0,T]\times \R^3.
		\]
		Hence $S(\eta)\in \mathcal K_{M,T}$.
		
		\smallskip
		\noindent\textbf{Step 4: pointwise strip and contraction.}
		Our goal next is to show that $\mathcal{S}: \eta \mapsto \rho$ defines a contraction mapping. Let $\eta_1,\eta_2\in \mathcal K_{M,T}$ and set
		\[
		\rho_i:=S(\eta_i),\qquad a_i:=d\alpha \eta_i^{\alpha-1},\qquad r:=\rho_1-\rho_2.
		\]
		Then $r(0)=0$ and
		\[
		r_t+\diver(rv^n)-\diver(a_1\nabla r)=\diver((a_1-a_2)\nabla \rho_2).
		\]
		Applying $\partial^\beta$ with $|\beta|\le K-1$ to the equation, testing the resulting identity by $\partial^\beta r$, and integrating over $\mathbb{R}^3$, we obtain the following energy estimate by a similar argument as in Step 2:
		\[
		\frac{d}{dt}\norm{r}_{H^{K-1}}^2
		+c_1 \norm{r}_{H^{K}}^2
		\le C\norm{r}_{H^{K-1}}^2\|v^n\|_{H^K}^2+C_{M,R}\norm{r}_{H^{K-1}}^2
		+C_{M,R}\norm{a_1-a_2}_{H^{K-1}}^2.
		\]
		Here we use that the map $\eta\mapsto a_\eta$ is Lipschitz on the fixed strip:
		\[
		\norm{a_1-a_2}_{H^{K-1}}
		\le C_M \norm{\eta_1-\eta_2}_{H^{K-1}}.
		\]
		Gr\"onwall's inequality and $r(0)=0$ give
		\[
		\sup_{0\le t\le T}\norm{r(t)}_{H^{K-1}}^2
		\le C_{M,R} T e^{C_{M,R}T}
		\sup_{0\le t\le T}\norm{\eta_1(t)-\eta_2(t)}_{H^{K-1}}^2.
		\]
		Shrink $T$ one last time so that
		\[
		C_{M,R} Te^{C_{M,R}T}\le \frac12.
		\]
		Then $S$ is a strict contraction on $\mathcal K_{M,T}$ in the
		$C([0,T];H^{K-1})$ norm. Banach's fixed-point theorem yields a unique fixed
		point $\rho^{n+1}=S(\rho^{n+1})$, which solves \eqref{eq:rho-iter}. The
		definition of $\mathcal K_{M,T}$ gives
		\[
		\rho^{n+1}-\rstar\in C([0,T];H^K)\cap L^2(0,T;H^{K+1}),
		\qquad
		\rho_t^{n+1}\in L^2(0,T;H^{K-1}),
		\]
		while the continuity of $\rho_t^{n+1}$ in $H^{K-2}$ was established above in
		Step 3. Hence the fixed point has the full regularity asserted in the lemma.
	\end{proof}
	
	\begin{lemma}\label{lem:velocity}
		Assume that $\rho^{n+1}$ satisfies the conclusions of Lemma \ref{lem:density}. Then there exists a small time (still denoted by $T$ for convenience) such that problem \eqref{eq:v-iter} admits a unique solution satisfying
		\[
		v^{n+1}\in C([0,T];H^{K})\cap L^2(0,T;H^{K+1}),
		\qquad
		v_t^{n+1}\in L^\infty(0,T;H^{K-2})\cap L^2(0,T;H^{K-1}),
		\]
		and
		\begin{equation}\label{eq:v-apriori}
			\begin{split}
				&\sup_{0\le t\le T}\norm{v^{n+1}(t)}_{H^{K}}^2
				+\int_0^T \norm{v^{n+1}(t)}_{H^{K+1}}^2\,dt\\
				&+\sup_{0\le t\le T}\norm{v_t^{n+1}(t)}_{H^{K-2}}^2
				+\int_0^T \norm{v_t^{n+1}(t)}_{H^{K-1}}^2\,dt
				\le 2C(\norm{v^{n+1}(0)}_{H^{K}}^2+\norm{v_t^{n+1}(0)}_{H^{K-2}}^2).
			\end{split}
		\end{equation}
	\end{lemma}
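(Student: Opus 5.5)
The plan is to treat \eqref{eq:v-iter} as a linear parabolic system for $v^{n+1}$ with frozen coefficients built from $\rho^{n+1}$ and $v^n$. Dividing by $\rho^{n+1}$, which is legitimate by \eqref{eq:rho-pointwise}, we write
\[
v_t^{n+1}+v^n\cdot\nabla v^{n+1}-\frac{1}{\rho^{n+1}}\mathcal A_{\rho^{n+1}}v^{n+1}
=-(\rho^{n+1})^{\gamma-2}\nabla\rho^{n+1}.
\]
The principal part is $(\rho^{n+1})^{\alpha-1}\bigl[\nu\Delta+\beta\nabla\operatorname{div}\bigr]$. By the coercivity estimate \eqref{ellpiticcondi} and the strip bound, it is uniformly strongly elliptic, with a Legendre-type lower bound $c_*|\nabla w|^2$ after integration by parts. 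Its coefficients $(\rho^{n+1})^{\alpha-1}$ have gradients in $L^\infty(0,T;H^{K-1})$ by \eqref{eq:rho-apriori}. Existence and uniqueness of a solution in $C([0,T];H^K)\cap L^2(0,T;H^{K+1})$ will then follow from standard linear parabolic theory, for instance a Galerkin scheme or the same contraction argument as Step 1 of Lemma~\ref{lem:density}, once the a priori bound below is in hand. Uniqueness comes from an $L^2$ energy estimate on the difference of two solutions.

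For the a priori estimate, apply $\partial^\beta$ with $|\beta|\le K$, test against $\rho^{n+1}\partial^\beta v^{n+1}$ (or against $\partial^\beta v^{n+1}$ in the divided form), and integrate. The top-order viscous term gives the dissipation $c_0\|\nabla\partial^\beta v^{n+1}\|_{L^2}^2$ by \eqref{ellpiticcondi}. The remaining terms split into two groups.
\begin{itemize}
\item The transport term $v^n\cdot\nabla\partial^\beta v^{n+1}$ is handled by integration by parts, with $\operatorname{div}v^n\in L^\infty$.
\item Commutators $[\partial^\beta,(\rho^{n+1})^{\alpha-1}]\nabla^2 v^{n+1}$, the first-order terms $\rho^{\alpha-1}\nabla\rho\cdot\nabla v$, and $[\partial^\beta,v^n\cdot\nabla]v^{n+1}$ are bounded by Kato--Ponce/Moser estimates. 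This gives $C_{M,R}\|v^{n+1}\|_{H^K}\|v^{n+1}\|_{H^{K+1}}$, which is absorbed by Young's inequality.
\end{itemize}
The pressure forcing is bounded in $H^K$ by $C_M\|\rho^{n+1}-\rho_*\|_{H^{K+1}}$, which is square-integrable in time. Gr\"onwall then yields
\[
\sup_t\|v^{n+1}\|_{H^K}^2+\int_0^T\|v^{n+1}\|_{H^{K+1}}^2\le e^{C_{M,R}T}\bigl(\|v_0\|_{H^K}^2+C_MT+C_M\textstyle\int_0^T\|\rho^{n+1}-\rho_*\|_{H^{K+1}}^2\bigr).
\]
The bound on $v_t^{n+1}$ in $L^\infty H^{K-2}\cap L^2H^{K-1}$ is read off from the equation, exactly as in Step 3 of Lemma~\ref{lem:density}. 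For $T$ small, the whole right-hand side is at most $2C$ times the initial quantities appearing in \eqref{eq:v-apriori}. Continuity in time with values in $H^K$ follows from $v\in L^2H^{K+1}$ and $v_t\in L^2H^{K-1}$ by the Lions--Magenes lemma.

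The main obstacle is the top-order commutator $\partial^\beta\bigl((\rho^{n+1})^{\alpha-1}\bigr)\nabla^2v^{n+1}$ when all $K$ derivatives fall on the coefficient. This term requires $\nabla^{K}\rho^{n+1}\in L^2_x$, times $\nabla^2 v^{n+1}\in L^\infty$. Here $\rho^{n+1}$ is only in $L^\infty H^K$, so we use exactly that, together with $\|\nabla^2v^{n+1}\|_{L^\infty}\lesssim\|v^{n+1}\|_{H^{K}}$, which holds since $K\ge6$.

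A second concern is the constant in \eqref{eq:v-apriori}. The pressure term contributes $C_M T$-type quantities, which are not controlled by the initial data alone. We plan to absorb them by shrinking $T$, using that $\|v_t^{n+1}(0)\|_{H^{K-2}}$ itself controls $\|\nabla\rho_0\|_{H^{K-2}}$ through the equation at $t=0$. If this is too weak, we would instead state the bound with an additional constant depending on $\rho_0$, which suffices for the iteration.
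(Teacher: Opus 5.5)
Your route is sound in outline but differs from the paper's in how $v_t^{n+1}$ is handled.

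The paper keeps the form $\rho^{n+1}v_t^{n+1}+\rho^{n+1}v^n\cdot\nabla v^{n+1}$ and uses the mass equation to produce $\frac12\frac{d}{dt}\int\rho^{n+1}|\partial^\beta v^{n+1}|^2\,dx$. This leaves the commutator $[\partial^\beta,\rho^{n+1}]v_t^{n+1}$, which costs $\|v_t^{n+1}\|_{H^{K-1}}$. The paper therefore also differentiates the equation in time and estimates $\partial^\beta v_t^{n+1}$ for $|\beta|\le K-2$, using bounds on $\rho_t^{n+1}$, $v_t^n$ and $\partial_t\mathcal A_{\rho^{n+1}}$. It then couples the two estimates through a small weight $\lambda_0$.

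Dividing by $\rho^{n+1}$ removes that commutator. One $H^K$ estimate then suffices, and the regularity $v_t^{n+1}\in L^\infty(0,T;H^{K-2})\cap L^2(0,T;H^{K-1})$ is read off from the equation, as $\rho_t$ is in Step 3 of Lemma~\ref{lem:density}. This is shorter and avoids the time-differentiated system.

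The paper's extra estimate aims at bounding $\sup_t\|v_t^{n+1}\|_{H^{K-2}}$ by $\|v_t^{n+1}(0)\|_{H^{K-2}}$, which is the shape of \eqref{eq:v-apriori}. Your read-off bound instead carries factors such as $\sup_t\|v^n\|_{H^{K-2}}$ and $\sup_t\|\rho^{n+1}-\rho_*\|_{H^{K-1}}$. Since Lemma~\ref{lem:invariant} picks $R$ from the constant in \eqref{eq:v-apriori}, you should keep these factors $R$-independent. For instance, use $\|v^n(t)-v_0\|_{H^{K-1}}\le\sqrt{t}\,R$ and $\|\rho^{n+1}(t)-\rho_0\|_{H^{K-1}}\le\sqrt{tM}$ with $T$ small. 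The paper's $\lambda_0$ also depends on $R$ and $M$, so this needs care in either route.

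Two steps need correcting.

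First, do not bound the pressure forcing in $H^K$ by $\|\rho^{n+1}-\rho_*\|_{H^{K+1}}$. The resulting term $C_M\int_0^T\|\rho^{n+1}-\rho_*\|_{H^{K+1}}^2\,dt$ is controlled only by $M$ in \eqref{eq:rho-apriori} and does not become small as $T\to0$. Nor is it controlled by $\|v_t^{n+1}(0)\|_{H^{K-2}}$, which involves at most $K-1$ derivatives of $\rho_0$, so your first absorption plan fails for it. Instead, integrate by parts once at top order, as in \eqref{eq:pressure-combined}, to get $\vartheta\|v^{n+1}\|_{H^{K+1}}^2+C_{\vartheta,M}(1+\|v^{n+1}\|_{H^K}^2)$. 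The pressure then contributes only $C_MT$, which is absorbed by shrinking $T$ exactly as in the paper's final step.

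Second, the genuinely top-order coefficient term is $\partial^\beta\bigl((\rho^{n+1})^{\alpha-2}\nabla\rho^{n+1}\bigr)\nabla v^{n+1}$, not $\partial^\beta\bigl((\rho^{n+1})^{\alpha-1}\bigr)\nabla^2v^{n+1}$. It involves $\nabla^{K+1}\rho^{n+1}$, so Kato--Ponce does not directly give the form $C\|v^{n+1}\|_{H^K}\|v^{n+1}\|_{H^{K+1}}$ you claim. Either move one derivative onto $\partial^\beta v^{n+1}$, as the paper does, or keep the integrable factor $\|\rho^{n+1}-\rho_*\|_{H^{K+1}}\in L^2(0,T)$ in Gr\"onwall.
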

	
	\begin{proof}
		We divide the argument into two steps.
		By classical theory for linear uniformly parabolic systems with Sobolev
		coefficients, there exists a unique strong solution \(v^{n+1}\) satisfying
		\[
		v^{n+1}\in L^\infty(0,T;H^{K})\cap L^2(0,T;H^{K+1}),
		\]
		and
		\[
		\partial_t v^{n+1}\in
		L^\infty(0,T;H^{K-2})\cap L^2(0,T;H^{K-1}).
		\]
		\smallskip
		\noindent\textbf{Step 1: the combined \(H^K\)-estimate.}
		For every multi-index \(\beta\) with \(|\beta|\le K\), applying
		\(\partial^\beta\) to \eqref{eq:v-iter}, testing the resulting equation by
		\(\partial^\beta v^{n+1}\), and integrating over \(\R^3\), we obtain
		\begin{equation}\label{eq:v-combined-energy}
			\begin{aligned}
				&\int_{\R^3}
				\partial^\beta(\rho^{n+1}v_t^{n+1})
				\cdot\partial^\beta v^{n+1}\,dx \\
				&\quad+
				\int_{\R^3}
				\partial^\beta(\rho^{n+1}v^n\cdot\nabla v^{n+1})
				\cdot\partial^\beta v^{n+1}\,dx \\
				&\quad+
				\int_{\R^3}
				\partial^\beta\nabla\left(\frac{(\rho^{n+1})^\gamma}{\gamma}\right)
				\cdot\partial^\beta v^{n+1}\,dx \\
				&=
				\int_{\R^3}
				\partial^\beta(\A_{\rho^{n+1}}v^{n+1})
				\cdot\partial^\beta v^{n+1}\,dx .
			\end{aligned}
		\end{equation}
		
		We first combine the principal time and transport terms. Writing
		\[
		\begin{aligned}
			\partial^\beta(\rho^{n+1}v_t^{n+1})
			&=
			\rho^{n+1}\partial^\beta v_t^{n+1}
			+
			[\partial^\beta,\rho^{n+1}]v_t^{n+1},\\
			\partial^\beta(\rho^{n+1}v^n\cdot\nabla v^{n+1})
			&=
			\rho^{n+1}v^n\cdot\nabla\partial^\beta v^{n+1}
			+
			[\partial^\beta,\rho^{n+1}v^n\cdot\nabla]v^{n+1},
		\end{aligned}
		\]
		and using
		\[
		\rho_t^{n+1}+\diver(\rho^{n+1}v^n)
		=
		d\Delta\bigl((\rho^{n+1})^\alpha\bigr),
		\]
		we find
		\begin{equation}\label{eq:v-principal-time-transport}
			\begin{aligned}
				&\int_{\R^3}
				\rho^{n+1}\partial^\beta v_t^{n+1}
				\cdot\partial^\beta v^{n+1}\,dx
				+
				\int_{\R^3}
				\rho^{n+1}v^n\cdot\nabla\partial^\beta v^{n+1}
				\cdot\partial^\beta v^{n+1}\,dx\\
				&\quad=
				\frac12\frac{d}{dt}
				\int_{\R^3}
				\rho^{n+1}|\partial^\beta v^{n+1}|^2\,dx
				-\frac d2
				\int_{\R^3}
				\Delta\bigl((\rho^{n+1})^\alpha\bigr)
				|\partial^\beta v^{n+1}|^2\,dx .
			\end{aligned}
		\end{equation}
		Since
		\[
		\|\nabla(\rho^{n+1})^\alpha\|_{L^\infty}
		\le C_M,
		\]
		integration by parts and Young's inequality give
		\begin{equation}\label{eq:density-diffusion-remainder}
			\begin{aligned}
				\left|
				\frac d2
				\int_{\R^3}
				\Delta\bigl((\rho^{n+1})^\alpha\bigr)
				|\partial^\beta v^{n+1}|^2\,dx
				\right|
				&=
				\left|
				d\int_{\R^3}
				\nabla\bigl((\rho^{n+1})^\alpha\bigr)\cdot
				\bigl(\partial^\beta v^{n+1}\cdot
				\nabla\partial^\beta v^{n+1}\bigr)\,dx
				\right|\\
				&\le
				\vartheta
				\|\nabla\partial^\beta v^{n+1}\|_{L^2}^2
				+
				C_{\vartheta,M}
				\|\partial^\beta v^{n+1}\|_{L^2}^2.
			\end{aligned}
		\end{equation}
		
		For the commutator generated by the time derivative, the standard estimate
		gives, for \(1\le|\beta|\le K\),
		\begin{equation}\label{eq:time-density-commutator}
			\begin{aligned}
				\|[\partial^\beta,\rho^{n+1}]v_t^{n+1}\|_{L^2}
				&\le
				C\|\nabla\rho^{n+1}\|_{L^\infty}
				\|v_t^{n+1}\|_{H^{K-1}}
				+
				C\|\rho^{n+1}-\rstar\|_{H^K}
				\|v_t^{n+1}\|_{L^\infty}\\
				&\le
				C_M\left(
				\|v_t^{n+1}\|_{H^{K-1}}
				+
				\|v_t^{n+1}\|_{H^{K-3}}
				\right).
			\end{aligned}
		\end{equation}
		Here we used \(K\ge 6\) and
		\(H^{K-3}(\R^3)\hookrightarrow L^\infty(\R^3)\). Consequently,
		\begin{equation}\label{eq:time-density-commutator-energy}
			\begin{aligned}
				&\sum_{1\le|\beta|\le K}
				\left|
				\int_{\R^3}
				[\partial^\beta,\rho^{n+1}]v_t^{n+1}
				\cdot\partial^\beta v^{n+1}\,dx
				\right|\\
				&\quad\le
				C_M
				\left(
				\|v_t^{n+1}\|_{H^{K-1}}
				+
				\|v_t^{n+1}\|_{H^{K-3}}
				\right)
				\|v^{n+1}\|_{H^K}\\
				&\quad\le
				\kappa\|v_t^{n+1}\|_{H^{K-1}}^2
				+
				C_{\kappa,M}\|v^{n+1}\|_{H^K}^2,
			\end{aligned}
		\end{equation}
		where the bounded \(H^{K-3}\)-norm of \(v_t^{n+1}\) has been absorbed into
		the constant.
		
		The transport commutator is estimated similarly. Using the regularity of
		\(\rho^{n+1}\) and the a priori bounds for \(v^n\), we have
		\begin{equation}\label{eq:transport-commutator-combined}
			\begin{aligned}
				&\sum_{1\le|\beta|\le K}
				\left|
				\int_{\R^3}
				[\partial^\beta,\rho^{n+1}v^n\cdot\nabla]v^{n+1}
				\cdot\partial^\beta v^{n+1}\,dx
				\right|\\
				&\quad\le
				C_{M,R}
				\|v^{n+1}\|_{H^K}
				\left(
				\|v^{n+1}\|_{H^K}
				+
				\|\nabla v^{n+1}\|_{H^K}
				\right)\\
				&\quad\le
				\vartheta\|v^{n+1}\|_{H^{K+1}}^2
				+
				C_{\vartheta,M,R}\|v^{n+1}\|_{H^K}^2.
			\end{aligned}
		\end{equation}
		For \(\beta=0\), the corresponding commutator vanishes.
		
		For the pressure contribution, the fixed-strip condition and the Moser
		composition estimate imply
		\[
		\left\|\nabla\left(\frac{(\rho^{n+1})^\gamma}{\gamma}\right)\right\|_{H^{K-1}}
		\le C_M.
		\]
		Thus, integrating by parts at the top order when necessary, we obtain
		\begin{equation}\label{eq:pressure-combined}
			\begin{aligned}
				&\sum_{|\beta|\le K}
				\left|
				\int_{\R^3}
				\partial^\beta\nabla\left(\frac{(\rho^{n+1})^\gamma}{\gamma}\right)
				\cdot\partial^\beta v^{n+1}\,dx
				\right|\\
				&\quad\le
				C
				\left\|\nabla\left(\frac{(\rho^{n+1})^\gamma}{\gamma}\right)\right\|_{H^{K-1}}
				\|v^{n+1}\|_{H^{K+1}}
				+
				C_M\|v^{n+1}\|_{L^2}\\
				&\quad\le
				\vartheta\|v^{n+1}\|_{H^{K+1}}^2
				+
				C_{\vartheta,M}
				\left(1+\|v^{n+1}\|_{H^K}^2\right).
			\end{aligned}
		\end{equation}
		
		We next consider the elliptic term. Recall that
		\[
		\A_{\rho^{n+1}}
		=
		\LL_{\rho^{n+1}}+\Kop_{\rho^{n+1}},
		\]
		where
		\[
		\begin{aligned}
			\LL_\rho v
			&=
			\diver\left(
			\nu\rho^\alpha\nabla v
			+
			(\nu-d)\rho^\alpha(\nabla v)^\top
			+
			(\alpha-1)(2\nu-d)\rho^\alpha\diver v\,\I
			\right),\\
			\Kop_\rho v
			&=
			\alpha d\rho^{\alpha-1}\nabla\rho\cdot\nabla v.
		\end{aligned}
		\]
		We decompose
		\[
		\partial^\beta(\A_{\rho^{n+1}}v^{n+1})
		=
		\A_{\rho^{n+1}}\partial^\beta v^{n+1}
		+
		[\partial^\beta,\A_{\rho^{n+1}}]v^{n+1}.
		\]
		The ellipticity condition and the lower-order structure of
		\(\Kop_{\rho^{n+1}}\) yield
		\begin{equation}\label{eq:elliptic-principal-combined}
			\begin{aligned}
				&-\sum_{|\beta|\le K}
				\int_{\R^3}
				\A_{\rho^{n+1}}\partial^\beta v^{n+1}
				\cdot\partial^\beta v^{n+1}\,dx\\
				&\quad\ge
				\lambda
				\|\nabla v^{n+1}\|_{H^K}^2
				-
				C_M\|v^{n+1}\|_{H^K}^2.
			\end{aligned}
		\end{equation}
		
		For the second-order commutator, the divergence structure gives
		\[
		\begin{aligned}
			[\partial^\beta,\LL_{\rho^{n+1}}]v^{n+1}
			&=
			\nu\,\partial_i\left(
			[\partial^\beta,(\rho^{n+1})^\alpha]\partial_i v^{n+1}
			\right)\\
			&\quad+
			(\nu-d)\,\partial_i\left(
			[\partial^\beta,(\rho^{n+1})^\alpha]\partial_jv_i^{n+1}
			\right)\\
			&\quad+
			(\alpha-1)(2\nu-d)\,
			\nabla\left(
			[\partial^\beta,(\rho^{n+1})^\alpha]\diver v^{n+1}
			\right).
		\end{aligned}
		\]
		After integration by parts, the commutator estimate gives
		\begin{equation}\label{eq:L-commutator-combined}
			\begin{aligned}
				&\sum_{1\le|\beta|\le K}
				\left|
				\int_{\R^3}
				[\partial^\beta,\LL_{\rho^{n+1}}]v^{n+1}
				\cdot\partial^\beta v^{n+1}\,dx
				\right|\\
				&\quad\le
				C_M
				\|v^{n+1}\|_{H^K}
				\|\nabla v^{n+1}\|_{H^K}\\
				&\quad\le
				\vartheta\|v^{n+1}\|_{H^{K+1}}^2
				+
				C_{\vartheta,M}\|v^{n+1}\|_{H^K}^2.
			\end{aligned}
		\end{equation}
		Similarly, since
		\[
		\Kop_{\rho^{n+1}}v^{n+1}
		=
		\alpha d(\rho^{n+1})^{\alpha-1}
		\nabla\rho^{n+1}\cdot\nabla v^{n+1},
		\]
		we have
		\begin{equation}\label{eq:K-commutator-combined}
			\begin{aligned}
				&\sum_{1\le|\beta|\le K}
				\left|
				\int_{\R^3}
				[\partial^\beta,\Kop_{\rho^{n+1}}]v^{n+1}
				\cdot\partial^\beta v^{n+1}\,dx
				\right|\\
				&\quad\le
				C_M\|v^{n+1}\|_{H^K}^2
				+
				\vartheta\|v^{n+1}\|_{H^{K+1}}^2,
			\end{aligned}
		\end{equation}
		Here for the highest-order term in $\rho^{n+1}$, we integrate by parts to transfer one spatial derivative from $\rho^{n+1}$ to $v^{n+1}$. 
		Combining \eqref{eq:elliptic-principal-combined}--\eqref{eq:K-commutator-combined},
		we conclude that
		\begin{equation}\label{eq:full-elliptic-combined}
			\begin{aligned}
				&-\sum_{|\beta|\le K}
				\int_{\R^3}
				\partial^\beta(\A_{\rho^{n+1}}v^{n+1})
				\cdot\partial^\beta v^{n+1}\,dx\\
				&\quad\ge
				c_0\|v^{n+1}\|_{H^{K+1}}^2
				-
				C_M\|v^{n+1}\|_{H^K}^2,
			\end{aligned}
		\end{equation}
		after choosing \(\vartheta>0\) sufficiently small. Here the \(L^2\)-part of
		the \(H^{K+1}\)-norm is absorbed into the lower-order term on the right.
		
		Finally, summing \eqref{eq:v-combined-energy} over all \(|\beta|\le K\), and
		using \eqref{eq:density-diffusion-remainder},
		\eqref{eq:time-density-commutator-energy},
		\eqref{eq:transport-commutator-combined},
		\eqref{eq:pressure-combined}, and
		\eqref{eq:full-elliptic-combined}, we obtain
		\begin{equation}\label{eq:v-HK-final-combined}
			\begin{aligned}
				&\frac{d}{dt}
				\sum_{|\beta|\le K}
				\int_{\R^3}
				\rho^{n+1}|\partial^\beta v^{n+1}|^2\,dx
				+
				c_1\|v^{n+1}\|_{H^{K+1}}^2\\
				&\quad\le
				C_{\kappa,M,R}
				\left(
				1+\|v^{n+1}\|_{H^K}^2
				\right)
				+
				\kappa_1
				\|v_t^{n+1}\|_{H^{K-1}}^2,
			\end{aligned}
		\end{equation}
		where \(\kappa_1>0\) is a constant multiple of \(\kappa\) and will be chosen
		later. Since \(\rho^{n+1}\) is uniformly bounded from below and above,
		\[
		\sum_{|\beta|\le K}
		\int_{\R^3}
		\rho^{n+1}|\partial^\beta v^{n+1}|^2\,dx
		\sim
		\|v^{n+1}\|_{H^K}^2.
		\]
		Thus \eqref{eq:v-HK-final-combined} provides simultaneously the \(L^2\)- and
		\(H^K\)-estimates for \(v^{n+1}\).
		
		\smallskip
		\noindent\textbf{Step 2: the $v_t$ estimate.}
		Differentiate \eqref{eq:v-iter} in time:
		\begin{align}
			\rho^{n+1}v_{tt}^{n+1}
			-\A_{\rho^{n+1}}v_t^{n+1}
			&=
			-\rho_t^{n+1}v_t^{n+1}
			-\rho_t^{n+1}v^n\cdot\nabla v^{n+1}
			-\rho^{n+1}v_t^n\cdot\nabla v^{n+1}\nonumber\\
			&\quad
			-\rho^{n+1}v^n\cdot\nabla v_t^{n+1}
			-\partial_t\nabla\left(\frac{(\rho^{n+1})^\gamma}{\gamma}\right)
			+(\partial_t\A_{\rho^{n+1}})v^{n+1},
			\label{eq:vt-eq}
		\end{align}
		where $(\partial_t\mathcal{A}_{\rho^{n+1}})v^{n+1}$ denotes the terms in which the time derivative acts only on $\rho^{n+1}$. Applying $\partial^\beta$, $|\beta|\le K-2$, testing by $\partial^\beta v_t^{n+1}$, integrating over $\R^3$, and using the following estimate
		\[
		-\int_{\R^3}
		\mathcal A_{\rho^{n+1}}\partial^\beta v_t^{n+1}\cdot \partial^\beta v_t^{n+1}\,dx
		\ge
		c_3\|\nabla \partial^\beta v_t^{n+1}\|_{L^2}^2-C_M \norm{\partial^\beta v_t^{n+1}}_{L^2}^2,
		\]
		we arrive at 
		\begin{equation}\label{vtJ6Ce}
			\frac12\frac{d}{dt}
			\int_{\R^3}\rho^{n+1}|\partial^\beta v_t^{n+1}|^2\,dx
			+c_3\|\nabla \partial^\beta v_t^{n+1}\|_{L^2}^2
			\le
			\sum_{m=1}^6\mathcal J_{\beta,m}
			+
			\mathcal C_\beta^{A}
			+
			\frac12\int_{\R^3}|\rho_t^{n+1}||\partial^\beta v_t^{n+1}|^2\,dx,
		\end{equation}
		for some $c_3>0$, where
		\[
		\mathcal C_\beta^{A}
		:=
		\left|
		\int_{\R^3}
		\Bigl(
		\partial^\beta(\mathcal A_{\rho^{n+1}}v_t^{n+1})
		-
		\mathcal A_{\rho^{n+1}}\partial^\beta v_t^{n+1}
		\Bigr)
		\cdot \partial^\beta v_t^{n+1}\,dx
		\right|,
		\]
		and the $\mathcal J_{\beta,m}$ correspond to the six right-hand side terms in \eqref{eq:vt-eq}. For $\mathcal J_{\beta,1}$, using $L^6-L^3-L^2$ or $L^2-L^{\infty}-L^2$ estimates, we have
		\[
		\mathcal J_{\beta,1}
		\lesssim
		\norm{\rho_t^{n+1}}_{H^{K-2}}
		\norm{v_t^{n+1}}_{H^{K-2}}
		\norm{v_t^{n+1}}_{H^{K-1}}
		\le
		\vartheta \norm{v_t^{n+1}}_{H^{K-1}}^2
		+C_{\vartheta,M} \norm{v_t^{n+1}}_{H^{K-2}}^2.
		\]
		Similarly, using $L^6-L^\infty-L^2-L^3$ or $L^2-L^6-L^6-L^6$ estimates, we arrive at
		\[
		\mathcal J_{\beta,2}
		\lesssim
		\norm{\rho_t^{n+1}}_{H^{K-2}}
		\norm{v^n}_{H^{K-1}}
		\norm{v^{n+1}}_{H^{K}}
		\norm{v_t^{n+1}}_{H^{K-1}}
		\le
		\vartheta \norm{v_t^{n+1}}_{H^{K-1}}^2
		+C_{\vartheta,R,M} \norm{v^{n+1}}_{H^{K}}^2,
		\]
		and using $L^{\infty}-L^2-L^3-L^6$ estimate, we have
		\[
		\mathcal J_{\beta,3}
		\lesssim
		\norm{\rho^{n+1}}_{H^{K}}\norm{v_t^n}_{H^{K-2}}
		\norm{v^{n+1}}_{H^{K}}
		\norm{v_t^{n+1}}_{H^{K-1}}
		\le
		\vartheta \norm{v_t^{n+1}}_{H^{K-1}}^2
		+C_{\vartheta,R,M} \norm{v^{n+1}}_{H^{K}}^2.
		\]
		For $\mathcal J_{\beta,4}$, using an $L^{\infty}$--$L^{\infty}$--$L^{2}$--$L^{2}$ estimate, we have
		\[
		\mathcal J_{\beta,4}
		\le
		\vartheta \norm{v_t^{n+1}}_{H^{K-1}}^2+C_{\vartheta,R,M} \norm{v_t^{n+1}}_{H^{K-2}}^2.
		\]
		For the pressure term, since
		\[
		\partial_t\nabla\left(\frac{(\rho^{n+1})^\gamma}{\gamma}\right)
		=
		\nabla\bigl((\rho^{n+1})^{\gamma-1}\rho_t^{n+1}\bigr),
		\]
		integrating by parts, we have
		\begin{equation}
			\begin{split}
				\left|\int \partial^{\beta} \partial_t\nabla\left(\frac{(\rho^{n+1})^\gamma}{\gamma}\right) \cdot \partial^{\beta}v_t^{n+1}dx\right|&=\left|\int \partial^{\beta} \partial_t\left(\frac{(\rho^{n+1})^\gamma}{\gamma}\right) \cdot \partial^{\beta}\diver v_t^{n+1}dx\right|\\
				&\le\left\|\partial_t\left(\frac{(\rho^{n+1})^\gamma}{\gamma}\right)\right\|_{H^{K-2}}\|v^{n+1}_t\|_{H^{K-1}}\\
				&\le \vartheta \norm{v_t^{n+1}}_{H^{K-1}}^2+C_{\vartheta,M}\norm{\rho_t^{n+1}}^2_{H^{K-2}}.
			\end{split}
		\end{equation}
		Hence
		\[
		\mathcal J_{\beta,5}
		\le
		\vartheta \norm{v_t^{n+1}}_{H^{K-1}}^2
		+C_M.
		\]
		For the term $\mathcal J_{\beta,6}$, we estimate
		\[
		\mathcal J_{\beta,6}
		:=
		\left|
		\int_{\R^3}
		\partial^\beta\bigl((\partial_t\A_{\rho^{n+1}})v^{n+1}\bigr)
		\cdot \partial^\beta v_t^{n+1}\,dx
		\right|.
		\]
		Recall that
		\[
		\begin{aligned}
			(\partial_t \mathcal{A}_{\rho^{n+1}})v^{n+1}
			&=
			\diver\left(
			\nu ((\rho^{n+1})^\alpha)_t \nabla v^{n+1}
			+
			(\nu-d)((\rho^{n+1})^\alpha)_t(\nabla v^{n+1})^\top
			\right) \\
			&\quad+
			\diver\left(
			(\alpha-1)(2\nu-d)((\rho^{n+1})^\alpha)_t
			\diver v^{n+1}\,\I
			\right) \\
			&\quad+
			\alpha c
			\bigl((\rho^{n+1})^{\alpha-1}\nabla\rho^{n+1}\bigr)_t
			\nabla v^{n+1}.
		\end{aligned}
		\]
		For the divergence terms, integrating by parts gives
		\[
		\begin{aligned}
			&\left|
			\int_{\R^3}
			\partial^\beta
			\diver\left(
			((\rho^{n+1})^\alpha)_t\nabla v^{n+1}
			\right)
			\cdot \partial^\beta v_t^{n+1}\,dx
			\right| \\
			&\le
			\left\|
			\partial^\beta\left(
			((\rho^{n+1})^\alpha)_t\nabla v^{n+1}
			\right)
			\right\|_{L^2}
			\|\nabla\partial^\beta v_t^{n+1}\|_{L^2}.
		\end{aligned}
		\]
		The same bound holds for the two other divergence terms. Since
		\[
		\left\|
		((\rho^{n+1})^\alpha)_t\nabla v^{n+1}
		\right\|_{H^{K-2}}
		+
		\left\|
		((\rho^{n+1})^\alpha)_t\diver v^{n+1}
		\right\|_{H^{K-2}}
		\le
		C_M
		\|\rho_t^{n+1}\|_{H^{K-2}}
		\|v^{n+1}\|_{H^{K-1}},
		\]
		we have
		\begin{equation}\label{Ibeta12diver}
			\begin{aligned}
				&\left|
				\int_{\R^3}
				\partial^\beta
				\diver\left(
				((\rho^{n+1})^\alpha)_t\nabla v^{n+1}
				\right)
				\cdot \partial^\beta v_t^{n+1}\,dx
				\right| \\
				&\le C_M
				\|\rho_t^{n+1}\|_{H^{K-2}}
				\|v^{n+1}\|_{H^{K-1}}\|\nabla\partial^\beta v_t^{n+1}\|_{L^2}\\
				&\le\vartheta
				\|v_t^{n+1}\|_{H^{K-1}}^2
				+
				C_{\vartheta,M}
				\|\rho_t^{n+1}\|_{H^{K-2}}^2
				\|v^{n+1}\|_{H^{K-1}}^2.
			\end{aligned}
		\end{equation}
		For the non-divergence part, we use
		\[
		\bigl((\rho^{n+1})^{\alpha-1}\nabla\rho^{n+1}\bigr)_t
		=
		(\rho^{n+1})^{\alpha-1}\nabla\rho_t^{n+1}
		+
		(\alpha-1)(\rho^{n+1})^{\alpha-2}
		\rho_t^{n+1}\nabla\rho^{n+1}.
		\]
		Thus
		\[
		\begin{aligned}
			&\left|
			\int_{\R^3}
			\partial^\beta
			\left[
			\bigl((\rho^{n+1})^{\alpha-1}\nabla\rho^{n+1}\bigr)_t
			\nabla v^{n+1}
			\right]\cdot
			\partial^\beta v_t^{n+1}\,dx
			\right| \\
			&\le I_{\beta}^{nd,1}+I_{\beta}^{nd,2},
		\end{aligned}
		\]
		where
		\[
		\begin{aligned}
			I_{\beta}^{nd,1}
			&:=
			\left|
			\int_{\R^3}
			\partial^\beta
			\left[
			(\rho^{n+1})^{\alpha-1}
			\nabla\rho_t^{n+1}
			\nabla v^{n+1}
			\right]\cdot
			\partial^\beta v_t^{n+1}\,dx
			\right|,\\
			I_{\beta}^{nd,2}
			&:=
			\left|
			\int_{\R^3}
			\partial^\beta
			\left[
			(\alpha-1)(\rho^{n+1})^{\alpha-2}
			\rho_t^{n+1}\nabla\rho^{n+1}
			\nabla v^{n+1}
			\right]\cdot
			\partial^\beta v_t^{n+1}\,dx
			\right|.
		\end{aligned}
		\]
		For \(I_{\beta}^{nd,1}\), by Leibniz' formula,
		\[
		\partial^\beta
		\left[
		(\rho^{n+1})^{\alpha-1}
		\nabla\rho_t^{n+1}
		\nabla v^{n+1}
		\right]
		=
		\sum_{\beta_1+\beta_2+\beta_3=\beta}
		C_{\beta_1,\beta_2,\beta_3}
		\partial^{\beta_1}(\rho^{n+1})^{\alpha-1}
		\nabla\partial^{\beta_2}\rho_t^{n+1}
		\nabla\partial^{\beta_3}v^{n+1}.
		\]
		For each term in the sum, integration by parts gives
		\[
		\begin{aligned}
			&\int_{\R^3}
			\partial^{\beta_1}(\rho^{n+1})^{\alpha-1}
			\nabla\partial^{\beta_2}\rho_t^{n+1}
			\nabla\partial^{\beta_3}v^{n+1}
			\cdot
			\partial^\beta v_t^{n+1}\,dx \\
			&\quad =
			-
			\int_{\R^3}
			\partial^{\beta_2}\rho_t^{n+1}
			\,
			\diver\left(
			\partial^{\beta_1}(\rho^{n+1})^{\alpha-1}
			\nabla\partial^{\beta_3}v^{n+1}
			\cdot
			\partial^\beta v_t^{n+1}
			\right)\,dx .
		\end{aligned}
		\]
		Therefore, using the Sobolev product estimates, the fixed strip bound of
		\(\rho^{n+1}\), and \(|\beta|\le K-2\), we obtain
		\[
		I_{\beta}^{nd,1}
		\le
		C_M
		\|\rho_t^{n+1}\|_{H^{K-2}}
		\left(
		\|v^{n+1}\|_{H^K}
		\|v_t^{n+1}\|_{H^{K-2}}
		+
		\|v^{n+1}\|_{H^{K-1}}
		\|v_t^{n+1}\|_{H^{K-1}}
		\right).
		\]
		For \(I_{\beta}^{nd,2}\), by the Sobolev product estimates,
		\[
		\begin{aligned}
			I_{\beta}^{nd,2}
			&\le
			\left\|
			(\alpha-1)(\rho^{n+1})^{\alpha-2}
			\rho_t^{n+1}\nabla\rho^{n+1}
			\nabla v^{n+1}
			\right\|_{H^{K-2}}
			\|v_t^{n+1}\|_{H^{K-2}} \\
			&\le
			C_M
			\|\rho_t^{n+1}\|_{H^{K-2}}
			\|v^{n+1}\|_{H^{K-1}}
			\|v_t^{n+1}\|_{H^{K-2}}.
		\end{aligned}
		\]
		Combining the estimates for \(I_{\beta}^{nd,1}\) and \(I_{\beta}^{nd,2}\), we get
		\[
		\begin{aligned}
			&\left|
			\int_{\R^3}
			\partial^\beta
			\left[
			\bigl((\rho^{n+1})^{\alpha-1}\nabla\rho^{n+1}\bigr)_t
			\nabla v^{n+1}
			\right]\cdot
			\partial^\beta v_t^{n+1}\,dx
			\right| \\
			&\quad\le
			C_M
			\|\rho_t^{n+1}\|_{H^{K-2}}
			\left(
			\|v^{n+1}\|_{H^K}
			\|v_t^{n+1}\|_{H^{K-2}}
			+
			\|v^{n+1}\|_{H^{K-1}}
			\|v_t^{n+1}\|_{H^{K-1}}
			\right).
		\end{aligned}
		\]
		Hence, by Young's inequality,
		\begin{equation}\label{vtk-1I12}
			\begin{aligned}
				&\left|
				\int_{\R^3}
				\partial^\beta
				\left[
				\bigl((\rho^{n+1})^{\alpha-1}\nabla\rho^{n+1}\bigr)_t
				\nabla v^{n+1}
				\right]\cdot
				\partial^\beta v_t^{n+1}\,dx
				\right| \\
				&\quad\le
				\vartheta
				\|v_t^{n+1}\|_{H^{K-1}}^2
				+
				C_{\vartheta,M}
				\|\rho_t^{n+1}\|_{H^{K-2}}^2
				\left(
				\|v^{n+1}\|_{H^K}^2
				+
				\|v^{n+1}\|_{H^{K-1}}^2
				\right).
			\end{aligned}
		\end{equation}
		Therefore, combining \eqref{Ibeta12diver} (two other divergence terms are similar) with \eqref{vtk-1I12}, we have
		\[
		\mathcal J_{\beta,6}
		\le
		4\vartheta
		\|v_t^{n+1}\|_{H^{K-1}}^2
		+
		C_{\vartheta,M}
		\|\rho_t^{n+1}\|_{H^{K-2}}^2
		\bigl(
		\|v^{n+1}\|_{H^{K}}^2
		+
		\|v^{n+1}\|_{H^{K-1}}^2
		\bigr).
		\]
		
		For the commutator term $\mathcal C_\beta^A$, write
		\[
		\mathcal A_{\rho^{n+1}} v_t^{n+1}
		=
		\diver\Bigl(
		b_1\nabla v_t^{n+1}
		+
		b_2(\nabla v_t^{n+1})^\top
		\Bigr)
		+
		\diver\Bigl(
		b_3\diver v_t^{n+1}\,\I
		\Bigr)
		+
		B\cdot\nabla v_t^{n+1},
		\]
		where
		\[
		b_1:=\nu(\rho^{n+1})^\alpha,\qquad
		b_2:=(\nu-d)(\rho^{n+1})^\alpha,
		\]
		\[
		b_3:=(\alpha-1)(2\nu-d)(\rho^{n+1})^\alpha,
		\qquad
		B:=\alpha d(\rho^{n+1})^{\alpha-1}\nabla\rho^{n+1}.
		\]
		Then
		\[
		\begin{aligned}
			&\partial^\beta(\mathcal A_{\rho^{n+1}}v_t^{n+1})
			-
			\mathcal A_{\rho^{n+1}}\partial^\beta v_t^{n+1} \\
			&=
			\diver\Bigl(
			[\partial^\beta,b_1]\nabla v_t^{n+1}
			+
			[\partial^\beta,b_2](\nabla v_t^{n+1})^\top
			\Bigr) \\
			&\quad+
			\diver\Bigl(
			[\partial^\beta,b_3]\diver v_t^{n+1}\,\I
			\Bigr)
			+
			[\partial^\beta,B\cdot\nabla]v_t^{n+1}.
		\end{aligned}
		\]
		Following an estimate similar to \eqref{eq:L-commutator-combined}, we have
		\[
		\begin{aligned}
			\mathcal C_\beta^A
			&\lesssim
			\sum_{i=1}^3
			\|[\partial^\beta,b_i]\nabla v_t^{n+1}\|_{L^2}
			\|\nabla \partial^\beta v_t^{n+1}\|_{L^2} \\
			&\quad+
			\|[\partial^\beta,B\cdot\nabla]v_t^{n+1}\|_{L^2}
			\|\partial^\beta v_t^{n+1}\|_{L^2}.\\
			&\le C_M
			\|v_t^{n+1}\|_{H^{K-2}}
			\|\partial^\beta v_t^{n+1}\|_{H^1}\\
			&\le
			\vartheta
			\|\partial^\beta v_t^{n+1}\|_{H^1}^2
			+
			C_{\vartheta,M}
			\|v_t^{n+1}\|_{H^{K-2}}^2.
		\end{aligned}
		\]
		
		For the term $\frac12\int_{\R^3}|\rho_t^{n+1}||\partial^\beta v_t^{n+1}|^2\,dx$, we have
		\[
		\frac12\int_{\R^3}|\rho_t^{n+1}||\partial^\beta v_t^{n+1}|^2\,dx \lesssim\|\rho_t^{n+1}\|_{L^{\infty}}\|v_t\|_{H^{K-2}}^2.
		\]
		Substituting the above estimates into \eqref{vtJ6Ce}, summing over $\beta$, and choosing $\vartheta$ sufficiently small, we obtain
		\begin{equation}\label{eq:vt-final}
			\sum_{|\beta|\le K-2} \frac{d}{dt}
			\int_{\R^3}\rho^{n+1}|\partial^\beta v_t^{n+1}|^2\,dx
			+c_3 \norm{v_t^{n+1}}_{H^{K-1}}^2
			\le
			C_{R,M} \norm{v_t^{n+1}}_{H^{K-2}}^2
			+C_{R,M} \norm{v^{n+1}}_{H^{K}}^2
			+C_{R,M}.
		\end{equation}
		
		We now combine \eqref{eq:v-HK-final-combined} and \eqref{eq:vt-final}. Choose
		\[
		\lambda_0:=\min\left\{1,\frac{c_1}{2C_{R,M}}\right\}.
		\]
		Multiplying \eqref{eq:vt-final} by $\lambda_0$ and adding the result to
		\eqref{eq:v-HK-final-combined}, we get
		\begin{equation}
			\begin{split}
				&\frac{d}{dt}\Bigl(\sum_{ |\beta| \le K}\int_{\R^3}\rho^{n+1}|\partial^\beta v^{n+1}|^2\,dx+\lambda_0\sum_{|\beta|\le K-2} \int_{\R^3}\rho^{n+1}|\partial^\beta v_t^{n+1}|^2\,dx
				\Bigr)
				\\
				&\quad+c_1 \norm{v^{n+1}}_{H^{K+1}}^2
				+\lambda_0 c_3 \norm{v_t^{n+1}}_{H^{K-1}}^2\\
				&\le
				(C_{\kappa,R,M}+\lambda_0 C_{R,M}) \norm{v^{n+1}}_{H^{K}}^2+\lambda_0 C_{R,M}\norm{v_t^{n+1}}_{H^{K-2}}^2
				+\lambda_0 C_{R,M} \norm{v^{n+1}}_{H^{K+1}}^2\\
				&\quad+C_{R,M}+\lambda_0 C_{R,M}+\kappa_1\norm{v_t^{n+1}}_{H^{K-1}}^2.
			\end{split}
		\end{equation}
		By the choice of $\lambda_0$, we have
		\[
		\lambda_0 C_{R,M}\le \frac{c_1}{2}.
		\]
		Then, choosing $\kappa_1$ sufficiently small, we obtain
		\begin{equation}
			\begin{split}
				&\frac{d}{dt}\Bigl(\sum_{ |\beta| \le K}\int_{\R^3}\rho^{n+1}|\partial^\beta v^{n+1}|^2\,dx+\lambda_0\sum_{|\beta|\le K-2} \int_{\R^3}\rho^{n+1}|\partial^\beta v_t^{n+1}|^2\,dx
				\Bigr)
				\\
				&\quad+\frac{c_1}{2} \norm{v^{n+1}}_{H^{K+1}}^2
				+\frac{\lambda_0 c_3}{2} \norm{v_t^{n+1}}_{H^{K-1}}^2 \\
				\le&
				C_{R,M}\Bigl(
				\norm{v^{n+1}}_{H^{K}}^2
				+\norm{v_t^{n+1}}_{H^{K-2}}^2
				\Bigr)
				+C_{R,M},
			\end{split}
		\end{equation}
		Applying Gr\"onwall's lemma and using the initial values
		\[
		v^{n+1}(0)=v_0,\qquad v_t^{n+1}(0)\in H^{K-2},
		\]
		with the latter controlled by the equation at $t=0$, namely
		\[
		v_t^{n+1}(0)
		=\frac{1}{\rho_0}\Bigl(
		\A_{\rho_0}v_0-\rho_0 v^n(0)\cdot\nabla v_0-\nabla\left(\frac{\rho_0^\gamma}{\gamma}\right)
		\Bigr),
		\]
		we conclude that
		\begin{equation}\label{vcombestima}
			\begin{split}
				&\sup_{0\le t\le T}\Bigl(
				\norm{v^{n+1}(t)}_{H^{K}}^2+\norm{v_t^{n+1}(t)}_{H^{K-2}}^2
				\Bigr)
				+\int_0^T \Bigl(
				\norm{v^{n+1}(t)}_{H^{K+1}}^2+\norm{v_t^{n+1}(t)}_{H^{K-1}}^2
				\Bigr)\,dt\\
				&\le e^{C_{R,M} T}(C_{R,M}T+C\norm{v^{n+1}(0)}_{H^{K}}^2+C\norm{v_t^{n+1}(0)}_{H^{K-2}}^2)\\
				&\le 2C(\norm{v^{n+1}(0)}_{H^{K}}^2+\norm{v_t^{n+1}(0)}_{H^{K-2}}^2),
			\end{split}
		\end{equation}
		provided $T$ is sufficiently small. Finally, since
		\[
		v^{n+1}\in L^2(0,T;H^{K+1}),
		\qquad
		v_t^{n+1}\in L^2(0,T;H^{K-1}),
		\]
		the Lions--Magenes theorem yields
		\[
		v^{n+1}\in C([0,T];H^{K}),
		\]
		which is the continuity asserted in the statement.
	\end{proof}
	
	\begin{lemma}[A genuine invariant set]\label{lem:invariant}
		There exist $R>0$ and $T>0$, depending only on
		\[
		\ubar,\ \obar,\ \rstar,\ \norm{\rho_0-\rstar}_{H^K},\ \norm{v_0}_{H^{K}},\ K,
		\]
		such that the following holds. If
		\[
		v^n\in\mathcal B_{R,T},
		\qquad
		\frac{\ubar}{2}\le \rho^n(t,x)\le 2\obar+1
		\quad\text{on }[0,T]\times\R^3,
		\]
		then
		\[
		v^{n+1}\in\mathcal B_{R,T},
		\qquad
		\frac{\ubar}{2}\le \rho^{n+1}(t,x)\le 2\obar+1
		\quad\text{on }[0,T]\times\R^3.
		\]
		Consequently,
		\[
		v^n\in\mathcal B_{R,T},
		\qquad
		\frac{\ubar}{2}\le \rho^n(t,x)\le 2\obar+1
		\quad\text{on }[0,T]\times\R^3
		\]
		for every $n\ge 0$.
	\end{lemma}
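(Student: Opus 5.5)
We will prove the lemma by a direct induction that combines Lemma~\ref{lem:density} and Lemma~\ref{lem:velocity}, fixing $R$ first and then shrinking $T$. The key observation is that the right-hand side of \eqref{eq:v-apriori} involves only $\norm{v^{n+1}(0)}_{H^K}=\norm{v_0}_{H^K}$ and $\norm{v_t^{n+1}(0)}_{H^{K-2}}$. The latter is given by the equation at $t=0$:
\[
v_t^{n+1}(0)=\rho_0^{-1}\Bigl(\A_{\rho_0}v_0-\rho_0v_0\cdot\nabla v_0-\nabla\bigl(\rho_0^\gamma/\gamma\bigr)\Bigr),
\]
since $v^n(0)=v_0$ for every element of $\mathcal B_{R,T}$. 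Hence
\[
\norm{v_t^{n+1}(0)}_{H^{K-2}}\le C_0,
\]
where $C_0$ depends only on $\ubar,\obar,\rstar,\norm{\rho_0-\rstar}_{H^K},\norm{v_0}_{H^K},K$ and \emph{not} on $R$. This uses Sobolev product and Moser composition estimates on the strip $[\ubar,\obar]$, with $K\ge6$.

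\textbf{Step 1: choice of $R$.} Define
\[
R:=\max\Bigl\{C_{\mathrm{heat}}\norm{v_0}_{H^K},\;2\sqrt{2C}\,\bigl(\norm{v_0}_{H^K}^2+C_0^2\bigr)^{1/2}\Bigr\},
\]
where $C$ is the constant in \eqref{eq:v-apriori}. We must check that this $C$ does not depend on $R$. Inspecting the proof of Lemma~\ref{lem:velocity}, $R$ and $M=C_R$ enter only through $e^{C_{R,M}T}$ and $C_{R,M}T$, which are absorbed into the factor $2$ once $T$ is small. The leading constant is then the equivalence constant between $\sum\int\rho^{n+1}|\partial^\beta\cdot|^2$ and the Sobolev norms, which depends only on the strip. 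With this choice, Lemma~\ref{lem:seed} gives $v^0\in\mathcal B_{R,T}$ for all $T\le1$.

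\textbf{Step 2: choice of $T$ and the inductive step.} With $R$ fixed, $M=C_R$ is fixed. We take $T$ smaller than all the thresholds required in Lemma~\ref{lem:density} (self-map, strip bound, contraction) and in Lemma~\ref{lem:velocity}, namely
\[
e^{C_{R,M}T}\bigl(C_{R,M}T+C(\cdots)\bigr)\le 2C(\cdots).
\]
These thresholds depend only on the listed data and on $R$, hence only on the data. Given $v^n\in\mathcal B_{R,T}$, Lemma~\ref{lem:density} produces $\rho^{n+1}$ satisfying \eqref{eq:rho-pointwise}, which is exactly the strip bound. Note that the hypothesis on $\rho^n$ is not actually needed, since $\rho^{n+1}$ is defined from $v^n$ alone. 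Lemma~\ref{lem:velocity} then gives $v^{n+1}$ with the regularity required in $\mathcal B_{R,T}$, and
\[
\text{(sum of the four norms)}\le 2\bigl(2C(\norm{v_0}_{H^K}^2+C_0^2)\bigr)^{1/2}\le R,
\]
using $(a+b+c+d)\le 2(a^2+b^2+c^2+d^2)^{1/2}$. Finally, $v^{n+1}(0)=v_0$ by construction. Induction from $n=0$, using Step 1, gives the conclusion for all $n$.

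The main obstacle is the $R$-independence of the constants in Step 1. Specifically, we must verify that the constant multiplying the initial data in \eqref{eq:v-apriori}, and the bound $C_0$ for $v_t^{n+1}(0)$, are independent of $R$ and $M$, so that the choice of $R$ is not circular. We would handle this by rereading the Gr\"onwall step \eqref{vcombestima}. There, all $R$- and $M$-dependence appears only through $C_{R,M}T$ and $e^{C_{R,M}T}$, both of which are made harmless by shrinking $T$ after $R$ has been fixed. Once this is checked, the rest of the argument is bookkeeping.
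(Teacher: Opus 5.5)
Your proposal is correct and is essentially the paper's own argument: fix $R\ge\max\{C_{\mathrm{heat}}\norm{v_0}_{H^K},\,[8C(\norm{v_0}_{H^K}^2+\norm{v_t^{n+1}(0)}_{H^{K-2}}^2)]^{1/2}\}$ as in \eqref{eq:R-choice} (your $2\sqrt{2C}$ is the same factor), shrink $T$ via Lemmas~\ref{lem:density} and~\ref{lem:velocity}, and induct starting from Lemma~\ref{lem:seed}; your observation that $v_t^{n+1}(0)$ depends only on the data, because $v^n(0)=v_0$, makes explicit what the paper leaves implicit. One caution on the point you single out as the main obstacle: as written, the paper's Gr\"onwall functional weights the $v_t$-part by $\lambda_0=\min\{1,c_1/(2C_{R,M})\}$, so reading off $\sup\norm{v_t}_{H^{K-2}}^2+\int\norm{v_t}_{H^{K-1}}^2$ costs an $R$-dependent factor $\lambda_0^{-1}$ in front of $\norm{v_0}_{H^K}^2$, which contradicts your inspection claim as stated; this is repairable, since \eqref{eq:vt-final} contains no $H^{K+1}$ term, so you may take $\lambda_0=1$ and $\kappa_1\le c_3/2$, after which the leading constant depends only on the density strip, which is exactly what you (and, implicitly, the paper) need.
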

	
	\begin{proof}
		First, we choose $R$ sufficiently large such that
		\begin{equation}\label{eq:R-choice}
			R\ge \max\left\{C_{\mathrm{heat}}\norm{v_0}_{H^{K}},\left[8C\left(\norm{v^{n+1}(0)}_{H^{K}}^2+\norm{v_t^{n+1}(0)}_{H^{K-2}}^2\right)\right]^{1/2}\right\}.
		\end{equation}
		where $C$ is chosen as the constant appearing in \eqref{vcombestima}. Once $R$ is fixed, Lemma \ref{lem:density} and Lemma \ref{lem:velocity} produce
		constants still denoted by $C_R(T)$ such that
		\begin{equation}
			\begin{split}
				\sup_{0\le t\le T}\norm{\rho^{n+1}(t)-\rstar}_{H^K}^2
				&+\int_0^T \norm{\rho^{n+1}(t)-\rstar}_{H^{K+1}}^2\,dt
				\\
				&+\sup_{0\le t\le T}\norm{\rho_t^{n+1}(t)}_{H^{K-2}}^2
				+\int_0^T \norm{\rho_t^{n+1}(t)}_{H^{K-1}}^2\,dt
				\le C_R(T),
			\end{split}
		\end{equation}
		and
		\begin{equation}
			\begin{split}
				&\sup_{0\le t\le T}\norm{v^{n+1}(t)}_{H^{K}}^2
				+\int_0^T \norm{v^{n+1}(t)}_{H^{K+1}}^2\,dt\\
				&+\sup_{0\le t\le T}\norm{v_t^{n+1}(t)}_{H^{K-2}}^2
				+\int_0^T \norm{v_t^{n+1}(t)}_{H^{K-1}}^2\,dt
				\le 2C(\norm{v^{n+1}(0)}_{H^{K}}^2+\norm{v_t^{n+1}(0)}_{H^{K-2}}^2).
			\end{split}
		\end{equation}
		Then the velocity estimate gives
		\[
		\norm{v^{n+1}}_{L^\infty(0,T;H^{K})}
		+
		\norm{v^{n+1}}_{L^2(0,T;H^{K+1})}
		+
		\norm{v_t^{n+1}}_{L^\infty(0,T;H^{K-2})}
		+
		\norm{v_t^{n+1}}_{L^2(0,T;H^{K-1})}
		\le R,
		\]
		Hence $v^{n+1}\in\mathcal B_{R,T}$.
		
		The pointwise strip for $\rho^{n+1}$ is already contained in
		\eqref{eq:rho-pointwise}. Thus one iteration step preserves both the velocity
		ball and the density strip.
		
		Finally, Lemma \ref{lem:seed} and \eqref{eq:R-choice} show that
		$v^0\in\mathcal B_{R,T}$. Since $\rho^0=\rho_0$ and the initial density lies in
		$[\ubar,\obar]$, the strip condition also holds at level $n=0$. The previous
		one-step invariance then yields the conclusion by induction on $n$.
	\end{proof}
	
	By an argument similar to the proof of Proposition 8.3 in Gu--Huang--Meng--Zhou \cite{Gu-Huang-Meng-Zhou}, we can establish the strong convergence of this sequence. Set
	\[
	\delta\rho^{n+1}:=\rho^{n+1}-\rho^n,
	\qquad
	\delta v^{n+1}:=v^{n+1}-v^n.
	\]
	
	\begin{lemma}[Contraction Estimate and Cauchy Sequences]\label{thm:contraction}
		Let $n \ge 2$ and define the energy functional $\mathfrak{E}^{n}(t)$ and its associated dissipation rate $\mathfrak{D}^{n}(t)$ by
		\begin{equation}\label{mathfrakEn}
			\mathfrak{E}^{n}(t) :=\Theta\|\delta\rho^{n}(t)\|_{L^2}^2 + \frac{1}{2}\int_{\mathbb{R}^3} \rho^{n}(x,t)|\delta v^{n}(x,t)|^2 \, dx,
		\end{equation}
		and
		\begin{equation}\label{mathfrakDn1}
			\mathfrak{D}^{n}(t) := \|\delta\rho^{n}(t)\|_{H^1}^2 + \|\delta v^{n}(t)\|_{H^1}^2,
		\end{equation}
		where $\Theta > 0$ is a constant depending only on $\gamma, \alpha, \nu, \varepsilon$, $\underline{\rho_0}$, $\overline{\rho_0}$, $\|\rho_0-\rstar\|_{H^3}$, $\|v_0\|_{H^2}$, $R$, and $M$. 
		
		Then, there exists a continuous and monotonically increasing function $\kappa(\cdot)$ with $\kappa(s) \to 0$ as $s \to 0^+$, such that the uniform contraction bound
		\begin{equation}\label{contractionen}
			\sup_{0\le t\le T}\mathfrak{E}^{n+1}(t) + \int_0^T \mathfrak{D}^{n+1}(t) \, dt \le \kappa(T) \left( \sup_{0\le t\le T}\mathfrak{E}^n(t) + \int_0^T \mathfrak{D}^n(t) \, dt \right)
		\end{equation}
		holds for all $n \ge 2$.
		
		In particular, there exists a sufficiently small time span $T_* > 0$, such that $\kappa(T_*) \le \frac{1}{2}$. Consequently, for all $0 < T \le T_*$, we have
		\begin{equation}\label{contraction-half}
			\sup_{0\le t\le T}\mathfrak{E}^{n+1}(t) + \int_0^T \mathfrak{D}^{n+1}(t) \, dt \le \frac{1}{2} \left( \sup_{0\le t\le T}\mathfrak{E}^n(t) + \int_0^T \mathfrak{D}^n(t) \, dt \right), \quad \forall n \ge 2,
		\end{equation}
		which guarantees that $\{\rho^n\}_{n=1}^\infty$ and $\{v^n\}_{n=1}^\infty$ form Cauchy sequences in $$L^\infty(0,T; L^2(\mathbb{R}^3)) \quad\text{and} \quad L^2(0,T; H^1(\mathbb{R}^3)).$$
	\end{lemma}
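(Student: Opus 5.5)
The plan is to derive the equations for the differences and run a low-order ($L^2$/$H^1$) energy estimate, placing all high-order norms on the already bounded iterates. Those iterates satisfy Lemmas~\ref{lem:density}--\ref{lem:invariant}: $\rho^n\in[\ubar/2,2\obar+1]$, $\|\rho^n-\rstar\|_{H^K}\le C_M$, and $v^n\in\mathcal B_{R,T}$.

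\textbf{Density difference.} Subtracting \eqref{eq:rho-iter} at levels $n$ and $n+1$ gives
\[
\partial_t\delta\rho^{n+1}+\diver(\delta\rho^{n+1}v^n)+\diver(\rho^n\delta v^n)-d\Delta\bigl((\rho^{n+1})^\alpha-(\rho^n)^\alpha\bigr)=0.
\]
I write $(\rho^{n+1})^\alpha-(\rho^n)^\alpha=a^n\delta\rho^{n+1}$ with $a^n=\alpha\int_0^1(\theta\rho^{n+1}+(1-\theta)\rho^n)^{\alpha-1}d\theta$. This coefficient satisfies $a^n\ge\lambda_*>0$ and $\|\nabla a^n\|_{L^\infty}\le C_M$. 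Testing with $\delta\rho^{n+1}$ then gives the dissipation $d\lambda_*\|\nabla\delta\rho^{n+1}\|_{L^2}^2$, up to the error $C_M\|\delta\rho^{n+1}\|_{L^2}\|\nabla\delta\rho^{n+1}\|_{L^2}$. The transport term is bounded by $C_R\|\delta\rho^{n+1}\|_{L^2}^2$. The coupling term is integrated by parts: it is bounded by $\|\rho^n\|_{L^\infty}\|\delta v^n\|_{L^2}\|\nabla\delta\rho^{n+1}\|_{L^2}$, which after Young's inequality leaves $C\|\delta v^n\|_{L^2}^2$.

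\textbf{Velocity difference.} Subtracting \eqref{eq:v-iter} at levels $n$ and $n+1$ gives
\[
\rho^{n+1}\partial_t\delta v^{n+1}+\rho^{n+1}v^n\cdot\nabla\delta v^{n+1}-\A_{\rho^{n+1}}\delta v^{n+1}=G^n,
\]
where $G^n$ collects $-\delta\rho^{n+1}(v^n_t+v^{n-1}\cdot\nabla v^n)$, $-\rho^n\delta v^{n}\cdot\nabla v^n$, $-\nabla(P(\rho^{n+1})-P(\rho^n))$ and $(\A_{\rho^{n+1}}-\A_{\rho^n})v^n$. I test with $\delta v^{n+1}$ and use the mass equation as in \eqref{eq:v-principal-time-transport}, paying the factor $d\Delta(\rho^{n+1})^\alpha$ with $\vartheta\|\nabla\delta v^{n+1}\|^2$. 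The coercivity \eqref{ellpiticcondi} then yields $c\|\nabla\delta v^{n+1}\|_{L^2}^2-C\|\delta v^{n+1}\|_{L^2}^2$. The terms of $G^n$ are handled as follows.
\begin{itemize}
\item The pressure difference and the divergence part of $(\A_{\rho^{n+1}}-\A_{\rho^n})v^n$ are integrated by parts once. They are bounded by $C_{M,R}\|\delta\rho^{n+1}\|_{L^2}\|\nabla\delta v^{n+1}\|_{L^2}$, using $\nabla v^n\in L^\infty$.
\item The non-divergence Korteweg-type part $\alpha d[(\rho^{n+1})^{\alpha-1}\nabla\rho^{n+1}-(\rho^n)^{\alpha-1}\nabla\rho^n]\cdot\nabla v^n$ contains $\nabla\delta\rho^{n+1}$. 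It is controlled by $C\|\delta\rho^{n+1}\|_{H^1}\|\delta v^{n+1}\|_{L^2}$.
\item The terms with $\delta\rho^{n+1}v^n_t$ and $\rho^n\delta v^n\cdot\nabla v^n$ are estimated through $\|v_t^n\|_{L^3}\|\delta\rho^{n+1}\|_{L^2}\|\delta v^{n+1}\|_{L^6}$ and $\|\delta v^n\|_{L^2}\|\delta v^{n+1}\|_{L^2}$, respectively.
\end{itemize}

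\textbf{Closing the estimate.} Choosing $\Theta$ large makes the density dissipation $\Theta d\lambda_*\|\nabla\delta\rho^{n+1}\|^2$ absorb the $\nabla\delta\rho^{n+1}$ contributions coming from the velocity equation. This gives
\[
\frac{d}{dt}\mathfrak E^{n+1}+c\,\mathfrak D^{n+1}\le C\mathfrak E^{n+1}+C\bigl(\|\delta v^n\|_{L^2}^2+\|\delta v^n\|_{L^2}\|\nabla\delta v^{n+1}\|_{L^2}\bigr).
\]
The right-hand side only involves level $n$ through $\|\delta v^n\|_{L^2}^2\le C\mathfrak E^n$. Since $\mathfrak E^{n+1}(0)=0$, Gr\"onwall's inequality gives $\sup_t\mathfrak E^{n+1}+\int_0^T\mathfrak D^{n+1}\le Ce^{CT}T\sup_t\mathfrak E^n$. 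So I can take $\kappa(T)=Ce^{CT}T$, and \eqref{contraction-half} plus the Cauchy property follow as stated.

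\textbf{Main obstacle.} I expect the hardest step to be the $\nabla\delta\rho$ terms produced by the density dependence of $\A$ and by the factor $\rho^{n+1}$ in front of $v_t$. These are controlled only because the parabolic density equation supplies $L^2_tH^1$ dissipation, and the weight $\Theta$ is tuned to use it. The coefficient $v_t^n$, which appears multiplied by $\delta\rho^{n+1}$, is controlled via $v_t^n\in L^\infty(0,T;H^{K-2})$ from $\mathcal B_{R,T}$. This is why the argument starts at $n\ge2$, so that all the involved iterates lie in the invariant set.
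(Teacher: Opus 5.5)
Your proposal is correct. It is essentially the standard $L^2$-level difference estimate that the paper does not write out but defers to Gu--Huang--Meng--Zhou (Proposition 8.3): the $\Theta$-weighted parabolic dissipation of $\delta\rho^{n+1}$ absorbs the $\nabla\delta\rho^{n+1}$ terms from the velocity equation, and Gr\"onwall with vanishing initial differences gives $\kappa(T)=O(T)$. One harmless slip: the convective difference is $\delta\rho^{n+1}v^{n-1}\cdot\nabla v^n+\rho^{n+1}\delta v^n\cdot\nabla v^n$, with $\rho^{n+1}$ rather than $\rho^n$ in the second term, and it is estimated exactly as you do.
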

	\subsubsection{The approximation process for local solutions}
	Now, we give the approximation procedure for the local solution. The uniform bounds give, after extraction,
	\[
	\rho^n-\rstar \rightharpoonup \rho-\rstar
	\quad\text{in }L^2(0,T;H^{K+1}),
	\qquad
	v^n\rightharpoonup v
	\quad\text{in }L^2(0,T;H^{K+1}),
	\]
	and, after extraction of the time derivatives,
	\[
	\rho_t^n\rightharpoonup \rho_t
	\quad\text{in }L^2(0,T;H^{K-1}),
	\qquad
	v_t^n\rightharpoonup v_t
	\quad\text{in }L^2(0,T;H^{K-1}).
	\]
	On the other hand, the contraction estimate shows that the full sequence is Cauchy in
	\[
	L^\infty(0,T;L^2(\R^3))\times L^\infty(0,T;L^2(\R^3))
	\]
	and in
	\[
	L^2(0,T;H^1(\R^3))\times L^2(0,T;H^1(\R^3)).
	\]
	Together with the uniform bounds
	\[
	\sup_n\sup_{0\le t\le T}\norm{\rho^n(t)-\rstar}_{H^K}
	+\sup_n\sup_{0\le t\le T}\norm{v^n(t)}_{H^{K}}
	\le C_R,
	\]
	\[
	\sup_n\int_0^T\norm{\rho^n(t)-\rstar}_{H^{K+1}}^2\,dt
	+\sup_n\int_0^T\norm{v^n(t)}_{H^{K+1}}^2\,dt
	\le C_R,
	\]
	this implies higher-order strong convergence by interpolation. Indeed, for the density difference,
	\[
	\sup_{0\le t\le T}\norm{\rho^n(t)-\rho^m(t)}_{H^{K-1}}
	\le
	C
	\sup_{0\le t\le T}\norm{\rho^n(t)-\rho^m(t)}_{L^2}^{1/K}
	\sup_{0\le t\le T}\norm{\rho^n(t)-\rho^m(t)}_{H^{K}}^{(K-1)/K},
	\]
	so $\{\rho^n\}$ is Cauchy in $C([0,T];H^{K-1})$. Likewise,
	\[
	\sup_{0\le t\le T}\norm{v^n(t)-v^m(t)}_{H^{K-1}}
	\le
	C
	\sup_{0\le t\le T}\norm{v^n(t)-v^m(t)}_{L^2}^{1/K}
	\sup_{0\le t\le T}\norm{v^n(t)-v^m(t)}_{H^{K}}^{(K-1)/K},
	\]
	so $\{v^n\}$ is Cauchy in $C([0,T];H^{K-1})$. Hence there exists a pair $(\rho,v)$ such that
	\[
	\rho^n\to \rho
	\quad\text{in }C([0,T];H^{K-1}),
	\]
	\[
	v^n\to v
	\quad\text{in }C([0,T];H^{K-1}).
	\]
	The approximation
	and compactness procedure is analogous to that in the proof of
	Gu--Huang--Meng--Zhou \cite[Lemma~3.1]{Gu-Huang-Meng-Zhou}; we therefore omit
	the details here. Consequently, \((\rho,v)\) belongs to the regularity class
	stated in Proposition~\ref{thm:main}. The uniqueness follows from the standard
	energy estimate for the difference of two solutions.

	\subsubsection{Weighted Energy Estimate}
	Fix $\Lambda>1$. We introduce the weighted energy
	\[
	E_\Lambda(t):=
	\sum_{1\leq |\beta|\leq 3}
	\left(
	\|\langle x\rangle^\Lambda\partial^\beta\rho(t)\|_{L^2}^2
	+
	\|\langle x\rangle^\Lambda\sqrt{\rho}\,
	\partial^\beta v(t)\|_{L^2}^2
	\right),
	\]
	and the corresponding dissipation
	\[
	D_\Lambda(t):=
	\sum_{1\leq |\beta|\leq 3}
	\left(
	\|\langle x\rangle^\Lambda\nabla\partial^\beta\rho(t)\|_{L^2}^2
	+
	\|\langle x\rangle^\Lambda\nabla\partial^\beta v(t)\|_{L^2}^2
	\right).
	\]
	Since $\rho$ is uniformly bounded from below and above, and
	\[
	|\nabla\langle x\rangle^{\Lambda}|
	+
	|\nabla^2\langle x\rangle^{\Lambda}|
	\leq
	C_\Lambda\langle x\rangle^{\Lambda},
	\]
	we have
	\[
	E_\Lambda(t)
	\sim
	\|\langle x\rangle^\Lambda\nabla\rho(t)\|_{H^2}^2
	+
	\|\langle x\rangle^\Lambda\nabla v(t)\|_{H^2}^2.
	\]
	
	\smallskip
	\noindent\textbf{Step 1: weighted estimate for the density.}
	We write the density equation as
	\[
	\rho_t-\diver(a(\rho)\nabla\rho)
	=
	-\diver(\rho v),
	\qquad
	a(\rho):=d\alpha\rho^{\alpha-1}.
	\]
	Since $\rho$ remains in a fixed positive strip, there exists $a_0>0$
	such that
	\[
	a(\rho)\geq a_0.
	\]
	Applying $\partial^\beta$, $1\leq|\beta|\leq3$, gives
	\[
	\partial_t\partial^\beta\rho
	-
	\diver(a(\rho)\nabla\partial^\beta\rho)
	=
	-\partial^\beta\diver(\rho v)
	+
	\mathcal C_\beta^\rho,
	\]
	where
	\[
	\mathcal C_\beta^\rho
	=
	\partial^\beta\diver(a(\rho)\nabla\rho)
	-
	\diver(a(\rho)\nabla\partial^\beta\rho)
	=
	\sum_{0<\eta\leq\beta}
	C_{\beta,\eta}
	\diver\left(
	\partial^\eta a(\rho)
	\nabla\partial^{\beta-\eta}\rho
	\right).
	\]
	
	Testing by $\langle x\rangle^{2\Lambda}\partial^\beta\rho$ (Strictly speaking, one should first test the equation against $\langle x\rangle_R^{2\Lambda}\partial^\beta\rho$,
	where $\langle x\rangle_R^{2\Lambda}$ is a smooth truncated version of
	$\langle x\rangle^{2\Lambda}$. After deriving estimates uniformly in
	$R$, one may pass to the limit $R\to\infty$. For notational simplicity,
	we carry out the computation directly with
	$\langle x\rangle^{2\Lambda}\partial^\beta\rho$) , we obtain
	\[
	\frac12\frac{d}{dt}
	\|\langle x\rangle^\Lambda\partial^\beta\rho\|_{L^2}^2
	-
	\int_{\R^3}
	\langle x\rangle^{2\Lambda}\partial^\beta\rho\,
	\diver(a(\rho)\nabla\partial^\beta\rho)\,dx
	=
	I_1+I_2,
	\]
	where
	\[
	I_1
	:=
	-\int_{\R^3}
	\langle x\rangle^{2\Lambda}\partial^\beta\rho\,
	\partial^\beta\diver(\rho v)\,dx,
	\]
	and
	\[
	I_2
	:=
	\int_{\R^3}
	\langle x\rangle^{2\Lambda}\partial^\beta\rho\,
	\mathcal C_\beta^\rho\,dx.
	\]
	
	For the diffusion term, integration by parts gives
	\[
	\begin{aligned}
		&-\int_{\R^3}
		\langle x\rangle^{2\Lambda}\partial^\beta\rho\,
		\diver(a(\rho)\nabla\partial^\beta\rho)\,dx\\
		&\quad=
		\int_{\R^3}
		a(\rho)\langle x\rangle^{2\Lambda}
		|\nabla\partial^\beta\rho|^2\,dx\\
		&\qquad+
		\int_{\R^3}
		a(\rho)\partial^\beta\rho\,
		\nabla\langle x\rangle^{2\Lambda}
		\cdot\nabla\partial^\beta\rho\,dx\\
		&\quad\geq
		\frac{a_0}{2}
		\|\langle x\rangle^\Lambda
		\nabla\partial^\beta\rho\|_{L^2}^2
		-
		C_\Lambda
		\|\langle x\rangle^\Lambda
		\partial^\beta\rho\|_{L^2}^2.
	\end{aligned}
	\]
	For $I_1$, after integration by parts,
	\[
	\begin{aligned}
		|I_1|
		&\leq
		C_\Lambda
		\left(
		\|\langle x\rangle^\Lambda
		\nabla\partial^\beta\rho\|_{L^2}
		+
		\|\langle x\rangle^\Lambda
		\partial^\beta\rho\|_{L^2}
		\right)
		\|\langle x\rangle^\Lambda
		\partial^\beta(\rho v)\|_{L^2}.
	\end{aligned}
	\]
	By Leibniz' formula and the unweighted high-order bounds,
	\[
	\begin{aligned}
		\|\langle x\rangle^\Lambda
		\partial^\beta(\rho v)\|_{L^2}
		&\leq
		C\|\langle x\rangle^\Lambda
		\partial^\beta v\|_{L^2}
		+
		C\|\langle x\rangle^\Lambda
		\partial^\beta\rho\|_{L^2}\\
		&\quad+
		C\sum_{0<\eta<\beta}
		\|\langle x\rangle^\Lambda
		\partial^\eta\rho\,
		\partial^{\beta-\eta}v\|_{L^2}\\
		&\leq
		C E_\Lambda(t)^{1/2}.
	\end{aligned}
	\]
	Therefore,
	\[
	|I_1|
	\leq
	\vartheta
	\|\langle x\rangle^\Lambda
	\nabla\partial^\beta\rho\|_{L^2}^2
	+
	C_{\vartheta,\Lambda}E_\Lambda(t).
	\]
	
	For $I_2$, using the divergence form of
	$\mathcal C_\beta^\rho$, we have
	\[
	\begin{aligned}
		|I_2|
		&\leq
		C_\Lambda\sum_{0<\eta\leq\beta}
		\left(
		\|\langle x\rangle^\Lambda
		\nabla\partial^\beta\rho\|_{L^2}
		+
		\|\langle x\rangle^\Lambda
		\partial^\beta\rho\|_{L^2}
		\right)\\
		&\qquad\qquad\qquad\times
		\|\langle x\rangle^\Lambda
		(\partial^\eta a(\rho))
		\nabla\partial^{\beta-\eta}\rho\|_{L^2}.
	\end{aligned}
	\]
	Since $a(\rho)$ is smooth on the fixed range of $\rho$, the composition
	estimate and the weighted Sobolev product estimate imply
	\[
	\sum_{0<\eta\leq\beta}
	\|\langle x\rangle^\Lambda
	(\partial^\eta a(\rho))
	\nabla\partial^{\beta-\eta}\rho\|_{L^2}
	\leq
	C E_\Lambda(t)^{1/2}.
	\]
	Thus,
	\[
	|I_2|
	\leq
	\vartheta
	\|\langle x\rangle^\Lambda
	\nabla\partial^\beta\rho\|_{L^2}^2
	+
	C_{\vartheta,\Lambda}E_\Lambda(t).
	\]
	Combining the above estimates, we obtain
	\[
	\frac{d}{dt}
	\|\langle x\rangle^\Lambda
	\partial^\beta\rho\|_{L^2}^2
	+
	c_1
	\|\langle x\rangle^\Lambda
	\nabla\partial^\beta\rho\|_{L^2}^2
	\leq
	C_\Lambda E_\Lambda(t).
	\]
	
	\smallskip
	\noindent\textbf{Step 2: weighted estimate for the velocity.}
	We use the momentum equation in the form
	\[
	\rho v_t
	+\rho v\cdot\nabla v
	+\nabla\left(\frac{\rho^\gamma}{\gamma}\right)
	=
	\A_\rho v,
	\qquad
	\A_\rho v=\LL_\rho v+\Kop_\rho v.
	\]
	Applying $\partial^\beta$, $1\leq|\beta|\leq3$, testing by
	$\langle x\rangle^{2\Lambda}\partial^\beta v$, and integrating over
	$\R^3$, we get
	\[
	\begin{aligned}
		&\int_{\R^3}
		\langle x\rangle^{2\Lambda}
		\partial^\beta(\rho v_t)\cdot\partial^\beta v\,dx\\
		&\quad+
		\int_{\R^3}
		\langle x\rangle^{2\Lambda}
		\partial^\beta(\rho v\cdot\nabla v)
		\cdot\partial^\beta v\,dx\\
		&\quad+
		\int_{\R^3}
		\langle x\rangle^{2\Lambda}
		\partial^\beta\nabla
		\left(\frac{\rho^\gamma}{\gamma}\right)
		\cdot\partial^\beta v\,dx\\
		&=
		\int_{\R^3}
		\langle x\rangle^{2\Lambda}
		\partial^\beta(\A_\rho v)
		\cdot\partial^\beta v\,dx.
	\end{aligned}
	\]
	
	For the time derivative term, we write
	\[
	\partial^\beta(\rho v_t)
	=
	\rho\,\partial^\beta v_t
	+
	[\partial^\beta,\rho]v_t.
	\]
	Then
	\[
	\begin{aligned}
		\int_{\R^3}
		\langle x\rangle^{2\Lambda}
		\rho\,\partial^\beta v_t\cdot\partial^\beta v\,dx
		&=
		\frac12\frac{d}{dt}
		\int_{\R^3}
		\langle x\rangle^{2\Lambda}
		\rho|\partial^\beta v|^2\,dx\\
		&\quad-
		\frac12
		\int_{\R^3}
		\langle x\rangle^{2\Lambda}
		\rho_t|\partial^\beta v|^2\,dx.
	\end{aligned}
	\]
	Using the bound on $\rho_t$ and the commutator estimate,
	\[
	\begin{aligned}
		&\left|
		\int_{\R^3}
		\langle x\rangle^{2\Lambda}
		\rho_t|\partial^\beta v|^2\,dx
		\right|\\
		&\quad+
		\left|
		\int_{\R^3}
		\langle x\rangle^{2\Lambda}
		[\partial^\beta,\rho]v_t
		\cdot\partial^\beta v\,dx
		\right|
		\leq
		C_\Lambda E_\Lambda(t).
	\end{aligned}
	\]
	
	For the transport term, we decompose
	\[
	\partial^\beta(\rho v\cdot\nabla v)
	=
	\rho v\cdot\nabla\partial^\beta v
	+
	[\partial^\beta,\rho v]\cdot\nabla v.
	\]
	The principal part satisfies
	\[
	\begin{aligned}
		&\left|
		\int_{\R^3}
		\langle x\rangle^{2\Lambda}
		\rho v\cdot\nabla\partial^\beta v
		\cdot\partial^\beta v\,dx
		\right|\\
		&\quad=
		\frac12
		\left|
		\int_{\R^3}
		\diver(\langle x\rangle^{2\Lambda}\rho v)
		|\partial^\beta v|^2\,dx
		\right|\\
		&\quad\leq
		C_\Lambda
		\|\langle x\rangle^\Lambda
		\partial^\beta v\|_{L^2}^2.
	\end{aligned}
	\]
	Moreover, the commutator part is estimated by
	\[
	\left|
	\int_{\R^3}
	\langle x\rangle^{2\Lambda}
	[\partial^\beta,\rho v]\cdot\nabla v
	\cdot\partial^\beta v\,dx
	\right|
	\leq
	C_\Lambda E_\Lambda(t).
	\]
	
	For the pressure term, since
	\[
	\nabla\left(\frac{\rho^\gamma}{\gamma}\right)
	=
	\rho^{\gamma-1}\nabla\rho,
	\]
	we have
	\[
	\partial^\beta\nabla
	\left(\frac{\rho^\gamma}{\gamma}\right)
	=
	\rho^{\gamma-1}\nabla\partial^\beta\rho
	+
	\sum_{0<\eta\leq\beta}
	C_{\beta,\eta}
	\partial^\eta(\rho^{\gamma-1})
	\nabla\partial^{\beta-\eta}\rho.
	\]
	Using the fixed strip condition, the composition estimate, and the
	weighted Sobolev product estimate,
	\[
	\begin{aligned}
		&\left|
		\int_{\R^3}
		\langle x\rangle^{2\Lambda}
		\partial^\beta\nabla
		\left(\frac{\rho^\gamma}{\gamma}\right)
		\cdot\partial^\beta v\,dx
		\right|\\
		&\quad\leq
		C
		\|\langle x\rangle^\Lambda
		\nabla\partial^\beta\rho\|_{L^2}
		\|\langle x\rangle^\Lambda
		\partial^\beta v\|_{L^2}\\
		&\qquad+
		C(1+E_\Lambda(t)^{1/2})
		\|\langle x\rangle^\Lambda
		\partial^\beta v\|_{L^2}\\
		&\quad\leq
		\vartheta
		\|\langle x\rangle^\Lambda
		\nabla\partial^\beta\rho\|_{L^2}^2
		+
		C_{\vartheta,\Lambda}(1+E_\Lambda(t)).
	\end{aligned}
	\]
	
	It remains to treat the elliptic term. We write
	\[
	\partial^\beta(\A_\rho v)
	=
	\A_\rho\partial^\beta v
	+
	[\partial^\beta,\A_\rho]v.
	\]
	By the weighted elliptic coercivity of $\A_\rho$, together with the
	lower-order estimate for $\Kop_\rho$,
	\[
	\begin{aligned}
		-&\int_{\R^3}
		\langle x\rangle^{2\Lambda}
		\A_\rho\partial^\beta v
		\cdot\partial^\beta v\,dx\\
		&\quad\geq
		c_2
		\|\langle x\rangle^\Lambda
		\nabla\partial^\beta v\|_{L^2}^2
		-
		C_\Lambda
		\|\langle x\rangle^\Lambda
		\partial^\beta v\|_{L^2}^2.
	\end{aligned}
	\]
	Indeed, all terms containing derivatives of
	$\langle x\rangle^{2\Lambda}$ are bounded by
	\[
	\vartheta
	\|\langle x\rangle^\Lambda
	\nabla\partial^\beta v\|_{L^2}^2
	+
	C_{\vartheta,\Lambda}
	\|\langle x\rangle^\Lambda
	\partial^\beta v\|_{L^2}^2.
	\]
	For the commutator, the explicit forms of $\LL_\rho$ and
	$\Kop_\rho$ imply
	\[
	\begin{aligned}
		&\left|
		\int_{\R^3}
		\langle x\rangle^{2\Lambda}
		[\partial^\beta,\A_\rho]v
		\cdot\partial^\beta v\,dx
		\right|\\
		&\quad\leq
		C
		\|\langle x\rangle^\Lambda
		\nabla\partial^\beta v\|_{L^2}
		\|\langle x\rangle^\Lambda
		\partial^\beta v\|_{L^2}\\
		&\qquad+
		C
		\|\langle x\rangle^\Lambda
		\nabla\partial^\beta v\|_{L^2}
		(1+E_\Lambda(t)^{1/2})
		+
		C(1+E_\Lambda(t))\\
		&\quad\leq
		\vartheta
		\|\langle x\rangle^\Lambda
		\nabla\partial^\beta v\|_{L^2}^2
		+
		C_{\vartheta,\Lambda}(1+E_\Lambda(t)).
	\end{aligned}
	\]
	Combining the above estimates gives
	\[
	\begin{aligned}
		&\frac{d}{dt}
		\int_{\R^3}
		\langle x\rangle^{2\Lambda}
		\rho|\partial^\beta v|^2\,dx
		+
		c_3
		\|\langle x\rangle^\Lambda
		\nabla\partial^\beta v\|_{L^2}^2\\
		&\quad\leq
		\vartheta
		\|\langle x\rangle^\Lambda
		\nabla\partial^\beta\rho\|_{L^2}^2
		+
		C_\Lambda(1+E_\Lambda(t)).
	\end{aligned}
	\]
	
	\smallskip
	\noindent\textbf{Step 3: summation and closure.}
	Summing the density and velocity estimates over
	$1\leq|\beta|\leq3$, and using the equivalence between
	\[
	\int_{\R^3}
	\langle x\rangle^{2\Lambda}
	\rho|\partial^\beta v|^2\,dx
	\]
	and
	\[
	\|\langle x\rangle^\Lambda
	\partial^\beta v\|_{L^2}^2,
	\]
	we obtain
	\[
	\begin{aligned}
		\frac{d}{dt}E_\Lambda(t)
		+
		c_4D_\Lambda(t)
		&\leq
		\vartheta
		\sum_{1\leq|\beta|\leq3}
		\|\langle x\rangle^\Lambda
		\nabla\partial^\beta\rho\|_{L^2}^2\\
		&\quad+
		C_\Lambda(1+E_\Lambda(t)).
	\end{aligned}
	\]
	Choosing $\vartheta>0$ sufficiently small and absorbing the first term
	on the right-hand side, we get
	\[
	\frac{d}{dt}E_\Lambda(t)
	+
	\frac{c_4}{2}D_\Lambda(t)
	\leq
	C_\Lambda(1+E_\Lambda(t)).
	\]
	Thus, by Gr\"onwall's inequality,
	\[
	E_\Lambda(t)
	+
	\int_0^tD_\Lambda(s)\,ds
	\leq
	C_{T,\Lambda}(1+E_\Lambda(0)),
	\qquad
	0\leq t\leq T.
	\]
	Consequently, if
	\[
	\langle x\rangle^\Lambda\nabla\rho_0\in H^2,
	\qquad
	\langle x\rangle^\Lambda\nabla v_0\in H^2,
	\]
	then
	\[
	\sup_{0\leq t\leq T}
	\left(
	\|\langle x\rangle^\Lambda\nabla\rho(t)\|_{H^2}
	+
	\|\langle x\rangle^\Lambda\nabla v(t)\|_{H^2}
	\right)
	\leq
	C_{T,\Lambda}.
	\]
	Since $H^2(\R^3)\hookrightarrow L^\infty(\R^3)$, this gives
	\[
	\|\langle x\rangle^\Lambda\nabla\rho(t)\|_{L^\infty}
	+
	\|\langle x\rangle^\Lambda\nabla v(t)\|_{L^\infty}
	\leq
	C_{T,\Lambda}.
	\]
	Therefore,
	\[
	|\nabla\rho(t,x)|+|\nabla v(t,x)|
	\leq
	C_{T,\Lambda}\langle x\rangle^{-\Lambda},
	\qquad
	0\leq t\leq T.
	\]

	\section*{Acknowledgments}
	X. Huang is partially supported by Chinese Academy of Sciences Project for Young Scientists in Basic Research (Grant No. YSBR-031), National Natural Science Foundation of China (Grant Nos. 12494542, 11688101) and National Key R\&D Program of China (Grant No. 2021YFA1000801). 
	
	\vspace{1cm}
	\noindent\textbf{Data availability statement.} Data sharing is not applicable to this article.
	
	\vspace{0.3cm}
	\noindent\textbf{Conflict of interest.} The authors declare that they have no conflict of interest.
	
\end{document}